\numberwithin{equation}{section}
\theoremstyle{plain}
\newtheorem{thm}{\protect\theoremname}[section]
  \theoremstyle{definition}
  \newtheorem{defn}[thm]{\protect\definitionname}
  \theoremstyle{remark}
  \newtheorem{rem}{\protect\remarkname}
  \theoremstyle{plain}
  \newtheorem{lem}[thm]{\protect\lemmaname}
  \theoremstyle{plain}
  \newtheorem{prop}[thm]{\protect\propositionname}
 \theoremstyle{plain}
  \newtheorem{conj}[thm]{\protect\conjecturename}
  \providecommand{\definitionname}{Definition}
  \providecommand{\lemmaname}{Lemma}
  \providecommand{\propositionname}{Proposition}
  \providecommand{\remarkname}{Remark}
\providecommand{\theoremname}{Theorem}
\providecommand{\conjecturename}{Conjecture}
\newcommand{\R}{\mathbb{R}}
\newcommand{\Z}{\mathbb{Z}}
\newcommand{\N}{\mathbb{N}}
\newcommand{\T}{\mathbb{T}}
\newcommand{\LL}{\mathcal{L}}
\newcommand{\OO}{\mathcal{O}}
\newcommand{\X}{\mathcal{X}}
\newcommand{\Y}{\mathcal{Y}}
\newcommand{\RR}{\mathcal{R}}
\newcommand{\ZZ}{\mathcal{Z}}
\newcommand{\C}{\mathbb{C}}
\newcommand{\F}{\mathcal{F}}
\newcommand{\G}{\mathcal{G}}
\newcommand{\NN}{\mathcal{N}}
\newcommand{\wt}{\widetilde }
\newcommand{\wh}{\widehat}
\newcommand{\Id}{\mathrm{Id}}
\newcommand{\dps}{\displaystyle}
\newcommand{\bigO}{\mathcal{O}}
\title{Parabolic saddles and Newhouse domains in Celestial Mechanics}
\author[M. Garrido]{Miguel Garrido}
\address[MG]{Departament de Matem\`atiques, Universitat Aut\`onoma de Barcelona, 080193, Barcelona, Spain}
\email{miguel.garrido@uab.cat}
\author[P. Mart\'in]{Pau Mart\'in}
\address[PM]{Departament de Matem\`atiques, Universitat Polit\`ecnica de Catalunya, Barcelona, 08028, Spain \& Centre de Recerca Matem\'atica, Bellaterra, 08194, Barcelona, Spain }
\email{p.martin@upc.edu}
\author[J. Paradela]{Jaime Paradela}
\address[JP]{{Department of Mathematics, University of Maryland}, College Park, 20776, MD, USA}
\email{paradela@umd.edu}
\begin{document}

\begin{abstract}
    In \cite{McGeheestablemfold} McGehee introduced a compactification of the phase space of the restricted 3-body problem by gluing a manifold of  periodic orbits ``at infinity''. Although from the dynamical point of view  these periodic orbits are parabolic (the linearization of the Poincar\'{e} map is the identity matrix), one of them, denoted here by $O$, possesses stable and unstable manifolds which, moreover,  separate the regions of bounded and unbounded motion. 
    
    This observation prompted the investigation of the homoclinic picture associated to $O$,  starting with the work of Alekseev \cite{AlekseevQR1,AlekseevQR2} and Moser \cite{Moserbook}.
    We continue this research and extend, to this degenerate setting, some classical results in the theory of homoclinic bifurcations. More concretely, we prove that there exist Newhouse domains $\mathcal N$ in parameter space (the ratio of masses of the bodies) and residual subsets $\mathcal R\subset \mathcal N$ for which the homoclinic class of $O$ has maximal Hausdorff dimension and is accumulated by generic elliptic periodic orbits.

 One of the main consequences of our work is the fact that, for a (locally) topologically large set of parameters of the restricted 3-body problem the union of its elliptic islands forms an unbounded subset of the phase space and, moreover, the closure of the set of generic elliptic periodic orbits contains hyperbolic sets with Hausdorff dimension arbitrarily close to maximal. Other instances of the restricted $n$-body problem such as the Sitnikov problem and the case $n=4$ are also considered.
  \end{abstract}

\maketitle

\tableofcontents

\section{Introduction}

\begin{comment}
We consider a one-parameter family of area preserving diffeomorphisms which, for all values of the parameter, have a parabolic fixed point $O$ (of a particular nature) with stable and unstable manifolds and extend, to this degenerate setting, some classical results in the theory of homoclinic bifurcations. More concretely, we prove that in a generic unfolding of a quadratic tangency to $O$ there exist Newhouse domains $\mathcal N$ in parameter space and residual subsets $\mathcal R\subset \mathcal N$ for which the homoclinic class of $O$ has maximal Hausdorff dimension and is accumulated by generic elliptic periodic orbits. 

Our motivation comes from the study of the restricted 3-body problem in which such a parabolic fixed point appears ``at infinity'' after a suitable compactification of the phase space. One of the main  consequences of our work is the fact that, for a (topologically) large set of parameters, the region where the restricted 3-body problem exhibits coexistence of thick chaotic sets and elliptic islands is unbounded.
\end{comment}

In the 70's Newhouse \cite{Newhouse79} showed that there exist open domains in the space of  smooth surface diffeomorphisms, formed by diffeomorphisms which exhibit homoclinic tangencies. These regions are called \textit{Newhouse domains}.  Apart from the importance of their own existence (what, for example, disproved the $\mathcal C^2$-density of axiom A  diffeomorphisms on compact surfaces \cite{NewhousenondenseaxiomA}), a remarkable feature of the Newhouse domains is the complexity of the dynamics of the diffeomorphisms in this region. Indeed, Newhouse himself proved that there exists a residual set of dissipative diffeomorphisms in the Newhouse region which have infinitely many sinks \cite{Newhousesinks}. Parametric versions of Newhouse's results appeared in the work of Robinson \cite{Robinsonnewhouseparam}. In \cite{Duarteabundance,Duarteabundancebif,Duartenearid} Duarte extended the theory to cover the area preserving case (including also parametric formulations) with the role of sinks replaced by elliptic islands. Since then, quite a number of surprisingly exotic dynamical phenomena have been shown to be ``abundant'' inside the Newhouse domains. For example, existence of homoclinic classes of maximal Hausdorff dimension \cite{NewhousehtopHD}, superexponential growth of the number of periodic orbits \cite{Kaloshinsuperexp,KaloshinGorodetskioftensinks}, existence of homoclinic tangencies of arbitrary order of degeneracy and universal dynamical conjugacy  classes \cite{GSTtangencies}. Moreover, Newhouse domains have been shown to exist for a variety of physical systems such as the Van der Pol equation \cite{Levimemoirs}, billiards \cite{TuraevRomKedarbilliards}, Lorenz-like models \cite{MR1257718},  the standard map \cite{Duartestdmap}, and, very recently, in the space of smooth Beltrami fields on $\mathbb{R}^3$ \cite{BergerFlorioPeraltaSalas}, just to mention a few.

 From a historical point of view, the study of the homoclinic picture in dynamical systems started with the work of Poincar\'{e} on the non-integrability of the $3$-body problem. However, and despite the efforts of many, the complex dynamics associated to the homoclinic phenomena that take place in Celestial Mechanics models is still very far from being well understood even in low dimensional systems. One of the most popular of such models is the so-called restricted 3-body problem, which can be seen as a limit  of the 3-body problem in which one of the bodies has mass zero (see \cite{Wintner}). In this limit, the motion of the \textit{massless body} is influenced by, but does not influence, the motion of the heavy bodies, which from now on we call the \textit{primaries}.

The main result of the present work, Theorem~\ref{thm:main}, concerns the existence of Newhouse domains for the restricted 3-body problem\footnote{In order to give a precise statement, we first reduce the study of the dynamics of the restricted 3-body problem to a one-parameter family of  area preserving maps (see Section~\ref{sec:restricted3bpareapreserving}).}. The more interesting point in our construction is that these Newhouse domains are associated to a \textit{degenerate} fixed point ``at infinity'' (see Section~\ref{sec:infinity}). As already highlighted above, diffeomorphisms in the Newhouse domain often exhibit very rich dynamics. This is portrayed in our second main result, Theorem~\ref{thm:main2}, which follows as a corollary of our proof of Theorem~\ref{thm:main} and well known techniques developed by Gorodetski \cite{Gorodetskistdmap} (see also \cite{Duarteabundance,Duarteabundancebif}). Together, Theorems \ref{thm:main} and \ref{thm:main2} imply Theorem \ref{thm:main3}, in which we show that for a (locally) topologically large set of parameters of the restricted 3-body problem the union of its elliptic islands forms an unbounded subset of the phase space and, moreover, the closure of the set of generic elliptic periodic orbits contains hyperbolic sets with Hausdorff dimension arbitrarily close to maximal.\vspace{0.3cm}

 \begin{rem}
     Theorem~\ref{thm:main} has already been announced in \cite{GorodetskiKaloshinannouncement} and a proof has appeared in the unpublished manuscript \cite{GorodetskiK12} by Gorodetski and Kaloshin. We however believe that our approach, which is completely different, is more conceptual and provides a rather stronger toolbox to study homoclinic phenomena in Celestial Mechanics. For example, the results in \cite{GorodetskiK12}, based on a $\mathcal{C}^2$ analysis are, a priori, not enough to deduce  the second part of Theorem~\ref{thm:main2} and Theorem~\ref{thm:main3}. In Section~\ref{sec:OM} we discuss more in detail the differences between our work and \cite{GorodetskiK12}.
 \end{rem}
 \vspace{0.3cm}

    In Section~\ref{sec:4bp} we also show (see Theorems~\ref{thm:Sitnikov}~and~\ref{thm:main4bp}) that similar results hold  for the Sitnikov and restricted 4-body problem.

\subsection{The restricted 3-body problem as an area preserving map}\label{sec:restricted3bpareapreserving}

We normalize the masses of the primaries so that $m_1=1-\mu$ and $m_2=\mu$ for $\mu\in(0,1/2]$. For any value of $\mu$ we consider the corresponding circular periodic orbit 
\[
q_{1,\mu}(t)=-\mu (\cos t,\sin t),\qquad\qquad  q_{2,\mu}(t)=(1-\mu)(\cos t,\sin t)
\]
of the 2-body Problem. The dynamical system describing the motion of the massless body in the same plane as the primaries is a Hamiltonian system known as the restricted planar circular 3-body problem (RPC3BP)\footnote{To be precise one should avoid collisions in the definition of the configuration space. Since we will work always in the region of the phase space where the massless body is far from the primaries, we will abuse notation and refer to $q\in \mathbb{R}^2$ as the confguration space.}
 \[
H_\mu(q,p,t)=\frac{|p|^2}{2}-V_{\mu}(q,t),\qquad\qquad (q,p,t)\in\mathbb{R}^4\times\mathbb T,
\]
with
\[
 V_{\mu}(q,t)=\frac{1-\mu}{|q-q_{1,\mu}(t)|}+\frac{\mu}{|q-q_{2,\mu}(t)|}.
\]
To exploit the invariance under rotations of the Hamiltonian, we introduce polar coordinates $(r,\alpha)\in\mathbb{R}_+\times\mathbb{T}$ on the configuration space (that is, $q=(r\cos\alpha,r\sin\alpha)$) and let $(y,G)\in T^*(\mathbb{R}_+\times\mathbb{T})$ denote the conjugated radial and angular momenta. It is then straightforward to check that, in the new variables, the Hamiltonian only depends on the angle $\phi=\alpha-t$. Hence, the so-called \textit{Jacobi constant} $\mathcal{J}=H_\mu-G$ is a conserved quantity. For any $J$ sufficiently large it is possible to express $G=G(r,y,\phi;J)$ and reduce the study of the dynamics on the hypersurface $\Sigma_J=\{\mathcal{J}(r,y,\phi,G)=J\}$ to a 3-dimensional dynamical system on the variables $(r,\phi,y)$. Moreover, since
\[
\dot \phi=\frac{G}{r^2}-1
\]
 one easily sees that, for $J$ large enough, there exists an open set $U\subset \Sigma_J\cap\{\phi=0\}$ where the first return map 
\begin{equation}\label{eq:Poincaremap}
\begin{split}
f_\mu:U\subset \Sigma_J\cap\{\phi=0\}&\to\Sigma_J\cap\{\phi=0\}\\
(r,y)&\mapsto (\bar r,\bar y)
\end{split}
\end{equation}
is well defined. In this setting, it is therefore natural to ask whether there exist Newhouse domains for the one-parameter family of  area preserving maps $\{f_\mu\}_{\mu\in(0,1/2]}$ in \eqref{eq:Poincaremap}. To the best of our knowledge, the first complete proof of this result has only appeared recently in \cite{Giraltcoorbitalchaos} (see also Section~\ref{sec:OM} where we discuss the unpublished work \cite{GorodetskiK12}).  In \cite{Giraltcoorbitalchaos} the authors prove that for a sequence of values of $\mu$ converging to zero there exists a quadratic homoclinic tangency (which unfolds generically) to a certain hyperbolic fixed point of the map $f_\mu$. Then, the existence of Newhouse domains is implied by the classical work of Duarte.

In the present work we are interested in homoclinic phenomena associated to a rather distinctive fixed point of the map $f_\mu$ which is located ``at infinity''. This fixed point is at the center of an important conjecture  (see Conjecture~\ref{eq:Kolmogorovconj}) in Celestial Mechanics. Moreover,  the analysis of the homoclinic picture associated to this point is rather challenging from the dynamical point of view since it is degenerate (of parabolic nature), and therefore, the classical theory does not apply.

\subsection{A degenerate fixed point at infinity}\label{sec:infinity}
It is well known that the dynamics of the 2 Body Problem, can be classified according to the value of the energy\footnote{The 2 Body Problem is a 1 degree of freedom Hamiltonian system, thus, integrable.}: negative energy levels correspond to elliptic (bounded) motions while positive energy levels correspond to hyperbolic (unbounded) motions. Separating these, there exists a codimension one submanifold, the so-called parabolic motions. The proper tool to study unbounded motions is McGehee's change of variables \cite{McGeheestablemfold}, which compactifies the configuration space by adding one point at infinity. This change of variables, given by the transformation $ h_{MG}:(x,y)\mapsto (r,y)= (2/x^2,y)$ will be studied in detail in Section~\ref{subsec:McGehee}. The main upshot of this compactification is that in McGehee's coordinates, the submanifold of parabolic motions can be seen as a homoclinic loop to the point $O=(0,0)$, which corresponds in the original coordinates to $r=\infty$, $y=0$ (see Figure~\ref{fig:McGehee}). This fixed point is however degenerate, the linearization of the dynamics (in McGehee's variables) coincides with the identity.

\begin{figure}
\centering
\includegraphics[scale=0.60]{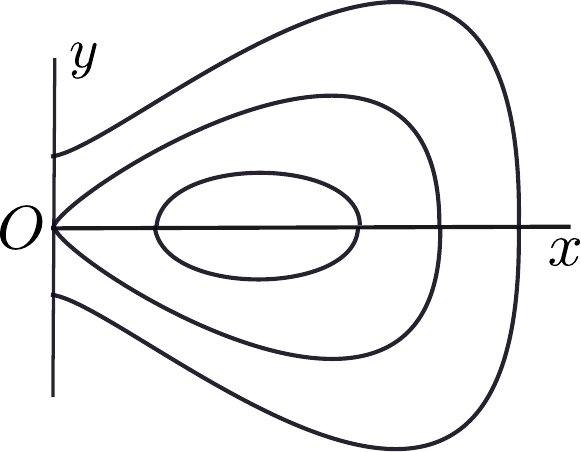}
\caption{Phase space of the $2$-body problem in McGehee coordinates. The stable and unstable manifolds of $O$ coincide along a homoclinic loop which separates the regions of bounded and unbounded motions.}
\label{fig:McGehee}
\end{figure}

In the RPC3BP, for any choice of $\mu\in (0,1/2]$, there also exists a ``fixed point at infinity'' (indeed, when the massless body is very far, the Hamiltonian of the RPC3BP is close to that of the 2BP). To be more precise and study the local dynamics in a neighborhood of this point we make use of McGehee's compactification and define the map 
\begin{equation}\label{eq:mainpoincaremap}
P_\mu=h_{MG}^{-1}\circ f_\mu \circ h_{MG}
\end{equation}
where $f_\mu$ is the one-parameter family of (area preserving) diffeomorphisms \eqref{eq:Poincaremap}. We will see in Section~\ref{subsec:McGehee} that $P_\mu$ is of the form 
\begin{equation}\label{eq:mainpoincaremapformula}
P_\mu=h^{-1}_{MG}\circ f_\mu\circ h_{MG}:(x,y)\mapsto (x+x^3(y+r_\mu(x,y;\mu)),\ y-x^3(x+s_\mu(x,y;\mu)))
\end{equation}
with $r_\mu, s_\mu=\mathcal O_3(x)$ and  real-analytic. A straightforward computation shows that $DP_\mu=\mathrm{Id}$. It is a classical result of McGehee that despite being degenerate, this fixed point has local stable and unstable manifolds \cite{McGeheestablemfold}. However, contrary to the situation in the 2-body problem, the (global) continuation of these manifolds does not coincide along a homoclinic loop and one expects these manifolds to intersect transversally and ``chaotic dynamics'' to take place. Indeed, by analyzing carefully the dynamics of a map of the form \eqref{eq:mainpoincaremapformula}, and the continuation of the local stable and unstable manifolds of the degenerate saddle $O$, Alekseev \cite{AlekseevQR1,AlekseevQR2} and  Moser \cite{Moserbook} showed the existence of hyperbolic sets homoclinically related to $O$ in a different configuration of the restricted 3-body problem known as the Sitnikov example. Then, Llibre and Sim\'o adapted their techniques to deal with the RPC3BP \cite{llibre1980oscillatory}. Since then, quite a number of extensions of these ideas have been used to construct hyperbolic sets in many Celestial Mechanics models and parameter ranges. In very rough terms, one could say that the main tools developed by Alekseev, Moser and the works after them consist on understanding the local dynamics arbitrarily close to $O$ at a $\mathcal C^1 $ level.

The following is our first main result.
\begin{thm}\label{thm:main}
There exists a Newhouse domain $\mathcal N\subset(0,1/2]$ (which contains $\{0\}$ in its closure) for the one-parameter family of diffeomorphisms $P_\mu=h_{MG}^{-1}\circ f_\mu\circ h_{MG}$. Namely, for all $\mu\in\mathcal N$ there exists  $\Lambda_\mu$ which is a basic set for $P_\mu$ and an orbit of homoclinic tangency to $\Lambda_\mu$ which unfolds generically. Moreover, for any $\mu\in\mathcal N$, the basic set $\Lambda_\mu$ is  homoclinically related to the degenerate saddle $O$. 
\end{thm}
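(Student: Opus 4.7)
The plan is to reduce Theorem~\ref{thm:main} to the classical area-preserving Newhouse theorem of Duarte, applied not to the parabolic point $O$ itself (to which no classical theory applies) but to an auxiliary hyperbolic basic set $\Lambda_\mu$ homoclinically related to $O$. Concretely, the goal is to exhibit a sequence $\mu_n\to 0^+$ such that $P_{\mu_n}$ has (i) a hyperbolic horseshoe $\Lambda_{\mu_n}$, homoclinically related to $O$, whose stable and unstable local thicknesses are arbitrarily large, and (ii) an orbit of quadratic homoclinic tangency to $\Lambda_{\mu_n}$ which unfolds generically along the one-parameter family $\{P_\mu\}$. Duarte's theorem then produces Newhouse domains accumulating each $\mu_n$, hence a Newhouse domain $\mathcal N$ with $0\in\overline{\mathcal N}$.

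The first key step is a refined analysis of the local dynamics near $O$, uniform in $\mu$. From~\eqref{eq:mainpoincaremapformula} one has $P_\mu=\mathrm{Id}+\mathcal O_4$ at $O$, so no $C^1$ hyperbolicity is available, but McGehee's invariant manifold theorem together with a normal form adapted to the parabolic character of $O$ should yield real-analytic local stable and unstable manifolds $W^{s,u}_{\mathrm{loc}}(O,\mu)$ and coordinates in which the local passage map takes an almost explicit form. The crucial output of this step is an asymptotic formula, valid to high order in the distance to $W^{s}_{\mathrm{loc}}(O,\mu)$, for the number of iterates needed to travel from an ``incoming'' to an ``outgoing'' cross section, together with $C^r$ estimates on the derivatives of the passage map. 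This generalizes to a $C^r$ setting the $C^1$-level description of Alekseev--Moser, and is what ultimately controls both the hyperbolic structure of $\Lambda_\mu$ and the non-degeneracy of the tangency.

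The second step combines this local information with a global splitting analysis to produce both the horseshoe and the tangency. For $\mu=0$ the invariant manifolds of $O$ coincide along a homoclinic loop; for $\mu>0$ small they split, and a Melnikov-type computation, by now standard in this context, provides a transverse primary homoclinic point $p_\mu$ together with quantitative lower bounds on the splitting. Composing the hyperbolicity along the orbit of $p_\mu$ with the local passage estimates of the first step yields, via shadowing, a hyperbolic horseshoe $\Lambda_\mu$ with a prescribed number of symbols, homoclinically related to $O$; by letting this number grow as $\mu\to 0$, its stable and unstable local thicknesses can be made arbitrarily large. To obtain the tangency, one tracks a \emph{secondary} branch of $W^u(\Lambda_\mu)$, obtained by letting $W^u(O)$ return close to $O$ a further time, and analyses how it folds against $W^s(\Lambda_\mu)$. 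The position of this fold is, by the first step, a smooth function of $\mu$ whose leading-order behaviour is explicit; a non-degeneracy (monotonicity) check at this leading order produces a sequence $\mu_n\to 0$ at which the fold becomes tangent to $W^s(\Lambda_{\mu_n})$, the tangency being quadratic and unfolding with non-zero velocity in $\mu$.

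The main obstacle is the first step. Because $O$ is parabolic, the $C^1$-level description of the local dynamics available from the classical Alekseev--Moser--Llibre--Sim\'o tradition is too weak to simultaneously control the geometry of $\Lambda_\mu$, the splitting of $W^{u,s}(O)$, the position of the secondary fold, and the non-degeneracy with respect to the single parameter $\mu$ already encoded in the Jacobi constant. The heart of the argument is therefore the construction of a sufficiently smooth normal form for $P_\mu$ near $O$, with remainder estimates uniform in $\mu$, and the translation of its consequences into the quantitative estimates on the passage map that feed into Duarte's theorem. In the standard hyperbolic setting this information comes essentially for free from local linearization; in the parabolic regime each of these estimates must be derived by hand, which is exactly what makes a $C^2$-level analysis such as the one used in \cite{GorodetskiK12} insufficient for the stronger statements (Theorems~\ref{thm:main2} and~\ref{thm:main3}) that rest on the same underlying construction.
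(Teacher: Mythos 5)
Your plan is a genuinely different route from the paper's, and it is worth being clear about the two. You propose to locate a quadratic tangency between the invariant manifolds of an auxiliary hyperbolic horseshoe $\Lambda_\mu$ near $O$, establish that the stable and unstable thicknesses of $\Lambda_\mu$ become arbitrarily large as $\mu\to 0$, and then invoke the abstract conservative Newhouse--Duarte thickness criterion. The paper instead produces a quadratic tangency \emph{directly between $W^u(O)$ and $W^s(O)$} (Proposition~\ref{lem:homoclinictangencies}), constructs the Shilnikov stable and unstable maps of the parabolic orbit together with their asymptotics (Proposition~\ref{prop:Shilnikov}), and uses them to renormalize a high iterate of $P_\mu$ near the tangency so that it converges, in $C^2$ on a fixed domain, to the critical conservative H\'enon map (Proposition~\ref{prop:mainresultrenormalization}). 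Duarte's theorem is then applied in its H\'enon-specific form --- existence of basic sets with persistent tangencies near $\kappa=-1$ --- and it is this H\'enon limit, not a thickness estimate, that supplies both $\Lambda_\mu$ and $\mathcal N$. The basic sets are shown to be homoclinically related to $O$ in a final step (Section~\ref{sec:homrelated}) by a Palis--Takens-type argument in which the linearizing foliations of the hyperbolic setting are replaced by the foliations induced by $\Psi_T^{s,u}$. Your route is essentially the one the paper attributes to the unpublished Gorodetski--Kaloshin manuscript, which the authors explicitly contrast with theirs.

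Beyond being a different route, your proposal has a concrete unaddressed gap: you assert that the local thicknesses of $\Lambda_\mu$ grow as $\mu\to 0$, but give no mechanism. In the hyperbolic setting this is extracted, via the Lambda lemma, from the eigenvalues of the saddle; here the Lambda lemma \emph{fails} (Section~\ref{sec:faillambdalemma}), and the local passage near $O$ is essentially nonlinear, so such an estimate is far from automatic. To make it rigorous you would need to derive thickness asymptotics from something equivalent to the Shilnikov-map estimates \eqref{eq:asymptoticShilnikov}--\eqref{eq:asymptoticShilnikovderiv2y}, which is comparable in effort to the renormalization the paper carries out --- and the renormalization buys more, since the H\'enon convergence is exactly what the paper reuses to prove Theorems~\ref{thm:main2} and \ref{thm:main3}. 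Your diagnosis of the main obstacle (a sufficiently smooth normal form near $O$ with uniform control of the passage map) is correct and matches the paper's, but the way you propose to exploit it is the weaker, $C^1$-plus-thickness route rather than the paper's $C^2$-renormalization route.
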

\vspace{0.2cm}

\begin{rem}
Of course, Theorem~\ref{thm:main} implies the existence of Newhouse domains for the family $\{f_\mu\}_{\mu\in(0,1/2]}$. We state the theorem for $P_\mu$ because we can then make sense of the degenerate saddle $O$ and the nature of the degeneracy of this fixed point becomes apparent from the expression \eqref{eq:mainpoincaremapformula}.

    Indeed, Theorem~\ref{thm:main}  holds for any one-parameter family of Hamiltonian diffeomorphisms $P_\mu$ of the form \eqref{eq:mainpoincaremapformula} which satisfy $P_\mu^{-1}\circ R=R\circ P_\mu$ (where $R$ is the involution $R(x,y)=(y,x)$) and for $\mu=0$ possess a homoclinic loop to $O$ which splits generically for $\mu\neq 0$ (as long as the form of the splitting can be determined).
\end{rem}
\vspace{0.2cm}

To prove Theorem~\ref{thm:main}, we find a sequence $\{\mu_n\}_{n\in\mathbb N}$ converging to zero for which $P_{\mu_n}$ possesses an orbit of quadratic homoclinic tangency between $W^{u,s}(O)$  and we show that these tangencies unfold generically. The main step in the proof of  Theorem~\ref{thm:main} is then understanding the local dynamics close to $O$ at a $\mathcal C^2$ level. The first key ingredient for this analysis is the construction of a suitable normal form by means of an iterative scheme. From this normal form one can already check that the classical version of the Lambda lemma  does not hold for the degenerate saddle $O$ (see Section~\ref{sec:faillambdalemma}). One of the main novelties of our work is to prove the existence and control the asymptotics of the Shilnikov maps associated to the degenerate saddle $O$. These maps encode the information of the local dynamics of orbits which pass arbitrarily close to $O$ (see Section~\ref{sec:localdynamicsoutline}) and can be used to ``replace'' the classical Lambda Lemma.

The construction of the Shilnikov maps also allows us to develop an asymptotic analysis of suitably renormalized iterations of the map close to the homoclinic tangency (as was done in \cite{MoraRomero} for the case of a hyperbolic fixed point). Surprisingly, despite the essentially nonlinear nature of the flow and several differences with the hyperbolic case, for each $n$ sufficiently large, the (critical) conservative H\'{e}non map shows up in this limit process (as in the hyperbolic case) for suitable open intervals of parameters converging to the bifurcation value $\mu_n$. Then, the existence of a  family of basic sets with persitent tangencies  follows directly from Theorem A in  \cite{Duarteabundance} (see also Theorem 3 in \cite{Gorodetskistdmap}). In order to prove that these sets are homoclinically related to  the degenerate saddle $O$ we  adapt the construction by Palis and Takens in   \cite{PalisTakensbook}, c.f. Proposition~1 of Section~6.4. (see also \cite{Duarteabundancebif}). The adaptation is non-trivial as the classical Lambda lemma does not hold and the proof involves constructing suitable invariant foliations and understanding their behavior under the local dynamics close to $O$.

To state our next result we recall the definition of a homoclinic class. 

\begin{defn}
  Given $\mu\in (0,1/2]$ we define the \textit{homoclinic class} $H(O,P_\mu)$ of the degenerate saddle $O$ as the closure of the union of all transverse homoclinic points to $O$.
\end{defn}

The following result follows from ideas developed by Duarte and Gorodetski in the non-degenerate setting (i.e. for homoclinic phenomena associated to a hyperbolic fixed point). In Section~\ref{sec:pfthm2} we provide a sketch of the proof to see that, making use of the techniques developed in the proof of Theorem~\ref{thm:main}, their arguments do extend to our degenerate setting.

\begin{thm}\label{thm:main2}
    Let $\mathcal N\subset(0,1/2]$ be the Newhouse domain constructed in Theorem~\ref{thm:main} for the one-parameter family of maps $P_\mu$ in \eqref{eq:mainpoincaremap}. There exists a residual subset $\mathcal R\subset \mathcal N$ such that, for all $\mu\in \mathcal R$
    \begin{itemize}
    \item $H(O,P_\mu)$ contains hyperbolic sets of Hausdorff dimension arbitrarily close to 2. In particular,
        \[
        \mathrm{dim}_H\, H(O,P_\mu)=2.
        \]
        \item $H(O,P_\mu)$ is accumulated by generic elliptic points of the map $P_\mu$.
    \end{itemize}
\end{thm}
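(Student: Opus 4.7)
The strategy is to follow the framework of Duarte~\cite{Duarteabundance,Duarteabundancebif,Duartenearid} and Gorodetski~\cite{Gorodetskistdmap} for residual properties inside area-preserving Newhouse domains, with the essential adaptation being the replacement of the classical Lambda lemma at $O$ by the Shilnikov maps constructed in the proof of Theorem~\ref{thm:main}. Theorem~\ref{thm:main} already supplies, for each $\mu\in\mathcal{N}$, a basic set $\Lambda_\mu$ homoclinically related to $O$ together with an orbit of quadratic homoclinic tangency to $\Lambda_\mu$ that unfolds generically, and the renormalization carried out in its proof shows that the critical conservative H\'enon map appears as a limit return map for open intervals of parameters clustering at the bifurcation. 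This is precisely the input Duarte's abundance machinery needs.

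For the Hausdorff dimension statement, I would construct, for each $n\in\N$, an open dense set $\mathcal{U}_n\subset\mathcal{N}$ consisting of parameters $\mu$ for which $P_\mu$ admits a basic set $\Lambda_\mu^{(n)}\subset H(O,P_\mu)$ with $\mathrm{dim}_H\Lambda_\mu^{(n)}>2-1/n$. Openness is clear, while density follows by iterating the tangency-plus-renormalization argument from Theorem~\ref{thm:main} inside any subdomain of $\mathcal{N}$ and invoking Theorem~A of~\cite{Duarteabundance} (cf. Theorem~3 of~\cite{Gorodetskistdmap}) on the H\'enon family to produce horseshoes of Hausdorff dimension arbitrarily close to $2$. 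For the accumulation by elliptic points, I would fix a countable basis $\{V_k\}_{k\in\N}$ of open sets of the phase space and define $\mathcal{V}_k\subset\mathcal{N}$ to be the set of $\mu$ for which either $V_k\cap H(O,P_\mu)=\emptyset$ or $V_k$ contains a generic elliptic periodic orbit of $P_\mu$. Using Duarte's abundance of generic elliptic periodic orbits in area-preserving Newhouse domains~\cite{Duartenearid} together with the same renormalization input, each $\mathcal{V}_k$ is open and dense. The residual set is then $\mathcal{R}=\bigcap_{n\in\N}\mathcal{U}_n\cap\bigcap_{k\in\N}\mathcal{V}_k$.

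The hard part is showing that the thicker basic sets and the generic elliptic periodic orbits produced above, which a priori live only in the homoclinic class of $\Lambda_\mu$, in fact lie in $H(O,P_\mu)$. In the hyperbolic setting this is automatic from the Lambda lemma applied at $O$, which is unavailable in our degenerate situation. The replacement is the Shilnikov map $\Psiloc$ constructed in the proof of Theorem~\ref{thm:main}: its controlled asymptotics imply that iterates of unstable curves of $\Lambda_\mu$ passing through a small neighborhood of $O$ $\mathcal{C}^1$-accumulate on the global unstable manifold $W^u(O)$. Combining this with the reversibility $P_\mu^{-1}\circ R=R\circ P_\mu$ yields the symmetric statement for $W^s(O)$. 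Consequently, every transverse intersection point of $W^{u,s}(\Lambda_\mu)$ is accumulated by transverse homoclinic points of $O$, so the horseshoes $\Lambda_\mu^{(n)}$ and the elliptic periodic orbits produced alongside them are contained in $H(O,P_\mu)$. This is the step where the degeneracy of $O$ forces a genuine departure from the Duarte--Gorodetski argument and where the technology developed for Theorem~\ref{thm:main} is indispensable.
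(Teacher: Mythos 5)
Your proposal follows essentially the same framework as the paper: combine Duarte's abundance results with Gorodetski's argument for Theorem~4 of~\cite{Gorodetskistdmap}, with Proposition~\ref{prop:mainresultrenormalization} replacing the Mora--Romero renormalization and the Shilnikov maps of Proposition~\ref{prop:Shilnikov} substituting for the $\lambda$-lemma at $O$. Where you differ from the paper is in the organization of the Baire argument for the elliptic part: the paper does not run over a countable basis $\{V_k\}$ of the phase space, but over the countably many maximal continuation intervals $I_s\subset\mathcal N$ of transverse homoclinic points $Q_\mu$ to $O$, proves each $I_s$ carries a residual $R_s$ of parameters for which $Q_\mu$ is accumulated by generic elliptic orbits, and sets $\mathcal R=\bigcap_s\bigl(R_s\cup(\mathcal N\setminus I_s)\bigr)$. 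This sidesteps a small imprecision in your version: $\{\mu:V_k\cap H(O,P_\mu)=\emptyset\}$ is \emph{closed} (transverse homoclinics persist, so $\{\mu:V_k\cap H(O,P_\mu)\neq\emptyset\}$ is open), hence your $\mathcal V_k$ is a closed set union an open set, not itself open; it is nonetheless residual because the bad set $\{\mu:V_k\cap H(O,P_\mu)\neq\emptyset,\ V_k\text{ contains no generic elliptic orbit}\}$ is nowhere dense, but that step needs to be supplied. Two further remarks about your ``hard part'' paragraph. First, the inclusion $W^{s,u}(\Lambda_\mu)\subset\overline{W^{s,u}(O)}$ that transfers tangencies from $\Lambda_\mu$ to $O$ (needed for the dimension statement) uses only the classical $\lambda$-lemma at the \emph{hyperbolic} set $\Lambda_\mu$, not at $O$, so the degenerate Shilnikov machinery is not needed there. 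Second, the genuine place where the $\lambda$-lemma at $O$ must be replaced is in pushing the elliptic orbits created near a tangency orbit $z_{\tilde\mu_n}$ to a $\mathcal O(1/m)$-neighborhood of a prescribed transverse homoclinic point $Q_\mu$; the observation that makes this go through, and that you should state explicitly, is that $Q_\mu$ can be taken uniformly away from the degenerate saddle, so the Shilnikov-map accumulation — valid only on compact arcs of $W^{u,s}(O)$ away from $O$ (cf. Section~\ref{sec:faillambdalemma}) — suffices.
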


\begin{rem}
    In \cite{NewhousehtopHD} Newhouse showed that the set of diffeomorphisms for which every homoclinic class is of maximal Hausdorff dimension is $\mathcal C^1$ generic among the diffeomorphisms of a compact surface preserving an area form (see also \cite{DownarowiczNewhouse} for higher degree of smootheness in the dissipative case). However, checking this property in a given parametric family of diffeomorphisms is rather challenging. Understanding how often (in the $\mathcal C^r$ topology for $r\geq 2$) diffeomorphisms in the Newhouse region have homoclinic classes with positive Lebesgue measure is a much harder problem.
\end{rem}
\vspace{0.2cm}

   Although large from the topological viewpoint, residual sets can have zero measure (for instance, the set of Liouville numbers). It would be interesting to understand what is the Lebesgue measure of the set of parameters for which Theorem~\ref{thm:main2} holds. A related question is to understand the set of parameters for which the number of periodic points grows superexponentially.

Finally, we believe it is worth to highlight the following (informal) reformulation of  the result in Theorem~\ref{thm:main2}. 

\begin{thm}\label{thm:main3}
For a residual set of parameters in the Newhouse domain $\mathcal N\subset(0,1/2]$ of the  restricted circular 3-body problem, the union of its elliptic islands forms an unbounded subset of the phase space\footnote{That is, there exist generic elliptic periodic orbits at which the distance between the massless body and the primaries is arbitrarily large.}. Moreover, for these parameters, the closure of the set of generic elliptic periodic orbits contains hyperbolic sets with Hausdorff dimension arbitrarily close to maximal.
\end{thm}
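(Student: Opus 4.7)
The plan is to derive Theorem~\ref{thm:main3} as a direct reinterpretation of Theorem~\ref{thm:main2} through McGehee's compactification. The crucial observation is that the degenerate saddle $O=(0,0)$ of $P_\mu$ corresponds, via $h_{MG}:(x,y)\mapsto (r,y)=(2/x^2,y)$, to the point at infinity $r=\infty$ of the RPC3BP; equivalently, any point within distance $\varepsilon$ of $O$ in McGehee coordinates is mapped to a point of the physical phase space at distance of order $\varepsilon^{-2}$ from the barycenter of the primaries. Hence, to prove that the union of the elliptic islands is unbounded in the physical phase space, it suffices to exhibit generic elliptic periodic points of $P_\mu$ in every neighborhood of $O$, and then pull them back by $h_{MG}$.

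Fix $\mu$ in the residual set $\mathcal R\subset \mathcal N$ provided by Theorem~\ref{thm:main2}. I first observe that $O\in H(O,P_\mu)$: any transverse homoclinic point $x\in W^s(O)\cap W^u(O)$ has a forward $P_\mu$-orbit converging to $O$, so $O$ lies in the closure of the transverse homoclinic points, which is precisely $H(O,P_\mu)$. The second conclusion of Theorem~\ref{thm:main2} guarantees that every point of $H(O,P_\mu)$ is accumulated by generic elliptic periodic orbits of $P_\mu$; in particular, $O$ itself is, producing generic elliptic periodic points of $P_\mu$ arbitrarily close to $O$. Transporting them through $h_{MG}$ yields generic elliptic periodic orbits of $f_\mu$ whose radial coordinate $r$ is arbitrarily large, establishing the unboundedness statement. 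For the Hausdorff-dimension claim, Theorem~\ref{thm:main2} supplies hyperbolic basic sets $\Lambda\subset H(O,P_\mu)$ with $\mathrm{dim}_H\Lambda$ arbitrarily close to $2$; each such $\Lambda$ is, by the same accumulation property, contained in the closure of the generic elliptic periodic points of $P_\mu$. Since $h_{MG}$ is a real-analytic diffeomorphism on $\{x>0\}$, it is bi-Lipschitz on any compact set avoiding $O$, and thus preserves both Hausdorff dimension and hyperbolicity when pushing $\Lambda$ to the physical phase space, completing the proof.

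The only conceptual point worth checking — rather than a genuine obstacle — is that the notion of \emph{generic elliptic periodic orbit} is preserved by $h_{MG}$. This amounts to verifying that the eigenvalues at a periodic point (automatic conjugacy invariants) and the Birkhoff normal form coefficients (which transform tensorially under smooth area-preserving conjugacy) carry over, so that the open-dense conditions defining genericity — irrational rotation number and non-vanishing twist — are unaffected by the change of coordinates. Both are standard for real-analytic symplectic conjugacies. Once they are recorded, the argument is entirely structural: Theorem~\ref{thm:main3} is a translation of Theorem~\ref{thm:main2} through McGehee's compactification, combined with the tautology that the homoclinic class of $O$ contains $O$ itself.
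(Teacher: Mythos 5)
Your argument is correct and follows exactly the route the paper intends: the paper explicitly presents Theorem~\ref{thm:main3} as an informal reformulation of Theorem~\ref{thm:main2} obtained by reading the McGehee compactification backwards, and you have simply made that translation precise. The three details you supply — that $O\in H(O,P_\mu)$ because forward iterates of a transverse homoclinic point are themselves transverse homoclinic points converging to $O$; that the hyperbolic sets $\Lambda$ from Theorem~\ref{thm:main2} are compact and, since $O$ is parabolic rather than hyperbolic, necessarily avoid a neighborhood of $O$, so $h_{MG}$ is bi-Lipschitz on them and preserves Hausdorff dimension; and that genericity of an elliptic periodic point is a symplectic conjugacy invariant (with $h_{MG}$ intertwining the singular form $\omega$ on the McGehee side with $dr\wedge dy$ on the physical side, so that $P_\mu\mapsto f_\mu$ is a symplectic conjugacy away from $O$) — are all the right things to check and are handled correctly.
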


\subsection{The set of oscillatory motions}\label{sec:OM}
In 1922 the French mathematician Jean Chazy classified all the possible final motions of the restricted $3$-body problem.
\begin{thm}[Chazy, 1922 \cite{Chazyclassification}]\label{thm:Chazy}
Every solution of the restricted $3$-body problem defined for all (future) times belongs to one of the following classes
\begin{itemize}
\item B (bounded): $\sup_{t\geq 0} |q(t)|<\infty$.
\item P (parabolic) $|q(t)|\to \infty$ and $|\dot{q}(t)|\to 0$ as $t\to\infty$.
\item H (hyperbolic): $|q(t)|\to \infty$ and $|\dot{q}(t)|\to c>0$ as $t\to\infty$.
\item O (oscillatory) $\limsup_{t\to\infty} |q(t)|=\infty$ and $\liminf_{t\to\infty} |q(t)|<\infty$.
\end{itemize}
\end{thm}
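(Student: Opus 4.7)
The plan is to perform a case analysis based on the behavior of $|q(t)|$ as $t\to+\infty$. Two of the four classes are definitional: if $\limsup_{t\to\infty}|q(t)|<\infty$ the orbit belongs to B, and if $\limsup_{t\to\infty}|q(t)|=\infty$ while $\liminf_{t\to\infty}|q(t)|<\infty$ it belongs to O. The theorem therefore reduces to showing that in the remaining case $\lim_{t\to\infty}|q(t)|=\infty$ the velocity $|\dot q(t)|$ always tends to a finite limit $c\geq 0$, so that the orbit falls into P when $c=0$ and into H when $c>0$; no other asymptotic behavior for $|\dot q(t)|$ is possible.

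For the escape case I would exploit that, since the primaries $q_{i,\mu}(t)$ have uniformly bounded positions and velocities, the potential satisfies the decay estimates
\[
V_\mu(q,t)=\bigO(|q|^{-1}),\qquad \nabla_q V_\mu(q,t)=\bigO(|q|^{-2}),\qquad \partial_t V_\mu(q,t)=\bigO(|q|^{-2}).
\]
Along the trajectory $\dot H_\mu=\partial_t H_\mu=-\partial_t V_\mu$, so once $\partial_t V_\mu$ is integrable in $t$ one gets $H_\mu(t)\to H_\infty\in\mathbb{R}$. Combined with $V_\mu(q(t),t)\to 0$ this yields $\tfrac{1}{2}|\dot q(t)|^2\to H_\infty$, and hence $|\dot q(t)|\to c:=\sqrt{2H_\infty}$; note that $H_\infty\geq 0$ is automatic, since the limit of a nonnegative quantity cannot be negative. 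The sign of $H_\infty$ then distinguishes P from H.

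The main obstacle is precisely the integrability of $\partial_t V_\mu$ along the orbit, which requires a quantitative lower bound on the escape rate of $|q(t)|$. I would proceed by a bootstrap: the estimate $|\ddot q|=|\nabla_q V_\mu|=\bigO(|q|^{-2})\to 0$ shows that $|\dot q(t)|$ is eventually bounded and slowly varying, and the elementary identity
\[
\frac{d^2}{dt^2}|q|^2=2|\dot q|^2+2q\cdot\ddot q=2|\dot q|^2+\bigO(|q|^{-1})
\]
implies that $|q|^2$ is essentially convex in $t$ for large $t$. This forces $q\cdot\dot q$ to become eventually monotone, and once $q\cdot\dot q>0$ the function $|q(t)|$ is increasing. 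Iterating the estimates produces the worst-case growth $|q(t)|\gtrsim t^{2/3}$ (the parabolic rate), which makes $\int^{\infty}|q(t)|^{-2}\,dt<\infty$ and closes the argument. This bootstrap is the only delicate step; everything else is routine Kepler-type asymptotics, and in fact, as suggested in Section~\ref{sec:infinity}, the dichotomy $c=0$ versus $c>0$ can be reinterpreted a posteriori as membership in the stable manifold $W^s(O)$ of the degenerate fixed point at infinity versus the complementary hyperbolic region.
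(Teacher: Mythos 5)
The paper does not prove this theorem: it is stated as a classical result and cited to Chazy's original 1922 paper \cite{Chazyclassification}, so there is no internal argument to compare your proposal against. Your plan is nonetheless the classical one: the split into B, O, and the escape case $\lim_{t\to\infty}|q(t)|=\infty$ is exhaustive, and the entire content is to show that in the escape case $|\dot q(t)|$ stabilizes, which one wants to extract from the energy drift $\dot H_\mu=-\partial_t V_\mu=\mathcal O(|q|^{-2})$ together with a quantitative escape rate.

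The gap is in the bootstrap, and it is a real one. The assertion that $|\ddot q|=\mathcal O(|q|^{-2})\to 0$ ``shows that $|\dot q(t)|$ is eventually bounded'' is false: decay of the acceleration to zero does not control the velocity (take $\ddot q=1/t$, giving $\dot q\sim\log t$). The only route to boundedness of $|\dot q|$ is through the energy identity, but that identity requires precisely the integrability of $|q(t)|^{-2}$ along the orbit that you are trying to establish, so as written the argument is circular. Likewise, ``iterating the estimates produces the worst-case growth $|q(t)|\gtrsim t^{2/3}$'' is asserted but not carried out, and this is exactly where the real work lies; in the circular restricted problem one typically breaks the circularity by exploiting the conserved Jacobi constant $\mathcal J=H_\mu-G$ (which gives an \emph{a priori} bound tying $|\dot q|$ to $G$ and $|q|$) together with a careful comparison of the radial motion to the Kepler problem, and this is the substance of Chazy's and of Alekseev's and Moser's treatments. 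Once the $t^{2/3}$ escape rate is in hand the remainder of your argument is correct, including the observation that $H_\infty\geq 0$ is automatic and that the P/H dichotomy corresponds, after McGehee's compactification, to lying on $W^s(O)$ versus in the open hyperbolic region.
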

Notice that this classification also applies for $t\to-\infty$. We distinguish both cases adding a superindex $+$ or $-$ to each of the cases, e.g. $H^+$ and $H^-$.
Compared to the other types of motions, Oscillatory Motions are special in the sense that they are a genuine feature of the restricted $3$-body problem (Bounded, Parabolic and Hyperbolic motions exist in the 2 Body Problem). Their existence was first proved by Sitnikov in 1960 \cite{sitnikov1960existence}. A more transparent proof by Moser appeared twenty years later in \cite{Moserbook}. In his approach, Moser understands that the set of Oscillatory Motions, which we will denote by $OM$, corresponds to a particular subset of the \textit{homoclinic class} of the fixed point $O$. Namely, if we denote by $\omega_P,\alpha_P$ the omega and alpha-limit sets associated to the map $P_\mu=h_{MG}^{-1}\circ f_\mu\circ h_{MG}$ (recall that $h_{MG}$ stands for the change of variables to McGehee's coordinates and $P$ is the set of Parabolic motions)
\begin{equation}\label{eq:oscillatorymotions}
OM=\{z=(x,y)\in\mathbb{R}^2\colon O\in \omega_P(z)\cap\alpha_P(z)\}\setminus P.
\end{equation}
Regarding the abundance of the set of oscillatory motions, in his 1970 ICM address Alekseev  conjectured the following \cite{AlekseevICM}.
\begin{conj}[Alekseev]\label{eq:Kolmogorovconj}
    The set $OM$ has Lebesgue measure zero.
\end{conj}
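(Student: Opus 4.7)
The plan is to attack Conjecture~\ref{eq:Kolmogorovconj} by exploiting the structural picture of the dynamics near the degenerate saddle $O$ that is developed in this paper. The strategy is to decompose $OM$ according to how closely and how often orbits visit a neighborhood of $O$. Concretely, for $\e>0$ and $N\in\N$ set $U_\e=\{z\in\R^2:|z|<\e\}$ and
\[
A^+_{\e,N}=\{z\in\R^2:\#\{n\geq 0:P_\mu^n(z)\in U_\e\}\geq N\},
\]
defining $A^-_{\e,N}$ analogously for backward iterates. From the definition \eqref{eq:oscillatorymotions} one has $OM\subset\bigcap_{\e>0}\bigcap_{N\in\N}(A^+_{\e,N}\cap A^-_{\e,N})$, so it is enough to prove that for each fixed $\e$ the $2$-dimensional Lebesgue measure satisfies $|A^{\pm}_{\e,N}|\to 0$ as $N\to\infty$.

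The main tool would be the normal form together with the Shilnikov maps close to $O$ constructed in the proof of Theorem~\ref{thm:main}. For each excursion into $U_\e$, the passage is encoded by a local map $\Psiloc$ whose derivative and distortion can be controlled uniformly in the number of iterates the orbit spends inside $U_\e$. Because $P_\mu$ is area preserving, so is $\Psiloc$; however, successive preimages of the return set must cluster along the local unstable manifold $W^u_{\mathrm{loc}}(O)$, which has zero area. Combining area preservation with a suitable distortion estimate, one expects a bound of the form $|A^+_{\e,N}\cap U_\e|\leq C(\e)/N^\alpha$ for some $\alpha>0$, from which $|A^+_{\e,N}|\to 0$ would follow by covering the relevant region of $\R^2$ by a fixed collection of preimages of $U_\e$ under bounded iterates of $P_\mu$.

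The main obstacle, and the reason this conjecture remains open, is precisely the degeneracy of $O$. In the hyperbolic case, passage times inside $U_\e$ are logarithmic in the distance to the saddle and local dynamics are essentially linear, which yields sharp Borel--Cantelli-type estimates. Here the passage times are polynomial, the invariant manifolds have only polynomial contraction/expansion, and the local geometry is dictated by the genuinely nonlinear normal form \eqref{eq:mainpoincaremapformula}. Even worse, Theorem~\ref{thm:main2} shows that in the Newhouse domain $\mathcal N$ the homoclinic class of $O$ accumulates generic elliptic islands; if KAM-type tori trap a positive measure set of orbits that simultaneously shadow the homoclinic loop of $O$, the conjecture fails outright, and excluding this scenario requires a delicate analysis of the interaction between those islands and the heteroclinic web of $O$.

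A realistic intermediate target accessible with the techniques of this paper would therefore be a quantitative weakening: bounding from above the upper box-counting dimension of $OM\cap U$ for any small neighborhood $U$ of the homoclinic loop of $O$, for a full-measure set of parameters $\mu$. Such a bound would not close Conjecture~\ref{eq:Kolmogorovconj}, but the Shilnikov-type estimates developed here are exactly what is needed to carry out a measure-theoretic version of the counting argument that, in the hyperbolic saddle case, gives Lebesgue measure zero.
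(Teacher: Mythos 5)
The statement you were asked to address is a \emph{conjecture}, not a theorem; the paper itself records it as \emph{wide open} and offers no proof, only pointing to partial progress of Gorodetski--Kaloshin on Hausdorff dimension. So there is no ``paper's proof'' to compare against, and your text correctly treats the statement as open rather than pretending to close it. That said, the proposed attack as written has two genuine gaps worth flagging.

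First, the reduction to $|A^{\pm}_{\e,N}|\to 0$ cannot work. Your containment $OM\subset\bigcap_{\e,N}(A^+_{\e,N}\cap A^-_{\e,N})$ is correct, but the sets $A^\pm_{\e,N}$ are far too large: they contain every orbit that recurs to $U_\e$ at least $N$ times, with no requirement of escaping to infinity in between. In McGehee coordinates, $U_\e$ corresponds to the region of large $r$ and small radial momentum, and bounded Keplerian-type motions with large semi-major axis pass through that region once per period. After perturbation, such motions persist on positive measure sets of invariant curves (and more to the point, the paper's own Theorem~\ref{thm:main2} produces generic elliptic islands accumulating on $H(O,P_\mu)$, whose orbits pass near $O$ at every period). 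All of these lie in $\bigcap_N A^\pm_{\e,N}$, so $|A^\pm_{\e,N}|$ is bounded below by a fixed positive number as $N\to\infty$. The decomposition must instead isolate genuine excursions — visits to $U_\e$ separated by passages outside some fixed larger annulus — and this is exactly where the degenerate (polynomial rather than logarithmic) passage times make the needed counting estimates delicate.

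Second, the last paragraph contains a conceptual slip. You write that if KAM tori trap a positive measure set shadowing the homoclinic loop of $O$, ``the conjecture fails outright.'' It would not: orbits trapped in an invariant curve around an elliptic periodic point are \emph{bounded} motions, not oscillatory ones, precisely because their $r$-coordinate is bounded along the whole orbit. Such islands are compatible with $OM$ having measure zero; what they defeat is your particular counting scheme, not the conjecture. Separately, the proposed fallback target — a nontrivial upper bound on the box-counting dimension of $OM$ near the loop — would, for a residual set of $\mu$, contradict the Gorodetski--Kaloshin result that $\dim_H OM = 2$ (referenced in this paper), so one would at the very least need to restrict attention to a set of parameters disjoint from that residual set, which is a nontrivial additional step you should make explicit.
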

This conjecture, which probably goes back to Kolmogorov \cite{Alekseevfinalmotions} (see the discussion in  \cite{GorodetskiK12}), was coined by Arnold  \textit{the fundamental problem of Celestial Mechanics}. It remains wide open. The only partial results have been obtained by Gorodetski and Kaloshin in \cite{GorodetskiK12}, where they have shown that, for a residual set of parameters, the Hausdorff dimension of $OM$ is maximal.

Their proof can be divided into two halves. In the first part they analyze the existence of quadratic homoclinic tangencies to $O$.  This step, which involves a $\mathcal C^2$ analysis of the dynamics close to $O$ is rather different from our approach. Indeed, the authors in \cite{GorodetskiK12} analyze directly the evolution of 2-jets close to the degenerate saddle. This leads to a rather complicated 4-dimensional system of differential equations. 

We believe that the tools used in the present work (normal form + Shilnikov maps + renormalization) are slightly more conceptual and can be generalized more easily to study homoclinic phenomena in  higher dimensional models such as the planar 3-body Problem. In particular, an extension of our techniques should allow us to compute the so-called separatrix map for the 3-body problem (close to parabolic-elliptic motions). This might be of potential interest for the study of Arnold diffusion and related problems.

From their analysis of the local dynamics and the existence of transverse intersections between $W^{u,s}(O)$, Gorodetski and Kaloshin prove the existence of a hyperbolic periodic orbit $z_\mu$ which is homoclinically related to $O$ and  exhibits a quadratic homoclinic tangency which unfolds generically. This leads to the existence of parameter subsets for which $P_\mu$ possesses  basic sets with persistent tangencies and large Hausdorff dimension which are homoclinically related to $z_\mu$ (and therefore to $O$).

In the second half of \cite{GorodetskiK12}, the authors make use of ideas coming from thermodynamic formalism to obtain a lower bound on the Hausdorff dimension of $OM$. Namely, the arguments above are enough to show that, for a residual subset of parameters, the homoclinic class of $O$ is of maximal Hausdorff dimension. For a hyperbolic fixed point, having a homoclinic class of Hausdorff dimension 2 implies that the set of points which contain it in both their alpha and omega limit sets also have Hausdorff dimension 2. The authors in \cite{GorodetskiK12} show that this classical result carries over to the degenerate saddle $O$ and conclude that, for a residual set of parameters, the set \eqref{eq:oscillatorymotions} has maximal Hausdorff dimension.
\vspace{0.2cm}
\begin{rem}
A priori, the techniques in \cite{GorodetskiK12}, which rely on a $\mathcal C^2$ analysis of the local dynamics are not enough to prove the second part of Theorem~\ref{thm:main2} (and therefore Theorem~\ref{thm:main3}). Moreover, the combination of the techniques developed for the proof of Theorem~\ref{thm:main} and Theorem~\ref{thm:main2} together with the ideas developed by Gorodetski and Kaloshin in the second half of \cite{GorodetskiK12} gives a different (more robust) proof of their result about the abundance of oscillatory motions. 
\end{rem}

\subsection{Newhouse domains in Celestial Mechanics}\label{sec:4bp}
To illustrate the robustness of our techniques, we present two additional results dealing with the existence of Newhouse domains in other popular dynamical systems found in the Celestial Mechanics literature. Namely, analogous results to Theorems~\ref{thm:main},~\ref{thm:main2}~and~\ref{thm:main3} also hold for the Sitnikov and the restricted 4-body problem. For the sake of brevity we only state the analogous results to Theorem~\ref{thm:main3}.

\subsubsection{The Sitnikov model}
In this configuration the two primaries have equal masses and revolve around each other in  Keplerian ellipses of eccentricity $\epsilon\in[0,1)$. The massless body moves on the line perpendicular to the plane of motion of the primaries (the invariance of this line under the dynamics is a consequence of the symmetry of the masses of the primaries). It was introduced by Sitnikov \cite{sitnikov1960existence} to provide the first example of oscillatory motions and has been widely studied in the works of Alekseev and Moser \cite{Alekseevfinalmotions,Moserbook}. For $\epsilon=0$ the primaries move on circles and the system is integrable. For $\epsilon>0$  the dynamics can be reduced to an area-preserving map of $\mathbb R^2$ by studying the time-one map.
\vspace{0.2cm}
\begin{thm}\label{thm:Sitnikov}
   There exists a locally topologically large set of parameters $\epsilon\in (0,1)$ of the  Sitnikov problem for which the union of its elliptic islands forms an unbounded subset of the phase space. Moreover, for these parameters, the closure of the set of generic elliptic periodic orbits contains hyperbolic sets with Hausdorff dimension arbitrarily close to maximal.
\end{thm}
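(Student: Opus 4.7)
The plan is to reduce Theorem~\ref{thm:Sitnikov} to the abstract setup already treated in the proofs of Theorems~\ref{thm:main} and~\ref{thm:main2}, invoking the remark after Theorem~\ref{thm:main} which isolates the structural hypotheses needed on a one-parameter family of Hamiltonian diffeomorphisms. In the Sitnikov problem the primaries move on Keplerian ellipses of eccentricity $\epsilon$ and the massless body moves on the perpendicular axis, so the system is described by a $1\tfrac{1}{2}$ degrees of freedom Hamiltonian, periodic in time with period $2\pi$. Let $f_\epsilon\colon(z,\dot z)\mapsto(\bar z,\bar{\dot z})$ denote the (area preserving) time-$2\pi$ map and, following Section~\ref{subsec:McGehee}, let $P_\epsilon=h_{MG}^{-1}\circ f_\epsilon\circ h_{MG}$ be its McGehee conjugate. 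For $\epsilon=0$ the two primaries sit at the origin (from the perspective of the massless body on the vertical axis the only thing that matters is the distance from the binary's center of mass, which for $\epsilon=0$ is constant) and the dynamics reduces to a one degree of freedom integrable system isomorphic to the $2$-body problem on the line. In McGehee coordinates the degenerate saddle $O$ at infinity therefore possesses a full homoclinic loop and $P_0$ takes the form~\eqref{eq:mainpoincaremapformula}.

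First, I would verify the three hypotheses listed in the remark following Theorem~\ref{thm:main}. The reversibility $P_\epsilon^{-1}\circ R=R\circ P_\epsilon$ with $R(x,y)=(y,x)$ is inherited from the classical time reversibility of the Sitnikov problem together with the symmetry of the Kepler ellipse about the pericenter axis; picking the Poincaré section so that time-$0$ coincides with pericenter passage makes this a direct computation. Analyticity and the fact that $P_\epsilon$ is a one-parameter Hamiltonian family of the form~\eqref{eq:mainpoincaremapformula} follow from the explicit expression of the vector field and McGehee's compactification, exactly as in Section~\ref{subsec:McGehee}. The last ingredient, the generic splitting of $W^{u,s}(O)$ for $\epsilon\neq 0$, is by now classical: a Melnikov-type computation, carried out by Alekseev \cite{AlekseevQR2} and Moser \cite{Moserbook} for the Sitnikov problem, yields an asymptotic formula of the form $\epsilon^\alpha e^{-c/\sqrt{J}}(a\sin\phi_0+\cdots)$ for the distance between the manifolds along a chosen transverse section, with a non-degenerate leading term. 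This is precisely the kind of form required in the remark.

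With these ingredients in hand, the argument used for Theorem~\ref{thm:main} applies verbatim to $\{P_\epsilon\}_{\epsilon\in(0,1)}$: one finds a sequence $\epsilon_n\to 0$ at which $P_{\epsilon_n}$ exhibits a quadratic homoclinic tangency between $W^u(O)$ and $W^s(O)$ that unfolds generically, and the normal form plus Shilnikov map machinery (Section~\ref{sec:localdynamicsoutline}) combined with the renormalization scheme around each tangency produces, on an interval of parameters accumulating on each $\epsilon_n$, a basic set $\Lambda_\epsilon$ with a persistent homoclinic tangency that is homoclinically related to $O$. This yields a Newhouse domain $\mathcal N_{\mathrm{Sit}}\subset(0,1)$ accumulating on $0$. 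The analogue of Theorem~\ref{thm:main2} then gives a residual $\mathcal R\subset\mathcal N_{\mathrm{Sit}}$ along which $H(O,P_\epsilon)$ has maximal Hausdorff dimension and is accumulated by generic elliptic periodic points.

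Transporting back through $h_{MG}$, an elliptic periodic orbit of $P_\epsilon$ accumulating on $O$ corresponds in original coordinates to a generic elliptic periodic orbit along which the massless body travels arbitrarily far from the primaries, which is exactly the unboundedness of the union of elliptic islands claimed in the theorem; the hyperbolic sets of large Hausdorff dimension inside $H(O,P_\epsilon)$ are preserved by the conjugacy on any compact subset bounded away from $O$, so the last statement also transfers. The main obstacle in carrying out this program is the verification of the generic splitting of the invariant manifolds of $O$ with explicit control of the leading coefficient and its dependence on $\epsilon$, since the degeneracy of the saddle forces the Melnikov-type integral to be computed along a parabolic, rather than hyperbolic, homoclinic orbit; all remaining steps are essentially a translation of the arguments already developed in the RPC3BP case.
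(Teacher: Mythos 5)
Your overall strategy matches the paper's: Appendix~\ref{sec:Sitnikov} likewise reduces to checking that the normal-form result (Proposition~\ref{prop:normalform}) applies to the McGehee-compactified Sitnikov Hamiltonian, by observing that $K_\epsilon(q,p,t)=qp+\mathcal O_6(q+p)$, and then invoking the Melnikov splitting formula to conclude that Proposition~\ref{lem:homoclinictangencies} and, with it, Theorems~\ref{thm:main}--\ref{thm:main2} carry over unchanged. The verification of the reversibility $P_\epsilon^{-1}\circ R=R\circ P_\epsilon$ via evenness of $\rho_\epsilon(t)$ is also the route the paper takes (see the remark following Lemma~\ref{lem:symplecticred}).

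One substantive error is worth correcting. The asymptotic form you quote for the Sitnikov splitting, $\epsilon^\alpha e^{-c/\sqrt J}(a\sin\phi_0+\cdots)$, is essentially the RPC3BP formula, not the Sitnikov one. The Sitnikov problem has no Jacobi constant and no angular momentum: the unperturbed parabolic homoclinic orbit is unique rather than a one-parameter family indexed by $J$, and the Melnikov integral, a convolution of a $2\pi$-periodic forcing against a profile decaying only algebraically along the parabolic orbit, is not exponentially small. The correct formula, which the paper takes from \cite{SitnikovPerezChavela} rather than from Alekseev--Moser (who establish transversality but are not the cited source for the sharp asymptotics used here), is
\[
E^u_\epsilon(u)-E^s_\epsilon(u)=\epsilon\,\sigma_\epsilon\sin(u)+\mathcal O(\epsilon^2),
\]
with $\sigma_\epsilon>0$ a bounded explicit constant and no exponentially small factor. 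Your conclusion survives, since all the renormalization scheme requires is a leading term with isolated non-degenerate critical points, a property both your formula and the true one share; but the appearance of the $e^{-c/\sqrt J}$ factor conflates the singular, exponentially small splitting regime of the RPC3BP at large Jacobi constant with the regular $\epsilon$-perturbation structure of the Sitnikov problem, and this distinction is precisely what makes the Sitnikov case the simpler of the two.
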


A sketch of the proof of this Theorem, which follows entirely from the ideas deployed to prove Theorems~\ref{thm:main}~and~\ref{thm:main2}, and a formula for the splitting of separatrices of the corresponding parabolic periodic point  at infinity obtained in \cite{SitnikovPerezChavela}, is provided in Appendix~\ref{sec:Sitnikov}.

\subsubsection{The restricted 4-body problem}
Consider the two-parameter family of periodic orbits of the 3-body problem known as the Lagrange triangles (see \cite{Wintner}). We study the motion of a massless body interacting with three massive bodies which move on a Lagrange triangular periodic orbit. This is a parametric family of two degrees of freedom dynamical systems (the parameters are the masses $m_1=\mu_1,m_2=\mu_2$ and $m_3=1-\mu_1-\mu_2$ of the heavy bodies).  Indeed, for all values of the masses, since the Lagrange triangles have \textit{circular} symmetry, it is possible to perform a symplectic reduction and boil down the study of this system to that of a map  $f_{\mu_1,\mu_2,\mathrm{4BP}}$ of the form \eqref{eq:Poincaremap}.
\vspace{0.3cm}

\begin{thm}\label{thm:main4bp}
There exists a (locally) topologically large set of parameters $\mu\in (0,1/2]$ of the  the restricted $4$-body problem with $\mu_1=\mu_2=\mu$ for  which the union of its elliptic islands forms an unbounded subset of the phase space. Moreover, for these parameters, the closure of the set of generic elliptic periodic orbits contains hyperbolic sets with Hausdorff dimension arbitrarily close to maximal.
\end{thm}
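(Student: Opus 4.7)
The plan is to mirror the architecture used for the restricted $3$-body problem, reducing the restricted $4$-body problem (with equal outer masses $\mu_1=\mu_2=\mu$) to a one-parameter family of area preserving maps possessing a degenerate parabolic fixed point at infinity of exactly the form \eqref{eq:mainpoincaremapformula}, and then invoking the same machinery (normal form, Shilnikov maps, renormalization to a conservative H\'enon-like limit, Duarte's theorem, and the Duarte--Gorodetski arguments adapted in the proofs of Theorems~\ref{thm:main} and~\ref{thm:main2}) to produce the desired Newhouse domain, residual subset of parameters, and unbounded elliptic islands. The first step is to fix the Lagrange triangular equilibrium of the three primaries $(m_1,m_2,m_3)=(\mu,\mu,1-2\mu)$, use the rotational invariance of the configuration to perform a symplectic reduction to a $3$-dimensional system, and reduce further to the first-return map $f_{\mu,\mu,\mathrm{4BP}}$ on a suitable Jacobi energy level, exactly as in Section~\ref{sec:restricted3bpareapreserving}. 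In the region where the massless body is far from the primaries, the Hamiltonian is a perturbation of the $2$-body problem (the primaries look, at leading order, like a single body of mass $1$), so McGehee's compactification applies and produces $P_{\mu}=h_{MG}^{-1}\circ f_{\mu,\mu,\mathrm{4BP}}\circ h_{MG}$.

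Next, I would verify, by a direct expansion of the potential around the Lagrange triangle in McGehee coordinates, that $P_{\mu}$ has the form \eqref{eq:mainpoincaremapformula} with real-analytic $r_\mu,s_\mu=\OO_3(x)$. The key point is that at $\mu=0$ one recovers the integrable $2$-body problem and $P_0$ has a homoclinic loop to the degenerate saddle $O$; here the $\mu=0$ limit is non-degenerate in the sense that the primaries collapse to a single body of mass $1$, and Chazy's classification together with McGehee's analysis provides the homoclinic loop. I would also check the reversibility $P_\mu^{-1}\circ R=R\circ P_\mu$ with $R(x,y)=(y,x)$, which follows from the standard time-reversal symmetry of the Lagrangian configuration once the reduction is carried out symmetrically.

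The main obstacle, as in the $3$-body case, is to establish that the unstable and stable manifolds $W^{u,s}(O)$ of the parabolic point split generically in $\mu$, and to control the shape of the splitting sufficiently well to exhibit a sequence $\mu_n\to 0$ of quadratic homoclinic tangencies unfolding generically. Because the Lagrange configuration has the outer primaries rotating at the same frequency as the reference frame, the perturbation is genuinely time-periodic and not small in the usual asymptotic sense; the standard Poincar\'e--Melnikov approach does not apply in the parabolic setting. Instead, I would adapt the asymptotic computation of the difference between the parameterizations of $W^{u}(O)$ and $W^{s}(O)$ carried out for the RPC3BP, using the normal form and Shilnikov maps developed in the proof of Theorem~\ref{thm:main}. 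The harmonic content of the new perturbation (arising from the mass distribution $(\mu,\mu,1-2\mu)$ at the Lagrange triangle) contributes a nonzero Fourier coefficient that dominates the leading asymptotic of the splitting, yielding transverse homoclinic points and a dense set of quadratic tangencies.

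With splitting and quadratic tangencies in hand, the remainder of the proof is identical to Theorems~\ref{thm:main},~\ref{thm:main2}~and~\ref{thm:main3}: the renormalization of high iterates near the tangency converges to the critical conservative H\'enon map, Duarte's theorem produces open parameter intervals exhibiting basic sets with persistent tangencies, the adaptation of Palis--Takens (already carried out for $O$) shows that these basic sets are homoclinically related to the degenerate saddle $O$, and the Duarte--Gorodetski arguments provide a residual subset of the resulting Newhouse domain along which generic elliptic periodic points accumulate on $O$ and on hyperbolic sets of Hausdorff dimension arbitrarily close to $2$. Pulling everything back through $h_{MG}$ translates the accumulation on $O$ into unboundedness of the set of elliptic islands in the original phase space, yielding Theorem~\ref{thm:main4bp}.
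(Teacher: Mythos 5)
Your overall architecture (reduce to an area-preserving return map, compactify via McGehee, run the normal form/Shilnikov/renormalization/Duarte/Gorodetski pipeline) is the same as the paper's, but your construction of the quadratic homoclinic tangency that feeds the renormalization scheme diverges substantially from what the paper actually does, and the divergence is where your argument has a gap. You propose to mimic the RPC3BP mechanism verbatim: treat $\mu$ as a small parameter, extract a Melnikov-type splitting formula $\propto \mu$ for fixed large Jacobi constant, and locate a sequence of \emph{secondary} tangencies $\mu_n\to 0$ via the Shilnikov maps as in Lemma~\ref{lem:exist_secondary_tangency}. For the RPC3BP, the crucial input is the Melnikov formula for $W^{u,s}(O)$ at fixed $J$ and small $\mu$, imported from \cite{MartinezPinyol}; for the 4BP there is no such reference, and you simply assert that ``the harmonic content of the new perturbation \dots\ contributes a nonzero Fourier coefficient that dominates the leading asymptotic of the splitting'' without giving the computation. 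This is not automatic: the perturbation here has a different structure (two equal outer bodies plus a displaced central body at the Lagrange triangle), and the leading asymptotics of its harmonics must be established before the rest of the Shilnikov-map machinery can be run.

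The paper sidesteps this issue entirely by working in a different parameter regime and producing a \emph{primary} tangency. In Appendix~\ref{sec:splitting4bp} the singular parameter is the Jacobi constant $G_0\to\infty$, not $\mu\to 0$; the splitting of $W^{u,s}(O)$ is exponentially small in $G_0$, and one analyzes it via the Hamilton--Jacobi formalism and the Fourier expansion of the Melnikov potential, following \cite{GMS2016}. The key observation is that the lowest Fourier coefficients satisfy $L^{[1]}, L^{[2]}\propto (1-3\mu)$ (Proposition~\ref{prop:Fourier_expansion_Melnikov_RPC4BP}), so they degenerate exactly at $\mu=1/3$, where the three primaries have equal mass. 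Near $\mu=1/3$ the $\ell=1$ and $\ell=3$ harmonics become competitive, and the resulting interplay yields a curve $\mu = 1/3 + \mathcal{O}(G_0^3 e^{-2G_0^3/3})$ along which $W^u(O)$ and $W^s(O)$ are tangent, with the tangency unfolding generically in $\mu$. This is then handed directly to Proposition~\ref{prop:mainresultrenormalization} (which needs only a generic quadratic tangency, primary or secondary), and the rest of the argument goes through. So the ``locally topologically large set of parameters'' in the statement sits near $\mu=1/3$, not near $\mu=0$.

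Bottom line: your route is not \emph{a priori} wrong, but the Melnikov formula you need near $\mu=0$ is neither in the paper nor, as far as I know, in the literature; until you compute it your proof has a genuine gap precisely at the step that distinguishes the 4BP from the RPC3BP. The paper's route is a genuinely different one---a primary tangency in a singular (exponentially small splitting) regime, exploiting an extra symmetry at $\mu=1/3$---and it fills exactly this gap with a concrete calculation.
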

\vspace{0.3cm}

The proof of this result follows from the proof of Theorems~\ref{thm:main}~and~\ref{thm:main2} together with the analysis of the invariant manifolds of $O$ carried out in Appendix~\ref{sec:splitting4bp}. Indeed, in Appendix~\ref{sec:splitting4bp} we prove that for $\mu\sim 1/3$ the invariant manifolds of $O$ (for the map $f_{\mu,\mu,\mathrm{4BP}}$) exhibit a quadratic homoclinic tangency which unfolds generically.

\begin{comment}
    \subsection{The Sitnikov problem}\label{sec:Sitnikov}
\end{comment}

\subsection*{Acknowledgments}
J.P. wants to express his gratitude to V. Kaloshin for hintful conversations and valuable suggestions, as well as for sharing with him the unpublished manuscript \cite{GorodetskiK12}.
M.G. has been partially supported by the Spanish Government grants PID2022-136613NB-I00 and PRE2020-096613 and the Catalan Government grant 2021SGR00113. P.M. has been partially supported by
the grant PID2021-123968NB-I00, funded by the Spanish State Research Agency through the
programs MCIN/AEI/10.13039/501100011033 and “ERDF A way of making Europe” and by the Spanish State Research Agency,
through the Severo Ochoa and María de Maeztu Program for Centers and Units of Excellence
in R\&D (CEX2020-001084-M).

\subsection{Organization of the article}
In Section~\ref{sec:outline} we give a sketch of the main steps in the proof of Theorems~\ref{thm:main} and~\ref{thm:main2}, including the introduction of McGehee's compactification and the reduction to a 3-dimensional system with a parabolic periodic orbit at the origin. In Section~\ref{sec:normalform} we study the local flow around this degenerate periodic orbit and then use this analysis to construct in Section~\ref{sec:Shilnikov} the so-called Shilnikov maps. In Section~\ref{sec:tangency} we show the existence of a sequence of parameter values for which there exists a quadratic homoclinic tangency between the stable and unstable manifolds of the parabolic orbit and show that it unfolds generically. Finally, in Section~\ref{sec:returnmap} we show that suitably renormalized iterates of the map close to the homoclinic tangency converge to the H\'{e}non map.

\section{Outline of the proof}\label{sec:outline}
In this section we sketch the main ingredients in the proof of Theorems~\ref{thm:main}~and~\ref{thm:main2}. In Lemmas~\ref{lem:symplecticred}~and~\ref{lem:McGehee} we exploit the symmetries of the system to reduce the dimension and then introduce McGehee's coordinates, what allows us to study unbounded motions. Then, in Proposition~\ref{prop:normalform} we obtain a normal form for the dynamics in McGehee's coordinates. In Proposition~\ref{prop:mainresultrenormalization} we show that a suitably renormalized iterate of the map close to the homoclinic tangency converges to the H\'{e}non map. This implies, making direct use of results of Duarte \cite{Duarteabundance}, the existence of basic sets with persistent tangencies. Then, in Section~\ref{sec:homrelated}  we show that these basic sets are homoclinically related to the degenerate saddle $O$. Finally, Section~\ref{sec:pfthm2} is devoted to provide the details and references which constitute the proof of Theorem~\ref{thm:main2}.

\subsection{Reduction to a 3-dimensional system with a degenerate periodic orbit}\label{subsec:McGehee}
The first step consists on reducing the dimension by exploiting the invariance by rotations. We recall that the change of variables to polar coordinates $\Psi_{pol}:(r,\alpha,y,G)\to (q,p)$ is symplectic. We let 
\[
U_{\mu}(r,\alpha-t)=V_\mu(r\cos\alpha,r\sin\alpha,t) - \frac{1}{r}
\]
and denote by 
\begin{equation}\label{eq:Hamiltonianpolar}
H_{\mathrm{pol},\mu}(r,\alpha,y,G,t)=H_\mu\circ\Psi_{pol}(r,\alpha,y,G,t)= \frac{y^2}{2}+\frac{G^2}{2 r^2}-\frac{1}{r}-U_\mu(r,\alpha-t)
\end{equation}
the Hamiltonian of the system in polar coordinates.
\begin{lem}\label{lem:symplecticred}
The Jacobi constant $\mathcal J=H_{\mathrm{pol},\mu}-G$ is a conserved quantity for the flow of \eqref{eq:Hamiltonianpolar}. Moreover, if we fix any value $J$ of the Jacobi constant  sufficiently large, the dynamics of the Hamiltonian system \eqref{eq:Hamiltonianpolar} on the level $\{\mathcal J=J\}$ is given by the  Hamiltonian
\[
H_{\mathrm{red},\mu}(r,y,\phi;J)= \frac{y^2}{2}+\frac{J^2}{2 r^2}-\frac{1}{r}+\frac{1}{r^2}\mathcal{O}(r^{-1}, y^2),\qquad (r,y,\phi)\in\mathbb{R}_+\times\mathbb{R}\times\mathbb{T}.
\]
\end{lem}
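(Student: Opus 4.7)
My plan is twofold: first establish conservation of $\mathcal{J}$ through a canonical transformation that renders the problem autonomous, and then perform a symplectic reduction on the level set $\{\mathcal{J}=J\}$ using $\phi$ as the new time variable, obtaining $H_{\mathrm{red},\mu}$ through an asymptotic expansion of the branch of $G$ defined implicitly by $\mathcal{J}=J$.

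For the first step I would use the time-dependent canonical transformation generated by $F_2(\alpha,G,t)=(\alpha-t)G$, which yields $\phi=\partial_G F_2=\alpha-t$ and leaves $G$ unchanged. The new Hamiltonian is
\[
K(r,\phi,y,G)=H_{\mathrm{pol},\mu}+\partial_t F_2=H_{\mathrm{pol},\mu}-G,
\]
which coincides with $\mathcal{J}$. Since $H_{\mathrm{pol},\mu}$ depends on $\alpha$ and $t$ only through the combination $\alpha-t=\phi$, the transformed Hamiltonian $K$ is autonomous, hence conserved along its flow.

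For the second step, fix $J$ large, so that $K=J$ defines $G$ implicitly as a function of $(r,y,\phi)$. The equation is quadratic in $G$, and one selects the branch that stays of order $J$ (rather than of order $2r^2$) as $r\to\infty$; the implicit function theorem applies along this branch because $\partial_G K=G/r^2-1$ is bounded away from zero for $J$ and $r$ sufficiently large. A standard symplectic reduction of one degree of freedom (using the action $\int(y\,dr+G\,d\phi-J\,dt)$ on the level set $\{K=J\}$ and discarding the exact piece $-J\,dt$) shows that the flow on this hypersurface, reparameterised by $\phi$, is Hamiltonian in $(r,y)$ with Hamiltonian equal to $-G(r,y,\phi;J)$, up to an additive constant and modulo the orientation of time (consistent with $\dot\phi\simeq-1$ for large $r$). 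To match the expression stated in the lemma, I would set $B=2J/r^2-y^2/r^2+2/r^3+2U_\mu/r^2$ and expand
\[
G=r^2-r^2\sqrt{1+B}=-r^2\,\tfrac{B}{2}+r^2\,\tfrac{B^2}{8}+O(r^2 B^3),
\]
from which a direct computation gives
\[
G+J=\tfrac{y^2}{2}-\tfrac{1}{r}+\tfrac{J^2}{2r^2}-U_\mu+\tfrac{1}{r^2}\mathcal{O}(r^{-1},y^2).
\]
To absorb $U_\mu$ into the error I would invoke the multipole expansion of $V_\mu$: since the primaries are placed so that $(1-\mu)q_{1,\mu}+\mu q_{2,\mu}\equiv 0$ (centre of mass at the origin), the dipole contribution vanishes and $U_\mu(r,\phi)=\mathcal{O}(r^{-3})$, which fits inside $r^{-2}\mathcal{O}(r^{-1},y^2)$.

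The main obstacle is the careful bookkeeping of orders in the expansion of $G$: since $J$ is large but treated as a fixed parameter, the leading terms $J^2/(2r^2)$ and $-1/r$ must be retained at the same stage, while the contributions from $B^3$ and higher, the cross terms in $B^2$, and the potential $U_\mu$ must all be shown to collapse into $r^{-2}\mathcal{O}(r^{-1},y^2)$. The sign conventions of the reduction (choice of root of the quadratic, and whether the reduced Hamiltonian is named $-G$, $G$ or $G+J$) need consistent tracking, but these are cosmetic: they only reflect the orientation of time and leave the flow lines of the Hamiltonian vector field unchanged.
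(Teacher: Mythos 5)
Your proposal is correct and follows essentially the same route as the paper: both solve the quadratic $\mathcal J=J$ for the relevant branch of $G$, Taylor-expand in $1/r$ and $y$, and invoke Poincar\'e--Cartan reduction to read off the reduced Hamiltonian as the function of $(r,y,\phi)$ conjugate to the Jacobi constant. The extra scaffolding you supply (the generating function $F_2=(\alpha-t)G$ passing to the rotating angle, the non-degeneracy $\partial_G\mathcal J=G/r^2-1\neq 0$ underlying the implicit function theorem, and the vanishing-dipole observation that makes $U_\mu(r,\phi)=\mathcal O(r^{-3})$) is exactly what the paper leaves implicit in its one-line proof.
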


Although this lemma is standard, for the sake of self completeness, we provide a proof in the beginning of Section~\ref{sec:normalform}. We now apply McGehee's change of variables composed with a rotation of angle $\pi/4$. The following lemma is a simple computation.
\begin{lem}\label{lem:McGehee}
Let $\widetilde\Psi_{MG}$ be the change of variables given by $\widetilde\Psi_{MG}=\Psi_{MG}\circ R_{\pi/4}(q,p)\mapsto (r,y)$ where 
\[
R_{\pi/4}:(q,p)\mapsto (\frac{1}{\sqrt{2}}(q+p),\frac{1}{\sqrt{2}}(p-q)) \qquad\qquad \Psi_{MG}:(x,y)\mapsto (2/x^{2},y)
\]
 Then, 
\[
\Psi^*(\mathrm{d}r\wedge\mathrm{d}y)=\frac{-2^{7/2}}{(q+p)^3}\mathrm{d}q\wedge\mathrm{d}p
\] 
and 
\begin{equation}\label{eq:McgeheeHam}
K_\mu(q,p,\phi)=H_{\mathrm{red},\mu}\circ\Psi(q,p,\phi)=- qp+\mathcal{O}_4(q,p).
\end{equation}
Namely, (after time rescaling) the equations of motion in coordinates $(q,p,t)$ are
\begin{equation}\label{eq:flownearinfty3d}
\dot q=(q+p)^3\partial_p K(q,p,\phi),\qquad \dot p=-(q+p)^3\partial_q K(q,p,\phi),\qquad \dot \phi=-2^{7/2}.
\end{equation}
\end{lem}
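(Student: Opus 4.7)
The plan is to decompose $\widetilde\Psi_{MG}=\Psi_{MG}\circ R_{\pi/4}$, pull both the symplectic form and the reduced Hamiltonian back through each factor in turn, and then read off the vector field from the (non-canonical) Hamilton equations, absorbing the conformal prefactor by a constant time rescaling. The whole argument is a direct computation with no conceptual obstacle; the only care required is keeping track of the sign and of the combinatorial factor $2^{7/2}$.

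For the symplectic form I first note that $R_{\pi/4}$ is area-preserving: a one-line expansion gives
\[
dx\wedge dy=\tfrac12(dq+dp)\wedge(dp-dq)=dq\wedge dp.
\]
The McGehee step $r=2/x^2$ contributes $dr=-4x^{-3}\,dx$, hence $dr\wedge dy=-4x^{-3}\,dq\wedge dp$; substituting $x=(q+p)/\sqrt 2$ gives $x^{-3}=2^{3/2}(q+p)^{-3}$, producing the announced prefactor $-2^{7/2}(q+p)^{-3}$.

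For the Hamiltonian, applying $\Psi_{MG}$ first yields $\tfrac12 y^2-\tfrac12 x^2+\tfrac{J^2}{8}x^4+\mathcal O_6(x,y)$, since the remainder $r^{-2}\mathcal O(r^{-1},y^2)$ becomes $\mathcal O(x^6)+\mathcal O(x^4y^2)$, both of total degree $\geq 6$. Then $R_{\pi/4}$ converts the standard saddle quadratic form to
\[
\tfrac14\big((p-q)^2-(q+p)^2\big)=-qp,
\]
while the quartic and higher-order remainder is of degree $\geq 4$ in $(q,p)$, giving \eqref{eq:McgeheeHam}.

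Finally, Hamilton's equations for $K$ with respect to $\omega=g\,dq\wedge dp$ with $g=-2^{7/2}(q+p)^{-3}$ are $\dot q=g^{-1}\partial_p K$ and $\dot p=-g^{-1}\partial_q K$, to which I append the angular equation $\dot\phi=1$ inherited from the reduction of Lemma~\ref{lem:symplecticred} (in which $\phi$ plays the role of time for the reduced flow). A constant time rescaling by the factor $-2^{7/2}$ clears the conformal prefactor in the $(q,p)$ equations and simultaneously turns $\dot\phi=1$ into $\dot\phi=-2^{7/2}$, producing \eqref{eq:flownearinfty3d}. The main obstacle, modest as it is, is simply bookkeeping: ensuring the sign and the power of $2$ come out consistently between the area factor, the Hamiltonian expansion, and the rescaling.
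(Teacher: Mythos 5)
Your computation is correct and is precisely the ``simple computation'' the paper has in mind but does not spell out: pull back $\mathrm{d}r\wedge\mathrm{d}y$ through $\Psi_{MG}$ and then $R_{\pi/4}$ to get the conformal factor $-2^{7/2}(q+p)^{-3}$, substitute the same two changes into $H_{\mathrm{red},\mu}$ to obtain $-qp+\mathcal{O}_4(q,p)$, and absorb the constant $-2^{7/2}$ by a constant time rescaling that also sets $\dot\phi=-2^{7/2}$. No gap; the approach coincides with the one the paper intends.
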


The dynamical system \eqref{eq:flownearinfty3d} can be seen as a time-dependent Hamiltonian system given by the time-dependent Hamiltonian $K(q,p,\phi)$ on the (singular) symplectic manifold $(\mathbb{R}^2, \omega)$ where 
\begin{comment}
If we  denote by 
\[
\widetilde K(\xi,\eta,\phi,E)=K(\xi,\eta,\phi)+E,
\]
 where we interprete $(\xi,\eta)$ and $(t,E)$ as pairs of conjugated variables, the dynamical system \eqref{eq:flownearinfty3d} can be obtained by projecting (onto the coordinates $(\xi,\eta,\phi)$) the flow generated by the time-dependent Hamiltonian $\widetilde{K}$ on the (singular) symplectic manifold $(\mathbb{R}^2\times\mathbb R\times\mathbb T,\omega)$ where 
\end{comment}
\[
\omega=\frac{1}{(q+p)^3}\mathrm{d}q\wedge\mathrm{d}p.
\]

\subsection{Local dynamics close to $O$}\label{sec:localdynamicsoutline}
In the next section we analyze the dynamics on a neighborhood of the (degenerate) periodic orbit
\[
\mathcal{O}=\{q=p=0,\  \phi\in\mathbb{T}\}.
\]
It is well known that, on a neighborhood of a hyperbolic periodic orbit of a real-analytic Hamiltonian of 1+1/2  degrees of freedom it is possible to analytically conjugate the dynamics to an integrable (autonomous) real-analytic Hamiltonian  known as the Birkhoff-Moser normal form. The proof of the convergence of this conjugacy, which first appeared in the work of Birkhoff, was established by Moser in \cite{Moseranalyticinvariants}, who showed that the radius of convergence of its Taylor series at the periodic orbit is strictly positive. 

The  periodic orbit $\mathcal O$ is degenerate so Moser's result does not apply. Indeed, we know from the work of \cite{McGeheestablemfold} that the local unstable and stable manifolds $W^{u,s}(\mathcal O)$ are, a priori, only $\mathcal C^\infty$ at $\mathcal O$ so any conjugacy straightening these invariant manifolds cannot, a priori, be real-analytic at $\mathcal O$. In the following proposition we obtain a suitable normal form for the flow around the degenerate periodic orbit $\mathcal{O}$.

\begin{prop}\label{prop:normalform}
Let $\bar \kappa>0$ be fixed and small enough and consider, for each $\rho>0$, the sectorial domain 
\begin{equation}\label{eq:sectorialdom}
V_\rho=\{z\in B_{\rho}(0)\subset\mathbb C \colon |\mathrm{Im}z| < \rho \kappa |\mathrm{Re}z|\}.
\end{equation}
For any $N \ge 3$, there exist $\bar \kappa>0$ and $\rho>0$ small enough such that, for any $\mu\in (0,1/2]$ there exists a real-analytic change of variables $\Omega: V_{\rho/2}\times V_{\rho/2}\times\mathbb T_\rho\to V_{\rho}\times V_{\rho}\times\mathbb T_\rho$, such that
\begin{itemize}
\item $\Omega$ preserves the form $\omega$, i.e. $\Omega_*\omega=\omega$,
\item In the new variables
\begin{equation}\label{eq:normalform}
\mathcal K(q,p,\phi)=K \circ\Omega(q,p,\phi)=-qp+q^4p^4\  k(q,p,\phi),\qquad k(q,p,\phi)= \mathcal{O}_N(q,p)
\end{equation}
uniformly on $V_{\rho/2}\times V_{\rho/2}\times\mathbb T_\rho$.
\end{itemize}
\end{prop}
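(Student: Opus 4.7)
I would prove Proposition~\ref{prop:normalform} by an iterative Birkhoff-type normal form procedure adapted to the singular symplectic form $\omega=(q+p)^{-3}dq\wedge dp$ and to the parabolic nature of $\mathcal{O}$. Two features distinguish it from the classical setting: the non-standard $\omega$ modifies the homological operator (introducing a factor of $(q+p)^3$), and the local stable and unstable manifolds at $\mathcal{O}$ are only $\mathcal{C}^\infty$ at the origin, forcing the analytic work to take place on sectorial domains.

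\textbf{Straightening.} The first step is to construct a real-analytic, $\omega$-symplectic change of variables on $V_\rho\times V_\rho\times\mathbb{T}_\rho$ placing the local invariant manifolds $W^{u,s}(\mathcal{O})$ (given by McGehee's theorem) onto the coordinate axes $\{q=0\}$ and $\{p=0\}$. Analytic representatives of these $\mathcal{C}^\infty$ manifolds on a sector exist by classical parabolic-germ theory (Borel-type resummation of their asymptotic expansion). In these coordinates the Hamiltonian vanishes on each axis, so it factors as $K=-qp+qp\cdot H_3$ with $H_3=\mathcal{O}_2(q,p,\phi)$.

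\textbf{Iterative normalization.} At each subsequent step $j=1,\ldots,N_*$, I would compose with the time-$1$ $\omega$-Hamiltonian flow $\Omega_j=\exp(X_{F_j}^\omega)$ of a polynomial generator $F_j$ chosen to kill a prescribed homogeneous piece of the current Hamiltonian. A direct computation yields
\[
\{-qp,F\}_\omega=(q+p)^3\bigl(q\,\partial_q F-p\,\partial_p F\bigr),
\]
so that on monomials $\{-qp,q^ap^b\}_\omega=(a-b)(q+p)^3q^ap^b$ and the image of $\mathrm{ad}_{-qp}$ at degree $k$ is the space of polynomials $(q+p)^3G$ with $G$ of degree $k-3$ containing no monomial $q^jp^j$. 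The key algebraic input is that the perturbation inherited from the reduction and McGehee change of variables is of the structured form $(q+p)^{2m}(p-q)^{2l}$ with $m\geq 2$. Expanding $(p-q)^{2l}$ as a polynomial in $(q+p)^2$ and $qp$ shows that each such term decomposes as a sum of monomials $(q+p)^{2m'}(qp)^j$ with $m'\geq 2$; since the factor $(q+p)^{2m'-3}$ has odd degree, it carries no diagonal monomial $q^ap^a$, which places each such summand in the image of $\mathrm{ad}_{-qp}$. An inductive argument (for which the reversibility of the problem plays a central role) shows that this structure is preserved under the iteration, so that the homological equation is solvable at every order. Stopping after finitely many steps produces the stated normal form $\mathcal{K}=-qp+q^4p^4\,k(q,p,\phi)$ with $k=\mathcal{O}_N$.

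\textbf{Analytic control and main obstacle.} Because only finitely many polynomial generators are used, the composed $\Omega=\Omega_0\circ\Omega_1\circ\cdots\circ\Omega_{N_*}$ is automatically real-analytic and there is no small-divisor issue. Standard Cauchy estimates on each time-$1$ flow provide control of the domain shrinkage; choosing $\bar\kappa$ and $\rho$ small enough gives the mapping property $\Omega:V_{\rho/2}\times V_{\rho/2}\times\mathbb{T}_\rho\to V_\rho\times V_\rho\times\mathbb{T}_\rho$, with all constants uniform in $\mu\in(0,1/2]$. I expect the hardest step to be the preliminary analytic straightening of the $\mathcal{C}^\infty$ invariant manifolds on the sectorial domain with estimates uniform in $\mu$; a secondary but non-trivial bookkeeping issue is the inductive verification that the perturbation's structured form is preserved throughout the iteration, for which the reversibility of the underlying system is essential.
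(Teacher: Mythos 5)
Your high-level outline captures the two preliminary ingredients (parity considerations in the homological equation, and a sectorial straightening of the $\mathcal{C}^\infty$ invariant manifolds), but the central claim of your ``iterative normalization'' step is wrong, and this gap is exactly where the bulk of the paper's technical work lives.

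You assert that after straightening the manifolds one can ``stop after finitely many steps'' with polynomial generators and arrive at $\mathcal{K}=-qp+q^4p^4\,k$ with $k=\mathcal{O}_N(q,p)$. This cannot work. The factorization $q^4p^4\,k$ is not a degree condition: it requires that, after removing the $-qp$ part, the Hamiltonian vanish to order $4$ along each coordinate axis. Once the manifolds are straightened, the perturbation is $qp\,H_3$ with $H_3$ a full analytic function, and the obstruction to the desired factorization consists of ``boundary'' terms of the form $q^jp^j\bigl(h_j(q,t)+\tilde h_j(p,t)\bigr)$ for $j=1,2,3$, where each $h_j$, $\tilde h_j$ is a genuine power series in a single variable with infinitely many Fourier modes in $t$. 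These cannot be killed by a polynomial generator: the homological equation you write down, $\{-qp,F\}_\omega=-\pi$, must be solved at \emph{every} order in $q$ (or $p$) and \emph{every} Fourier mode of $t$ simultaneously. The required generator is itself a full function, and because each transformation reintroduces new boundary terms, the procedure must be an infinite iteration. The paper runs a quadratic (Newton-type) scheme in Banach spaces $\mathcal{Z}_{k,\ell,N,\rho}$ of sectorial analytic functions, with super-exponential decay $\varepsilon_{k+1}=\varepsilon_k^{3/2}$, and the convergence of the infinite composition on shrinking sectorial domains is precisely the ``hard analysis'' you wave away when you say there is no small-divisor issue. Moreover, the operator that must be inverted at each step is not the algebraic $\{-qp,\cdot\}_\omega$ but the first-order PDE operator $\mathcal{L}_k f = kq^{3-k}f - q^{4-k}\partial_q f + q^{-k}\partial_t f$ (cf.\ Lemma~\ref{lem:Inversa_de_loperador_lineal}), which incorporates the singular time-rescaling $(q+p)^{3}$ and the fast time dependence; its inversion is a transport-equation estimate on a sectorial domain, not a monomial count.

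Two further points. First, the ordering of steps in the paper is the reverse of yours and the choice is not cosmetic: a \emph{finite} polynomial Lie stage is performed first, on a full bidisc and relying on the evenness of the Hamiltonian in $(q,p)$ and in $t$, precisely so that afterwards the invariant manifolds vanish to order $N$, $\gamma^{u,s}(x,t)=x^N+\mathcal{O}(x^{N+1})$; this makes the subsequent sectorial straightening change of variables close to identity at order $N-1$ and keeps the remainder in the needed function spaces. Second, the ``structured form $(q+p)^{2m}(p-q)^{2l}$'' you invoke is an ad hoc substitute for what the paper actually uses, namely the two parity symmetries (evenness in $(q,p)$ and in $t$), which enter via the explicit bookkeeping of Lemma~\ref{lem:parity_poisson}; your parity remark about the image of $\mathrm{ad}_{-qp}$ is correct as far as it goes, but it only addresses the finite algebraic stage, not the infinite-dimensional stage that produces the $(qp)^4$ factorization.
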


The proof of Proposition~\ref{prop:normalform} is deferred to Section~\ref{sec:normalform}. It consists of two main steps. First, we  introduce a change of variables $\chi$ which straightens the stable and unstable manifolds, i.e. in the new variables $\{q=0\}$ (respectively $\{p=0\}$) corresponds to the local unstable (stable) manifold so 
\[
K\circ\chi(q,p,\phi)=-qp(1+\mathcal{O}_1(q,p)).
\]
The existence of this change of variables is an easy consequence of the mere existence of suitable parametrizations of these local invariant manifolds, which was shown in \cite{McGeheestablemfold}.  Then, to obtain the normal form \eqref{eq:normalform} we develop an iterative scheme to successively find symplectic changes of variables  (with respect to the singular form $\omega$) which kill terms of increasingly higher order. Then, we prove that the limit transformation (obtained as an infinite product of close to identity transformations) is well defined. 
\vspace{0.2cm}
\begin{rem}
    We notice that the normal form \eqref{eq:normalform} is a priori non-integrable.
    Understanding whether one can conjugate the dynamics on a neighborhood of $\mathcal O$ to an integrable normal form would require a much deeper analysis. For our purposes it is enough to conjugate the system to the normal form \eqref{eq:normalform}.
\end{rem}
 \vspace{0.2cm}

\subsubsection{Failure of the classical Lambda lemma}\label{sec:faillambdalemma}
The local flow associated to the Hamiltonian $\mathcal K$ in Proposition~\ref{prop:normalform} is essentially nonlinear. Namely
\begin{equation}\label{eq:lambdalemmaflow}
\dot q=(q+p)^3\partial_p \mathcal K(q,p,\phi)\qquad\qquad\dot p=-(q+p)^3\partial_q \mathcal K(q,p,\phi)
\end{equation}
We now observe that the classical version of the Lambda lemma (see for example Lemma 7.1 in \cite{PalisdeMelo}) does not hold. Indeed, take a point $z=(q,0)\in W_{loc}^s(\mathcal O)$ and consider the vector $v=(0,1)\in T_z^*\mathbb{R}^2$. Then, it is an easy exercise to check that the evolution of $v$ under the flow \eqref{eq:lambdalemmaflow} is given by the linear differential equation
\[
\dot v=q^3(t)\begin{pmatrix}
    -4&-3\\
    0&1
\end{pmatrix} v
\]
where 
\[
q(t)=\frac{q}{(1+3q^3t)^{1/3}}.
\]
Therefore, reparametrizing $\mathrm{d}s/\mathrm{d}t=q^{3}(t)$ the vector $\tilde v(s)=v(t(s))$ is given by 
\[
\tilde v(s)=\exp(s A)v\qquad\qquad A=\begin{pmatrix}
    -4&-3\\
    0&1
\end{pmatrix} 
\]
and, after an straightforward computation
\[
\lim_{t\to+\infty} \frac{1}{\lVert v(t)\rVert}v(t)=\lim_{s\to+\infty} \frac{1}{\lVert \tilde v(s)\rVert}\tilde v(s)= \frac{1}{\sqrt{34}}(-3,5) .
\]
That is, the image of a disk transverse to $W^s_{loc}(\mathcal O)$ may accumulate to $W^u(\mathcal O)$ but this accumulation is not $\mathcal C^1$ at $\mathcal O$. In the following we will see that this annoyance is indeed rather irrelevant for our purposes. Indeed, we will  show that the accumulation is $\mathcal C^2$ away from $\mathcal O$ \footnote{In his construction of non-trivial hyperbolic sets homoclinically related to $\mathcal O$ in \cite{Moserbook}, Moser already observed that the accumulation is at least $\mathcal C^1$ away from $\mathcal O$.}. The main tool that we will use to describe the local dynamics are the so-called Shilnikov maps, which we introduce in the next section.

\subsubsection{The Shilnikov stable and unstable maps}
Proposition~\ref{prop:normalform} allows us to control the dynamics of points whose orbit spends arbitrarily large times close to the degenerate saddle periodic orbit $\mathcal O$. This is crucial to prove, in Proposition~\ref{prop:Shilnikov}, the existence and asymptotics of the so-called Shilnikov maps (see \cite{Shilnikov67a,Shilnikov67b} and see also  \cite{MoraRomero,GSTtangencies} for their use in the renormalization scheme at a quadratic tangency), also denoted in the literature as Shilnikov data. Before stating Proposition~\ref{prop:Shilnikov}, for the sake of self-completness, we find convenient to first recall the definition of the Shilnikov maps in the case of a hyperbolic fixed point of a 2-dimensional map. To that end we consider for a moment a diffeomorphism $f:\mathbb{R}^2\to\mathbb{R}^2$ for which the origin $O=(0,0)$ is a hyperbolic fixed point and assume that $W^{s}_{loc}(O)=\{p=0\}$ and $W^{u}_{loc}(O)=\{q=0\}$. Let $a>0$ be a sufficiently small fixed constant and define the subset
\begin{equation}\label{eq:dfnsigmadelta}
\Sigma=\{(\xi,\eta)\in\mathbb R^2\colon |\xi-a|,|\eta-a|\leq a/2\}.
\end{equation}
Define also, for any $(\xi_0,\eta_0)\in \Sigma$, the sections
\[
\sigma^s_{\xi_0}=\{q=\xi_0,\ 0\leq p\leq a/4 \},\qquad\qquad \sigma^u_{\eta_0}=\{p=\eta_0,\  0\leq q\leq a/4\},
\]
which are, respectively, transverse to the local stable and local unstable manifolds. For any $n\in\mathbb N$ denote by $\gamma^s_{\xi_0,n}$ the unique connected component of $f^n(\sigma^s_{\xi_0})\cap \{(q,p)\in[0,3a/2]^2\}$ which contains the point $f^{n}(\xi_0,0)$ and  by $\gamma^u_{\eta_0,n}$ the unique connected component of $f^{-n}(\sigma^u_{\eta_0})\cap \{(q,p)\in[0,3a/2]^2\}$ which contains the point $f^{-n}(0,\eta_0)$ (see Figure~\ref{fig:Shilnikov}). Then, by the $\lambda$-lemma (see \cite{PalisdeMelo}, for instance), if $a$ is small enough, for any $n\in\mathbb N$ large enough, the sets
\[
\gamma^s_{\xi_0,n}\cap \sigma^u_{\eta_0}, \qquad\qquad \gamma^{u}_{\eta_0,n}\cap \sigma^s_{\xi_0},
\]
consist each of a unique point (see Figure~\ref{fig:Shilnikov}). The maps
\begin{equation}
\begin{array}{rl}
    F^s_{n}:\Sigma  & \to\mathbb R^2 \\
    (\xi,\eta) &\mapsto  \gamma^{u}_{\eta,n}\cap \sigma^s_\xi
\end{array}\qquad\qquad \begin{array}{rl}
    F^u_{n}:\Sigma  & \to\mathbb R^2 \\
    (\xi,\eta) &\mapsto  \gamma^s_{\xi,n}\cap \sigma^u_\eta
\end{array}
\end{equation}
are known as the Shilnikov stable and unstable maps associated to the map $f$ and the hyperbolic fixed point $O$.

\begin{figure}
\centering
\includegraphics[scale=0.65]{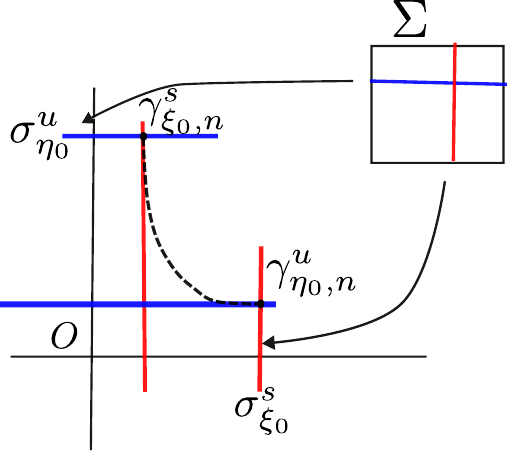}
\caption{Construction of the Shilnikov data.}
\label{fig:Shilnikov}
\end{figure}

In order to introduce these maps in the context of the Hamiltonian $\mathcal K$ in Proposition~\ref{prop:normalform}, we first consider the return map on the section $\{\phi=0\}$, which we denote by  $\Psi_{\mathcal K}:(q,p)\mapsto (\bar q,\bar p)$. Clearly $O=(0,0)$ is a (degenerate) fixed point for $\Psi_{\mathcal K}$ and its local stable (unstable) manifold is given by the $q$ axis ($p$ axis). Let $a>0$ be now such that
\[
\{(q,p)\in[0,3a/2]^2\}\subset V_\rho \times V_\rho.
\]
For any $n\in\mathbb N$ denote by $\psi^s_{\xi_0,n}$ the unique connected component of $\Psi_{\mathcal K}^n(\sigma^s_{\xi_0})\cap \{(q,p)\in[0,3a/2]^2\}$ which contains the point $\Psi^{n}_{\mathcal K}(\xi_0,0)$ and  by $\psi^u_{\eta_0,n}$ the unique connected component of $\Psi^{-n}_{\mathcal K}(\sigma^u_{\eta_0})\cap \{(q,p)\in[0,3a/2]^2\}$ which contains the point $\Psi^{-n}_{\mathcal K}(0,\eta_0)$. Suppose that, for $a>0$ sufficiently small and $n\in\mathbb N$ large enough, for any $(\xi_0,\eta_0)\in \Sigma$, the sets 
\begin{equation}\label{eq:Shilnikovassumption}
\psi^s_{\xi_0,n}\cap \sigma^u_{\eta_0} \qquad\qquad \psi^{u}_{\eta_0,n}\cap \sigma^s_{\xi_0},
\end{equation}
consist each of a unique point. Then, as in the case of a hyperbolic fixed point, we can define the stable and unstable Shilnikov maps by
\begin{equation}\label{eq:Shilnikovmapsdefn}
\begin{array}{rl}
    \Psi^s_{n}:\Sigma  & \to\mathbb R^2 \\
    (\xi,\eta) &\mapsto  \psi^{u}_{\eta,n}\cap \sigma^s_\xi
\end{array}\qquad\qquad \begin{array}{rl}
    \Psi^u_{n}:\Sigma  & \to\mathbb R^2 \\
    (\xi,\eta) &\mapsto  \psi^s_{\xi,n}\cap \sigma^u_\eta
\end{array}
\end{equation}

Of course, a priori, these maps might not be well defined. Namely, the assumption that \eqref{eq:Shilnikovassumption} consist of a unique point needs to be justified since the fixed point $O$ is degenerate so the classical $\lambda$-lemma cannot be applied (see Section~\ref{sec:faillambdalemma}). In the next proposition we show that the maps \eqref{eq:Shilnikovmapsdefn} are indeed well defined and, moreover, we provide asymptotic expressions for them. For the renormalization scheme in Section~\ref{sec:renormalization}, it will be crucial that we are able to describe the behavior of these maps on a suitable complex neighbourhood of $\Sigma$ in \eqref{eq:dfnsigmadelta}. To that end, for $\delta>0$ we let 
\begin{equation}\label{eq:complexextensionsigma}
\Sigma_\delta=\{\tau=(\xi,\eta)\in\mathbb{C}^2\colon \mathrm{dist}(\tau,\Sigma)\leq \delta\}.
\end{equation}

\begin{prop}\label{prop:Shilnikov}
Let $a>0$ be sufficiently small. Then, there exists  $T_0\gg1$ such that for all $\mu\in (0,1/2]$, all $T\in\mathbb N$,  $T\geq T_0$ and $\delta\gtrsim T^{-1}$,
the Shilnikov stable and unstable maps $\Psi^{s}_T$, $\Psi^u_T$ in \eqref{eq:Shilnikovmapsdefn} are well defined real-analytic maps on $\Sigma_\delta$  of the form 
\begin{equation}\label{eq:formShilnikovmaps}
\Psi^{s}_T(\xi,\eta)=(\xi,y_T(\xi,\eta)),\qquad\qquad \Psi^{u}_T(\xi,\eta)=(x_T(\xi,\eta),\eta),
\end{equation}
Moreover, uniformly for $(\xi,\eta)\in \Sigma_\delta$,
\begin{itemize}
    \item They satisfy the asymptotic expressions
\begin{equation}\label{eq:asymptoticShilnikov}
\begin{split}
x_T(\xi,\eta)= &\left(\frac{\pi}{16 T}\right)^{2/3} \eta^{-1}\left( 1+\mathcal{O}(T^{-1})\right),\\
y_T(\xi,\eta)=& \left(\frac{\pi}{16 T}\right)^{2/3} \xi^{-1}\left( 1+\mathcal{O}(T^{-1})\right),
\end{split}
\end{equation}
\item Their first derivatives satisfy
\begin{align}\label{eq:asymptoticShilnikovderiv}
\partial_\eta x_T(\xi,\eta)= &-\left(\frac{\pi}{16 T}\right)^{2/3} \eta^{-2}\left(1+\mathcal O(T^{-1})\right)\\
\partial_\xi x_T(\xi,\eta)=&\frac{2}{3T^{5/3}}\left(\frac{\pi}{16 }\right)^{2/3} (\xi\eta)^{-5/2}\left(1+\mathcal O(a)\right)\label{eq:asymptoticShilnikovderiv2}
\end{align}
and 
\begin{align}\label{eq:asymptoticShilnikovderivy}
\partial_\xi y_T(\xi,\eta)= &-\left(\frac{\pi}{16 T}\right)^{2/3} \xi^{-2}\left(1+\mathcal O(T^{-1})\right)\\
\partial_\eta y_T(\xi,\eta)=&\frac{2}{3T^{5/3}}\left(\frac{\pi}{16 }\right)^{2/3} (\xi\eta)^{-5/2}\left(1+\mathcal O(a)\right)\label{eq:asymptoticShilnikovderiv2y}
\end{align}
\item For $\alpha=(\alpha_\xi,\alpha_\eta)$ with $|\alpha|=2$ (in multiindex notation)
\[
\partial^\alpha x_T(\xi,\eta)=\mathcal{O}(T^{-2/3})\qquad\qquad \partial^\alpha y_T(\xi,\eta)=\mathcal{O}(T^{-2/3})
\]
\end{itemize}
\end{prop}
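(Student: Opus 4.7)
My plan is to exploit the near-integrable structure of the normal form $\mathcal K$ given by Proposition~\ref{prop:normalform}, since the classical $\lambda$-lemma is unavailable at the degenerate saddle. For the truncated Hamiltonian $H_0(q,p) = -qp$ with singular symplectic form $\omega$, a direct computation shows that $I := qp$ is a constant of motion. Introducing $w = \log(q/p)$, one has $q+p = 2\sqrt{I}\cosh(w/2)$ and
\begin{align*}
\dot w \;=\; -2(q+p)^3 \;=\; -16\, I^{3/2}\cosh^3(w/2),
\end{align*}
which is separable. Integrating over the (rescaled) time $\tau_T$ corresponding to $T$ iterations of the return map yields the key identity
\begin{align*}
8\, I^{3/2}\tau_T \;=\; \int_{v_f}^{v_0}\! \mathrm{sech}^3(v)\, dv, \qquad v_0 = \tfrac{1}{2}\log(\xi^2/I),\quad v_f = -\tfrac{1}{2}\log(\eta^2/I),
\end{align*}
where the endpoints encode that the initial point lies on $\sigma^s_\xi$ (so $p_0 = I/\xi$) and the final point on $\sigma^u_\eta$ (so $q_f = I/\eta$). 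As $T\to\infty$ both $v_0, -v_f \to \infty$ and the integral converges to $\int_\R \mathrm{sech}^3 v\, dv = \pi/2$; inverting produces $I \sim (\pi/(16\tau_T))^{2/3}$ and, after matching the time normalization $\tau_T \propto T$ inherited from $\dot\phi = -2^{7/2}$, the leading terms of $x_T = I/\eta$ and $y_T = I/\xi$ stated in \eqref{eq:asymptoticShilnikov}.

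To promote this truncated calculation to the full normal form I will show that $\mathcal K - H_0 = q^4 p^4 k$ has a negligible effect over the timescale $\tau_T$. A direct Poisson bracket computation with $\omega$ gives $\dot I = (q+p)^3 q^4 p^4\bigl(p\,\partial_p k - q\,\partial_q k\bigr)$, and since $\int_0^{\tau_T}(q+p)^3\, ds = \tfrac{1}{2}(w_0 - w_f) = O(\log T)$ along truncated orbits, the total drift satisfies $|\Delta I|\lesssim I^4 \log T$, which is far smaller than $I\sim T^{-2/3}$. Choosing $N$ sufficiently large in Proposition~\ref{prop:normalform} makes all the perturbative corrections decay faster than any algebraic power of $T$ entering the estimates, and an entirely analogous estimate controls the drift of $w$. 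Consequently, the full orbits remain uniformly close to the truncated ones, both in real and in complex directions.

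With these quantitative estimates in hand, the Shilnikov maps are produced via the implicit function theorem. In the truncated flow, the map $p_0 \mapsto (\text{$p$-coordinate of }\Phi^{\tau_T}(\xi,p_0))$ is strictly monotone on $[0,a/4]$ with derivative comparable to the unstable expansion produced by traversing the saddle, and both monotonicity and a quantitative lower bound on this derivative persist under the perturbation. Hence, for each $(\xi,\eta)\in\Sigma$ there is a unique $p_0 = y_T(\xi,\eta)$ whose $T$-th iterate under $\Psi_{\mathcal K}$ lies on $\sigma^u_\eta$, and similarly $x_T$ is defined. Analytic dependence of the flow on initial conditions extends the construction to $\Sigma_\delta$ with $\delta\gtrsim T^{-1}$, the size being dictated by the requirement that the complex orbit remain in $V_\rho\times V_\rho$ throughout the time interval $[0,\tau_T]$. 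The first-order asymptotics \eqref{eq:asymptoticShilnikovderiv}--\eqref{eq:asymptoticShilnikovderiv2y} are obtained by implicitly differentiating the time--energy identity displayed above, and the quadratic bound $\partial^\alpha x_T,\partial^\alpha y_T = O(T^{-2/3})$ for $|\alpha|=2$ follows from Cauchy estimates on $\Sigma_\delta$, trading a factor of $\delta^{-1}\sim T$ against the size of the Shilnikov data.

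The main obstacle is the simultaneous control of the non-integrable perturbation over the long polynomial time $\tau_T \sim T$ and on a complex extension whose radius $\delta \sim T^{-1}$ shrinks as $T\to\infty$. In the hyperbolic setting this would be automatic because the time-of-flight past a hyperbolic saddle is merely logarithmic in the distance to it, but the parabolic decay here forces one to iterate for $T$ steps with each iteration producing only an $O(T^{-1})$ motion. The essential technical device is the high-order normal form of Proposition~\ref{prop:normalform}, which makes the perturbation $q^4 p^4 k$ decay faster than any of the algebraic powers of $T$ that appear in the estimates, and thereby allows the perturbative construction, together with the linearized equations needed for the derivative formulas, to close uniformly on the shrinking complex disks $\Sigma_\delta$.
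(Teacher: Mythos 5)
Your approach is genuinely different from the paper's, and in a clean and illuminating way: you exploit the near-conservation of $I=qp$ for the leading part $-qp$ of $\mathcal K$, reduce the local flow to a scalar ODE for $w=\log(q/p)$, and obtain the time-of-flight identity
\[
8\,I^{3/2}T \;=\; \int_{v_f}^{v_0}\frac{dv}{\cosh^3 v},
\qquad v_0=\tfrac12\log(\xi^2/I),\; v_f=-\tfrac12\log(\eta^2/I),
\]
by separation of variables. The paper instead poses the boundary-value problem $q(0)=\xi$, $p(s_*)=\eta$ for the time-rescaled flow and solves it as the fixed point of the integral operator \eqref{eq:integraloperator} in a Banach space of curves with weighted sup norm $\max_s\bigl(|q(s)e^{s}|+|p(s)e^{s_*-s}|\bigr)$, after which $s_*$ is recovered from $T=\int_0^{s_*}(q_*+p_*)^{-3}\,ds$. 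Both routes end up with exactly the same function $G(\sigma)=\int_{-\infty}^{\sigma}(e^u+e^{-u})^{-3}\,du$ and the same $(\pi/(16T))^{2/3}$ leading behaviour, so your derivation of the asymptotics \eqref{eq:asymptoticShilnikov} is a real simplification rather than a re-packaging. What the paper's contraction-mapping formulation buys is simultaneous, quantified control of the whole orbit (not just $I$ and $w$), which is then used directly in the complex-analytic estimates.

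There are, however, concrete gaps. First, the well-definedness and real-analyticity of $\Psi^{s,u}_T$ on a complex $\delta$-neighbourhood of $\Sigma$ with $\delta\sim T^{-1}$ is the substantive claim of the proposition, and you reduce it to "monotonicity $+$ IFT" which is exactly what needs to be established. The essential quantitative input is the analogue of Lemma~\ref{lem:techlemmaimaginary}: the complex orbit must remain inside the sectorial domain $V_\rho\times V_\rho\times R_\kappa$ for all $s\in[0,s_*]$, and this is what forces the constraint $\delta\lesssim e^{-3s_*/2}\sim T^{-1}$. You flag this constraint but do not verify it; your drift bounds on $\dot I$ are carried out over the reals and must be redone with full control of the imaginary parts of $q,p,t$ along the flow. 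Second, your statement that taking $N$ large in Proposition~\ref{prop:normalform} "makes all the perturbative corrections decay faster than any algebraic power of $T$" is not accurate: with $\mathcal K-(-qp)=q^4p^4k$, $k=\mathcal O_N(q,p)$, one gets $|\Delta I|\lesssim I^4 a^N\sim a^N T^{-8/3}$; the gain from $N$ is in powers of the fixed small constant $a$, not in powers of $T$. This is still $\ll I$, so the conclusion survives, but the stated rationale is wrong. Third, for the derivative estimates the phrase "implicitly differentiating the time--energy identity, plus Cauchy estimates" glosses over the fact that a direct Cauchy estimate on an $O(T^{-1})$ error with $\delta\sim T^{-1}$ only yields $O(1)$ for first derivatives and $O(T)$ for second derivatives. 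One has to first extract an explicit next-order term (as the paper does with the $\frac{1}{3T(\xi\eta)^{3/2}}c(\xi,\eta)$ correction in \eqref{eq:asymptoticexpressionsstar}) and apply Cauchy estimates only to the residual; otherwise the claimed $\mathcal O(T^{-2/3})$ bound for $|\alpha|=2$ does not follow from the available ingredients. Your implicit differentiation does correctly recover the $T^{-5/3}$ scale of $\partial_\xi x_T$, so the strategy is right, but the error budget as described does not close.
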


As we will see in Sections~\ref{subsec:tangencies}, ~\ref{sec:renormalization},~\ref{sec:pfthm1}~and~\ref{sec:pfthm2}, the Shilnikov maps are an extremely useful tool to understand the homoclinic picture at the degenerate saddle $O$. The fact that $\partial_\xi x_T\neq 0$ and  $\partial_\eta y_T\neq 0$ guarantees that $\Psi_{T}^{s,u}$ are local diffeomorphisms. This is crucial for developing the renormalization scheme in Section~\ref{sec:renormalization}.

\begin{rem}
    From now on, we will work on three different scales and we make use of three different coordinate systems:
    \begin{itemize}
        \item We will denote by $(q,p)$ the macroscopic coordinates introduced in Proposition~\ref{prop:normalform}.
        \item We will use $(\xi,\eta)$ to denote coordinates on $\Sigma$ (the domain for the Shilnikov maps). One can think of these coordinates as describing the intermediate scale as they zoom in regions which are at distance $\mathcal O(T^{-2/3})$ of the local invariant manifolds.
        \item In Proposition~\ref{prop:mainresultrenormalization} we introduce microscopic coordinates $(Q,P)$ which zoom in a region of size $\mathcal O(T^{-5/3}, T^{-10/3})$.
    \end{itemize}
\end{rem}

\subsection{Secondary homoclinic tangencies to the degenerate saddle~$O$}\label{subsec:tangencies}

It has been shown in \cite{MartinezPinyol} that, for all $\mu>0$ sufficiently small, $W^{s}(O)\pitchfork W^u(O)\neq\emptyset$. For a sequence of parameter values one can also prove the existence of (secondary) quadratic homoclinic tangencies.

\begin{prop}\label{lem:homoclinictangencies}
There exists a sequence $\{\mu_n\}_{n\in\mathbb N}$ converging to zero such that, for every $\mu\in\{\mu_n\}_{n\in\mathbb N}$, there exists a point $z_n\in \mathbb{R}^2$ at which the map $P_{\mu_n}$ has a quadratic homoclinic tangency between $W^{u,s}(O)$ which unfolds generically with $\mu$.
\end{prop}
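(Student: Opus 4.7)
The plan is to construct the tangency points as \emph{secondary} homoclinic tangencies, i.e.\ as orbits that leave $O$ along $W^u(O)$, travel once through the global dynamics, re-enter a neighborhood of $O$, spend $T$ iterations there (where the dynamics is described by the Shilnikov map), and only then approach $W^s(O)$. The two ingredients are: the existence of transverse primary homoclinic intersections of $W^{u,s}(O)$ for $\mu$ small, together with a non-degenerate dependence of the primary splitting on $\mu$, as established in \cite{MartinezPinyol}; and the Shilnikov stable/unstable maps $\Psi^{s,u}_T$ of Proposition~\ref{prop:Shilnikov}, which substitute for the $\lambda$-lemma at the degenerate saddle and come equipped with the sharp asymptotics \eqref{eq:asymptoticShilnikov}--\eqref{eq:asymptoticShilnikovderiv2y}.

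Fix $\mu>0$ small, let $p(\mu)$ be a transverse primary homoclinic point depending smoothly on $\mu$, and fix a short arc $\alpha(\mu)\subset W^u(O)$ through $p(\mu)$. First, I would pull $\alpha(\mu)$ back until it crosses the section $\sigma^u_{\eta_0}$ near $(0,\eta_0)\in W^u_{\mathrm{loc}}(O)$, and then apply $\Psi^u_T$ to transport it to $\sigma^s_{\xi_0}$. By \eqref{eq:asymptoticShilnikov}, the image lies at $q$-height $x_T\sim(\pi/16T)^{2/3}\eta^{-1}$, and by \eqref{eq:asymptoticShilnikovderiv2} it bends transverse to $W^u_{\mathrm{loc}}(O)$ with the small but non-vanishing profile $\partial_\xi x_T\sim T^{-5/3}(\xi\eta)^{-5/2}$, which has definite sign. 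Propagating forward along the global branch back into a neighborhood of $p(\mu)$ via the bounded smooth outer dynamics yields a family of arcs $\Gamma_T(\mu)\subset W^u(O)$ lying close to $W^s(O)$. Let $d_T(\mu)$ denote the signed transverse distance between the extremum of $\Gamma_T(\mu)$ and $W^s(O)$. The three items to verify are: (i) $d_T$ changes sign as $\mu$ varies in a small interval $(\mu_T^-,\mu_T^+)$ shrinking with $T$, because the primary splitting translates $\Gamma_T$ linearly in $\mu$ (to leading order, from \cite{MartinezPinyol}) and dominates the Shilnikov corrections; (ii) the extremum corresponds to a genuine quadratic contact, since the first nontrivial term in the Taylor expansion of $\Gamma_T(\mu)$ transverse to $W^s(O)$ is controlled by $\partial_\xi x_T$, which is of definite sign and of order $T^{-5/3}$; (iii) the unfolding is generic, $\partial_\mu d_T(\mu_T)\neq0$, again by comparison with the primary splitting derivative. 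Setting $\mu_n:=\mu_{T_n}$ for any $T_n\to\infty$ then produces the desired sequence accumulating at~$0$.

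The main difficulty I foresee is the quantitative matching of scales: one must compare the primary splitting and its $\mu$-derivative (polynomially small in $\mu$, by the formula in \cite{MartinezPinyol}) with the Shilnikov-induced corrections of sizes $T^{-2/3}$ and $T^{-5/3}$, and ensure that the $\mu$-derivative of the gap function dominates every Shilnikov error term uniformly as $T\to\infty$. In practice, this forces a specific relationship between $T$ and the bifurcation window $(\mu_T^-,\mu_T^+)$, and one has to choose the sequence $T_n$ so that $\mu_{T_n}$ lies in the correct regime at each step. Apart from this, once the scales are aligned, the quadratic character of the tangency follows directly from the definite-sign leading term in $\partial_\xi x_T$, so Proposition~\ref{prop:Shilnikov} does most of the heavy lifting.
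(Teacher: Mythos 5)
Your route is genuinely different from the paper's and I think it has a gap in the way it produces the quadratic contact, so let me explain both.

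The paper does \emph{not} iterate a secondary arc of $W^u(O)$ and compare it directly with $W^s(O)$. Instead it exploits (Duarte's) approximate symmetry under the involution $R(q,p)=(p,q)$ and compares $W^u(O)$ with the \emph{diagonal} $\Delta=\{q=p\}$: Lemma~\ref{lem:transversesplitting} writes a compact piece of $W^u(O)$ as a graph $p=M_\mu(q)$ with the oscillatory Melnikov profile $M_\mu(q)\sim\frac{\mu}{2q}\sigma_J\sin\big((4\mathcal J/q)^3\big)$, Lemma~\ref{lem:diagonal} writes the backward Shilnikov image $\Phi^{-T/2}_{\mathrm{loc}}(\Delta)$ as the \emph{monotone} graph $p=y_T(q,q)\sim(\pi/16T)^{2/3}q^{-1}$, and Lemma~\ref{lem:exist_secondary_tangency} finds a double zero of $\delta(q,\mu;n)=M_\mu(q)-y_n(q,q)$ by solving $\delta=\partial_q\delta=0$ in the two unknowns $(q,\mu)$. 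The tangency between $W^u(O)$ and $W^s(O)$ then follows from approximate $R$-symmetry.

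The key difference, and where I believe your argument breaks, is the source of the quadratic contact. You attribute the parabolic shape of the returning arc $\Gamma_T(\mu)$ to the derivative $\partial_\xi x_T=\mathcal O(T^{-5/3}(\xi\eta)^{-5/2})$. But in the paper the quadratic coefficient in the Taylor expansion at the tangency is
\[
\beta_n=\partial^2_{q}\delta_n(q_n,\mu_n)\sim 1152\,\frac{\mu_n}{q_n^9}\,\sigma_J\sin\big(4^3q_n^{-3}\big)\sim n^{-2/3}a^{-9},
\]
i.e.\ it is dominated by the second $q$-derivative of the Melnikov function, \emph{not} by the Shilnikov map. With $T=n$ the quantity $\partial_\xi x_T\sim T^{-5/3}a^{-5}$ you point to is smaller by a factor of order $n\,a^{-4}\gg 1$; it is a subdominant correction. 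In other words, the quadratic fold is created by the oscillation of $M_\mu$ hitting the slowly varying Shilnikov-transported curve, not by the intrinsic curvature of the Shilnikov map. Once that is recognized, the oscillation also gives the sequence of parameters for free: the amplitude-matching $M_\mu\sim y_n$ automatically forces $\mu_n\sim n^{-2/3}$ (since $M_\mu\sim\mu/q$ and $y_n\sim n^{-2/3}/q$), so the scale matching you flag as a delicate step in your proposal is resolved by construction rather than by a separate estimate.

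Your overall architecture (iterate a fundamental domain of $W^u(O)$ through the Shilnikov zone, then push with the global map and detect a sign change) is the Palis--Takens/Mora--Romero blueprint for a hyperbolic saddle and could in principle be made to work, but the shape of the returning arc is a nontrivial composition of a $\mu$-dependent Shilnikov map and a $\mu$-dependent global map; the paper's use of $\Delta$ gives a clean monotone comparison graph so that only the Melnikov oscillation is responsible for the fold, which keeps the bookkeeping manageable and makes the quadratic structure and genericity of the unfolding transparent.
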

The proof of Proposition~\ref{lem:homoclinictangencies} follows from an idea which already appears in the work of Duarte \cite{Duarteabundance} (see also \cite{GorodetskiK12}). We however provide a more direct proof in Section~\ref{sec:tangency} which makes use of the Shilnikov stable and unstable maps constructed in Proposition~\ref{prop:Shilnikov}.

\subsection{Renormalization in a neighborhood of a homoclinic tangency}\label{sec:renormalization}
We finally prove that large enough iterates of the map $\Psi_{\mathcal K}$ (recall that we denote by $\Psi_{\mathcal K}$ the return map to the section $\{\phi=0\}$ associated to the flow of the Hamiltonian $\mathcal K$ in Proposition~\ref{prop:normalform}) close to the homoclinic tangency can be renormalized, with the critical H\'{e}non map showing up in the limit process. This can be seen as an extension of the result in \cite{MoraRomero} to the degenerate saddle $O$. The main tool involved in this construction are the Shilnikov maps constructed in Proposition~\ref{prop:Shilnikov}, which in the present setting are essentially nonlinear (in the case of a hyperbolic fixed point these maps can be approximated by linear maps).

In order to describe the domain where the rescaled dynamics converges to the H\'{e}non map we denote by $B_{10}\subset \mathbb C^2$ the complex ball of radius $10$ centered at the origin. 

\begin{prop}\label{prop:mainresultrenormalization}
    Let $P_\mu$ denote the Poincar\'{e} map in \eqref{eq:mainpoincaremap} and let $\mu\in (0,1/2)$ be such that $W^s(O)$ and $W^u(O)$ possess a quadratic homoclinic tangency which unfolds generically. Denote by $z$ the corresponding  homoclinic tangency for the map $P_{\mu}$. Then,  there exists $L\in\mathbb{N}$ such that, for any $T\in\mathbb N$ sufficiently large, there exist:
    \begin{itemize}
        \item An affine reparametrization $\varphi_{T}:[-2,2]\to\mathbb{R}$, which maps $\varphi_{T}:\kappa\mapsto \mu=\varphi_{T}(\kappa)$,
        \item A real-analytic change of variables $\Psi_{\kappa,T}: B_{10}\to \mathbb{C}^2$
    \end{itemize}
    such that, as $T\to\infty$,
    \[
    \varphi_{T}([-2,2])\to \mu,\qquad\qquad \Psi_{\kappa,T}(B_{10})\to z,
    \] 
    and,  uniformly for $(Q,P)\in B_{10}$, \begin{equation}\label{eq:convergenceHenon}
    \Psi_{\kappa,T}^{-1}\circ P_{\varphi_{T}(\kappa)}^{L+T}\circ \Psi_{\kappa,T}(Q,P)=(P,\kappa-Q-P^2)+\mathcal{O}(T^{-2/3}).
    \end{equation}
\end{prop}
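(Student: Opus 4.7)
The plan is to decompose $P_\mu^{L+T}$ into a local part of $T$ iterations near $O$ (encoded by the Shilnikov maps of Proposition~\ref{prop:Shilnikov}) composed with a global part of $L$ iterations along the homoclinic orbit of $z$. Using the normal-form coordinates of Proposition~\ref{prop:normalform}, so that $W^s_{\mathrm{loc}}(O)=\{p=0\}$ and $W^u_{\mathrm{loc}}(O)=\{q=0\}$, I would pick transverse sections $\sigma^s_{\xi_*}$ and $\sigma^u_{\eta_*}$ near $O$ containing the points $z_s=\sigma^s_{\xi_*}\cap W^s$ and $z_u=\sigma^u_{\eta_*}\cap W^u$ lying on the orbit of $z$ at $\mu=\mu_0$, and let $L\in\N$ be the number of iterations by which $P_{\mu_0}^L$ sends a neighborhood of $z_u$ onto one of $z_s$. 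Then on a small neighborhood of $z_s$ one has $P_\mu^{L+T}=G_\mu\circ\mathcal S_T$, where $\mathcal S_T$ is the local $T$-iterate defined implicitly by the BVP $p_{\mathrm{in}}=y_T(\xi,\eta)$, $\mathcal S_T(\xi,p_{\mathrm{in}})=(x_T(\xi,\eta),\eta)$ of Proposition~\ref{prop:Shilnikov}, and $G_\mu=P_\mu^L$ is the global map along the homoclinic loop.

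Next I would expand $G_\mu$ in local coordinates $(u,v)$ centered at $z_u$ (with $u=0$ on $W^u$). The hypotheses of quadratic tangency with generic unfolding, together with the diffeomorphism/area-preservation property of $G_\mu$, force
\[
G_\mu(u,v)=\bigl(\xi_*+a_{10}u+a_{01}v+\mathcal{O}_2,\ c(\mu-\mu_0)+b_{10}u+\tfrac12 b_{02}v^2+\mathcal{O}_3\bigr),
\]
with $a_{01}, b_{10}, b_{02}, c\neq 0$: tangency gives $\partial_v G_{\mu_0,2}(0,0)=0$; the quadratic character gives $b_{02}\neq 0$; generic unfolding gives $c\neq 0$; and the Jacobian nonvanishing gives $a_{01}b_{10}\neq 0$. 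I would then introduce the rescaling and parameter reparametrization
\[
\Psi_{\kappa,T}(Q,P)=(\xi_T+\alpha_T Q,\,p_T+\beta_T P),\qquad \varphi_T(\kappa)=\mu_0+\gamma_T+\delta_T\kappa,
\]
with $\alpha_T\asymp T^{-5/3}$, $\beta_T\asymp T^{-10/3}$, $\gamma_T\asymp T^{-2/3}$, $\delta_T\asymp T^{-10/3}$, and $(\xi_T,p_T)\to z_s$. The translation $(\xi_T,p_T)$ and the shift $\gamma_T$ are pinned down by demanding that $\Psi_{\kappa,T}(0,0)$ be an approximate fixed point of $P_{\varphi_T(\kappa)}^{L+T}$: concretely, $\gamma_T$ absorbs the drift coming from $y_T(\xi_*,\eta_*)\sim T^{-2/3}$ and from $G(x_T(\xi_*,\eta_*),0)\sim T^{-2/3}$, its exact value being determined by the area-preservation constraint $\det DP_\mu^{L+T}\equiv 1$.

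To verify the Hénon limit I would invert the BVP $p_0=y_T(\xi,\eta)$ using the asymptotics of Proposition~\ref{prop:Shilnikov}, obtaining
\[
\eta-\eta_*=c_1T^{-2/3}(Q-Q_T)+c_2T^{-5/3}(P-P_T)+\mathcal{O}(T^{-7/3}),
\]
and then substitute into $G_{\varphi_T(\kappa)}(x_T(\xi,\eta),\eta-\eta_*)$ and expand. After division by $\alpha_T$ and $\beta_T$: the $\tfrac12 b_{02}v^2$ term supplies $-P^2$ (after absorbing $b_{02}$ into $\beta_T$); the $\delta_T\kappa$ shift supplies $+\kappa$; the leading $a_{01}v$ contribution, rescaled by $\beta_T/\alpha_T$, supplies the first-coordinate relation $Q'=P$; and a delicate combination of the quadratic corrections to $G$ with the subleading $\partial_\xi x_T\sim T^{-5/3}$ term from \eqref{eq:asymptoticShilnikovderiv2} of Proposition~\ref{prop:Shilnikov} supplies the $-Q$ term. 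The errors $\mathcal{O}(T^{-2/3})$ originate from the $(1+\mathcal{O}(T^{-1}))$ factors in the Shilnikov asymptotics and from the cubic terms in $G_\mu$.

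The main obstacle is the bookkeeping: individual derivatives of $G_\mu\circ\mathcal S_T$ scale like $T^{5/3}$ or $T$ and fail to produce the Hénon Jacobian without a simultaneous use of (i) area preservation, which fixes $\gamma_T$ and annihilates the would-be leading $T^2$ terms of $\det DP^{L+T}$, and (ii) the subleading $\partial_\xi x_T=\mathcal{O}(T^{-5/3})$ term, which is essential for producing the $-Q$ contribution in $P'$; notably, a purely $\mathcal{C}^1$ analysis of the local dynamics would be insufficient for this. A secondary technical point is the real-analyticity of $\Psi_{\kappa,T}$ on the complex ball $B_{10}$, which follows from extending the computations to the complex domain $\Sigma_\delta$ with $\delta\gtrsim T^{-1}$ guaranteed by Proposition~\ref{prop:Shilnikov}; this comfortably contains the rescaled neighborhood $\Psi_{\kappa,T}(B_{10})$, which has diameter $\mathcal{O}(T^{-5/3})$ in the $\xi$-direction.
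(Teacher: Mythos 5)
Your decomposition $P_\mu^{L+T}=G_\mu\circ\mathcal S_T$ with $\mathcal S_T=\Psi_T^u\circ(\Psi_T^s)^{-1}$ is exactly the one the paper uses, and your read of the global map's quadratic structure and the parameter scaling ($\gamma_T\asymp T^{-2/3}$, $\delta_T\asymp T^{-10/3}$) is also consistent with the paper. The genuine gap is in the choice of the change of variables
\[
\Psi_{\kappa,T}(Q,P)=(\xi_T+\alpha_T Q,\ p_T+\beta_T P),\qquad \alpha_T\asymp T^{-5/3},\ \beta_T\asymp T^{-10/3},
\]
which you take to be a \emph{diagonal} affine map in the $(q,p)$ normal-form coordinates near $z_s$. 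This cannot produce the H\'enon limit, for the following reason. The paper's renormalizing change is $\Psi_{\kappa,T}=\Psi_T^s\circ h_T$, where $h_T$ is an \emph{isotropic} affine rescaling of $\Sigma$ by $\alpha=\partial_\eta y_T\sim T^{-5/3}$; since $\Psi_T^s(\xi,\eta)=(\xi,y_T(\xi,\eta))$ with $\partial_\xi y_T\sim T^{-2/3}\gg\partial_\eta y_T\sim T^{-5/3}$, the composition $\Psi_T^s\circ h_T$ is, to linear order, a strongly sheared map. Comparing it with your diagonal rescaling, the discrepancy is a shear $M_T(Q,P)=(Q,\,P-sQ)$ with $s\sim\partial_\xi y_T/\partial_\eta y_T\sim T$. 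If $(\Psi_T^s\circ h_T)^{-1}\circ P^{L+T}\circ(\Psi_T^s\circ h_T)\to\mathtt H$, then your conjugate equals $M_T^{-1}\circ\mathtt H\circ M_T$ up to the $\mathcal O(T^{-2/3})$ error, and conjugating the H\'enon map by a shear of size $T$ destroys its form --- the first coordinate becomes $P-sQ$ rather than $P$, and the second picks up terms of order $T$ and $T^2$. Your own intermediate formula already signals the problem: you invert the boundary value problem and obtain $\eta-\eta_*\approx c_1 T^{-2/3}(Q-Q_T)+c_2T^{-5/3}(P-P_T)$, so $v$ carries a $T^{-2/3}Q$ contribution; after substituting into $\tfrac12 b_{02}v^2$ and $b_{10}u$ and dividing by $\beta_T\sim T^{-10/3}$, the $Q$-dependence produces $T^2Q^2$ and $TPQ$ terms in the second output coordinate with no mechanism to cancel them (the $b_{10}u$ term contributes linearly in $Q$ while $b_{02}v^2$ contributes quadratically, so they cannot cancel each other). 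Area preservation fixes the global determinant but does not annihilate a shear of this size.

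The fix, which is the paper's main device, is precisely \emph{not} to use a straight affine rescaling in the physical coordinates: one must pre-compose (resp.\ post-compose) with $\Psi_T^u$ (resp.\ $(\Psi_T^s)^{-1}$), so that the renormalization acts on the intermediate $(\xi,\eta)$ coordinates of $\Sigma$, where an isotropic rescaling by $\alpha\sim T^{-5/3}$ suffices (plus the $\mathcal O(1)$ linear adjustment $\hat h$). The Shilnikov maps then absorb the shear: in \eqref{eq:leftrenorm} and \eqref{eq:rightrenorm} the dangerous $\partial_\xi y_T$ contributions appear with matching coefficients on both sides of the comparison and cancel by construction, leaving only the H\'enon terms at order $\alpha^2$. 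You would have to build this shear into $\Psi_{\kappa,T}$ by hand (e.g.\ a term $\propto T^{-7/3}Q$ in the $p$-component) to rescue your version; as written the diagonal form is wrong.
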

\vspace{0.3cm}

\begin{rem}
It may come as a surprise that we can actually show that \eqref{eq:convergenceHenon} holds uniformly on the complex ball $B_{10}$ (which is a fixed domain independent of $T)$. Indeed, the change of variables $\Psi_{\kappa,T}$ is defined through the Shilnikov maps \eqref{eq:Shilnikovmapsdefn}, which as we have seen in Proposition~\ref{prop:Shilnikov}, are only defined on a $\delta$ complex neighbourhood of $\Sigma$ with $\delta\gtrsim T^{-1}$. The reason why \eqref{eq:convergenceHenon} holds uniformly on $B_{10}$ is that $\Psi_{\kappa,T}$ also involves a suitable rescaling by a factor $\alpha\sim T^{-5/3}$. Namely, in the original variables (in which the Shilnikov maps are defined) we are looking at regions of size $T^{-5/3}$, which is much smaller than $\delta$, around a point contained in the real plane.
\end{rem}\vspace{0.2cm}
\begin{rem}\label{rem:convergence}
We highlight that, for any fixed $r\in\mathbb N$, the estimate in \eqref{eq:convergenceHenon} guarantees that, for sufficiently large $T$, the map   $\Psi_{\kappa,T}^{-1}\circ P_{\varphi_T(\kappa)}^{L+T}\circ \Psi_{\kappa,T}$ converges to the H\'{e}non map in the $\mathcal C^{r}$ topology uniformly for $(\xi,\eta)\in[-5,5]^2$.
\end{rem}
\vspace{0.3cm}

The proof of Proposition~\ref{prop:mainresultrenormalization} is deferred to Section~\ref{sec:returnmap}. We are now ready to finish the proof of Theorem~\ref{thm:main}.

\subsection{Proof of Theorem~\ref{thm:main}}\label{sec:pfthm1}

We divide the proof in two parts.
\subsubsection{Basic sets with robust homoclinic tangencies}
The first part of Theorem~\ref{thm:main} follows from classical results of Duarte \cite{Duarteabundance}. Indeed, Theorem A in that paper shows the existence of a Newhouse domain, which contains $\kappa=-1$ in its closure,  for the H\'{e}non map
\[
(Q,P)\mapsto (P,\kappa-Q-P^2).
\]
 By \eqref{eq:convergenceHenon}, for any $\kappa\in[-2,2]$, for a fixed $L$ large enough and any  sufficiently large $T$,  there exists a value of $\mu=\varphi_{n,T}(\kappa)$ and a domain in which the  map $P_\mu^{L+T}$ is, modulo a conjugacy,  $\mathcal{O}(T^{-2/3})$ close in the $\mathcal C^2$ topology (see Remark~\ref{rem:convergence}) to the H\'{e}non map with a parameter $\kappa$. This implies that, for each $n\in\mathbb N$ sufficiently large and each $\mu_n$ as in Proposition~\ref{lem:homoclinictangencies}, there exists an open set $U_n$ containing $\mu_n$ in its closure such that, for any $\mu\in U_n$, there exists a basic set $\Lambda_\mu$ of $P_\mu$ which exhibits robust homoclinic tangencies.

\subsubsection{The basic sets $\Lambda_\mu$ are homoclinically related to $\mathcal O$}\label{sec:homrelated}

It remains to prove that the sets $\Lambda_\mu$ are homoclinically related to the degenerate saddle $\mathcal O$. The proof follows the same lines as that of Proposition 1, Chapter 6.4 in the book \cite{PalisTakensbook} by Palis and Takens (see also \cite{Duarteabundancebif}), for the non-degenerate case (i.e. wild sets homoclinically related to a hyperbolic fixed point). However, some of the objects in their construction are not well defined in our setting and must be replaced by appropriate substitutes. We therefore reproduce their proof and indicate, when needed, the corresponding modifications. Throughout this section we fix a value of $n\in\mathbb N$ and omit the dependence on $n$ from the notation.

\begin{figure}
\centering
\includegraphics[scale=0.85]{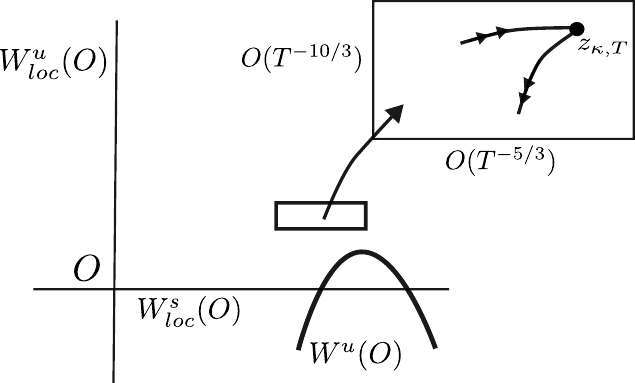}
\caption{The hyperbolic fixed point $z_{\kappa,T}$ and its local invariant manifolds.}
\label{fig:Localmanifolds}
\end{figure}

Roughly speaking the idea of the proof is the following. Fix a value of $T$ sufficienly large. Then, for any $\kappa$ close to $-1$, the map $P^{L+T}_{\varphi_T(\kappa)} $ has a hyperbolic fixed point $Z_{\kappa,T}$ which is contained in the basic set $\Lambda_{\varphi_T(\kappa)}$ (see Figure~\ref{fig:Localmanifolds}). We denote by 
\[
z_{\kappa,T}=\Psi_{\kappa,T}(Z_{\kappa,T})
\]
the corresponding point in $(q,p)$ coordinates (the coordinates given by Proposition~\ref{prop:normalform}). Let $W^{u,s}(z_{\kappa,T})$ denote their unstable and stable manifolds and suppose that there exists a compact piece of $W^{u}(z_{\kappa,T})$ (resp. $W^{s}(z_{\kappa,T})$) which is arbitrarily close (taking $T$ large enough) to a compact piece of $W^{u}(O)$ (resp. $W^s(O)$) which covers several (more than one) fundamental domains. Then, as $W^{u}(O)$ and $W^s(O)$ intersect transversally \cite{MartinezPinyol}, for $T$ large enough $W^{u}(z_{\kappa,T})\pitchfork W^s(O)\neq\emptyset$ and $W^{s}(z_{\kappa,T})\pitchfork W^u(O)\neq\emptyset$.

The main difficulty to carry out this argument is that, Proposition~\ref{prop:mainresultrenormalization} only gives information on very small pieces of $W^{u,s}_{loc}(z_{\kappa,T})$. In the following we show how to enlarge these pieces while controlling their geometry. 
To that end, we define the sets 
\[
V=\Psi^u_T([a/2,3a/2]^2),\qquad\qquad H=\Psi^s_T([a/2,3a/2]^2),
\]
and the vertical and horizontal foliations $\mathcal F_v, \mathcal F_h$ of $V$ and $H$ whose leaves correspond, respectively, to the curves (see Figure~\ref{fig:Foliations})
\begin{align*}
\gamma_{v,\xi}(\eta):[-a/2,a/2]&\mapsto \Psi^{u}_T(a+\xi,a+\eta),\qquad\qquad \xi\in[-a/2,a/2],\\
\gamma_{h,\eta}(\xi):[-a/2,a/2]&\mapsto \Psi^{s}_T(a+\xi,a+\eta),\qquad\qquad \eta\in[-a/2,a/2].
\end{align*}

\begin{figure}
\centering
\includegraphics[scale=0.95]{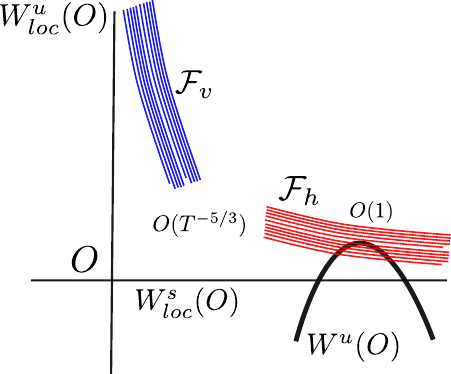}
\caption{The horizontal and vertical foliations constructed using the Shilnikov maps.}
\label{fig:Foliations}
\end{figure}

The foliations $\mathcal F_v,\mathcal F_h$ substitute the classical foliations associated to the linearizing coordinates in a neighborhood of a hyperbolic fixed point (which are the ones used in \cite{PalisTakensbook}). The main properties of these foliations which we will use are gathered in Lemmas~\ref{lem:c1accumulation},~\ref{lem:conefields}~and~\ref{lem:lineoftangencies}. Namely, Lemma~\ref{lem:c1accumulation} shows that the leaves of these foliations can be used as a reference against which images of pieces of the local manifolds of $Z_{\kappa,T}$ can be compared to. Then, Lemma~\ref{lem:conefields} shows that these foliations also carry dynamical information about the local dynamics around the saddle, what is key to guarantee that iterates of suitable pieces of the invariant manifolds of $Z_{\kappa,T}$ approach the leaves of these foliations and, moreover, are enlarged. Finally, in Lemma~\ref{lem:lineoftangencies} we control the angle between $\mathcal F_h$ and the image of $\mathcal F_v$ under the global dynamics.

\begin{lem}\label{lem:c1accumulation}
For $T$ large enough, the leaves of $\mathcal F_v$ (resp. $\mathcal F_h$) are arbitrarily close in the $\mathcal C^1$ topology to the compact arc
    \[
    \gamma^u=W^{u}_{loc}(O)\cap \{a/2\leq p\leq a\},\qquad\qquad (\text{resp. } \gamma^s=W^{s}_{loc}(O)\cap \{a/2\leq q\leq a\}),
    \]
    which covers several fundamental domains for $a$ small enough (but independent of $T$).
\end{lem}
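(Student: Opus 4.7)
The plan is to extract the $\mathcal C^1$ convergence directly from the asymptotic expansions in Proposition~\ref{prop:Shilnikov}, and then to verify the ``several fundamental domains'' claim by integrating the parabolic flow on $W^{u,s}_{loc}(O)$ in the normal-form coordinates from Proposition~\ref{prop:normalform}.

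For the convergence, I would write each vertical leaf as a graph over the $p$-axis. By \eqref{eq:formShilnikovmaps},
\[
\gamma_{v,\xi}(\eta)=\bigl(x_T(a+\xi,a+\eta),\,a+\eta\bigr),
\]
so setting $p=a+\eta$ the leaf is $\{q=\phi_{T,\xi}(p):p\in[a/2,3a/2]\}$ with $\phi_{T,\xi}(p)=x_T(a+\xi,p-a)$. The estimates \eqref{eq:asymptoticShilnikov} and \eqref{eq:asymptoticShilnikovderiv} give $\|\phi_{T,\xi}\|_{\mathcal C^1([a/2,3a/2])}=\mathcal O(a^{-2}T^{-2/3})$ uniformly in $\xi\in[-a/2,a/2]$. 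Hence, restricted to the $p$-range $[a/2,a]$, the leaf $\gamma_{v,\xi}$ lies $\mathcal O(T^{-2/3})$-close in $\mathcal C^1$ to the segment $\gamma^u=\{(0,p):a/2\le p\le a\}\subset W^u_{loc}(O)$. The identical argument, now using \eqref{eq:asymptoticShilnikovderivy} in place of \eqref{eq:asymptoticShilnikovderiv}, handles the horizontal leaves $\gamma_{h,\eta}$ and $\gamma^s$.

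For the fundamental-domain count I would restrict the normal form \eqref{eq:normalform} to $W^u_{loc}(O)=\{q=0\}$. Because $\mathcal K(0,p,\phi)\equiv 0$ and $\partial_q(q^4 p^4 k)\big|_{q=0}=0$, the Hamiltonian equations associated to $\mathcal K$ and the form $\omega$ reduce on $\{q=0\}$ to the clean parabolic ODE $\dot p=p^4$. Integrating over one full period of $\phi$ yields a return map on $W^u_{loc}(O)$ of the form
\[
p\longmapsto \frac{p}{(1-c\,p^3)^{1/3}}=p+\tfrac{c}{3}p^4+\mathcal O(p^7),\qquad c>0,
\]
so a fundamental domain based at $p_0\in[a/2,a]$ has length $\sim p_0^4\sim a^4$. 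The arc $\gamma^u$, of length $a/2$, therefore contains on the order of $a^{-3}$ fundamental domains, which is as large as desired once $a$ is taken small but fixed (independently of $T$). The reversibility $P_\mu^{-1}\circ R=R\circ P_\mu$ delivers the analogous count on $\gamma^s\subset W^s_{loc}(O)$.

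The main technical point, though a mild one, is to keep the two estimates decoupled by the correct order of quantifiers: $a$ must be chosen small first, so that the macroscopic arcs $\gamma^{u,s}$ contain many parabolic fundamental domains, and only then is $T$ sent to infinity so that the leaves of $\mathcal F_{v,h}$ become $\mathcal C^1$-close to those arcs. Since the normal form and the parabolic dynamics on $W^{u,s}_{loc}(O)$ do not depend on $T$, while the $\mathcal C^1$ bound on the leaves is of order $a^{-2}T^{-2/3}$ and tends to zero for fixed $a$, this ordering causes no difficulty and the statement follows.
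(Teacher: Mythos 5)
Your proof is correct and follows essentially the same route as the paper's one-line argument: the $\mathcal C^1$ convergence is read off from the Shilnikov asymptotics of Proposition~\ref{prop:Shilnikov}, and the fundamental-domain count from the parabolic ODE $\dot p=p^4$ on $W^u_{loc}(O)$ in the normal-form coordinates of Proposition~\ref{prop:normalform}.

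Two small corrections are worth flagging. First, a slip in the reparametrization: with $p=a+\eta$ the graph function is $\phi_{T,\xi}(p)=x_T(a+\xi,p)$, not $x_T(a+\xi,p-a)$; as written the second argument would fall outside $\Sigma_\delta$, where the asymptotics of Proposition~\ref{prop:Shilnikov} are established, so the subsequent $\mathcal C^1$ bound $\mathcal O(a^{-2}T^{-2/3})$ would not directly apply. Second, invoking the involution $R$ to transfer the fundamental-domain count from $\gamma^u$ to $\gamma^s$ is not justified: the paper itself cautions, right after Lemma~\ref{lem:exist_secondary_tangency}, that the normal-form coordinates of Proposition~\ref{prop:normalform} are only \emph{approximately} $R$-reversible, since the construction does not track the remainder's behaviour under $R$. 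The intended (and simpler) argument is the mirror computation you already did on $\{q=0\}$: on $\{p=0\}$ one has $\partial_p\mathcal K|_{p=0}=-q$, hence $\dot q|_{p=0}=-q^4$, and the same count goes through.
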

\begin{proof}
    The proof is an straightforward consequence of the definition of the foliations $\mathcal F_{v,h}$ and the asymptotic formulas in Proposition~\ref{prop:Shilnikov}. The fact that $\gamma^{u,s}$ cover several fundamental domains for $a$ small enough follows from the fact that on $\gamma^u$ we have $\dot p=p^4(1+\mathcal O(p^3))$ and on $\gamma^s$ we have $\dot q=-q^4(1+\mathcal O(q^3))$ (see Proposition~\ref{prop:normalform}).
\end{proof}

We now show that the foliations $\mathcal F_v,\mathcal F_h$ can be used to define a ``cone field'' which encodes the local dynamics. To that end, the introduction of some notation is in order. We denote by $\Psi^T_{loc}:H\to V$ the map
\begin{equation}\label{eq:defnlocalmap}
\Psi_{loc}^T =\Psi_{\mathcal K}^T|_{H},
\end{equation}
which describes the local dynamics. We also introduce the map $\Psi_{glob}:V\to \mathbb R^2$
\[
\Psi_{glob}=\Psi_{\mathcal{K}}^L|_V,
\]
which describes the global dynamics from $V$ to (a neighbourhood of) $H$. 
\begin{lem}\label{lem:conefields}
    Let $(\xi,\eta)\in[a/2,3a/2]^2$ and let $T$ be sufficiently large. Let $(q,p)=\Psi_T^s(\xi,\eta)\in H$ and let $w\in T_{(q,p)} H$ such that 
    \[
     \angle (w,\mathcal F_h)\gtrsim T^{-5/3}.
    \]
    Then, $\Psi_{loc}^T(q,p)\in V$,
    \[
    \lVert D\Psi_{loc}^T(q,p)w \rVert \gtrsim T^{5/3}\lVert w\rVert 
    \]
    and 
     \[
     \angle (D\Psi^T_{loc}(q,p)w,\mathcal F_v)\lesssim T^{-5/3}.
    \]
\end{lem}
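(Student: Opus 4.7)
The plan is to exploit the fact that, in the common $(\xi,\eta)$-parametrization supplied by the Shilnikov maps $\Phi:=\Psi_T^s$ and $\Psi:=\Psi_T^u$, the transition $\Psi_{loc}^T$ acts as the identity, so that
\[
D\Psi_{loc}^T(q,p)=D\Psi(\xi,\eta)\cdot D\Phi(\xi,\eta)^{-1},
\]
and every quantitative bound reduces to the Shilnikov asymptotics supplied by Proposition~\ref{prop:Shilnikov}. Because of the triangular form $\Phi(\xi,\eta)=(\xi,y_T)$ and $\Psi(\xi,\eta)=(x_T,\eta)$, the Jacobian determinants have no off-diagonal cross-terms: $\det D\Phi=\partial_\eta y_T$ and $\det D\Psi=\partial_\xi x_T$. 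The key qualitative feature is the \emph{scale swap}: the column vectors
\[
\partial_\xi\Phi=(1,\partial_\xi y_T)^\top,\quad \partial_\eta\Phi=(0,\partial_\eta y_T)^\top,\quad \partial_\xi\Psi=(\partial_\xi x_T,0)^\top,\quad \partial_\eta\Psi=(\partial_\eta x_T,1)^\top
\]
satisfy $|\partial_\xi\Phi|,|\partial_\eta\Psi|\sim 1$ and $|\partial_\eta\Phi|,|\partial_\xi\Psi|\sim T^{-5/3}$. The $\eta$-direction, which is contracted at the source by $\Phi$, is uncontracted at the target by $\Psi$, producing the total stretching $|\partial_\eta\Psi|/|\partial_\eta\Phi|\sim T^{5/3}$ that underlies the conclusion.

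Given $w\in T_{(q,p)}H$, I would decompose $w=A\,\partial_\xi\Phi+B\,\partial_\eta\Phi$, so that $D\Psi_{loc}^T w=A\,\partial_\xi\Psi+B\,\partial_\eta\Psi$. Since $\mathcal F_h$ is by construction tangent to $\partial_\xi\Phi$, the wedge-product identity
\[
|w\wedge\partial_\xi\Phi|=|B|\,|\det D\Phi|=|B|\,|\partial_\eta y_T|,
\]
combined with $|\partial_\xi\Phi|\sim 1$, converts the hypothesis $\angle(w,\mathcal F_h)\gtrsim T^{-5/3}$ into $|B|\gtrsim\lVert w\rVert$; the hypothesis is quantitatively the statement that $w$ has a nondegenerate component along the contracted direction $\partial_\eta\Phi$ at the source, which is precisely the direction that blows up to macroscopic scale at the target.

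The norm estimate then follows immediately from the scale swap: since $|\partial_\eta\Psi|\sim 1$, one has $\lVert D\Psi_{loc}^T w\rVert\gtrsim|B|$, while $\lVert w\rVert$ is controlled by the contribution of the contracted direction $|B|\cdot|\partial_\eta\Phi|\sim|B|T^{-5/3}$, yielding the expansion $\lVert D\Psi_{loc}^T w\rVert\gtrsim T^{5/3}\lVert w\rVert$ through the ratio $|\partial_\eta\Psi|/|\partial_\eta\Phi|\sim T^{5/3}$. The angle conclusion follows from the parallel wedge-product computation at the target: the leaves of $\mathcal F_v$ are tangent to $\partial_\eta\Psi$, so
\[
\sin\angle(D\Psi_{loc}^T w,\mathcal F_v)=\frac{|A|\,|\det D\Psi|}{\lVert D\Psi_{loc}^T w\rVert\,|\partial_\eta\Psi|}\lesssim\frac{|A|\,T^{-5/3}}{|B|}\lesssim T^{-5/3},
\]
where the last inequality uses $|A|\lesssim\lVert w\rVert\lesssim|B|$.

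The main technical obstacle is keeping the three distinct scales $1$, $T^{-2/3}$ and $T^{-5/3}$ separated while controlling the $\mathcal O(T^{-1})$ and $\mathcal O(a)$ subleading corrections from Proposition~\ref{prop:Shilnikov}: in particular, the bases $(\partial_\xi\Phi,\partial_\eta\Phi)$ and $(\partial_\xi\Psi,\partial_\eta\Psi)$ are not orthogonal because $\partial_\xi y_T,\partial_\eta x_T\sim T^{-2/3}$, and care is needed to ensure that the off-diagonal cross-terms in the expansions of $\lVert w\rVert^2$ and $\lVert D\Psi_{loc}^T w\rVert^2$ stay below the leading contributions identified above. Once these are tracked, the proof is reduced to the two wedge-product identities applied to the explicit Shilnikov asymptotics.
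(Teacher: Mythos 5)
Your reparametrization viewpoint --- transporting everything through $\Phi=\Psi^s_T$ and $\Psi=\Psi^u_T$ so that $\Psi^T_{loc}$ is the identity in the $(\xi,\eta)$-chart, then decomposing $w$ in the frame $(\partial_\xi\Phi,\partial_\eta\Phi)$ --- is a genuinely different route from the paper's proof, which writes out the Jacobian
$D\Psi^T_{loc}=D\Psi^u_T\,(D\Psi^s_T)^{-1}$
as an explicit $2\times2$ matrix, computes its eigenvalues $\lambda_\pm=\tilde\alpha^{\mp1}(1+\mathcal O(T^{-4/3}))$, and identifies the contracting eigenvector with the tangent to $\mathcal F_h$. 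Your wedge-product identities give the angle-to-coefficient translation very cleanly and the step ``hypothesis $\Rightarrow$ $|B|\gtrsim\lVert w\rVert$'' is correct, as is the output-angle estimate, which only requires $|A|\lesssim|B|$.

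There is, however, a concrete gap in the norm estimate. Since $\partial_\xi\Phi=(1,\partial_\xi y_T)$ has unit first component and $\partial_\eta\Phi=(0,\partial_\eta y_T)$, one has
\[
\lVert w\rVert=\left\lVert\bigl(A,\;A\partial_\xi y_T+B\partial_\eta y_T\bigr)\right\rVert\;\gtrsim\;|A|,
\]
so the claim ``$\lVert w\rVert$ is controlled by $|B|\,|\partial_\eta\Phi|\sim|B|\,T^{-5/3}$'' fails: it would require $|A|\lesssim T^{-5/3}|B|$, whereas the angle hypothesis, through your own wedge identity
\[
\sin\angle(w,\mathcal F_h)=\frac{|B|\,|\partial_\eta y_T|}{\lVert w\rVert\,\lVert\partial_\xi\Phi\rVert}\sim\frac{|B|\,T^{-5/3}}{\lVert w\rVert},
\]
only yields $|A|\lesssim\lVert w\rVert\lesssim|B|$. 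Take for instance $A=B$: the angle with $\mathcal F_h$ is $\sim T^{-5/3}$ so the hypothesis holds at the borderline, $\lVert w\rVert\sim1$, but
\[
D\Psi^T_{loc}w=\bigl(A\partial_\xi x_T+B\partial_\eta x_T,\;B\bigr)
\]
has norm $\sim1$, so the asserted $T^{5/3}$-expansion does not materialize. To obtain $\lVert D\Psi^T_{loc}w\rVert\gtrsim T^{5/3}\lVert w\rVert$ from this decomposition you need $|A|/|B|\lesssim T^{-5/3}$, i.e.\ a hypothesis on the angle with $\mathcal F_v$ (or a bounded-away-from-zero angle with $\mathcal F_h$) rather than the one you invoke. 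The paper's route via the spectral decomposition makes the expansion mechanism explicit (what gets stretched by $\lambda_+\sim T^{5/3}$ is precisely the $v_+$-component), and makes it easier to see what angular condition on $w$ is actually needed to guarantee that the $v_+$-component is commensurate with $\lVert w\rVert$.
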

\begin{proof}

The idea is to show that the contracting eigendirection associated to the local map $\Psi_{loc}^T$ is sufficiently close to the tangent vector to the horizontal foliation $\mathcal F_h$. Since $\Psi^T_{loc}=\Psi^u_T\circ (\Psi^s_T)^{-1}$ it is an easy computation to show that, if we write
\[
\hat \alpha(q,p)=\partial_\xi x_T(q,y_T^{-1}(q,p)) \qquad\qquad \hat \beta(q,p)=\partial_\eta x_T(q,y_T^{-1}(q,p))
\]
and
\[
\tilde\beta(q,p)=\partial_\xi y_T(q,y_T^{-1}(q,p)) \qquad\qquad \tilde\alpha(\xi,\eta)=\partial_\eta y_T(q,y_T^{-1}(q,p))
\]
then
\[
D \Psi^T_{loc}=\begin{pmatrix}
\hat\alpha& \hat\beta\\
0&1
\end{pmatrix}\begin{pmatrix}
1&0\\
-\frac{\tilde\beta}{\tilde\alpha}& \frac{1}{\tilde\alpha}
\end{pmatrix}=\begin{pmatrix}
-\frac{\hat\beta\tilde\beta}{\tilde\alpha}+\hat\alpha& \frac{\hat\beta}{\tilde\alpha}\\
-\frac{\tilde\beta}{\tilde\alpha}&\frac{1}{\tilde\alpha},
\end{pmatrix}
\]
and is an easy computation to check that the eigenvalues of $D\Psi_{loc}^T$ are given by 
\[
\lambda_\pm=\frac{\gamma\pm\sqrt{\gamma^2-4\tilde\alpha^2}}{2\tilde\alpha}\qquad\qquad\gamma=1-\hat\beta\tilde\beta+\hat\alpha\tilde\alpha.
\]
From the definition of $\hat\alpha,\tilde\alpha,\hat\beta$ and $\tilde\beta$ we thus have 
\[
\lambda_+=\tilde\alpha^{-1}(1+\mathcal{O}(T^{-4/3}))\qquad\qquad \lambda_-=\lambda_+^{-1}.
\]
Finally, one can check that the eigenvector associated to the contracting eigenvalue is of the form $v_-=(1,\ \tilde\beta(1+\mathcal{O}(T^{-2/3}))$ and the proof follows since, at a point $(q,p)=\Psi_s(\xi,\eta)\in H$, the vector $(1,\tilde\beta(q,p))$ is tangent to the foliation $\mathcal F_h$.
\end{proof}

We now control the angles between $\Psi_{glob} (\mathcal F_v)$ and $\mathcal F_h$.
\begin{lem}\label{lem:lineoftangencies}
There exists a line $\ell_\kappa\subset V$ such that \begin{itemize}
    \item $\angle (\ell_\kappa, \mathcal F_v)\geq \pi/4$
    \item At a point $(q,p)\in \ell_\kappa$ the angle between the image under $\Psi_{glob}$ of the tangent vector to $\mathcal F_v$ at $(q,p)$ and the tangent vector to $\mathcal F_h$ at $\Psi_{glob}(q,p)$ is zero.
    \item The angle between the image under $\Psi_{glob}$ of a vector tangent to $\mathcal F_v$ and the vector tangent to $\mathcal F_h$ at the image point is (locally) strictly increasing as we move away from $\ell_\kappa$.
\end{itemize}
    
\end{lem}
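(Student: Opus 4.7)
The plan is to encode the three conclusions as properties of the zero set of a single scalar ``tangency function'', identify its $T\to\infty$ limit, and invoke the Implicit Function Theorem. Choose continuous unit tangent vector fields $v_v$ to $\mathcal F_v$ on $V$ and $v_h$ to $\mathcal F_h$ on $H$, and define
\[
\Phi(z) = \det\bigl( v_h(\Psiglob(z)),\; D\Psiglob(z)\,v_v(z) \bigr),\qquad z\in V.
\]
Then $\Phi(z)=0$ precisely when the image under $\Psiglob$ of the leaf of $\mathcal F_v$ through $z$ is tangent to the leaf of $\mathcal F_h$ through $\Psiglob(z)$, and since $v_v,v_h$ are unit vectors, $|\Phi(z)|$ equals the sine of the angle appearing in items (2)--(3). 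The lemma therefore reduces to producing a smooth curve $\ell_\kappa\subset V$, transverse to $\mathcal F_v$ with angle at least $\pi/4$, on which $\Phi$ vanishes and across which $\Phi$ is strictly monotone.

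The first step is to identify the $T\to\infty$ limit of $\Phi$ (note that $\mu=\varphi_T(\kappa)\to\mu_n$ as $T\to\infty$, so the limit takes place at the tangency parameter). Lemma~\ref{lem:c1accumulation} gives $\mathcal C^1$ convergence of the leaves of $\mathcal F_v,\mathcal F_h$ to $W^{u,s}_{\mathrm{loc}}(O)$. Combined with the second-derivative bounds $\partial^\alpha x_T,\partial^\alpha y_T=\mathcal O(T^{-2/3})$ of Proposition~\ref{prop:Shilnikov} (valid on the complex neighborhood $\Sigma_\delta$ with $\delta\gtrsim T^{-1}$, so higher-order derivative bounds come for free via Cauchy estimates), the tangent vector fields $v_v,v_h$ themselves converge in $\mathcal C^1$ on compact subsets to the unit tangent vector fields of $W^{u,s}_{\mathrm{loc}}(O)$. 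Since $\Psiglob=\Psi_{\mathcal K}^L$ is fixed and real-analytic, $\Phi$ converges in $\mathcal C^1$ to a limit $\Phi_\infty$ defined in terms of the invariant manifolds themselves. In local coordinates near the preimage $z_*\in V$ of the quadratic tangency of Proposition~\ref{lem:homoclinictangencies} in which $W^u_{\mathrm{loc}}=\{q=0\}$, and near $w_*=\Psiglob(z_*)$ in which $W^s_{\mathrm{loc}}=\{p=0\}$, one has $v_v=(0,1)$ and $v_h=(1,0)$, so writing $\Psiglob=(f,g)$ yields $\Phi_\infty(q,p)=\partial_p g(q,p)$ up to a nowhere-vanishing smooth factor. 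The quadratic tangency hypothesis reads $g(0,0)=\partial_p g(0,0)=0$ and $\partial_p^2 g(0,0)\neq 0$, hence $\Phi_\infty(z_*)=0$ with $\partial_p\Phi_\infty(z_*)\neq 0$.

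The Implicit Function Theorem applied to $\Phi_\infty$ yields a smooth curve $\ell_*$ through $z_*$ which is locally the full zero set of $\Phi_\infty$, transverse to $\mathcal F_v$ at $z_*$, and across which $\Phi_\infty$ is strictly monotone. Shrinking the neighborhood of $z_*$ (and, if required, performing an inconsequential linear change of coordinates in $V$) ensures $\angle(\ell_*,\mathcal F_v)\geq\pi/4$ uniformly. The $\mathcal C^1$ convergence $\Phi\to\Phi_\infty$ then transfers these properties to finite but sufficiently large $T$ via a second application of the IFT, producing the desired curve $\ell_\kappa$. The main technical hurdle is the $\mathcal C^1$ convergence of the tangent vector fields $v_v,v_h$ themselves (not merely of the leaves of $\mathcal F_v$ and $\mathcal F_h$), which requires differentiating the Shilnikov asymptotics inside the complex domain $\Sigma_\delta$ and careful bookkeeping of the derivative bounds; once the identification $\Phi_\infty=\partial_p g$ has been made, the remainder is a textbook application of the Implicit Function Theorem combined with the quadratic nature of the tangency.
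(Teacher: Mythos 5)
Your approach is genuinely different from the paper's. The paper never passes to a limit: it observes that the tangency equation defining $\ell_\kappa$ is exactly the third component $F_3=0$ of the system already solved in Lemma~\ref{lem:renormfirststep}, so in the Shilnikov chart $(\xi,\eta)$ the implicit function theorem applies \emph{directly at finite $T$}, using the explicit asymptotics of Proposition~\ref{prop:Shilnikov} to verify that $DF$ is uniformly invertible. The curve is then $\ell_\kappa=\Psi_T^u(\{(\xi,\eta_*(\xi))\})$. You instead build a coordinate-free tangency function $\Phi$ in $(q,p)$ coordinates, take the $T\to\infty$ limit $\Phi_\infty=\partial_p g$ (correctly identified), apply the IFT to $\Phi_\infty$, and try to transfer. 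Same core mechanism (quadratic tangency $\Rightarrow$ regular zero of a tangency function $\Rightarrow$ IFT), but the transfer step is not free in this setting, and that is where a real gap appears.

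Two concrete issues. First, the $\mathcal C^1$ convergence of $v_v,v_h$ in the $(q,p)$ chart is not implied by what is stated. To differentiate $v_v$ with respect to $(q,p)$ you must compose with $(D\Psi_T^u)^{-1}$, whose entries scale like $T^{5/3}$ and $T$ because $\det D\Psi_T^u=\partial_\xi x_T\sim T^{-5/3}$. The stated second-derivative bound $\partial^\alpha x_T=\mathcal O(T^{-2/3})$ for $|\alpha|=2$ then only gives $D_z v_v=\mathcal O(T)$, which diverges. What saves the day is the sharper, unstated estimate $\partial_\xi\partial_\eta x_T=\mathcal O(T^{-5/3})$ (obtainable from the explicit expansion of $x_T$ but not from the displayed bounds); without it your $\mathcal C^1$ convergence claim does not follow. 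In addition the domain $V$ collapses onto a one-dimensional arc as $T\to\infty$, so ``$\mathcal C^1$ convergence on compact subsets'' needs a more careful formulation than you provide. Second, the $\pi/4$ angle bound is coordinate-sensitive in a way your argument doesn't confront. The paper proves it in the $(\xi,\eta)$ chart, where $\ell_\kappa=\{(\xi,\eta_*(\xi))\}$ is nearly horizontal, $\mathcal F_v=\{\xi=\mathrm{const}\}$ is vertical, and $\partial_\xi\eta_*=\mathcal O(T^{-2/3})$ gives angle $\pi/2-\mathcal O(T^{-2/3})$. In your $(q,p)$ chart the pushed-forward tangent to $\ell_\kappa$ is $(\partial_\xi x_T + \eta_*'\partial_\eta x_T,\eta_*')$ and the tangent to $\mathcal F_v$ is $(\partial_\eta x_T,1)$; their wedge equals $\partial_\xi x_T\sim T^{-5/3}$ while the norms are $\sim T^{-2/3}$ and $\sim 1$, giving a $(q,p)$-angle of order $T^{-1}$ — not $\geq\pi/4$. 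So the ``inconsequential linear change of coordinates'' you invoke is not inconsequential: it is the entire content of reducing to the Shilnikov chart, which is what the paper does from the start.
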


To prove Lemma~\ref{lem:lineoftangencies} we need to use some of the results established in the proof of Proposition~\ref{prop:mainresultrenormalization}. Thus, its proof is deferred to Section~\ref{sec:returnmap}. In the next result, we describe the local geometry of the invariant manifolds of the point $z_{\kappa,T}$.

\begin{figure}
\centering
\includegraphics[scale=1]{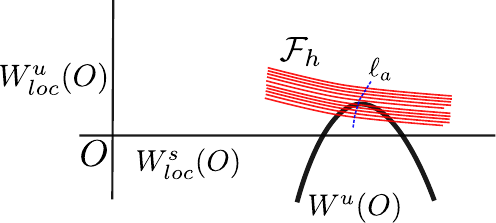}
\caption{The line of tangencies is transverse to the leaves of the horizontal foliation.}
\label{fig:linetangencies}
\end{figure}

\begin{lem}\label{lem:locgeomunstabmanifoldbasicset}
   Denote by $\gamma^{u,s}_{loc}$ the unique connected component of $W^{u,s}(z_{\kappa,T})$ which contains $z_{\kappa,T}$ and is contained in the ball of radius $T^{-10/3}$ around $z_{\kappa,T}$. Then,
   \begin{itemize}
   \item $\gamma^u_{loc}$ is arbitrarily close (for large $T$) to a leave of $\Psi_{glob}(\mathcal F_{v})$.
       \item $\angle (\gamma_{loc}^u,\mathcal F_v)\gtrsim T^{-5/3}$.
   \end{itemize}
   
\end{lem}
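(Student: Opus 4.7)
The plan is to pull everything back to the microscopic $(Q,P)$ coordinates via the renormalization $\Psi_{\kappa,T}$ introduced in Proposition~\ref{prop:mainresultrenormalization}, where the dynamics is a $\mathcal C^r$-small perturbation of the H\'enon map and the local invariant manifolds of the hyperbolic fixed point are controlled by classical persistence, and then push forward to $(q,p)$ to extract the asymptotic geometry of $\gamma^u_{loc}$. Concretely, by Remark~\ref{rem:convergence} the renormalized map $F_{\kappa,T}:=\Psi_{\kappa,T}^{-1}\circ P^{L+T}_{\varphi_T(\kappa)}\circ\Psi_{\kappa,T}$ converges in any $\mathcal C^r$ norm to the H\'enon map $(Q,P)\mapsto(P,\kappa-Q-P^2)$ on $B_{10}$, with error $\mathcal O(T^{-2/3})$. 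For $\kappa$ in the parameter range of interest, the H\'enon map has a hyperbolic fixed point $Z_0(\kappa)$ whose unstable eigendirection $(1,\lambda_u(\kappa))$ satisfies $\lambda_u>1$ uniformly in $\kappa$. By $\mathcal C^1$-persistence, for $T$ large enough $F_{\kappa,T}$ has a hyperbolic fixed point $Z_{\kappa,T}=Z_0+\mathcal O(T^{-2/3})$ with local unstable manifold $\widetilde{\gamma}^u_{loc}\subset B_5$ that is $\mathcal C^1$-close to the H\'enon unstable manifold and whose tangent vector at $Z_{\kappa,T}$ equals $(1,\lambda_u)+\mathcal O(T^{-2/3})$.

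Setting $z_{\kappa,T}:=\Psi_{\kappa,T}(Z_{\kappa,T})$ and $\gamma^u_{loc}:=\Psi_{\kappa,T}(\widetilde{\gamma}^u_{loc})$ (restricted to the ball of radius $T^{-10/3}$ around $z_{\kappa,T}$), the geometry of $\gamma^u_{loc}$ is read off by pushing forward through $\Psi_{\kappa,T}$. From the construction of $\Psi_{\kappa,T}$ in Section~\ref{sec:returnmap} (built from the Shilnikov maps of Proposition~\ref{prop:Shilnikov}, the global map $\Psi_{glob}$, and a linear rescaling), this map is, to leading order, an anisotropic affine map: in a basis $(\hat e_1,\hat e_2)$ with $\hat e_1$ tangent to $W^u(O)$ at $z$ and $\hat e_2$ transverse, the $Q$-direction is scaled by $\alpha_1\sim T^{-5/3}$ and the $P$-direction by $\alpha_2\sim T^{-10/3}$. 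Pushing the unstable eigenvector $(1,\lambda_u)$ through this rescaling yields a tangent vector to $\gamma^u_{loc}$ at $z_{\kappa,T}$ of the form $\alpha_1\hat e_1+\alpha_2\lambda_u\hat e_2+\mathcal O(T^{-2/3})(\alpha_1\hat e_1+\alpha_2\hat e_2)$, i.e.\ a slope with respect to $W^u(O)$ equal to $(\alpha_2/\alpha_1)\lambda_u(1+o(1))=T^{-5/3}\lambda_u(1+o(1))$.

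For the first bullet, Lemma~\ref{lem:c1accumulation} shows that the leaves of $\mathcal F_v$ are arbitrarily $\mathcal C^1$-close to $W^u_{loc}(O)\cap\{a/2\leq p\leq a\}$ as $T\to\infty$; since $\Psi_{glob}$ is a real-analytic diffeomorphism with uniformly bounded derivatives, the image leaves of $\Psi_{glob}(\mathcal F_v)$ are correspondingly $\mathcal C^1$-close to $\Psi_{glob}(W^u_{loc}(O)\cap\{a/2\leq p\leq a\})\subset W^u(O)$, a piece of the unstable manifold covering a neighborhood of $z$. Combining this with the slope estimate above, the tangent to $\gamma^u_{loc}$ at every of its points differs from the tangent to the nearby leaf of $\Psi_{glob}(\mathcal F_v)$ by $\mathcal O(T^{-5/3})\to 0$, which together with closeness of base points proves $\mathcal C^1$-closeness of $\gamma^u_{loc}$ to that leaf. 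For the second bullet, the slope $T^{-5/3}\lambda_u(1+o(1))$ is bounded below by a uniform constant times $T^{-5/3}$ because $\lambda_u>1$ stays uniformly bounded away from one in $\kappa$, which yields $\angle(\gamma^u_{loc},\mathcal F_v)\gtrsim T^{-5/3}$ (interpreted either as the angle with $\Psi_{glob}(\mathcal F_v)$ in the region around $z_{\kappa,T}$ or, equivalently, as the angle between the pullback $\Psi_{glob}^{-1}(\gamma^u_{loc})\subset V$ and $\mathcal F_v$).

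The principal delicate point is the explicit identification, from the construction of $\Psi_{\kappa,T}$ in Section~\ref{sec:returnmap}, of the anisotropic scaling factors $\alpha_1\sim T^{-5/3}$ and $\alpha_2\sim T^{-10/3}$ together with the verification that no leading-order cross terms mix the two directions; these are precisely the ingredients that make the quotient $\alpha_2/\alpha_1$ behave like $T^{-5/3}$ rather than like $1$ or $T^{-10/3}$. Once these scalings are in hand (which follows from the asymptotics of the Shilnikov maps in Proposition~\ref{prop:Shilnikov}), the rest of the argument is routine hyperbolic perturbation theory applied to a uniformly $\mathcal C^r$-small perturbation of the H\'enon map.
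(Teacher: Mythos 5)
Your high-level strategy is the same as the paper's: locate the hyperbolic fixed point and its unstable eigendirection for the limiting H\'enon map, invoke $\mathcal C^1$-persistence, and push the unstable tangent vector forward through the renormalization conjugacy $\Psi_{\kappa,T}=\Psi^s_T\circ h_T$ to read off the geometry of $\gamma^u_{loc}$ in $(q,p)$. However, your execution of the push-forward step has a genuine and fatal gap.

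You model $D\Psi_{\kappa,T}$ as an anisotropic \emph{diagonal} map, scaling by $\alpha_1\sim T^{-5/3}$ and $\alpha_2\sim T^{-10/3}$ with ``no leading-order cross terms'' in a basis aligned with $W^u(O)$. This is not what the conjugacy looks like. While $Dh_T$ is diagonal (of size $\sim T^{-5/3}$ in both directions), the differential $D\Psi^s_T$ is genuinely \emph{lower triangular with a large shear}: in the notation of Proposition~\ref{prop:Shilnikov} it equals $\left(\begin{smallmatrix}1 & 0\\ \partial_\xi y_T & \partial_\eta y_T\end{smallmatrix}\right)$, where $\partial_\xi y_T\sim T^{-2/3}$ while $\partial_\eta y_T\sim T^{-5/3}$. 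Composing, $D(\Psi^s_T\circ h_T)$ is lower triangular with entries of orders $T^{-5/3}$, $T^{-7/3}$, $T^{-10/3}$; its singular values are indeed $\sim T^{-5/3}$ and $\sim T^{-10/3}$, but its singular directions are \emph{not} the coordinate axes — the shear dominates. Consequently your claim that the image of the unstable eigenvector $(\tau,1)$ has slope $\sim T^{-5/3}$ relative to $W^u(O)$ is false: pushing $(\tau,1)$ through the lower-triangular matrix produces a vector with slope $\partial_\xi y_T+\mathcal O(T^{-5/3})\sim T^{-2/3}$.

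This is not a small technical slip — it is precisely the delicate point that makes the lemma nontrivial, and you flagged it as such but then asserted it without verification. The $T^{-5/3}$ angle estimate the paper obtains comes from a \emph{cancellation}: the leading $T^{-2/3}$ slope of $D\Psi_{\kappa,T}\cdot(\tau,1)$ coincides \emph{exactly} with the slope of the tangent to $\mathcal F_h$ (both equal to the off-diagonal entry of $D\Psi^s_T$, as the paper computes: $E^u=\mathrm{span}\{(1,\beta+\tau^{-1}\alpha)\}$ versus tangent $(1,\beta)$ for $\mathcal F_h$). It is only the residual term $\tau^{-1}\alpha\sim T^{-5/3}$ that gives the angle. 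Your argument never identifies the foliation direction you must compare against nor sees the cancellation, so the first bullet (where you claim tangents ``differ by $\mathcal O(T^{-5/3})$'' from the leaves of $\Psi_{glob}(\mathcal F_v)$) is unjustified as well. Finally, note that the angle in the second bullet is used in Section~\ref{sec:homrelated} to verify the hypothesis of Lemma~\ref{lem:conefields}, which requires a \emph{lower} bound on the angle with $\mathcal F_h$ (not $\mathcal F_v$ — the lemma statement appears to contain a typo that the paper's own proof resolves by comparing to $\mathcal F_h$); your closing parenthetical interpretation (angle with $\Psi_{glob}(\mathcal F_v)$ or its pullback to $V$) is incompatible with the first bullet, which says $\gamma^u_{loc}$ is $\mathcal C^1$-close to a leaf of $\Psi_{glob}(\mathcal F_v)$ and hence would have angle $o(1)$, not $\gtrsim T^{-5/3}$.
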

The proof of Lemma~\ref{lem:locgeomunstabmanifoldbasicset} is carried out in Section~\ref{sec:returnmap}.  Finally, we show how to use Lemmas~\ref{lem:c1accumulation},~\ref{lem:conefields},~\ref{lem:lineoftangencies}~and~\ref{lem:locgeomunstabmanifoldbasicset} to study images of $\gamma^{u}_{loc}$ under $\Psi_{glob}\circ\Psi^T_{loc}$. By Lemmas~\ref{lem:locgeomunstabmanifoldbasicset}~and~\ref{lem:conefields} the image of $\gamma^{u}_{loc}$ under the local map $\Psi_{loc}^T$ gets expanded by a factor $T^{5/3}$ and forms an angle $\mathcal O(T^{-5/3})$ with $\mathcal F_v$. Moreover $\Psi_{loc}^T(\gamma^{u}_{loc})$ moves away from the line of tangencies $\ell_\kappa$. Thus, $\Psi_{glob}\circ\Psi_{loc}^T(\gamma^u_{loc})$ forms an angle larger than $\sim T^{-5/3}$ with $\mathcal F_h$. Therefore, we can repeat this argument again until the image of  $\gamma_{loc}^u$ under a suitable iterate of $ \Psi_{glob}\circ \Psi^{T}_{loc}$ is arbitrarily close to a (sufficiently large) compact piece of leaf of $\Psi_{glob}(\mathcal F_v)$.
The proof of Theorem~\ref{thm:main} is complete.

\subsection{Proof of Theorem~\ref{thm:main2}}\label{sec:pfthm2}
As already mentioned, the proof of Theorem~\ref{thm:main2} follows directly from the ideas developed by Gorodetski in the proof of Theorem 4 in \cite{Gorodetskistdmap}. As almost all results in homoclinic bifurcation theory, these ideas make use of the Lambda lemma. We have already discussed (see Section~\ref{sec:faillambdalemma}) that the classical version of the Lambda lemma fails in our degenerate setting. Namely, if we take a disk transverse to, let's say, the local stable manifold, and we iterate it $T\in\mathbb{N}$ times, we only control the geometry of the part which is ``sufficiently'' far away from $O$. This is of course not an issue as we do not need to accumulate pieces of the unstable manifold which are arbitrarily close to $O$ but only a compact part which contains at least one fundamental domain.

\subsubsection{Dimension of $H(O,P_\mu)$}\label{sec:HD}
It is straightforward to check that   the very same argument in Section 6 of \cite{Gorodetskistdmap} \footnote{Replacing the renormalization result of Mora-Romero \cite{MoraRomero} with Proposition~\ref{prop:mainresultrenormalization}.} allows us to conclude that $\mathrm{dim}_H H(O,P_\mu)=2$ once we verify that there exists a dense subset of parameters in $\mathcal N$ for which $O$ has a homoclinic orbit of quadratic tangency which unfolds generically. This  is a consequence of the fact that for all $\mu\in \mathcal N$ there exists a basic set $\Lambda_\mu$ homoclinically related to $O$ and which exhibits persistent tangencies which unfold generically (see Theorem~\ref{thm:main} and Figure~\ref{fig:Homrelated}). Indeed, this implies that $W^{s,u}(\Lambda_\mu)\subset\overline {W^{s,u}(O)}$ and the claim follows.

\begin{comment}
    \[
H(O,P_\mu)=\overline{\{q\in \mathrm{Hyp}(P_\mu)\colon q \text{ is homoclinically related to } O\}}.
\]
As for the case of a hyperbolic fixed point, this equality relies on the fact that   any transverse homoclinic point to $O$ is accumulated by periodic hyperbolic orbits which are homoclinically related to $O$. For the degenerate saddle $O$ this follows directly from the work of Moser \cite{Moserbook}. 
\begin{rem}
    In \cite{Moserbook} Moser only deals with the Sitnikov problem (see Section~\ref{sec:Sitnikov}). We claim that his anlysis carries over to the circular problem as well. To see this we compare the expansion (in McGehee variables) of the Hamiltonians associated to the circular problem and the Sitnikov problem around the parabolic periodic orbit at the origin. These expansions are indeed different as can be checked comparing \eqref{def:HamiltoniaH} and the expansion of the Sitnikov problem in the Appendix~\ref{sec:Sitnikov}, they differ, up to order $6$, by the term $A(q+p)^4$. However, as it is shown in Section~\ref{sec:normalform}, this term can be removed by a close to identity change of variables. 
\end{rem}
\end{comment}

\subsubsection{Accumulation by generic elliptic periodic orbits}\label{sec:ellipticpoints}
We now complete the proof of the second part of Theorem~\ref{thm:main2}, which (as the first part) follows entirely from the arguments developed in Section 6 of \cite{Gorodetskistdmap}. The only reason why we reproduce these arguments in this section is to indicate where the Lambda lemma is used so one can check that the classical version can be replaced by a suitable reformulation.

 Let $\mu\in \mathcal N$ and let $Q_\mu$ be a transverse homoclinic point to $O$ for the map $P_\mu$. Let $I_{Q_\mu}\subset\mathcal N$ be the maximal subinterval for which $Q_\mu$ can be continued. All transverse homoclinic points for all values of $\mu\in\mathcal N$ generate a countable number of subintervals $\{I_s\}_{s\in\mathbb N}$. We now claim that, for each $I_s$ there exists a residual subset $R_s\subset I_s$ such that for $\mu\in R_s$ the transverse homoclinic point $Q_\mu$ is accumulated by elliptic periodic points. Then, we define 
\[
\mathcal R=\bigcap_{s\in\mathbb N} \tilde R_s\qquad\qquad \tilde R_{s}= R_s\cup (\mathcal N\setminus I_s).
\]
It remains to verify the claim.  To that end we first notice that there exists a dense subset $\tilde I_s\subset I_s$ such that for any $\mu\in \tilde I_s$, $O$ has a homoclinic orbit $z_\mu$ of quadratic tangency  which unfolds generically. 

\begin{figure}
\centering
\includegraphics[scale=1]{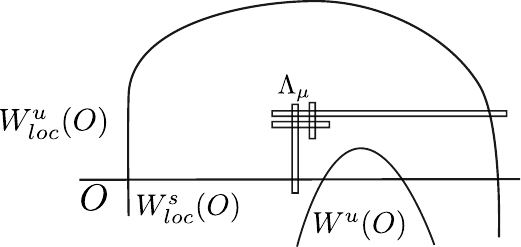}
\caption{The basic set $\Lambda_\mu$ is homoclinically related to $O$.}
\label{fig:Homrelated}
\end{figure}

Let $\{\tilde\mu_n\}_{n\in\mathbb N}\subset\widetilde I_s$ be a dense subsequence and denote by $z_{\tilde\mu_n}$ the corresponding sequence of orbits of quadratic homoclinic tangency.  Then, Proposition~\ref{prop:mainresultrenormalization} and the proof of Theorem~A in  \cite{MoraRomero}  imply that, for any $m\in\mathbb N$, there exists an open and dense subset $R_{s,m}\subset \tilde I_s$ such that, for $\mu \in R_{s,m}$, there exists an elliptic periodic point of $P_\mu$  contained in a $\mathcal O(1/m)$ neighborhood of some $z_{\tilde\mu_n}$. It is at this step of the proof that \cite{Gorodetskistdmap} uses the Lambda Lemma, in the setting of that paper, to show that for any $\mu\in R_{s,m}$ also a $\mathcal O(1/m)$ neighborhood of the corresponding transverse homoclinic point contains an elliptic periodic point. As, without loss of generality we can suppose that $Q_\mu$ is uniformly away from the degenerate saddle, it also holds in our setting that for any $\mu\in R_{s,m}$ a $\mathcal O(1/m)$ neighborhood of $Q_\mu$ contains an elliptic periodic point (make use, for instance, of the definition of the Shilnikov maps and their asymptotic properties in Proposition~\ref{prop:Shilnikov}). The claim holds defining $R_s=\bigcap_{m\in\mathbb N}R_{s,m}$. This shows the truth of Theorem~\ref{thm:main2}.

\section{Normal form coordinates}\label{sec:normalform}
This section is dedicated to the proof of Proposition~\ref{prop:normalform}, which provides a suitable normal form to show the existence of Shilnikov coordinates in Section~\ref{sec:Shilnikov}. First we provide a proof of Lemma~\ref{lem:symplecticred}. The proof is standard and we dispose of it quickly.

\begin{proof}[Proof of Lemma~\ref{lem:symplecticred}]
    The equation $H_{\mathrm{pol},\mu}(r,y,\phi,G)-G=J$ defines an analytic function $G = \widetilde{G} (r,y,\phi;J)$ satisfying 
	\begin{align*}
		\label{def:G_reduction_Poincare_Cartan}
		\widetilde{G} (r,y,\phi;J) &= r^2 \left(-1 + \sqrt{1 - \frac{2}{r^2} \left(\frac{y^2}{2} - \frac{1}{r} - U_{\mu}(r, \phi) - J\right)} \, \right) \\
        &= \sum_{n=1}^{\infty} 2^n {(-1)}^n {\frac{1}{2}\choose n} r^{-2n+2} {\left(\frac{y^2}{2} - \frac{1}{r} - U_{\mu}(r, \phi) - J\right)}^n \\
		&= -\left(\frac{y^2}{2} - \frac{1}{r} - U_{\mu}(r, \phi) - J\right) + \frac{1}{r^2} \left(\frac{J^2}{2} + \mathcal O(r^{-1},y^2)\right).
	\end{align*}
    By Poincar\'{e}-Cartan reduction, the equations of motion defined by the Hamiltonian $H_{\mathrm{pol},\mu}$ on the level $\{\mathcal J=J\}$ are equivalent to the equations of motion of
    \[
    H_{\mathrm{red},\mu}(r,y,\phi;J)=-\widetilde{G} (r,y,\phi;J)+J = \frac{y^2}{2} - \frac{1}{r}  + \frac{J^2}{2 r^2} + \frac{1}{r^2} {\mathcal{O}}\left(r^{-1},y^2\right).
    \]
\end{proof}

\begin{rem}
\label{paritat_en_t}
It is well known that the potential $U$ in~\eqref{eq:Hamiltonianpolar} is even in $\phi = \alpha - t$. Hence, so is $H_{\mathrm{red},\mu}(r,y,\phi;J)$. The same happens to the Hamiltonian of the Sitnikov configuration~\eqref{Sitnikov} and the restricted planar 4 body problem~\eqref{eq:H4bp}, when two of the primaries have equal masses.
\end{rem}

From now on, we write $t$ instead of $\phi$ as the time variable. Together,  Lemmas~\ref{lem:symplecticred}~and~\ref{lem:McGehee}  show that  the Hamiltonian of the restricted 3-body problem is conjugated to the flow of a Hamiltonian of the form
\begin{equation}\label{def:HamiltoniaH}
	H(q,p,t,I) = \NN(q,p,I) + H_1(q,p,t)
\end{equation}
with respect to the 2-form
\begin{equation}
	\label{def:omega}
	\omega = \frac{1}{(q+p)^3} dq\wedge dp + dt \wedge dI,
\end{equation}
where
\begin{equation}
	\label{def:Hamiltonia_N_i_Hu}
	\NN(q,p,I)  = -qp + I
\end{equation}
and
\begin{equation}
	\label{def:Hamiltonia_Hu}
	H_1(q,p,t)  = (q+p)^3 \widetilde H_1(q,p,t),
\end{equation}
is even in $(q,p)$ and even in $t$, that is, the Taylor expansion of $H_1$ in  $(q,p)$ has
only even powers and their coefficients are even periodic functions of $t$, and $\widetilde H_1 = \OO_1(q,p)$.

\begin{thm}
	\label{thm:forma_normal}
	Let $H(q,p,t,I)$ be a Hamiltonian of the form \eqref{def:HamiltoniaH} with $\omega$ as in \eqref{def:omega}, $\mathcal N$ as in \eqref{def:Hamiltonia_N_i_Hu} and $H_1$ as in \eqref{def:Hamiltonia_Hu}. Let $N \in \N$ be fixed and let $\rho>0$ be small enough. Let $V_\rho$ be the sectorial domain in \eqref{eq:sectorialdom}. There exists a close to the identity change of coordinates 
 \[
 \Phi:V_{\rho/2}\times V_{\rho/2}\times\mathbb T_\rho\to V_{\rho}\times V_{\rho}\times\mathbb T_\rho
 \]
 of class $\mathcal C^{N}$ with respect to $(q,p)$ at $(q,p) = (0,0)$ and real-analytic away from $(q,p)=(0,0)$, preserving the 2-form $\omega$, such that $H\circ\Phi=\NN+ \wt H_1$, with
	\[
	 \wt H_1(q,p,t) =  (qp)^4 g_N(q,p,t),
	\]
	where  $ g_N(q,p,t) = \OO_N(q,p)$.
\end{thm}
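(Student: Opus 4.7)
My plan combines a preliminary straightening of the local invariant manifolds of the degenerate periodic orbit $\mathcal O = \{q = p = 0,\ \phi \in \mathbb T\}$ with a finite iterative normal-form procedure that kills successively higher-order terms of the perturbation via near-identity, $\omega$-preserving transformations. Because $N$ is fixed and finite, only boundedly many steps are needed; the regularity at the origin is then dictated entirely by the straightening step, yielding the stated $\mathcal C^N$ class and real-analyticity away from the origin.

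For the straightening, I invoke McGehee's existence theorem for $W^{s,u}_{\mathrm{loc}}(\mathcal O)$, which provides parametrizations that are $\mathcal C^\infty$ at $\mathcal O$ and real-analytic away from it. From these I build a change of variables $\Phi_0$ preserving $\omega$ after which $W^u_{\mathrm{loc}}=\{q=0\}$ and $W^s_{\mathrm{loc}}=\{p=0\}$. The invariance of each coordinate axis under the Hamiltonian flow then forces $\partial_q H_1|_{p=0} \equiv 0 \equiv \partial_p H_1|_{q=0}$, so after subtracting functions of $t$ only the perturbation becomes divisible by $qp$; combined with $H_1 = \mathcal O_4(q,p)$ inherited from \eqref{def:Hamiltonia_Hu} this yields the form $qp\cdot h_0$ with $h_0 = \mathcal O_2(q,p)$.

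For the iterative step I proceed by induction on the total degree $m$ in $(q,p)$. Assuming the Hamiltonian has been reduced to $\mathcal N + qp\cdot h$ with $h = \mathcal O_{m-2}(q,p)$, I seek a near-identity $\omega$-symplectomorphism $\phi^1_{S_m}$, generated by a Hamiltonian $S_m(q,p,t)$ of appropriate degree, so that after conjugation the perturbation keeps the same form but with $h$ of order $m-1$. At leading order, the correction is governed by the cohomological operator
\[
\mathcal L S_m = (q+p)^3\,(q\partial_q S_m - p\partial_p S_m) - \partial_t S_m,
\]
and one must solve $\mathcal L S_m = F_m - F_m^{\mathrm{res}}$, where $F_m$ is the degree-$m$ homogeneous piece of the perturbation and $F_m^{\mathrm{res}}$ the unremovable remainder. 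Nonzero Fourier modes $e^{ikt}$, $k\neq 0$, are annihilated by the $-\partial_t S_m$ piece using $S_m$ of degree $m$. Time-averaged non-diagonal monomials $q^a p^b$ with $a\neq b$ are handled by taking $S_m$ of degree $m-3$, exploiting that $(q+p)^3(q\partial_q - p\partial_p)$ is a degree-$(+3)$ operator acting injectively on such monomials. Finally, time-averaged diagonal monomials $(qp)^k$ with $k\geq 4$ are eliminated by off-diagonal generators such as $q^{k-3}p^k$ (or $q^k p^{k-3}$), whose image under $\mathcal L$ contains a nonzero $(qp)^k$ contribution coming from the expansion of $(q+p)^3 q^{k-3}p^k$. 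Iterating up to $m = N+7$ reduces the perturbation to the claimed form $(qp)^4 g_N$ with $g_N = \mathcal O_N(q,p)$.

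The most delicate point, which I expect to be the main obstacle, is the exclusion of the low-order diagonal resonances $(qp)^2$ and $(qp)^3$ (total degrees $4$ and $6$) from the time-averaged part of the perturbation at every stage of the induction, since these monomials sit in the cokernel of $\mathcal L$ restricted to polynomial generators: no monomial $q^a p^b$ with $a,b\geq 0$ produces a $(qp)^k$ with $k\leq 3$ through $\mathcal L$. Their exclusion should follow from the conjunction of the initial structural factor $(q+p)^3$ in $H_1$, the evenness of $H$ in $(q,p)$ and in $t$ (cf.~Remark~\ref{paritat_en_t}), and the reversibility $R:(q,p)\mapsto(p,q)$ of the flow, each of which is preserved by $\Phi_0$ and by every subsequent cohomological step. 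Once this algebraic check is carried out, the remaining task of verifying that each $\phi^1_{S_m}$ maps $V_{\rho/2}\times V_{\rho/2}\times \mathbb T_\rho$ into $V_\rho\times V_\rho\times\mathbb T_\rho$ and that the finite composition $\Phi = \Phi_0 \circ \phi^1_{S_4} \circ \cdots \circ \phi^1_{S_{N+7}}$ inherits the $\mathcal C^N$ regularity at the origin is a standard bookkeeping argument, since each $S_m$ is polynomial of bounded degree in $(q,p)$ with smooth $t$-dependence.
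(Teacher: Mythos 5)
Your proposal has a genuine gap at the core iterative step, and I believe it is fatal as stated. The target form $\wt H_1 = (qp)^4 g_N$ with $g_N = \OO_N(q,p)$ is a \emph{divisibility} condition, not merely a condition on the order of vanishing: it requires that every monomial $q^a p^b$ with $\min(a,b)\le 3$ be absent, and such monomials occur at arbitrarily high total degree. A degree-by-degree polynomial normal form carried to $m = N+7$ leaves a remainder that is $\OO_{N+8}(q,p)$, but $\OO_{N+8}$ still contains, for instance, a contribution behaving like $q^{N+8}$, which is not divisible by $p$, much less by $(qp)^4$. No finite polynomial iteration of the kind you propose can impose the $(qp)^4$ structure. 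The paper's way around this is to work on the sectorial domains $V_\rho$ and treat, in one stroke, the \emph{functional} families $q^jp^j h_j(q,t)$ and $q^jp^j\tilde h_j(p,t)$ rather than individual monomials: the operators $\LL_j$, $\wt\LL_j$ in \eqref{def:operadors_diferencials_L_i_wt_L} and Lemma~\ref{lem:Inversa_de_loperador_lineal} invert the homological equation on the Banach spaces $\X_{r,\rho}$, and the quadratic scheme of Lemma~\ref{lem:lemma_iteratiu} and Proposition~\ref{prop:primer_pas_de_forma_normal_total}, with its super-exponentially summable domain losses, eliminates the entire families for $j\le 3$ in finitely many composite steps. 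There is no analogue of this mechanism in your plan.

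A secondary problem is the order of operations. By straightening the invariant manifolds \emph{first}, you destroy the structural factor $(q+p)^3$ that the paper's Proposition~\ref{prop:forma_normal_formal} exploits in an essential way: there, the time-averaged even-degree piece of the perturbation is $(q+p)^3 h_1$ with $h_1$ homogeneous of odd degree $2k-1$, so $h_1$ contains no diagonal monomial $(qp)^j$ and the equation $(q\partial_q - p\partial_p)F = -h_1$ has a unique solution with no resonance obstruction. After your $\Phi_0$ (built from the non-polynomial, only $\mathcal C^\infty$-at-the-origin functions $\gamma^{u,s}$), the perturbation is merely $qp\cdot h_0$ with no guaranteed $(q+p)^3$ factor, and the map $F \mapsto (q+p)^3(q\partial_q - p\partial_p)F$ from degree-$(m-3)$ to degree-$m$ homogeneous polynomials has cokernel of dimension at least three. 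So the claim that only the diagonal monomials $(qp)^2$, $(qp)^3$ obstruct the cohomological equation is itself unproved in your setting, and the reversibility $R$ together with the evenness in $t$ do not automatically close this gap.
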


The rest of the section is devoted to the proof of Theorem~\ref{thm:forma_normal}, which is performed in two steps. 
In what follows, we will use the Poisson bracket associated to the 2-form $\omega$ in~\eqref{def:omega}, that is, for functions $F(q,p,t,I)$ and $G(q,p,t,I)$,
\begin{equation}
	\label{def:parentesi_de_Poisson}
	\{F,G\}_{\mid(q,p,t,I)} = (q+p)^3 \left(\frac{\partial F}{\partial q}\frac{\partial G}{\partial p}-\frac{\partial F}{\partial p}\frac{\partial G}{\partial q}\right)_{\mid(q,p,t,I)}+ \left(
	\frac{\partial F}{\partial t}\frac{\partial G}{\partial I}-\frac{\partial F}{\partial I}\frac{\partial G}{\partial t}\right) _{\mid(q,p,t,I)}.
\end{equation}

\subsection{First step: preliminary transformations and straightening of the invariant manifolds}

\begin{prop}
 	\label{prop:redressament_de_les_varietats}
 	Let $H$ be the Hamiltonian in Theorem \ref{thm:forma_normal}. Let $N,M\in \N$ be fixed, with $N - 2M$ large enough. Then, for $\rho>0$ small enough there  exists a close to identity change of coordinates $\Phi:V_{\rho/2}\times V_{\rho/2}\times\mathbb T_\rho\to V_{\rho}\times V_{\rho}\times\mathbb T_\rho$ satisfying $\pi_t \Phi = \Id$, preserving the 2-form $\omega$, such that $H\circ\Phi=\NN + qp\widehat H_1$ with
  \[
  \widehat H_1(q,p,t) = \sum_{j=0}^{M-1} (qp)^j\left(h_j(q,t) + \tilde h_j (p,t)\right) + (qp)^M   \wt H_{2}(q,p,t),
  \]
  and, for some $\rho>0$, $h_j=\mathcal O_{N-2j}(q), \tilde h_j= \mathcal O_{N-2j}(p)$  for $j=1,\dots,M-1$ and 
  $\wt H_2:V_{\rho} \times V_{\rho}\times  \T_{\sigma}  \to \C$, analytic, $\mathcal C^\infty$ at $\{ 0 \} \times V_{\rho}\times  \T_{\rho}  \cup V_{\rho} \times \{0\}\times  \T_{\rho}$ and $\wt H_2(q,p,t) = \OO_{N-2M}(q,p)$. 
 \end{prop}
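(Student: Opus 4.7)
The plan is to prove the proposition in two stages: first, straighten the local stable and unstable manifolds of $\mathcal O$ so that the Hamiltonian acquires an overall factor $qp$ in its perturbative part; second, perform an iterative Birkhoff-type normal form to organize the remaining terms into the claimed decomposition.

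For the first stage, I would apply McGehee's theorem~\cite{McGeheestablemfold}, which provides analytic parametrizations of $W^{u,s}_{\mathrm{loc}}(\mathcal O)$ on the sectorial domain of the form $p=\gamma^u(q,t)$ and $q=\gamma^s(p,t)$ with $\gamma^{u,s}=\OO_2$. Solving the implicit system $Q=q-\gamma^s(p,t)$, $P=p-\gamma^u(q,t)$ yields a near-identity coordinate change putting $W^{u,s}_{\mathrm{loc}}(\mathcal O)$ along the new coordinate axes. As this map is not necessarily $\omega$-preserving, I would compose it with the time-$1$ flow of an auxiliary Hamiltonian $qp\,\varphi(q,p,t)$, adjusted order by order to cancel the symplectic defect while retaining the invariance of the axes. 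The outcome is a Hamiltonian of the form $\NN+qp\,F(q,p,t)$ with $F$ real-analytic on a slightly smaller sector.

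For the second stage, I would iterate a Birkhoff-type normal form. At the $k$-th step, starting from $\NN+qp\,F_{k-1}$, the monomials of $qp\,F_{k-1}$ of lowest untreated order are split into those fitting the decomposition $(qp)^j h_j(q,t)+(qp)^j\wt h_j(p,t)$ and those to be eliminated. The elimination is performed by the time-$1$ flow of a generating Hamiltonian $S_k$ solving the cohomological equation $\{S_k,\NN\}=-T_k$. Using
\[
\{q^a p^b\psi(t),\NN\}=-(a-b)(q+p)^3q^a p^b\psi(t)+q^a p^b\psi'(t),
\]
the ansatz $S_k=q^a p^b\psi(t)$ solves the equation whenever $a\neq b$, while for $a=b$ the time-derivative component handles the time-oscillating pieces. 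Monomials $q^a p^b$ with $a>b$ (resp.\ $b>a$) are absorbed into the structure $(qp)^b q^{a-b}$ (resp.\ $(qp)^a p^{b-a}$), i.e.\ into the functions $h_b(q,t)$ or $\wt h_a(p,t)$. Iterating a finite number of times suffices to make the remaining untreated terms of order $\geq N$, which are collected into $(qp)^M\wt H_2$ as claimed.

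The main obstacles are, first, the control of analyticity on the sector $V_{\rho/2}$: each $\Phi_{S_k}$ shrinks the domain of analyticity, so one must balance the number of iterations against the geometric shrinking through a standard book-keeping of losses. Second, one must verify that the factorization $\NN+qp\,(\cdots)$ is maintained throughout the iteration; this follows by choosing each $S_k$ of the form $qp\,\wt S_k$, so that $\Phi_{S_k}$ preserves the invariance of $\{q=0\}$ and $\{p=0\}$. Third, and most delicate, is the treatment of resonant pure $(qp)^j$ terms generated during the iteration; I would handle these by noting that, thanks to the $(q+p)^3$ prefactor in $H_1$ and the parity properties of Remark~\ref{paritat_en_t}, the time-averaged obstructions vanish identically, leaving only time-oscillating pieces which are directly eliminated using the $\partial_t S_k$ component of the bracket.
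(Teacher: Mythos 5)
Your proposal uses the same two ingredients as the paper—a Birkhoff-type normal form and a straightening of the local invariant manifolds—but performs them in the \emph{opposite} order, and this reversal creates a genuine gap.

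The paper's proof does the Birkhoff normal form \emph{first} (Proposition~\ref{prop:forma_normal_formal}), while the Hamiltonian still has the special structure $H_1=(q+p)^3\widetilde H_1$ with $\widetilde H_1$ of definite parity in $(q,p)$ and even in $t$. This is what makes the cohomological equation solvable at every step: the time-independent part of the order-$(2k+2)$ term factors as $(q+p)^3 h_1$ with $h_1$ of \emph{odd} degree $2k-1$, so it contains no resonant monomial $q^m p^m$, and the zero-mean part is killed using $\partial_t$. The algebra in Lemma~\ref{lem:parity_poisson} guarantees the parity structure propagates through the iteration. Only after the perturbation has been pushed to order $\mathcal O_N$ and the invariant manifolds satisfy $\gamma^{u,s}(x,t)=x^N+\mathcal O(x^{N+1})$ does the paper perform the straightening; the straightening map is then near-identity to order $N-1$, so it cannot reintroduce low-order terms, and Lemma~\ref{lem:descomposicio_en_productes_qp} gives the claimed decomposition with $h_j=\mathcal O_{N-2j}$.

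Your order—straighten first, normalize second—destroys precisely the structure the normalization needs. After straightening, the perturbation is $qp\,F(q,p,t)$ with $F$ generic and only $\mathcal O_2$, and the factor $(q+p)^3$ is gone. Now the time-averaged resonant monomials $(qp)^j\cdot\text{const}$ with $2j<N$ may in principle appear, and they lie in the kernel of $\{\,\cdot\,,\NN\}$, so your homological equation $\{S_k,\NN\}=-T_k$ simply cannot remove them. Since the proposition requires $h_j=\mathcal O_{N-2j}(q)$, such a constant term in $h_j$ with $j<N/2$ is forbidden, so these resonances are fatal. Your final paragraph asserts that the time-averaged obstructions vanish ``thanks to the $(q+p)^3$ prefactor in $H_1$ and the parity properties,'' but this is exactly the structure the straightening transformation has already destroyed: once the manifolds are straightened, the perturbation has a factor $qp$, not $(q+p)^3$, and the oddness-of-degree argument no longer applies. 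You would need a separate argument, using the specific form of the straightening change generated by $F(q,t)$ and $G(p,t)$, to show that those resonant constants are actually absent—and that argument is not supplied. A secondary issue is that the paper can run the Birkhoff iteration on the full polydisk $\mathbb B^2_\rho\times\T_\rho$, whereas your approach forces the subsequent normal form onto the sectorial domain from the start, which introduces additional analytic-continuation concerns that your sketch does not address.
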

 
The proof of this result is technical and is deferred to Appendix \ref{sec:appendixtechnical}. We now show how to complete the proof of Theorem \ref{thm:forma_normal}. First we introduce a suitable functional setting.

\subsection{Some Banach spaces and their properties}

We start with some definitions. Let $\rho>0$ and $\sigma >0$ and let $V_\rho$ as in \eqref{eq:sectorialdom}. We define the spaces of functions
\begin{equation}
\label{def:espaisXiY}
\begin{aligned}
\X_{r,\rho} & = \{f: V_{\rho}\times \T_\sigma \to \C\mid \text{analytic, \;$\|f\|_{r,\rho} < \infty$}\}, \\
%\Y_{r,s,N,\rho} & = \{F: V_{\tilde \kappa \rho,\rho} \times V_{\tilde \kappa \rho,\rho} \times \T_\sigma \to \C\mid \text{analytic, }\\ & \text{ $z_1^{-r}z_2^{-s}F(z_1,z_2,t)$
% continuous at 
 %$\{0\} \times V_{\tilde \kappa \rho,\rho} \times \T_\sigma \cup V_{\tilde \kappa \rho,\rho} \times \{0\} \times \T_\sigma $, \; $\|F\|_{r,s,N,\rho} < \infty$}\}, \\
\Y_{N,\rho} & = \{F: V_{\rho} \times V_{\rho} \times \T_\sigma \to \C\mid \text{analytic,  $\|F\|_{N,\rho} < \infty$}\},
\end{aligned}
\end{equation}
where $\T_\sigma = \{t \in \C/2\pi\Z\mid |\Im t | < \sigma\}$ and, for $f\in \X_{r,\rho}$ and $F\in \Y_{N,\rho}$,
\[
\begin{aligned}
  \|f\|_{r,\rho} & = \sup_{(z,t)\in V_{\rho}\times \T_\sigma} |z^{-r} f(z,t)|,\\
%  \|F\|_{r,s,N,\rho} & = \sup_{(z_1,z_2,t)\in V_{\tilde \kappa \rho,\rho}\times V_{\tilde \kappa \rho,\rho}\times\T_\sigma} |z_1^{-r} z_2^{-s} (|z_1|+|z_2|)^{-N} F(z_1,z_2,t)|.
%  \\
  \|F\|_{N,\rho} & = \sup_{(q,p,t)\in V_{\rho}\times V_{\rho}\times\T_\sigma} |(|q|+|p|)^{-N} F(q,p,t)|.
\end{aligned}
\]
It is clear that $\X_{r,\rho}$ and $\Y_{N,\rho}$ are Banach spaces and the norms satisfy
\[
%\begin{aligned}
\|f g \|_{r+s,\rho} 
%&
\le \|f\|_{r,\rho} \|g \|_{s,\rho},  
\qquad
%\\ 
\|F G\|_{N+M,\rho} 
%& 
\le \|F \|_{N,\rho} \|G\|_{M,\rho}.
%\end{aligned}
\]
% and, for $a, b > 0$,
% \[
% \|F\|_{r-a,s-b,N+a+b,\rho} \le \|F\|_{r,s,N,\rho}.
% \]
Furthermore, if $s>0$  and $M>0$,
\[
\|f\|_{r-s,\rho}  \le \rho^{s} \|f\|_{r,\rho}, \qquad
 \|F\|_{N-M,\rho}  \le (2\rho)^M\|F\|_{N,\rho}.
\]
Given $k\ge 1$, $\ell \ge k$, $N-2\ell \ge 0$, we also define the space
\begin{multline}
\label{def:espaiZ}
%\begin{aligned}
\ZZ_{k,\ell,N,\rho}   = \{  H: V_{\rho} \times V_{\rho} \times \T_\sigma \to \C\mid 
%& 
H(q,p,t) = \sum_{j=k}^{\ell-1} q^jp^j (h_j(q,t)+\tilde h_j(p,t)) + q^\ell p^\ell \wt H(q,p,t), 
\\ 
\text{with  $h_j,\tilde h_j \in \X_{N-2j,\rho}$, $j=k,\dots,\ell-1$, $\wt H \in \Y_{N-2\ell,\rho}$}\}, 
%\\
%\ZZ^4_{3,\rho}  = \{ & H: V_{\rho} \times V_{\rho} \times \T_\sigma \to \C\mid 
%\\
%& 
%H(q,p,t) = q^4p^4 \wt H(q,p,t),  \; \text{with  $\wt H \in \Y_{3,\rho}$}\}
% \end{aligned}
\end{multline}
which, with the norm
\[
\llbracket H\rrbracket_{k,\ell,N,\rho}  = \sum_{j=k}^{\ell-1} \left(\|h_j\|_{N-2j,\rho}+\|\tilde h_j\|_{N-2j,\rho}\right)  + \|\wt H\|_{N-2\ell,\rho}, 
\]
is a Banach space. 
If $\ell = k$, we  will understand the sum in the definition of $\ZZ_{\ell,\ell,N,\rho}$ and its norm as empty. The following lemma is a straightforward computation.
\begin{lem}
\label{prop:normaNnormaNmes2j}
There exists $K\ge 0$ such that, if $H \in \ZZ_{k+k',\ell+\ell',N+N',\rho}$, with $k,k',\ell, \ell', N, N' \ge 0$, $\ell \ge k$, , $\ell+\ell' \ge k+k'$, $N+N'-2(\ell+\ell')\ge 0$, then $H \in \ZZ_{k,\ell,N,\rho}$ and
\begin{equation}
\label{fita:normaNnormaNmes2j}
\llbracket H\rrbracket_{k,\ell,N,\rho} \le K^{N'}\rho^{N'} \llbracket H\rrbracket_{k+k',\ell+\ell',N+N',\rho}.
\end{equation} 
\end{lem}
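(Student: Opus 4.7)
The plan is to repackage the $(k+k',\ell+\ell',N+N')$-decomposition of $H$ as a $(k,\ell,N)$-decomposition and then quantify each change of norm by a factor of order $\rho^{N'}$. Write
\[
H(q,p,t) = \sum_{j=k+k'}^{\ell+\ell'-1} q^j p^j \bigl(h_j(q,t)+\tilde h_j(p,t)\bigr) + q^{\ell+\ell'} p^{\ell+\ell'}\wt H(q,p,t),
\]
with $h_j,\tilde h_j\in \X_{N+N'-2j,\rho}$ and $\wt H\in \Y_{N+N'-2(\ell+\ell'),\rho}$. Since $k+k'\ge k$ and $\ell+\ell'\ge \ell$, I define a new decomposition by setting $h'_j=\tilde h'_j=0$ for $k\le j< k+k'$ (the ``missing'' low-order slots), keeping $h'_j=h_j$, $\tilde h'_j=\tilde h_j$ for $\max(k,k+k')\le j\le \ell-1$, and collecting the higher-order summands (indices $\ell\le j\le \ell+\ell'-1$) together with $q^{\ell+\ell'}p^{\ell+\ell'}\wt H$ into a single remainder factored by $q^\ell p^\ell$, namely
\[
\wt H'(q,p,t) \;=\; \sum_{j=\max(\ell,k+k')}^{\ell+\ell'-1} q^{j-\ell} p^{j-\ell}\bigl(h_j+\tilde h_j\bigr) \;+\; q^{\ell'}p^{\ell'}\wt H.
\]
(When $k+k'\ge \ell$, the ``kept'' range is empty and everything goes into $\wt H'$.)

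For the kept entries I would use the trivial embedding $\X_{N+N'-2j,\rho}\hookrightarrow \X_{N-2j,\rho}$, which on $V_\rho$ gives $\|h_j\|_{N-2j,\rho}\le \rho^{N'}\|h_j\|_{N+N'-2j,\rho}$, and similarly for $\tilde h_j$. For $\wt H'$, the key pointwise estimate on $V_\rho\times V_\rho\times \T_\sigma$ is that $|q^{j-\ell}p^{j-\ell}h_j(q,t)|\le (|q|+|p|)^{2(j-\ell)}\,|q|^{N+N'-2j}\|h_j\|_{N+N'-2j,\rho}\le (|q|+|p|)^{N+N'-2\ell}\|h_j\|_{N+N'-2j,\rho}$, and similarly for the $\tilde h_j$ and $\wt H$ terms, using that each of $|q|,|p|$ is bounded by $(|q|+|p|)$. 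Since $V_\rho\subset B_\rho(0)$ gives $|q|+|p|<2\rho$, the extra factor $(|q|+|p|)^{N'}$ is absorbed as $(2\rho)^{N'}$, yielding
\[
\|\wt H'\|_{N-2\ell,\rho}\le (2\rho)^{N'}\Bigl(\sum_{j=\max(\ell,k+k')}^{\ell+\ell'-1}\bigl(\|h_j\|_{N+N'-2j,\rho}+\|\tilde h_j\|_{N+N'-2j,\rho}\bigr)+\|\wt H\|_{N+N'-2(\ell+\ell'),\rho}\Bigr).
\]

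Summing the contributions to $\llbracket H\rrbracket_{k,\ell,N,\rho}$ and taking $K=2$ then yields the bound \eqref{fita:normaNnormaNmes2j}. The main (minor) subtlety is bookkeeping the case $k+k'\ge \ell$, where the sum of kept summands is empty and all terms must be absorbed into $\wt H'$; the pointwise estimate above is written precisely so that it applies uniformly in $j$ over the range $j\ge k+k'$, and in particular in that degenerate case. No other step is delicate: all the work is a single application of the inclusion $\X_{N+N'-2j,\rho}\hookrightarrow \X_{N-2j,\rho}$ together with the elementary inequality $(|q|+|p|)^{N'}\le (2\rho)^{N'}$ on $V_\rho\times V_\rho$.
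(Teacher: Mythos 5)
Your proof is correct and is essentially the computation the paper declares to be "straightforward": you re-group the $(k+k',\ell+\ell',N+N')$-decomposition into a $(k,\ell,N)$-decomposition, embed the kept $\X$-components via $\|h\|_{r-N',\rho}\le\rho^{N'}\|h\|_{r,\rho}$, and absorb the high-order summands into the $\Y$-remainder using $|q|,|p|\le |q|+|p|<2\rho$ on $V_\rho\times V_\rho$, which yields $K=2$. The one sanity check worth making explicit — that $N+N'-2j>0$ for all $j\le\ell+\ell'-1$ appearing in $\wt H'$, so that $|q|^{N+N'-2j}\le(|q|+|p|)^{N+N'-2j}$ is legitimate — follows immediately from the hypothesis $N+N'-2(\ell+\ell')\ge 0$, and your handling of the degenerate range $k+k'\ge\ell$ via the $\max(\ell,k+k')$ lower summation limit is exactly right.
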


We remark that, if $H \in \ZZ_{k,\ell,N,\rho}$, and $N-2\ell \ge 0$, the corresponding functions $h_j,\tilde h_j \in \X_{N-2j,\rho}$, $j=k,\dots,\ell-1$, and $\wt H \in \Y_{N-2\ell,\rho}$ are uniquely defined. Indeed, $h_k(q,t) = (H(q,p,t)/(q^k p^k))_{\mid p =0}$, and the rest of the functions are obtained analogously. Hence, we have that
\[
\ZZ_{k,\ell, N,\rho} = \ZZ_{N,\rho}^k \oplus \ZZ_{k+1,\ell,N,\rho},
\]
where
\begin{equation}
\label{def:ZZ1_i_wtZZ}
\ZZ_{N,\rho}^k  = \{ H: V_{\rho} \times V_{\rho} \times \T_\sigma \to \C\mid 
 H(q,p,t) = q^k p^k(h_k(q,t)+\tilde h_k(p,t)), \text{ with  $h_k,\tilde h_k \in \X_{N-2k,\rho}$}\}  .
\end{equation}
The space $\ZZ_{N,\rho}^k$ is a Banach space with the induced norm. We will denote by $\pi_k: \ZZ_{k,\ell,N,\rho}  \to \ZZ_{N,\rho}^k$, the projection onto $\ZZ_{N,\rho}^k$, and $\tilde\pi_{k} = \Id -\pi_k: \ZZ_{k,\ell,N,\rho}  \to \ZZ_{k+1,\ell,N,\rho}$. Clearly,  $\|\pi_k\| = \|\tilde\pi_{k}\| =1$. Given $k'<k$  and $H\in\mathcal Z_{k,\ell,N,\rho}$ we will abuse notation and write $\pi_{k'} H=0$. 

\subsection{Poisson bracket properties and Lie series}
\label{sec:parentesi_de_Poisson_i_series_de_Lie}
The following elementary lemma will play a crucial role in what follows.

\begin{lem}
    \label{lem:producte_de_H_per_f_i_tilde_f}
Assume $0<\rho <1$, $k\ge 1$, $\ell \ge k+1$, $N\ge 2\ell$, and $M\ge \ell-k$. Then, there exists $C>0$ such that, if  $f,\tilde f \in \X_{M,\rho}$ and $H \in \ZZ_{k,\ell,N,\rho}$,  $\wh H(q,p,t) = H(q,p,t)(f(q,t)+\tilde f(p,t)) \in \ZZ_{k,\ell,N+M,\rho}$, $\llbracket \wh H \rrbracket_{k,\ell,N+M,\rho} \le C \llbracket H \rrbracket_{k,\ell,N,\rho}(\|f\|_{M,\rho}+\|\wt f\|_{M,\rho})$  and $\llbracket \pi_1 \wh H \rrbracket_{k,\ell,N+M,\rho} \le  \llbracket \pi_1 H \rrbracket_{k,\ell,N,\rho}(\|f\|_{M,\rho}+\|\wt f\|_{M,\rho})$.
% If, moreover, $H \in \wt \ZZ_{N,\rho}$, $\wh H \in \wt \ZZ_{N+M,\rho}$.
\end{lem}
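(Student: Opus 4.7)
The plan is to distribute the multiplication $\wh H = H(f+\tilde f)$ over the canonical decomposition of $H \in \ZZ_{k,\ell,N,\rho}$ and verify that each resulting piece either sits in the correct $q^j p^j$-slot of $\ZZ_{k,\ell,N+M,\rho}$ or can be absorbed into the remainder $q^\ell p^\ell \wh{\wt H}$. Writing
\[
H = \sum_{j=k}^{\ell-1} q^j p^j (h_j(q,t) + \tilde h_j(p,t)) + q^\ell p^\ell \wt H(q,p,t),
\]
I split the product into \emph{pure} terms $\sum_{j=k}^{\ell-1} q^j p^j (h_j f + \tilde h_j \tilde f)$, \emph{mixed} terms $\sum_{j=k}^{\ell-1} q^j p^j (h_j(q,t)\tilde f(p,t) + \tilde h_j(p,t) f(q,t))$, and the \emph{high-order} term $q^\ell p^\ell \wt H(f+\tilde f)$. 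The pure terms are already of the required form with $\hat h_j := h_j f,\ \hat{\tilde h}_j := \tilde h_j \tilde f \in \X_{N+M-2j,\rho}$, the multiplicativity of the $\X$-norm giving $\|\hat h_j\|_{N+M-2j,\rho} + \|\hat{\tilde h}_j\|_{N+M-2j,\rho} \le (\|h_j\|_{N-2j,\rho} + \|\tilde h_j\|_{N-2j,\rho})(\|f\|_{M,\rho} + \|\tilde f\|_{M,\rho})$.

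For a mixed term $q^j p^j h_j(q,t) \tilde f(p,t)$, I would rewrite it as $q^\ell p^\ell\, G(q,p,t)$ with $G(q,p,t) = (h_j(q,t)/q^{\ell-j})(\tilde f(p,t)/p^{\ell-j})$. Both factors are genuinely analytic because $h_j$ vanishes at $q=0$ to order $N-2j \ge \ell - j$ (using $N\ge 2\ell$) and $\tilde f$ vanishes at $p=0$ to order $M \ge \ell-k \ge \ell - j$; a direct computation from the definition of the norms gives $\|h_j/q^{\ell-j}\|_{N-j-\ell,\rho} = \|h_j\|_{N-2j,\rho}$ and $\|\tilde f/p^{\ell-j}\|_{M+j-\ell,\rho} \le \|\tilde f\|_{M,\rho}$. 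Using $|q|^{N-j-\ell}|p|^{j+M-\ell} \le (|q|+|p|)^{N+M-2\ell}$ one then concludes that $G \in \Y_{N+M-2\ell,\rho}$ with $\|G\|_{N+M-2\ell,\rho} \le \|h_j\|_{N-2j,\rho}\|\tilde f\|_{M,\rho}$. The symmetric mixed term is handled identically, and the high-order term is immediately of the required form since $|f(q,t)+\tilde f(p,t)| \le (\|f\|_{M,\rho}+\|\tilde f\|_{M,\rho})(|q|+|p|)^M$ gives $\|\wt H(f+\tilde f)\|_{N+M-2\ell,\rho} \le \|\wt H\|_{N-2\ell,\rho}(\|f\|_{M,\rho}+\|\tilde f\|_{M,\rho})$. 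Summing the finitely many contributions yields $\llbracket\wh H\rrbracket_{k,\ell,N+M,\rho} \le C \llbracket H\rrbracket_{k,\ell,N,\rho}(\|f\|_{M,\rho}+\|\tilde f\|_{M,\rho})$ with $C$ depending only on $\ell - k$.

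For the sharper bound on $\pi_1 \wh H$, if $k \ge 2$ the statement is vacuous since $\pi_1 H = 0$ and the decomposition above shows that every piece of $\wh H$ is divisible by $q^2 p^2$, so $\pi_1 \wh H = 0$ as well. When $k=1$, the key observation is that the two mixed contributions at level $j=1$, namely $qp\, h_1(q,t)\tilde f(p,t)$ and $qp\,\tilde h_1(p,t)f(q,t)$, satisfy $\tilde f(0,t) = f(0,t) = 0$ (as $M \ge 1$), and therefore vanish at $\{p=0\}$ and $\{q=0\}$ respectively; the higher-order pieces are divisible by $q^2 p^2$. Neither class contributes to $\pi_1 \wh H$, so $\pi_1\wh H = qp(h_1 f + \tilde h_1 \tilde f)$ and the sharp bound without extra constant follows directly from the algebra property of the $\X$-norm. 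The only real obstacle in this argument is the clean bookkeeping that places the mixed terms inside $\Y_{N+M-2\ell,\rho}$; this is precisely what the quantitative hypotheses $N\ge 2\ell$ and $M\ge\ell-k$ are tailored to secure.
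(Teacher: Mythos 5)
Your proof is correct and follows essentially the same route as the paper's: you split $H(f+\tilde f)$ into pure terms $q^j p^j(h_j f + \tilde h_j\tilde f)$ staying in the same slot, absorb the mixed products $q^j p^j h_j\tilde f$ and $q^j p^j\tilde h_j f$ into the $q^\ell p^\ell$-remainder by writing them as $q^\ell p^\ell\cdot(q^{-(\ell-j)}h_j)(p^{-(\ell-j)}\tilde f)$, and verify the $\Y_{N+M-2\ell,\rho}$-norm bound exactly as in the text. The paper's three-line argument is the same decomposition; your extra observations (that $C$ depends on $\ell-k$, and the explicit case split for the $\pi_1$-bound) merely spell out what the paper leaves implicit.
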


\begin{proof}  Let $H(q,p,t) = \sum_{j=k}^{\ell-1} q^j p^j (h_j(q,t)+\tilde h_j(p,t)) +q^\ell p^\ell \wt H(q,p,t)$. Then, since, for $j=k,\dots,\ell-1$,
\[
q^jp^j (h_j+\tilde h_j) (f+\tilde f)  = q^jp^j (h_jf+\tilde h_j \tilde f) + q^\ell p^\ell \frac{1}{q^{\ell-j}p^{\ell-j}}(h_j \tilde f + \tilde h_j f), 
\]
and 
\[
\left\|\frac{1}{q^{\ell-j}p^{\ell-j}}(h_j \tilde f + \tilde h_j f) \right\|_{N+M-2\ell,\rho} \le 
\llbracket H \rrbracket_{k,\ell,N,\rho}(\|f\|_{M,\rho}+\|\wt f\|_{M,\rho}),
\]
both claims follow immediately. 
%by using the property \|F\|_{r-a,s-b,N+a+b,\rho} \le \|F\|_{r,s,N,\rho} to prove that the last terms are in Y_{0,0,N+M-6,\rho}.
%take into account that \X_{r,\rho} is isometrically embedded into \Y_{r,0,0,\rho}
%For the first norm, for the norm of the new \tilde H: for two of the terms use \|f\|_{r-s,\rho} \le \rho^{s} \|f\|_{r,\rho} for s=(2,)1. Third term correspond to old \tilde H. If \rho > 1 the bound would be (1+2\rho^2)...
%Second norm is trivial

% The last statement follows from the fact that, if $H \in \wt \ZZ_{N,\rho}$, then $\pi_1 H = 0$.
\end{proof}

\begin{lem}
\label{lem:multiplicacio_per_qp} Assume $0<\rho <1$, $n\ge 0$.
For $H \in \ZZ_{k,\ell,N,\rho}$, let $\mathsf{j}_n(H)(q,p,t) = q^n p^n H(q,p,t)$. Then
$\mathsf{j}_n:  \ZZ_{k,\ell,N,\rho} \to \ZZ_{k+n,\ell+n,N+2n,\rho}$ is linear and $\|\mathsf{j}_n\| \le 1$.
Furthermore, $\mathsf{j}_n(H) \in \ZZ_{\min\{k+n,\ell\},\ell,N,\rho}$, with 
$\llbracket \mathsf{j}_n(H)\rrbracket_{\min\{k+n,\ell\},\ell,N,\rho} \le \rho^{2n} \llbracket H\rrbracket_{k,\ell,N,\rho} $.

% \textcolor{red}{També és cert: $\mathsf{j}:  \ZZ_{k,\ell,N,\rho} \to \ZZ_{k+1,\ell,N,\rho}$ is linear and $\|\mathsf{j}\| \le \rho^2$.}
\end{lem}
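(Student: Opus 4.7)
My plan is as follows. Linearity of $\mathsf{j}_n$ is immediate from the definition, so I focus on the two norm bounds. The first assertion reduces to an index shift: writing the canonical representative $H=\sum_{j=k}^{\ell-1}q^jp^j(h_j+\tilde h_j)+q^\ell p^\ell\wt H$ and multiplying by $q^np^n$, the reindexing $j'=j+n$ expresses $\mathsf{j}_n(H)$ in canonical form for $\ZZ_{k+n,\ell+n,N+2n,\rho}$. The regularity indices shift consistently: $h_{j'-n}\in\X_{N-2(j'-n),\rho}=\X_{(N+2n)-2j',\rho}$ and $\wt H\in\Y_{N-2\ell,\rho}=\Y_{(N+2n)-2(\ell+n),\rho}$, so every component norm is reused verbatim. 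This gives $\llbracket\mathsf{j}_n(H)\rrbracket_{k+n,\ell+n,N+2n,\rho}=\llbracket H\rrbracket_{k,\ell,N,\rho}$ and hence $\|\mathsf{j}_n\|\le 1$.

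For the second claim, I re-express $\mathsf{j}_n(H)$ in $\ZZ_{\min\{k+n,\ell\},\ell,N,\rho}$ by truncating the shifted expansion at $j'=\ell-1$ and absorbing the overflow indices $j'\in\{\max\{k+n,\ell\},\dots,\ell+n-1\}$ together with $q^{\ell+n}p^{\ell+n}\wt H$ into a new remainder
\[
\hat{\wt H}(q,p,t)=\sum_{j'=\max\{k+n,\ell\}}^{\ell+n-1}q^{j'-\ell}p^{j'-\ell}(h_{j'-n}(q,t)+\tilde h_{j'-n}(p,t))+q^np^n\wt H(q,p,t).
\]
The key tool will be the elementary monomial inequality
\[
|q|^a|p|^b\le\rho^{a+b-c}(|q|+|p|)^c\qquad\text{on }V_\rho\times V_\rho,\text{ whenever }a,b\ge 0\text{ and }a+b\ge c\ge 0,
\]
which I will derive from weighted AM--GM applied to $u=|q|/\rho,v=|p|/\rho\in[0,1]$: when $u+v\le 1$ one has $u^av^b\le(u+v)^{a+b}\le(u+v)^c$, and when $u+v>1$ one has $u^av^b\le 1\le(u+v)^c$.

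Applying this to each summand of $\hat{\wt H}$, where the combined power of $|q|$ and $|p|$ is exactly $(N-2\ell)+2n$, yields a gain of $\rho^{2n}$ over the corresponding $\|h_{j'-n}\|$, $\|\tilde h_{j'-n}\|$ or $\|\wt H\|$; and for the surviving components $\hat h_{j'}=h_{j'-n}$ with $j'\in\{k+n,\dots,\ell-1\}$, the inclusion $\X_{r,\rho}\hookrightarrow\X_{s,\rho}$ for $r\ge s$ has operator norm $\rho^{r-s}$, producing the same $\rho^{2n}$ gain. The only point requiring care is purely combinatorial: each original piece $h_j,\tilde h_j$ (for $j=k,\dots,\ell-1$) and $\wt H$ is routed to exactly one slot of the new decomposition, with no repetition. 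This is what allows the contributions to sum to $\rho^{2n}\llbracket H\rrbracket_{k,\ell,N,\rho}$ with no extra multiplicative constant, matching the form of the stated bound; I expect this accounting to be the only step that needs deliberate attention, all other steps being direct substitutions into the definitions.
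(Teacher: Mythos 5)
Your argument is correct and, since the paper records this lemma as ``a straightforward computation,'' it is in effect the intended proof: the index shift for the first claim, and for the second the monomial bound $|q|^a|p|^b\le\rho^{a+b-c}(|q|+|p|)^c$ on $V_\rho\times V_\rho$ together with the routing of $h_j,\tilde h_j,\wt H$ into the decomposition for $\ZZ_{\min\{k+n,\ell\},\ell,N,\rho}$, deliver precisely the $\rho^{2n}$ factor with no spurious constant (each contribution loses $\rho^{2n}$, whether via the inclusion $\X_{r,\rho}\hookrightarrow\X_{r-2n,\rho}$, the monomial estimate with total degree $N-2\ell+2n$, or $|q|^n|p|^n\le\rho^{2n}$). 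One cosmetic remark: the elementary inequality you state follows from $u\le u+v$ and $v\le u+v$ on $[0,1]^2$, not weighted AM--GM, though the derivation you wrote out is the right one.
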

\begin{proof}
    It is a straightforward computation. 
    %for the norm, take into account that \|F\|_{r-a,s-b,N+a+b,\rho} \le \|F\|_{r,s,N,\rho} and  \|F\|_{r,s,N-M,\rho} \le (2\rho)^M\|F\|_{r,s,N,\rho}, for 0<\rho<1. Possible better bound depending on \rho?
\end{proof}

\begin{lem}
\label{lem:derivades_a_ZNrho} Assume $0<\rho <1$.
There exists $C>0$ such that, for any $0 < \rho' < \rho$, if $H \in \ZZ_{k,\ell,N,\rho}$, then
\begin{enumerate}
\item $q\partial_q H \in \ZZ_{k,\ell,N,\rho'}$ and 
\[\llbracket q\partial_q H\rrbracket_{k,\ell,N,\rho'} \le \frac{C}{\rho-\rho'} \llbracket H\rrbracket_{k,\ell,N,\rho},  \qquad \llbracket \pi_1 (q\partial_qH)\rrbracket_{k,\ell,N,\rho'} \le \frac{C}{\rho-\rho'} \llbracket\pi_1 H\rrbracket_{k,\ell,N,\rho},
\]
\item $p\partial_p H \in \ZZ_{k,\ell,N,\rho'}$ and
\[
\llbracket p\partial_p H\rrbracket_{k,\ell,N,\rho'} \le \frac{C}{\rho-\rho'} \llbracket H\rrbracket_{k,\ell,N,\rho}, \qquad \llbracket \pi_1 (p\partial_pH)\rrbracket_{k,\ell,N,\rho'} \le \frac{C}{\rho-\rho'} \llbracket\pi_1 H\rrbracket_{k,\ell,N,\rho}.
\]
\end{enumerate}
% If, moreover, $H \in \wt \ZZ_{N,\rho}$, then
% \begin{enumerate}
% \setcounter{enumi}{2}
% \item $q\partial_q H \in \wt \ZZ_{N,\rho'}$, $\|q\partial_q H\|_{N,\rho'} \le \frac{C}{\rho-\rho'} \|H\|_{N,\rho}$, 
% \item $p\partial_p H \in \wt 
% \ZZ_{N,\rho'}$, $\|p\partial_p H\|_{N,\rho'} \le \frac{C}{\rho-\rho'} \|H\|_{N,\rho}$.
% \end{enumerate}
\end{lem}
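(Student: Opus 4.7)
The plan is to reduce the desired bound on $\ZZ_{k,\ell,N,\rho}$ to Cauchy-type estimates on the two ``atomic'' spaces $\X_{r,\rho}$ and $\Y_{N,\rho}$, and then reassemble them via the decomposition of $H$. I will treat $q\partial_q$; the bound for $p\partial_p$ is identical by the $(q,p)$-symmetry of the norms and of the sector.

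The geometric key is a Cauchy estimate adapted to the sectorial domain $V_\rho$: for any $q\in V_{\rho'}$, the complex disc $D_q:=\{w\in\C:|w-q|\le c_\kappa(\rho-\rho')|q|\}$ is contained in $V_\rho$ for some constant $c_\kappa>0$ depending only on $\kappa$. Both the ball constraint $|w|<\rho$ and the angular constraint $|\Im w|<\rho\kappa|\Re w|$ follow from $|q|<\rho'$ and $|\Im q|<\rho'\kappa|\Re q|$, using that $|\Re q|\asymp|q|$ when $\kappa$ is small; moreover, on $\partial D_q$ one has $|w|\le 2|q|$. Cauchy's formula on $D_q$ then yields, for $f\in\X_{r,\rho}$,
\[
|q\partial_q f(q,t)|\le \frac{|q|\sup_{\partial D_q}|f(\cdot,t)|}{c_\kappa(\rho-\rho')|q|}\le \frac{2^r|q|^r}{c_\kappa(\rho-\rho')}\|f\|_{r,\rho},
\]
so $\|q\partial_q f\|_{r,\rho'}\le\frac{C}{\rho-\rho'}\|f\|_{r,\rho}$. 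The same Cauchy argument applied in the $q$-variable with $p$ fixed, together with the inequality $(|w|+|p|)^N\le 2^N(|q|+|p|)^N$ on $\partial D_q$, yields the analogous bound $\|q\partial_q F\|_{N,\rho'}\le\frac{C}{\rho-\rho'}\|F\|_{N,\rho}$ for $F\in\Y_{N,\rho}$.

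For $H\in\ZZ_{k,\ell,N,\rho}$ decomposed as in \eqref{def:espaiZ}, the Leibniz rule and the fact that $q\partial_q\tilde h_j(p,t)=0$ give
\[
q\partial_q H=\sum_{j=k}^{\ell-1}q^jp^j\bigl[(jh_j+q\partial_q h_j)+j\tilde h_j\bigr]+q^\ell p^\ell\bigl[\ell\wt H+q\partial_q\wt H\bigr],
\]
which is manifestly an element of $\ZZ_{k,\ell,N,\rho'}$. Estimating each of its new atomic components via the Cauchy bounds above, and absorbing the combinatorial constants $j,\ell\le\ell$ into an enlarged constant by means of the trivial inequality $1\le(\rho-\rho')^{-1}$ valid whenever $\rho-\rho'<1$, produces the desired bound $\llbracket q\partial_q H\rrbracket_{k,\ell,N,\rho'}\le\frac{C}{\rho-\rho'}\llbracket H\rrbracket_{k,\ell,N,\rho}$.

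The projection estimate is immediate: $q\partial_q$ does not alter the power $q^jp^j$ in front of each summand, so $\pi_1(q\partial_q H)=qp\bigl[(h_1+q\partial_q h_1)+\tilde h_1\bigr]$ when $k=1$, and $\pi_1(q\partial_q H)=0$ when $k\ge 2$; in either case applying the atomic estimates only to the $j=1$ components yields the second inequality. The only non-routine point in the whole argument is the sectorial Cauchy estimate: what makes it work is that the usable disc radius is proportional to $|q|$, and the factor $q$ in $q\partial_q$ exactly cancels the $|q|^{-1}$ loss coming from Cauchy's formula, producing a Lipschitz-in-$\rho$ loss rather than a worse one.
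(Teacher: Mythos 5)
Your proof is correct and implements exactly what the paper intends by ``Cauchy estimates and a straightforward computation'': the sectorial Cauchy estimate with disc radius proportional to $(\rho-\rho')|q|$, so that the factor $q$ in $q\partial_q$ cancels the $|q|^{-1}$ loss, followed by the Leibniz decomposition in $\ZZ_{k,\ell,N,\rho'}$. The only point worth noting is that your constant picks up factors like $2^{N-2j}$ and $\ell$, but this is harmless since the lemma fixes $k,\ell,N,\rho$ before asserting the existence of $C$.
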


\begin{proof} 
It follows from Cauchy estimates and  a straightforward computation.
% Statements 1 and 2 follow from Cauchy estimates and  a straightforward computation. Statements 3 and 4 follow from 1 and 2 , since if $H \in \wt \ZZ_{N,\rho}$,
% \[
% \pi_1 (q\partial_qH) = 0, \qquad \pi_1 (p\partial_pH) = 0.
% \]
%the distance of a point q \in V_{\kappa \rho, \rho} to the border of V_{\kappa \rho',\rho'} is bounded from below by k\kappa |q|(\rho-\rho'), with k=1+O(\rho). For \kappa > 1 and \rho small enough, we can assume C<1
\end{proof}

\begin{lem}
\label{lem:derivades_de_pf+qtildef} Assume $0<\rho <1$. Let $M,n\in \N$, $M \ge n \ge 1$. Let $f,\tilde f \in \X_{M,\rho}$, $F(q,p,t) = p^n f(q,t)+q^n\tilde f (p,t)$. Then there exists $C>0$ such that, for any $0 < \rho' < \rho$,
\begin{enumerate}
\item $\partial_p F(q,p,t) = q^n p^{n-1} (g(q,t)+\tilde g(p,t))$, with $\|g\|_{M-n,\rho'}\le \frac{C}{\rho-\rho'} \|f\|_{M,\rho}$ $ \|\tilde g\|_{M-n,\rho'}\le \frac{C}{\rho-\rho'} \|\tilde f\|_{M,\rho}$,
\item $\partial_q F(q,p,t) = q^{n-1}p^n (h(q,t)+\tilde h(p,t))$, with $\|h\|_{M-n,\rho'}\le \frac{C}{\rho-\rho'} \|f\|_{M,\rho}$ $ \|\tilde h\|_{M-n,\rho'}\le \frac{C}{\rho-\rho'} \|\tilde f\|_{M,\rho}$.
\end{enumerate}
\end{lem}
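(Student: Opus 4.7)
The proof is a direct computation followed by Cauchy estimates, entirely in the spirit of Lemma~\ref{lem:derivades_a_ZNrho}. Differentiating $F$ with respect to $p$ gives
\[
\partial_p F(q,p,t) = n p^{n-1} f(q,t) + q^n \partial_p \tilde f(p,t).
\]
Since $f \in \X_{M,\rho}$ with $M\ge n$, the quotient $f(q,t)/q^n$ is still analytic on $V_\rho\times\T_\sigma$ and lies in $\X_{M-n,\rho}$ with norm bounded by $\|f\|_{M,\rho}$. Similarly, $\tilde f \in \X_{M,\rho}$ forces $\partial_p\tilde f$ to vanish to order $M-1\ge n-1$ at $p=0$, so $\partial_p\tilde f(p,t)/p^{n-1}$ is analytic on $V_\rho\times\T_\sigma$. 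Setting $g(q,t):= n f(q,t)/q^n$ and $\tilde g(p,t):=\partial_p\tilde f(p,t)/p^{n-1}$ yields at once the desired factorization $\partial_p F = q^n p^{n-1}(g(q,t)+\tilde g(p,t))$, and the bound for $g$ is immediate from its definition (without even invoking $1/(\rho-\rho')$).

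For $\tilde g$ I apply a Cauchy estimate. Writing $\tilde f(p,t)=p^M \tilde h(p,t)$ with $\sup_{V_\rho\times\T_\sigma}|\tilde h|\le \|\tilde f\|_{M,\rho}$, the product rule gives
\[
\partial_p\tilde f(p,t) = M p^{M-1}\tilde h(p,t)+p^M\partial_p\tilde h(p,t),
\]
and Cauchy's integral formula on a contour of radius proportional to $\rho-\rho'$ around $p\in V_{\rho'}$ and contained in $V_\rho$ yields $|\partial_p\tilde h(p,t)|\le \frac{C}{\rho-\rho'}\|\tilde f\|_{M,\rho}$. Combined with $|p|\le \rho< 1$, this gives $|\partial_p\tilde f(p,t)|\le \frac{C'}{\rho-\rho'}|p|^{M-1}\|\tilde f\|_{M,\rho}$ on $V_{\rho'}\times\T_\sigma$, and dividing by $|p|^{n-1}$ produces exactly $\|\tilde g\|_{M-n,\rho'}\le \frac{C'}{\rho-\rho'}\|\tilde f\|_{M,\rho}$. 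Part~(2) follows by the same argument with the roles of $q$ and $p$ exchanged.

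The only delicate point is verifying that a circle of radius $\gtrsim \rho-\rho'$ centered at an arbitrary $p\in V_{\rho'}$ fits inside $V_\rho$; this is not obvious near the lateral boundary of the sector, where the naive distance from $V_{\rho'}$ to $\partial V_\rho$ can be smaller than $\rho-\rho'$. The key observation is that the half-aperture $\arctan(\rho\kappa)$ of $V_\rho$ grows with $\rho$, so that $V_{\rho'}$ is contained in a slightly narrower open sector whose closure sits inside $V_\rho$ with uniform margin $\gtrsim \rho-\rho'$. This geometric fact is already implicit in Lemma~\ref{lem:derivades_a_ZNrho} and is reused here verbatim, so the two lemmas are proved by essentially the same mechanism.
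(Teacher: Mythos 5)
Your computation of $\partial_p F$, the split into $g$ and $\tilde g$, the direct bound $\|g\|_{M-n,\rho'}\le n\|f\|_{M,\rho}$ (which indeed needs no loss $1/(\rho-\rho')$), and the observation that part~(2) is symmetric, are all correct. The gap is in the Cauchy estimate for $\tilde g$: your ``key observation'' --- that $V_{\rho'}$ sits inside $V_\rho$ with a \emph{uniform} margin $\gtrsim \rho-\rho'$ --- is false near the vertex. Both $V_{\rho'}$ and $V_{\rho}$ are sectors with vertex at the origin, so their boundaries meet at $0$; for $p\in V_{\rho'}$ with $|p|$ small the distance from $p$ to $\partial V_{\rho}$ is at most $\lesssim |p|$, which can be far smaller than $\rho-\rho'$. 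More precisely, writing $p=x+iy$ with $x>0$ and $|y|<\rho'\kappa x$, the distance to the ray $y=\rho\kappa x$ equals $\frac{\rho\kappa x - y}{\sqrt{1+\rho^2\kappa^2}}\ge\frac{(\rho-\rho')\kappa x}{\sqrt{1+\rho^2\kappa^2}}$ with $x\sim|p|$, so the margin is $\sim\kappa(\rho-\rho')|p|$, \emph{not} $\sim\rho-\rho'$. A circle of fixed radius $\sim\rho-\rho'$ therefore does not fit inside $V_\rho$, and the bound you quote for $\partial_p\tilde h$ (with $\tilde h$ merely bounded on a sector shrinking to a point) cannot hold uniformly down to the vertex.

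The repair is standard but must be made explicit: take circles of \emph{scaled} radius $r(p)=c(\rho-\rho')|p|$, which do fit by the estimate above. The extra factor $|p|$ in $r(p)$ is then exactly absorbed by the weight in the norm. Concretely, for $|\zeta-p|=r(p)$ one has $|\zeta|\le|p|(1+c(\rho-\rho'))$ and $\zeta\in V_\rho$, so
\[
|\partial_p\tilde f(p,t)|\ \le\ \frac{1}{r(p)}\sup_{|\zeta-p|=r(p)}|\tilde f(\zeta,t)|\ \le\ \frac{(1+c)^M}{c(\rho-\rho')}\,|p|^{M-1}\,\|\tilde f\|_{M,\rho},
\]
which after dividing by $|p|^{\,n-1}$ gives $\|\tilde g\|_{M-n,\rho'}\le\frac{C}{\rho-\rho'}\|\tilde f\|_{M,\rho}$ directly (no need for the intermediate split $\tilde f=p^M\tilde h$). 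This scaled-radius Cauchy estimate in a sector is also the mechanism tacitly used in Lemma~\ref{lem:derivades_a_ZNrho}, where the derivative always appears multiplied by $q$ or $p$ for exactly this reason.
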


\begin{proof} It follows from Cauchy estimates and  a straightforward computation.
%Better bounds?
%$\partial_p F(q,p,t) = q (g(q,t)+\tilde g(p,t))$, with $\|g\|_{N-1,\rho}\le \|f\|_{N,\rho}$ $ \|\tilde g\|_{N,\rho'}\le \frac{C}{\rho'-\rho} \|\tilde f\|_{N,\rho}$,
%and analogously for the second item
%We can make C<1 by taking \kappa>1 and 0<\rho<1 small enough
\end{proof}

Combining these lemmas, we have the following.
We recall that, given $H(q,p,t)$, $G(q,p,t)$, its Poisson bracket is
\[
\{H,G\}(q,p,t) = (q+p)^3 (\partial_q H \partial_p G - \partial_p H \partial_q G).
\]

\begin{lem}[Poisson bracket lemma]
\label{lem:parentesi_de_Poisson}
Let $k\ge 1$, $\ell \ge k$, $N-2\ell \ge 0$, $n\ge 1$, $M\ge n$. There exists $\tilde C>0$ such that, for any $1 \ge \rho_0\ge\rho >\rho' >0$, the following holds. Let $H \in \ZZ_{k,\ell,N,\rho}$,   $f,\tilde f \in \X_{M,\rho_0}$, $F(q,p,t) = p^n f(q,t)+q^n\tilde f (p,t)$. Then, $\{H,F\} \in \ZZ_{k+n-1,\ell+n-1,N+M+n+1,\rho'}$  and 
\[
\llbracket \{H,F\} \rrbracket_{k+n-1,\ell+n-1,N+M+n+1,\rho'}  \le \frac{\tilde C }{(\rho_0-\rho')(\rho-\rho')} \llbracket H\rrbracket_{k,\ell,N,\rho}(\|f\|_{M,\rho_0}+\|\tilde f\|_{M,\rho_0}). 
\]
In particular, if $n\ge 2$, $\pi_1 \{H,F\} = 0$. If $n=1$, 
\[
\llbracket \pi_1 \{H,F\} \rrbracket_{k+n-1,\ell+n-1,N+M+n+1,\rho'} 
 \le  \frac{\tilde C }{(\rho_0-\rho')(\rho-\rho')} \llbracket \pi_1 H\rrbracket_{k,\ell,N,\rho}(\|f\|_{M,\rho_0}+\|\tilde f\|_{M,\rho_0}).
\]
\end{lem}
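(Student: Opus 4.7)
The plan is to decompose $\{H,F\} = (q+p)^3(\partial_q H\,\partial_p F - \partial_p H\,\partial_q F)$ into building blocks that each fall under one of the already established Banach-scale lemmas, rather than trying to expand everything into monomials at once. The key idea is never to differentiate $H$ directly (a bare $\partial_q H$ or $\partial_p H$ does not live in the $\mathcal Z$-scale), but always to work with $q\partial_q H$ and $p\partial_p H$, which by Lemma~\ref{lem:derivades_a_ZNrho} remain in $\mathcal Z_{k,\ell,N,\rho'}$ with a Cauchy loss $(\rho-\rho')^{-1}$ and with the correct $\pi_1$-behaviour. The structural form of $F$ is used symmetrically, via Lemma~\ref{lem:derivades_de_pf+qtildef}, so that the derivatives of $F$ also retain a $f(q,t)+\tilde f(p,t)$-type factorisation after extracting $q^n p^{n-1}$ or $q^{n-1}p^n$.

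First I would record the shape of the derivatives of $F$. Lemma~\ref{lem:derivades_de_pf+qtildef} gives $\partial_p F = q^n p^{n-1}(g_1 + \tilde g_1)$ and $\partial_q F = q^{n-1} p^n (g_2 + \tilde g_2)$, with $g_i, \tilde g_i \in \mathcal X_{M-n,\rho'}$ and norms controlled by $(\rho_0-\rho')^{-1}(\|f\|_{M,\rho_0}+\|\tilde f\|_{M,\rho_0})$. Using this I can rewrite each half of the inner bracket so that no negative powers of $q$ or $p$ appear:
\[
\partial_q H\,\partial_p F = q^{n-1} p^{n-1}\,(q\partial_q H)(g_1+\tilde g_1), \qquad \partial_p H\,\partial_q F = q^{n-1} p^{n-1}\,(p\partial_p H)(g_2+\tilde g_2).
\]
Lemma~\ref{lem:producte_de_H_per_f_i_tilde_f} then places each of $(q\partial_q H)(g_1+\tilde g_1)$ and $(p\partial_p H)(g_2+\tilde g_2)$ in $\mathcal Z_{k,\ell,N+M-n,\rho'}$, and Lemma~\ref{lem:multiplicacio_per_qp} shifts the whole product into $\mathcal Z_{k+n-1,\ell+n-1,N+M+n-2,\rho'}$. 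Subtracting the two terms leaves $\partial_q H\,\partial_p F - \partial_p H\,\partial_q F$ in the same space, with a bound of the expected shape.

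Next I would absorb the $(q+p)^3$ prefactor by splitting $(q+p)^3 = (q^3 + p^3) + 3qp(q+p)$. The first piece has the form $f(q) + \tilde f(p)$ with $q^3, p^3 \in \mathcal X_{3,\rho'}$, so multiplication by it stays inside the $\mathcal Z$-scale via Lemma~\ref{lem:producte_de_H_per_f_i_tilde_f} and raises $N$ by $3$, landing in $\mathcal Z_{k+n-1,\ell+n-1,N+M+n+1,\rho'}$. For the second piece I apply Lemma~\ref{lem:multiplicacio_per_qp} to multiply by $qp$ and then Lemma~\ref{lem:producte_de_H_per_f_i_tilde_f} with $f+\tilde f = q+p \in \mathcal X_{1,\rho'}$ to multiply by $q+p$; the resulting space $\mathcal Z_{k+n,\ell+n,N+M+n+1,\rho'}$ embeds into $\mathcal Z_{k+n-1,\ell+n-1,N+M+n+1,\rho'}$ by Lemma~\ref{prop:normaNnormaNmes2j}. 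Summing the two contributions gives the main estimate.

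The $\pi_1$ claim is then a matter of bookkeeping. If $n\ge 2$ then $k+n-1 \ge 2$, so the base diagonal level of the target space already exceeds $1$ and $\pi_1\{H,F\} = 0$ by the standing convention. If $n=1$, the $\pi_1$-versions of Lemmas~\ref{lem:producte_de_H_per_f_i_tilde_f} and~\ref{lem:derivades_a_ZNrho} propagate the $\pi_1$-estimate through each of the preceding steps: each product or derivative preserves the bound $\llbracket \pi_1 \cdot \rrbracket \le \llbracket \pi_1 H\rrbracket \cdot (\text{auxiliary norms})$, and multiplication by $qp$ or by $q+p$ either annihilates $\pi_1$ or preserves it cleanly. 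The main obstacle I expect is purely combinatorial: organising the three nested applications of the Cauchy losses $(\rho-\rho')^{-1}$ and $(\rho_0-\rho')^{-1}$ so that the final constant comes out in the clean form $\tilde C/[(\rho_0-\rho')(\rho-\rho')]$, and treating separately the degenerate case $\ell=k$ where the diagonal sum defining $\mathcal Z_{k,\ell,N,\rho}$ is empty and only the residual block $q^\ell p^\ell \widetilde H$ survives.
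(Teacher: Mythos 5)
Your proposal is correct and follows essentially the same route as the paper: record the structure of $\partial_q F$, $\partial_p F$ via Lemma~\ref{lem:derivades_de_pf+qtildef}, pair each with $q\partial_q H$ or $p\partial_p H$ so as to remain in the $\mathcal Z$-scale (Lemmas~\ref{lem:derivades_a_ZNrho},~\ref{lem:producte_de_H_per_f_i_tilde_f},~\ref{lem:multiplicacio_per_qp}), then absorb $(q+p)^3$ by splitting it into the split-variable part $q^3+p^3$ and the diagonal part $3qp(q+p)$, and finally track the $\pi_1$ projection through each step. The only cosmetic difference is that the paper does not spell out the final embedding $\mathcal Z_{k+n,\ell+n,N+M+n+1,\rho'}\subset\mathcal Z_{k+n-1,\ell+n-1,N+M+n+1,\rho'}$ as an invocation of Lemma~\ref{prop:normaNnormaNmes2j}, but treats it implicitly; your bookkeeping concerns about the two Cauchy losses and the degenerate case $\ell=k$ are real but resolve exactly as you anticipate.
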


\begin{proof}
By Lemma~\ref{lem:derivades_de_pf+qtildef},
\[
\begin{aligned}
    q^{-1}\partial_p F(q,p,t) & = q^{n-1} p^{n-1} (g(q,t)+\tilde g (p,t)),\\
    p^{-1}\partial_q F(q,p,t) & = q^{n-1} p^{n-1}(h(q,t)+\tilde h (p,t))
\end{aligned}
\]
with $\|g\|_{M-n,\rho'}, \|h\|_{M-n,\rho'} \le C (\rho_0-\rho')^{-1}\|f\|_{M,\rho_0}$ and $\|\tilde g\|_{M-n,\rho'}, \|\tilde h\|_{M-n,\rho'} \le C (\rho_0-\rho')^{-1}\|\tilde f\|_{M,\rho_0}$.
Hence, by Lemmas~\ref{lem:derivades_a_ZNrho},~\ref{lem:multiplicacio_per_qp} and~\ref{lem:producte_de_H_per_f_i_tilde_f}, for some constant $K$ (depending only on the constants in those lemmas),
\[
\begin{aligned}
\llbracket \partial_q H \partial_p F\rrbracket_{k+n-1,\ell+n-1,N+M+n-2,\rho'} & = \left\llbracket q\partial_q H (q p)^{n-1} (g+\tilde g )\right\rrbracket_{k+n-1,\ell+n-1,N+M+n-2,\rho'} \\
& \le C \llbracket q\partial_q H \rrbracket_{k,\ell,N,\rho'} \left( \| g\|_{M-n,\rho_1} +  \| \tilde g\|_{M-n,\rho_1}\right) \\
& \le K \frac{1}{\rho_0-\rho'} \frac{1}{\rho-\rho'}\llbracket H\rrbracket_{k,\ell,N,\rho}(\|f\|_{M,\rho_0}+\|\tilde f\|_{M,\rho_0}).
\end{aligned}
\]
With the same argument, we have that
\[
\llbracket \partial_p H \partial_q F\rrbracket_{k+n-1,\ell+n-1,N+M+n-2,\rho'}\le K \frac{1}{\rho_0-\rho'} \frac{1}{\rho-\rho'}\llbracket H\rrbracket_{k,\ell,N,\rho}(\|f\|_{M,\rho_0}+\|\tilde f\|_{M,\rho_0}).
\]
Now, taking into account that $(q+p)^3 = q^3 + p^3 + qp(3q+3p)$, on the one hand we have that, by Lemma~\ref{lem:producte_de_H_per_f_i_tilde_f}
\begin{multline*}
\llbracket (q^3+p^3) (\partial_q H \partial_p F - \partial_p H \partial_q F)\rrbracket_{k+n-1,\ell+n-1,N+M+n+1,\rho'} \\
\begin{aligned}
 & \le C
\llbracket \partial_q H \partial_p F - \partial_p H \partial_q F\rrbracket_{k+n-1,\ell+n-1,N+M+n-2,\rho'}(\|q^3\|_{3,\rho'}+\|p^3\|_{3,\rho'}) \\
& \le K \frac{1}{\rho_0-\rho'} \frac{1}{\rho-\rho'}\llbracket H\rrbracket_{k,\ell,N,\rho}(\|f\|_{M,\rho_0}+\|\tilde f\|_{M,\rho_0}).
\end{aligned}
\end{multline*}
In particular, if $n=1$,
\begin{multline*}
\llbracket \pi_1 (q^3+p^3) (\partial_q H \partial_p F - \partial_p H \partial_q F)\rrbracket_{k,\ell,N+M+2,\rho'} \\
\begin{aligned}
 & \le C
\llbracket \pi_1(\partial_q H \partial_p F - \partial_p H \partial_q F)\rrbracket_{k,\ell,N+M-1,\rho'}(\|q^3\|_{3,\rho'}+\|p^3\|_{3,\rho'}) \\
& \le K \frac{1}{\rho_0-\rho'} \frac{1}{\rho-\rho'}\llbracket \pi_1 H\rrbracket_{k,\ell,N,\rho}(\|f\|_{M,\rho_0}+\|\tilde f\|_{M,\rho_0}).
\end{aligned}
\end{multline*}
Also, on the other hand, by Lemma~\ref{lem:multiplicacio_per_qp}, %Lemma~\ref{lem:producte_de_H_per_f_i_tilde_f} is not needed in this steep 
\begin{multline*}
\llbracket qp (3q+3p) (\partial_q H \partial_p F - \partial_p H \partial_q F)\rrbracket_{k+n,\ell+n,N+M+n+1,\rho'} \\
\begin{aligned}
 & \le 
\llbracket (3q+3p) (\partial_q H \partial_p F - \partial_p H \partial_q F)\rrbracket_{k+n-1,\ell+n-1,N+M+n-2,\rho'} \\
& \le 
\|3q+3p\|_{1,\rho'} \llbracket \partial_q H \partial_p F - \partial_p H \partial_q F\rrbracket_{k+n-1,\ell+n-1,N+M+n-2,\rho'} \\
& \le 
K \frac{1}{\rho_0-\rho'} \frac{1}{\rho-\rho'}\llbracket H\rrbracket_{k,\ell,N,\rho}(\|f\|_{M,\rho_0}+\|\tilde f\|_{M,\rho_0}).
\end{aligned}
\end{multline*}
% Finally, the claim follows taking $\tilde C$ large enough and using that, by~\eqref{fita:normaNnormaNmes2j},
% \[
% \|\{H,F\} \|_{k,\ell,N,\rho'} \le (2\rho)^{M+n+1}\|\{H,F\} \|_{k+n+1,\ell+n+1,N+M+n+1,\rho'}.
% \]
Hence, the claim is proven.
\end{proof}

\begin{prop}
    \label{prop:serie_de_Lie}
Let $k\ge 1$, $\ell \ge k$, $N-2\ell \ge 0$, $n\ge 1$, $M\ge n$. There exist $K, \wt K>0$ such that, for any $1 \ge \rho >\rho' >0$, the following holds.  Let  $f,\tilde f \in \X_{M,\rho}$, $F(q,p,t) = p^n f(q,t)+q^n\tilde f (p,t)$ and let $\Phi_F^s$ be the flow of the Hamiltonian $F$ (with respect to the 2-form~\eqref{def:omega}). Then,  if 
\[
\frac{1}{(\rho-\rho')^2} (\|f\|_{M,\rho}+\|\tilde f\|_{M,\rho}) < K,
\]
for $H \in \ZZ_{k,\ell,N,\rho}$,  $H \circ \Phi_F^1 \in \ZZ_{N,\rho'}$, 
\[
\begin{aligned}
\llbracket H \circ \Phi_F^1\rrbracket_{k,\ell,N,\rho'} & \le \left( 1+  \wt K \frac{\rho^{M+n+1}}{(\rho-\rho')^2} (\|f\|_{M,\rho}+\|\tilde f\|_{M,\rho})\right) \llbracket H \rrbracket_{k,\ell,N,\rho},\\
\llbracket \pi_1 H \circ \Phi_F^1\rrbracket_{k,\ell,N,\rho'} & \le \left( 1+ \wt K \frac{\rho^{M+n+1} }{(\rho-\rho')^2} (\|f\|_{M,\rho}+\|\tilde f\|_{k,\ell,M,\rho})\right) \llbracket\pi_1 H \rrbracket_{k,\ell,N,\rho},
\end{aligned}
\]
$H \circ \Phi_F^1-H \in  \ZZ_{k+n-1,\ell+n-1,N+M+n+1,\rho'}$,
\[
\begin{aligned}
\llbracket H \circ \Phi_F^1-H\rrbracket_{k+n-1,\ell+n-1,N+M+n+1,\rho'} & \le   \frac{\wt K}{(\rho-\rho')^2}  \llbracket H \rrbracket_{k,\ell,N,\rho}  (\|f\|_{M,\rho}+\|\tilde f\|_{M,\rho}), \\
\llbracket \pi_1 H \circ \Phi_F^1-\pi_1 H\rrbracket_{k+n-1,\ell+n-1,N+M+n+1,\rho'} & \le   \frac{\wt K}{(\rho-\rho')^2}  \llbracket \pi_1 H \rrbracket_{k,\ell,N,\rho}  (\|f\|_{M,\rho}+\|\tilde f\|_{M,\rho}),
\end{aligned}
\]
and 
$H \circ \Phi_F^1-H -\{H,F\}\in  \ZZ_{k+2(n-1),\ell+2(n-1),N+2(M+n+1),\rho'}$,
\[
\llbracket H \circ \Phi_F^1-H-\{H,F\}\rrbracket_{k+2(n-1),\ell+2(n-1),N+2(M+n+1),\rho'} \le  \frac{ \wt K}{(\rho-\rho')^4}  \llbracket H \rrbracket_{k,\ell,N,\rho}  (\|f\|_{M,\rho}+\|\tilde f\|_{M,\rho})^2.
\]
% if $H \in \wt \ZZ_{N,\rho}$, $H \circ \Phi_F^1 \in \wt \ZZ_{N,\rho}$ and,
% if $\RR \in \ZZ_{3,\rho}^4$, $\RR \circ \Phi_F^1 \in \ZZ_{3,\rho}^4$, with
% \[
% \begin{aligned}
% \|\RR \circ \Phi_F^1\|_{\rho'} & \le \left( 1+ \wt K \frac{\rho^{M+2}}{(\rho-\rho')^2} (\|f\|_{M,\rho}+\|\tilde f\|_{M,\rho})\right) \|\RR \|_{\rho},\\
% \|\RR \circ \Phi_F^1-\RR\|_{\rho'} & \le  \wt K \frac{\rho^{M+2}}{(\rho-\rho')^2}  \|\RR \|_{\rho}  (\|f\|_{M,\rho}+\|\tilde f\|_{M,\rho}).
% \end{aligned}
%\]
%take into account "Moreover,..." in previous lemma
\end{prop}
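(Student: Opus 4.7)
The strategy is the classical Lie series argument adapted to the Banach scale $\ZZ_{k,\ell,N,\rho}$ built above, with the heavy lifting about the Poisson brackets already packaged in Lemma~\ref{lem:parentesi_de_Poisson}. The starting point are the identities
\[
H\circ\Phi_F^1-H=\int_0^1\{H,F\}\circ\Phi_F^s\,\mathrm{d}s,\qquad
H\circ\Phi_F^1-H-\{H,F\}=\int_0^1\!\!\int_0^s\{\{H,F\},F\}\circ\Phi_F^\sigma\,\mathrm{d}\sigma\,\mathrm{d}s.
\]
All three estimates should follow from these formulas combined with the structural control of $\{H,F\}$ and $\{\{H,F\},F\}$ via Lemma~\ref{lem:parentesi_de_Poisson} and a composition estimate allowing one to bound $G\circ\Phi_F^s$ in $\ZZ_{\cdot,\cdot,\cdot,\rho'}$ by $\llbracket G\rrbracket_{\cdot,\cdot,\cdot,\rho''}$ for some intermediate $\rho''\in(\rho',\rho)$.

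First I would verify that, under the smallness hypothesis, the flow $\Phi_F^s$ is well defined and maps the sectorial polydisc $V_{\rho'}\times V_{\rho'}\times\T_\sigma$ into $V_{\rho''}\times V_{\rho''}\times\T_\sigma$ for every $s\in[0,1]$, with, say, $\rho''=\rho'+\tfrac14(\rho-\rho')$. The Hamiltonian vector field of $F$ with respect to $\omega$ is
\[
\dot q=(q+p)^3\partial_pF,\qquad \dot p=-(q+p)^3\partial_qF,\qquad \dot t=0,
\]
and the structure $F=p^nf(q,t)+q^n\tilde f(p,t)$ with $M\ge n\ge 1$ together with Cauchy estimates on $f,\tilde f\in\X_{M,\rho}$ produces a bound of order $(\rho-\rho')^{-1}\rho^{M+n+2}(\|f\|_{M,\rho}+\|\tilde f\|_{M,\rho})$ on $X_F$. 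A standard Gronwall-type argument, exploiting that $X_F$ vanishes to order $M+n+2$ at the origin, then shows that the displacement $|\Phi_F^s(q,p,t)-(q,p,t)|$ is much smaller than $\rho-\rho'$ and, in particular, preserves the sectorial aperture defining $V_\rho$ in \eqref{eq:sectorialdom}. This is where the condition on $K$ enters decisively.

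Next I would feed this into the Lie series identities. Choosing intermediate radii $\rho>\rho_1=\rho-\tfrac13(\rho-\rho')>\rho_2=\rho-\tfrac23(\rho-\rho')>\rho''>\rho'$, a first application of Lemma~\ref{lem:parentesi_de_Poisson} gives
\[
\llbracket\{H,F\}\rrbracket_{k+n-1,\ell+n-1,N+M+n+1,\rho_1}\le \frac{\tilde C}{(\rho-\rho_1)^2}\llbracket H\rrbracket_{k,\ell,N,\rho}\bigl(\|f\|_{M,\rho}+\|\tilde f\|_{M,\rho}\bigr),
\]
and a second application, using the inclusions $\X_{M,\rho}\hookrightarrow\X_{M,\rho_1}$ and the bound above, yields the analogous estimate for $\{\{H,F\},F\}$ on $V_{\rho_2}$. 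Since $\Phi_F^s(V_{\rho'})\subset V_{\rho''}\subset V_{\rho_2}$, for any analytic $G$ on $V_{\rho_2}\times V_{\rho_2}\times\T_\sigma$ one has $\|G\circ\Phi_F^s\|_{V_{\rho'}}\le \|G\|_{V_{\rho_2}}$, and the analogous inequality passes to the $\llbracket\cdot\rrbracket$ norms because the decomposition defining $\ZZ_{k,\ell,N,\rho'}$ is unique on the class of analytic functions. Integrating in $s$ (and $\sigma$) then produces all three estimates in the statement. The $\pi_1$ versions are identical, invoking the second half of Lemma~\ref{lem:parentesi_de_Poisson}.

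The main obstacle I expect is Step 1: keeping the flow inside the \emph{sectorial} (rather than circular) polydisc $V_{\rho'}$ and simultaneously arguing that composition with $\Phi_F^s$ preserves membership in $\ZZ_{k,\ell,N,\rho'}$ rather than just producing an analytic function of the right size. The first issue is handled by the gentle aperture built into~\eqref{eq:sectorialdom} and by the high vanishing order of $X_F$ at the origin (the factor $(q+p)^3$ coupled with $F=\OO_{M+n}(q,p)$ makes $X_F=\OO_{M+n+2}(q,p)$, so the radial displacement is genuinely subdominant compared with $\rho-\rho'$). The second difficulty is harmless: the uniqueness of the decomposition of an analytic function into the form prescribed by $\ZZ_{k,\ell,N,\rho'}$ means that the structural claim reduces to analyticity of $H\circ\Phi_F^1$ on $V_{\rho'}\times V_{\rho'}\times\T_\sigma$, which is immediate from Step~1.
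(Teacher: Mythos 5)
Your strategy is genuinely different from the paper's. The paper never analyzes the flow $\Phi_F^s$ at all: it works purely with the formal Lie series $H\circ\Phi_F^1=\sum_{j\ge 0}\frac{1}{j!}\,\mathrm{ad}_F^jH$ and applies the Poisson bracket estimate of Lemma~\ref{lem:parentesi_de_Poisson} term by term, tracking the gain in indices $(k,\ell,N)$ at each iteration and summing the resulting geometric-over-factorial series. That route sidesteps entirely your Step~1 (flow well-posedness on sectorial domains) and your Step~2 (composition estimates), which is precisely why the paper can afford to work directly in the graded $\ZZ_{k,\ell,N,\rho}$ scale.

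The gap in your proposal is Step~2. You claim that the sup-norm composition inequality $\|G\circ\Phi_F^s\|_{V_{\rho'}}\le\|G\|_{V_{\rho_2}}$ ``passes to the $\llbracket\cdot\rrbracket$ norms because the decomposition defining $\ZZ_{k,\ell,N,\rho'}$ is unique.'' This does not follow. Membership in $\ZZ_{k,\ell,N,\rho'}$ is a structural statement (existence of a graded decomposition with coefficient functions $h_j(q,t),\tilde h_j(p,t)$ in the \emph{weighted} spaces $\X_{N-2j,\rho'}$ and remainder in $\Y_{N-2\ell,\rho'}$), and the norm $\llbracket\cdot\rrbracket_{k,\ell,N,\rho'}$ is computed on those pieces, not on the function itself. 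Uniqueness of the decomposition tells you that if $H\circ\Phi_F^1$ has such a decomposition, the pieces are well defined; it gives no bound on their weighted norms in terms of $\sup_{V_{\rho_2}}|H|$. The weights $(|q|+|p|)^{-N}$ and $|z|^{-(N-2j)}$ are unbounded near the vertex of the sector, so an $O(1)$ displacement of the argument relative to its distance to the origin (which is a priori what a near-identity flow gives you) can distort these norms by a factor that depends on how the flow rescales the radial coordinate. Moreover, the coefficient $h_j$ is extracted by restricting $(H/(qp)^j)$ to $\{p=0\}$; after composing with $\Phi_F^s$, this restriction no longer sees $H$ on the axis but on $\Phi_F^s(\{p=0\})$, and even though the axes are invariant for the flow of $F=p^nf+q^n\tilde f$ with $n\ge1$, the radial reparametrization along the axis still has to be controlled, which is a separate estimate you do not provide. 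To close this gap you would have to prove a quantitative composition lemma in the $\ZZ$-scale, which is roughly as much work as the Poisson-bracket-plus-Lie-series route the paper takes; the two strategies are in that sense of comparable difficulty, but your version cannot be waved through.
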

\begin{proof}
We recall that
\[
H \circ \Phi_F^1 = \sum_{j\ge 0} \frac{1}{j!} \mathrm{ad}_F^j H,
\]
where $\mathrm{ad}_F^0 H = H$ and $\mathrm{ad}_F^j H = \{\mathrm{ad}_F^{j-1} H, F\}$, for $j\ge 1$.
Then,  applying Lemma~\ref{lem:parentesi_de_Poisson} $j$ times,
\begin{multline}
\label{ineq:fita_adjunt_fina}
\llbracket \mathrm{ad}_F^j H\rrbracket_{k+j(n-1),\ell+j(n-1),N+j(M+n+1),\rho'} \\
\begin{aligned}
 & = \llbracket \{\mathrm{ad}_F^{n-1} H, F\}\rrbracket_{k+j(n-1),\ell+j(n-1),N+j(M+n+1),\rho-(\rho-\rho')} \\
& \le \frac{C }{(\rho-\rho')^2/n} \llbracket \mathrm{ad}_F^{n-1} H\rrbracket_{k+(j-1)(n-1),\ell+(j-1)(n-1),N+(j-1)(M+n+1),\rho-(j-1)(\rho-\rho')/j} \\
& \times (\|f\|_{M,\rho}+\|\tilde f\|_{M,\rho}) \\
&\le \left(\frac{C }{(\rho-\rho')^2/j} (\|f\|_{M,\rho}+\|\tilde f\|_{M,\rho})\right)^j \llbracket H\rrbracket_{k,\ell,N,\rho}.
\end{aligned}
\end{multline}
In particular, 
\[
 \begin{aligned}
 \llbracket \mathrm{ad}_F^j H\rrbracket_{k,\ell,N,\rho'}   
& \le (2\rho)^{j(N+n+1)} \llbracket \mathrm{ad}_F^j H\rrbracket_{k+j(n+1),\ell+j(n+1),N+j(M+n+1),\rho'} \\
&\le \left(\frac{C (2\rho)^{N+n+1}}{(\rho-\rho')^2/j} (\|f\|_{M,\rho}+\|\tilde f\|_{M,\rho})\right)^j \llbracket H\rrbracket_{k,\ell,N,\rho}.
\end{aligned}
\]
%at step $j$ take $\rho-(n-j)(\rho-\rho')/n$
hence, for a $\wt K$ large enough,
\[
\begin{aligned}
\llbracket H \circ \Phi_F^1\rrbracket_{k,\ell,N,\rho'} & \le \sum_{j\ge 0} \frac{1}{j!} \llbracket \mathrm{ad}_F^j H\rrbracket_{k,\ell,N,\rho'} \\
& \le \sum_{j\ge 0} \left(\frac{C (2\rho)^{M+n+1}e}{(\rho-\rho')^2} (\|f\|_{M,\rho}+\|\tilde f\|_{M,\rho})\right)^j \llbracket H\rrbracket_{k,\ell,N,\rho} \\
& \le \left( 1+ \wt K \frac{\rho^{M+n+1}}{(\rho-\rho')^2} (\|f\|_{M,\rho}+\|\tilde f\|_{M,\rho})\right) \llbracket H \rrbracket_{k,\ell,N,\rho}.
\end{aligned}
\]
Also, using directly~\eqref{ineq:fita_adjunt_fina},
\[
\begin{aligned}
\llbracket H \circ \Phi_F^1-H\rrbracket_{k+n-1,\ell+n-1,N+M+n+1,\rho'} &\le   \sum_{j\ge 1} \frac{1}{j!} \llbracket \mathrm{ad}_F^j H\rrbracket_{k+j(n-1),\ell+j(n-1),N+j(M+n+1),\rho'} \\
& \le 
 \sum_{j\ge 1}
\left(\frac{C }{(\rho-\rho')^2/j} (\|f\|_{M,\rho}+\|\tilde f\|_{M,\rho})\right)^j \llbracket H\rrbracket_{k,\ell,N,\rho},
\end{aligned}
\]
which implies the claim. The same argument produces the bound for $H \circ \Phi_F^1-H-\{H,F\}$.

The bounds for $\pi_1$ projections are obtained analogously.
\end{proof}

\subsection{The quadratic scheme}
\label{sec:esquema_quadratic}
Next lemma deals with the solutions of some linear partial differential equations we will need later. For $k\ge 1$, we introduce the first order linear partial differential operators
\begin{equation}
\label{def:operadors_diferencials_L_i_wt_L}
    \LL_k f = k q^{3-k} f - q^{4-k} \partial_q f + q^{-k}\partial_t f, \qquad \wt \LL_k \tilde f = -k p^{3-k} \tilde f + p^{4-k} \partial_p \tilde f + p^{-k}\partial_t \tilde f.
\end{equation}

\begin{lem}
\label{lem:Inversa_de_loperador_lineal}
Let $N\ge 4$, $k \ge 0$. There exists $C>0$ such that, for any $h \in \X_{N,\rho}$, there exists $f \in \X_{N-k-3,\rho}$, with $\|f\|_{N-k-3,\rho}\le C\|h\|_{N,\rho}$, satisfying $\LL_k f = h$. 
The same claim applies to the equation $\wt \LL_k \tilde f = \tilde h$.
\end{lem}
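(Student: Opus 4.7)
Since $\LL_k$ is a first-order linear differential operator plus a zeroth-order multiplicative term, I would invert it by the method of characteristics. Multiplying the equation $\LL_k f = h$ through by $q^k$ recasts it as $\partial_t f - q^4 \partial_q f + k q^3 f = q^k h$. Its characteristic flow is $\dot q = -q^4$, $\dot t = 1$, namely
\[
q(\tau;q_0) = q_0(1+3\tau q_0^3)^{-1/3}, \qquad t(\tau;t_0) = t_0 + \tau,
\]
and $q^{-k}$ is an integrating factor along characteristics, turning the equation into the scalar identity $\frac{d}{d\tau}\bigl[q(\tau;q_0)^{-k} f(q(\tau;q_0), t_0+\tau)\bigr] = h(q(\tau;q_0), t_0+\tau)$. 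Since $q(\tau;q_0) \to 0$ as $\tau \to +\infty$, the natural closing-up condition at infinity suggests defining
\[
f(q_0, t_0) := -\, q_0^{k} \int_0^{+\infty} h\bigl(q(\tau;q_0),\, t_0 + \tau\bigr)\, d\tau.
\]

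Next I would verify that this formula makes sense and delivers the claimed estimate. The key geometric point is that for $q_0 \in V_\rho$ the positive-time characteristic $q(\tau;q_0)$ stays in $V_\rho$: the modulus is nonincreasing because $|1+3\tau q_0^3|\ge 1$, and a short argument using that $\rho\kappa$ is small (so that $\mathrm{Re}(q_0^3) \ge 0$ throughout the sector) shows that $\arg q(\tau;q_0)$ interpolates monotonically between $\arg q_0$ and $0$, hence stays inside the sector. The same observation yields the quantitative lower bound $|1+3\tau q_0^3| \gtrsim \max(1,\tau|q_0|^3)$, whence $|q(\tau;q_0)|^N \lesssim \min(|q_0|^N, \tau^{-N/3})$. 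Splitting the integral at $\tau = |q_0|^{-3}$ then gives $\int_0^{+\infty} |q(\tau;q_0)|^N\, d\tau \lesssim |q_0|^{N-3}$ as soon as $N \ge 4$, and combined with $|h(q,t)| \le \|h\|_{N,\rho} |q|^N$ it produces
\[
|f(q_0,t_0)| \le C \|h\|_{N,\rho}\, |q_0|^{N+k-3},
\]
which, since $|q_0| \le \rho \le 1$ and $k \ge 0$, is strictly stronger than the claimed bound $\|f\|_{N-k-3,\rho} \le C \|h\|_{N,\rho}$.

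The remaining verifications are soft. Analyticity of $f$ on $V_\rho \times \T_\sigma$ follows from holomorphic dependence of $q(\tau;q_0)$ on $q_0$ together with uniform convergence of the integral on compact subsets, which justifies differentiating under the integral sign; periodicity in $t_0$ is inherited from $h$. That $f$ actually solves $\LL_k f = h$ is immediate by differentiating the defining formula and using the semigroup property of the characteristic flow. For $\wt\LL_k \tilde f = \tilde h$ the argument is symmetric with the direction of time reversed: the characteristics $\dot p = p^4$ blow up for $\tau > 0$ but decay to $0$ as $\tau \to -\infty$, so one would define
\[
\tilde f(p_0, t_0) := p_0^{k} \int_{-\infty}^{0} \tilde h\bigl(p(\tau;p_0),\, t_0 + \tau\bigr)\, d\tau
\]
with $p(\tau;p_0) = p_0(1-3\tau p_0^3)^{-1/3}$, and the same bookkeeping produces the same estimate. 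The only mildly technical point in this plan is the careful verification that the positive-time characteristic half-orbit stays inside the sectorial domain $V_\rho$ uniformly in $q_0$ and that $|1+3\tau q_0^3|^{-1/3}$ admits the announced decay rate; once the sector opening $\rho\kappa$ is chosen small enough, both reduce to a routine complex-geometric computation.
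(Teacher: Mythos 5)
Your proof is correct and follows essentially the paper's route: integrate the transport operator $\LL_k$ along its characteristics $\dot q=-q^4$, $\dot t=1$, which flow into the vertex of the sector, using $q^{-k}$ as an integrating factor (equivalently, writing $\LL_k f=-q^4\partial_q(f/q^k)+\partial_t(f/q^k)$). The paper's only cosmetic difference is that it first applies the change $q=3^{-1/3}u^{-1/3}$ to rectify the characteristics to straight lines $u\mapsto u+\tau$ and then cites a ready-made lemma for the constant-coefficient transport equation, whereas you carry out the characteristic integration directly in the $q$ variable and verify the convergence and decay bound by hand, making the argument self-contained.
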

\begin{proof}
We observe that $\LL_k f = h$ is equivalent to
\begin{equation}
\label{eq:invertint_operador_lineal}
- q^4 \partial_q (f/q^k) + \partial_t (f/q^k) = h.
\end{equation}
With the change $q = 3^{-1/3} u^{-1/3}$ and introducing $\eta(u,t) = 3^{k/3} u^{k/3} f(3^{-1/3} u^{-1/3},t)$ and $\varphi(u,t) = h(3^{-1/3} u^{-1/3},t)$, equation~\eqref{eq:invertint_operador_lineal} becomes
\begin{equation}
\label{∂ef:equacio_coeficients_constants}
\partial_u \eta + \partial_t \eta = \varphi.
\end{equation}
Observe that the change $q \to u$ sends functions in $\X_{N,\rho}$ to functions in 
$\wt \X_{N/3,\rho}$, where
\[
\wt \X_{r,\rho} =  \{\eta: \wt V_{\rho}\times \T_\sigma \to \C\mid \text{analytic, \;$\|\eta\|_{r,\rho} < \infty$}\},
\]
with $\wt V_{\rho}$ being the image by the change of variables of the sector $V_{\rho}$, that is,
\[
\wt V_{\rho} = \{u \in \C \mid |\Im u| < \tan(3 \arctan(\kappa \rho)) \Re u, \; |u| > 3^{-1} \rho^{-3}\}
\]
and
\[
\|\eta\|_{r,\rho} =\sup_{(u,t)\in \wt V_{\rho}\times \T_\sigma} |u^r \eta(u,t)|.
\]
Since $\varphi \in \wt \X_{N/3,\rho}$ and $N/3 > 1$, Lemma~10.4 in~\cite{https://doi.org/10.48550/arxiv.2207.14351} ensures that equation~\eqref{∂ef:equacio_coeficients_constants} admits a solution $\eta \in \wt \X_{N/3-1,\rho}$, with
$\|\eta\|_{N/3-1,\rho} \le K \|\varphi\|_{N/3,\rho}$, for some $K>0$. The claim follows immediately.
\end{proof}

\begin{rem}
\label{rem:G_i_tilde_G}
In view of Lemma~\ref{lem:Inversa_de_loperador_lineal}, we define, for $H_1 \in \ZZ_{k,\ell,N,\rho}$, with $\pi_1 H_1 = q^kp^k(h_1+\tilde h_1)$, $h_1,\tilde h_1 \in \X_{N-2k,\rho}$,
the maps $\G_k, \wt \G_k: \ZZ_{k,\ell,N,\rho}\to \X_{N-k-3,\rho}$, such that $\G_k H_1 = f$ and $\wt \G_k H_1 = \tilde f$, where
$f$ and $\tilde f$ are the solutions given by Lemma~\ref{lem:Inversa_de_loperador_lineal} of the equations $\LL_k f = h_1$ and $\wt \LL_k \tilde f = \tilde h_1$, respectively. They satisfy
\[
\|\G_k H_1\|_{N-k-3,\rho}, \|\wt \G_k H_1\|_{N-k-3,\rho} \le C \llbracket \pi_k H_1 \rrbracket_{k,\ell,N,\rho}.
\]
\end{rem}

\begin{lem}[Iterative lemma]
\label{lem:lemma_iteratiu}
%Let $k\ge 1$, $\ell \ge k$, $N \ge \min\{2\ell,2k+4\}$, and $0<\rho<1$. 
Let $H = \NN +H_1$, where $\NN$ was introduced in~\eqref{def:Hamiltonia_N_i_Hu},  and
$H_1 \in \ZZ_{1,\ell,N,\rho}$. Consider the Hamiltonian 
\[
F = p \G_1 H_1+ q \wt \G_1 H_1
\]
and let $\Phi_F^1$ be the time-one map defined by $F$. If, for $0<\rho'<\rho$,
\begin{equation}
\label{cond:lema_iteratiu}
K \frac{\rho^{N-2}}{(\rho-\rho')^2} \llbracket \pi_1 H_1\rrbracket_{1,\ell,N,\rho} < 1,
\end{equation}
then $H \circ \Phi_F^1 = \NN + H_2$, with  $H_2 \in \ZZ_{1,\ell,N,\rho'}$ satisfying
\[
\llbracket \pi_1 H_2\rrbracket_{1,\ell,N,\rho'}  \le  \frac{ K \rho^{N-2}}{(\rho-\rho')^3} \llbracket \pi_1 H_1 \rrbracket_{1,\ell,N,\rho}^2
\]
and 
\[
\llbracket\tilde \pi_1 H_2 \rrbracket_{2,\ell,N,\rho'} \leq \left(1+\frac{ K \rho^{N-2}}{(\rho-\rho')^2} \llbracket \pi_1 H_1 \rrbracket_{1,\ell, N,\rho} \right)  \left(\frac{K}{\rho-\rho'}\llbracket\pi_1 H_1 \rrbracket_{1,\ell,N,\rho}+\llbracket \tilde\pi_1 H_1 \rrbracket_{2,\ell,N,\rho}\right),
\]
for a constant $K>0$ which is independent of $\rho,\rho'$ and $N$.
\end{lem}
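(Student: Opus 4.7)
The plan is the classical Newton step of a Hamiltonian normal form scheme: the generating function $F = p\,\G_1 H_1 + q\,\wt\G_1 H_1$ is chosen so that its time-one flow cancels the resonant part $\pi_1 H_1$ to first order, leaving a new perturbation $H_2$ whose $\pi_1$-component is quadratically small in $\llbracket \pi_1 H_1\rrbracket_{1,\ell,N,\rho}$.

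I would first verify the \emph{homological identity} $\pi_1\{\NN,F\} = -\pi_1 H_1$. Using the Poisson bracket formula~\eqref{def:parentesi_de_Poisson} with $\NN = -qp+I$ and $F = pf+q\tilde f$, $f = \G_1 H_1$, $\tilde f = \wt\G_1 H_1$, a direct computation gives
\[
\{\NN,F\} = (q+p)^3\bigl[-pf + q\tilde f + pq(\partial_q f - \partial_p \tilde f)\bigr] - p\,\partial_t f - q\,\partial_t \tilde f.
\]
Extracting from each summand the terms of the form $qp\bigl(h(q,t)+\tilde h(p,t)\bigr)$ and comparing with the operators $\LL_1, \wt\LL_1$ in~\eqref{def:operadors_diferencials_L_i_wt_L} collapses the $\pi_1$-projection exactly to $-qp(\LL_1 f + \wt\LL_1 \tilde f)$; by Remark~\ref{rem:G_i_tilde_G} this equals $-\pi_1 H_1$.

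I would then Lie-expand $H\circ\Phi_F^1$. By Lemma~\ref{lem:Inversa_de_loperador_lineal}, $\|f\|_{N-4,\rho}+\|\tilde f\|_{N-4,\rho} \le C\llbracket\pi_1 H_1\rrbracket_{1,\ell,N,\rho}$, so~\eqref{cond:lema_iteratiu} implies the smallness hypothesis of Proposition~\ref{prop:serie_de_Lie} with $n=1$, $M=N-4$. Applying it to $H_1$ yields bounds for $H_1\circ\Phi_F^1$ and for the quadratic-in-$F$ remainder $R_1 := H_1\circ\Phi_F^1 - H_1 - \{H_1,F\}$, in both total and $\pi_1$-refined versions. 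For the $\NN$ piece Proposition~\ref{prop:serie_de_Lie} is not applicable as a black box (since $\NN\notin\ZZ_{k,\ell,N,\rho}$); I instead use the explicit formula above, place the two projections of $\{\NN,F\}$ into the appropriate $\ZZ$-spaces, and iterate Lemma~\ref{lem:parentesi_de_Poisson} to bound $R_\NN := \NN\circ\Phi_F^1 - \NN - \{\NN,F\}$.

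Combining,
\[
H_2 = \tilde\pi_1 H_1 + \tilde\pi_1\{\NN,F\} + \{H_1,F\} + R_\NN + R_1,
\]
with the leading term $\pi_1 H_1$ annihilated by the cohomological identity. Projecting by $\pi_1$ and using only the $\pi_1$-refined bounds of Lemma~\ref{lem:parentesi_de_Poisson} and Proposition~\ref{prop:serie_de_Lie} keeps the right-hand side quadratic in $\llbracket\pi_1 H_1\rrbracket$; the $N$-index conversion of Lemma~\ref{prop:normaNnormaNmes2j} produces the $\rho^{N-2}$ factor, and the compounding of Cauchy-style losses across the iterated brackets yields the extra $(\rho-\rho')^{-3}$ factor. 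For $\tilde\pi_1 H_2$, the term $\tilde\pi_1 H_1\in\ZZ_{2,\ell,N,\rho'}$ is the dominant piece, while $\tilde\pi_1\{\NN,F\}$ together with the $\tilde\pi_1$-parts of $\{H_1,F\}$ and of the remainders produce the $K(\rho-\rho')^{-1}\llbracket \pi_1 H_1\rrbracket$ summand, and the multiplicative prefactor comes from the total-norm Lie-series bound of Proposition~\ref{prop:serie_de_Lie}. The main obstacle I anticipate is the ad hoc treatment of $\NN$: because $\NN$ does not belong to any $\ZZ_{k,\ell,N,\rho}$ space, Proposition~\ref{prop:serie_de_Lie} cannot be invoked off the shelf and one has to compute $\{\NN,F\}$ by hand, identify its $\pi_1$- and $\tilde\pi_1$-components, and re-insert them into the $\ZZ$-space bookkeeping in order to iterate Lemma~\ref{lem:parentesi_de_Poisson}; a closely related subtle point is to propagate the $\pi_1$-refined bounds throughout the iteration so that the final estimate for $\pi_1 H_2$ depends on $\llbracket\pi_1 H_1\rrbracket^2$ alone, and not on the larger $\llbracket H_1\rrbracket$.
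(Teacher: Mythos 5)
Your overall strategy is sound and rests on the same two pillars as the paper's proof: the homological identity $\pi_1\{\NN,F\}=-\pi_1 H_1$ (this is precisely the content of the paper's explicit computation showing $H_1^{[0]}=\{\NN,F\}+\pi_1 H_1\in\ZZ_{2,\ell,N,\rho'}$) and the Lie-series estimates of Proposition~\ref{prop:serie_de_Lie}. The one genuine divergence is in the decomposition of $H_1\circ\Phi_F^1$: you isolate the quadratic Lie remainder $R_1=H_1\circ\Phi_F^1-H_1-\{H_1,F\}$, whereas the paper first splits $H_1=\pi_1 H_1+\tilde\pi_1 H_1$ and treats $H_1^{[2]}=(\pi_1 H_1)\circ\Phi_F^1-\pi_1 H_1$ and $H_1^{[3]}=(\tilde\pi_1 H_1)\circ\Phi_F^1$ separately. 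The virtue of that grouping is that $H_1^{[3]}\in\ZZ_{2,\ell,\cdot,\rho'}$, so its $\pi_1$-projection vanishes automatically, while $H_1^{[2]}$ is controlled by the $H\circ\Phi_F^1-H$ estimate applied to $H=\pi_1 H_1$, which yields a bound depending \emph{only} on $\llbracket\pi_1 H_1\rrbracket$ and not on the full $\llbracket H_1\rrbracket$. Your route has a snag at exactly this point: Proposition~\ref{prop:serie_de_Lie} does \emph{not} state a $\pi_1$-refined bound for the quadratic remainder $R_1$ (only the first three expansions there carry a $\pi_1$-refined version), and the total-norm bound on $R_1$ carries a factor $\llbracket H_1\rrbracket$ that the claimed inequality does not allow. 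The fix is easy: in your decomposition $\pi_1\{H_1,F\}+\pi_1 R_1=\pi_1(H_1\circ\Phi_F^1-H_1)$, so the two pieces should be bounded jointly by the $\pi_1$-refined linear-remainder estimate, at which point your argument collapses into the paper's. A point in your favor is the treatment of $R_\NN$: computing $\{\NN,F\}$ by hand and iterating Lemma~\ref{lem:parentesi_de_Poisson} is cleaner than the paper's direct appeal to Proposition~\ref{prop:serie_de_Lie} for $\NN\circ\Phi_F^1-\NN-\{\NN,F\}$, since $\NN=-qp+I$ does not belong to any $\ZZ_{k,\ell,N,\rho}$ with finite norm once $N>2$, and this application of the proposition needs an extra word of justification.
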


\begin{comment}
    \|H_1'-H_1\|_{1,\ell,N,\rho'} & \le \frac{\wt K \rho^{N-2}}{(\rho-\rho')^2} (1+ \|H_1\|_{1,\ell,N,\rho}  )\|\pi_1 H_1 \|_{N+2,\rho},\\
\end{comment}
\begin{proof}
Let $f = \G_1 H_1$, $\tilde  f = \wt \G_1 H_1$. By Lemma~\ref{lem:Inversa_de_loperador_lineal}, they satisfy $f, \tilde f \in \X_{N-4,\rho}$ with $\|f\|_{N-4,\rho}, \|\tilde f\|_{N-4,\rho}\le C \llbracket\pi_1 H_1 \rrbracket_{1,\ell,N,\rho}$. By Proposition~\ref{prop:serie_de_Lie}, under the above hypotheses, $H \circ \Phi_F^1$ is well defined. It satisfies
\[
H \circ \Phi_F^1 = \NN + H_2,
\]
where
\begin{equation}
\label{descomposicio_H1prima}
 H_2 =  H_1^{[0]} + H_1^{[1]} +  H_1^{[2]} +H_1^{[3]}
\end{equation}
and
\begin{equation}
\label{def:H10...H14}
\begin{aligned}
 H_1^{[0]} & = \{\NN,F\} + \pi_1 H_1, \\ 
 H_1^{[1]} & = \NN \circ \Phi_F^1 - \NN - \{\NN,F\}, \\
H_1^{[2]} & =  (\pi_1H_1) \circ \Phi_F^1 - \pi_1H_1, \\
H_1^{[3]} & =  (\tilde\pi_1H_1 )\circ \Phi_F^1  
\\
\end{aligned}
\end{equation}
We estimate each term separately.  For $ H_1^{[0]}$ 
a direct computation shows that
\[
 H_1^{[0]} = q^2 p^2(g_2+\tilde g_2) + q^3 p^3 (g_3+\tilde g_3) + q^4 p^4 (g_4+\tilde g_4),
\]
where
\[
\begin{aligned}
g_2 & = -3 (f-q \partial_q f), &\qquad \tilde g_2 & = 3 (\tilde f-p \partial_p \tilde f) \\
g_3 & = -\frac{3}{q^2} \left(f - q \partial_q f \right), &\qquad \tilde g_3 & = \frac{3}{p^2} \left(\tilde f - p \partial_p \tilde f \right)  \\
g_4 & = -\frac{1}{q^4} \left(f - q \partial_q f \right), & \qquad \tilde g_4 & = \frac{1}{p^4} \left(\tilde f - p \partial_p \tilde f \right).
\end{aligned}
\]
Hence, $H_1^{[0]}\in \ZZ_{2,\ell, N,\rho'}$. In particular, $\pi_1  H_1^{[0]} = 0$ and, since $\|f\|_{N-4,\rho}, \|\tilde f\|_{N-4,\rho}\leq K \llbracket \pi_1 H_1 \rrbracket_{1,\ell,N,\rho}$, it follows from Lemmas~\ref{lem:multiplicacio_per_qp}~and~\ref{lem:derivades_a_ZNrho} that
\begin{equation}
\label{fita:H_sub_1_super_0}
  \llbracket H_1^{[0]}\rrbracket_{2,\ell,N,\rho'} \le \frac{K_1}{\rho-\rho'} \llbracket \pi_1 H_1 \rrbracket_{1,\ell,N,\rho}.
\end{equation}
% We first claim that, for some constant $K_1>0$, $H_1^{[0]} \in \wt \ZZ_{N+2,\rho'}$ and
% \begin{equation}
% \label{fita:H_sub_1_super_0}
% \|H_1^{[0]}\|_{N+2,\rho'} \le \frac{K_1}{\rho-\rho'} \|\pi_1 H_1 \|_{N+2,\rho}.
% \end{equation}
% Indeed, by the choice of $F$, a direct computation shows that
% \[
% H_1^{[0]} = q^2 p^2(g_2+\tilde g_2) + q^3 p^3 (g_3+\tilde g_3) + q^4 p^4 (g_4+\tilde g_4),
% \]
% where
% \[
% \begin{aligned}
% g_2 & = -3 (f-q \partial_q f), &\qquad \tilde g_2 & = 3 (\tilde f-p \partial_p \tilde f) \\
% g_3 & = -\frac{3}{q^2} \left(f - q \partial_q f \right), &\qquad \tilde g_3 & = \frac{3}{p^2} \left(\tilde f - p \partial_p \tilde f \right)  \\
% g_4 & = -\frac{1}{q^4} \left(f - q \partial_q f \right), & \qquad \tilde g_4 & = \frac{1}{p^4} \left(\tilde f - p \partial_p \tilde f \right)
% \end{aligned}
% \]
% and 
% \[
% \wt G = -\frac{3q+p}{q^3} \left(f - q \partial_q f \right) + \frac{q+3p}{p^3} \left(\tilde f - p \partial_p \tilde f \right).
% \]
% Since $\|f\|_{N-2,\rho}, \|\tilde f\|_{N-2,\rho}\le C \|\pi_1 H_1 \|_{N+2,\rho}$, the claim on 
% $H_1^{[0]}$ follows immediately.
%See proof of Lemma~\ref{lem:producte_de_H_per_f_i_tilde_f}; essentially use (q+p)^{-a-b} < q^{-a} p^{-b}
Now, a direct application of Proposition~\ref{prop:serie_de_Lie} shows that 
\begin{align*}
\label{fita:H_sub_1_super_1}
\llbracket  H_1^{[1]}\rrbracket_{1,\ell, N,\rho'}   \le &\frac{K_1 \rho^{2N-4}}{(\rho-\rho')^3} \llbracket \pi_1 H_1 \rrbracket_{1,\ell,N,\rho}^2,\\
\llbracket H_1^{[2]}\rrbracket_{1,\ell,N,\rho'}\le & \frac{K_1 \rho^{N-2}}{(\rho-\rho')^2}\llbracket\pi_1 H_1\rrbracket_{1,\ell,N,\rho}^2,\\
\llbracket H_1^{[3]}\rrbracket_{2,\ell,N,\rho'}\le & \left(1+ \frac{K_1 \rho^{N-2}}{(\rho-\rho')^2}\llbracket \pi_1H_1\rrbracket_{1,\ell,N,\rho}\right)\llbracket \tilde\pi_1 H_1\rrbracket_{2,\ell,N,\rho}.
\end{align*}
Indeed, from last item of Proposition~\ref{prop:serie_de_Lie} (taking $k=n=1$ and $M=N-4$)  
\begin{align*}
\llbracket  H_1^{[1]}\rrbracket_{1,\ell, N,\rho'}   \le &\rho^{2N-4}\llbracket  H_1^{[1]}\rrbracket_{1,\ell, 3N-4,\rho'} \leq \frac{K_1 \rho^{(2N-4)}}{(\rho-\rho')^3} \left(\lVert f\rVert_{N-4,\rho}+\lVert \tilde f\rVert_{N-4,\rho} \right)^2\\
\leq &  \frac{K_1 \rho^{(2N-4)}}{(\rho-\rho')^3}\llbracket \pi_1 H_1 \rrbracket_{1,\ell,N,\rho}^2.
\end{align*}
Also, from Proposition~\ref{prop:serie_de_Lie} (again taking $k=n=1$ and $M=N-4$)
\[
\begin{aligned}
\llbracket  H_1^{[2]}\rrbracket_{1,\ell, N,\rho'}   \le \rho^{N-2} \llbracket  H_1^{[2]}\rrbracket_{1,\ell, 2N-2,\rho'} &\leq \frac{K_1 \rho^{N-2}}{(\rho-\rho')^2} \llbracket \pi_1 H_1 \rrbracket_{1,\ell,N,\rho} \left(\lVert f\rVert_{N-4,\rho}+\lVert \tilde f\rVert_{N-4,\rho} \right) \\
&\leq   \frac{K_1 \rho^{N-2}}{(\rho-\rho')^2}\llbracket \pi_1 H_1 \rrbracket_{1,\ell,N,\rho}^2
\end{aligned}
\]
and (taking $k=2,n=1$ and $M=N-4$) 
\[
\llbracket  H_1^{[3]}\rrbracket_{2,\ell, N,\rho'}   \le \left(1+ \frac{K_1 \rho^{N-2}}{(\rho-\rho')^2}\llbracket \pi_1 H_1 \rrbracket_{1,\ell,N,\rho}  \right)\llbracket \tilde \pi_1 H_1 \rrbracket_{2,1,N,\rho}.
\]
The estimate for $\pi_1 H_2$ follows from  $\pi_1  H_1^{[0]} =\pi_1  H_1^{[3]}= 0$.

% \begin{equation}
% \label{fita:H_sub_1_super_3}
% \|H_1^{[3]}\|_{N+2,\rho'}\le \left( 1+ \wt K \frac{\rho^{N}}{(\rho-\rho')^2} \|\pi_1 H_1\|_{N+2,\rho}\right)\| H_1\|_{N+2,\rho}.
% \end{equation}
% follows from the first inequality of Proposition~\ref{prop:serie_de_Lie}. 

\end{proof}

We now run the iterative Lemma~\ref{lem:lemma_iteratiu} to show that it is possible to kill all the terms of the form $qp(h_1(q,t)+\tilde h_1(p,t))$  appearing in the normal form given by Proposition~\ref{prop:redressament_de_les_varietats}. 

\begin{prop}
\label{prop:primer_pas_de_forma_normal_total}
Let $N,\ell\in \N$, with $N$ and $N-\ell$ large enough. Let 
\[
H = \NN  + H_1,
\]
be the normal form given by Proposition~\ref{prop:redressament_de_les_varietats}. Then, 
there exists a symplectic change of variables $\Phi:V_{\rho/2} \times V_{\rho/2} \times \T_\sigma  \to V_\rho \times V_\rho \times \T_\sigma $ such that 
\[
H \circ \Phi = \NN + H^{(2)},
\]
with  $H^{(2)} \in  \ZZ_{2,\ell,N,\rho/2}$ and 
\[
 \llbracket H^{(2)}\rrbracket_{2,\ell,N,\rho/2}\lesssim \llbracket\pi_1 H_1 \rrbracket^{5/6}_{1,\ell,N,\rho_0}+\llbracket\tilde\pi_1 H_1 \rrbracket_{2,\ell,N,\rho_0}.
\]
\end{prop}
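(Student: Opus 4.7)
The plan is to set up a Newton-type (quadratically convergent) iterative scheme in which we repeatedly apply the Iterative Lemma~\ref{lem:lemma_iteratiu} to successively annihilate the component $\pi_1 H$, while keeping control of the $\tilde\pi_1$ part. Define a strictly decreasing sequence of radii $\rho_0 = \rho > \rho_1 > \rho_2 > \dots$ converging to $\rho_\infty = \rho/2$, for instance $\rho_m = \rho/2 + \rho/2^{m+1}$, so that $\rho_m - \rho_{m+1} = \rho/2^{m+2}$. Starting from $H^{(1)} := \NN + H_1$, we inductively construct a generating function $F_m = p\,\G_1 H^{(m)} + q\,\wt\G_1 H^{(m)}$ and set $H^{(m+1)} := H^{(m)} \circ \Phi_{F_m}^1$, so that $H^{(m+1)} = \NN + H_{m+1}$ with $H_{m+1} \in \ZZ_{1,\ell,N,\rho_{m+1}}$. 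Writing
\[
\varepsilon_m := \llbracket \pi_1 H_m \rrbracket_{1,\ell,N,\rho_m}, \qquad E_m := \llbracket \tilde\pi_1 H_m \rrbracket_{2,\ell,N,\rho_m},
\]
Lemma~\ref{lem:lemma_iteratiu} yields, provided the smallness condition \eqref{cond:lema_iteratiu} is met at step $m$,
\[
\varepsilon_{m+1} \le A_m\, \varepsilon_m^{2}, \qquad E_{m+1} \le (1 + B_m\,\varepsilon_m)\bigl(C_m\,\varepsilon_m + E_m\bigr),
\]
with $A_m \lesssim 2^{3m}$, $B_m \lesssim 2^{2m}$, $C_m \lesssim 2^m$ (the implicit constants depending on $N$ and $\rho$).

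Next I would verify the quadratic convergence of $\varepsilon_m$ under the assumption that the initial $\varepsilon_0$ is small enough (which is where the smallness of $\llbracket \pi_1 H_1\rrbracket_{1,\ell,N,\rho_0}$ enters). A direct logarithmic computation shows that the recursion $\log \varepsilon_{m+1} \le \log A_m + 2\log \varepsilon_m$ admits an explicit majorant of the form $\log \varepsilon_m \le 2^m(\gamma + \log \varepsilon_0) - cm - c'$ for suitable $\gamma, c, c' > 0$, so that whenever $\varepsilon_0 < e^{-\gamma}$ one has $\varepsilon_m \to 0$ doubly exponentially fast. In particular, the smallness condition \eqref{cond:lema_iteratiu} is preserved at every step, and $\sum_m \varepsilon_m$ is summable (indeed, controlled by $\varepsilon_0$ up to a constant factor). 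Plugging this into the recursion for $E_m$, the product $\prod_m(1+B_m\varepsilon_m)$ converges and the accumulated additive contribution $\sum_m C_m \varepsilon_m$ is finite, giving
\[
\sup_m E_m \lesssim E_0 + \varepsilon_0^{\alpha}
\]
for some $\alpha \in (0,1]$ (an exponent strictly less than $1$ appears because a fractional power of $\varepsilon_0$ is used to absorb the universal constants $A_m, B_m, C_m$ and the factor $\rho^{N-2}$ in the smallness threshold); this is precisely the source of the $5/6$ in the statement, and it can be made explicit by tracking the balance between $\varepsilon_0$ and the loss of domain size $\rho_m - \rho_{m+1}$ in the iteration.

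To pass to the limit, I would show that the infinite composition $\Phi := \lim_{m\to\infty}\Phi_{F_1}^1 \circ \dots \circ \Phi_{F_m}^1$ is a well-defined, $\omega$-preserving change of variables $V_{\rho/2}\times V_{\rho/2}\times \T_\sigma \to V_\rho \times V_\rho \times \T_\sigma$. Each factor $\Phi_{F_m}^1$ differs from the identity by $\mathcal O(\varepsilon_m)$ (by Proposition~\ref{prop:serie_de_Lie} applied to the coordinate functions), so the sum $\sum_m \|\Phi_{F_m}^1 - \Id\|$ is dominated by $\sum_m \varepsilon_m$ and is therefore convergent. The choice $\rho_m - \rho_{m+1} = \rho/2^{m+2}$ gives room at every step for the displacement produced by $\Phi_{F_m}^1$ to stay inside the next smaller sector; in particular, $\Phi$ does not take points out of the complex sectorial domain $V_\rho$. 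In the limit, $\pi_1 H^{(m)} \to 0$ (uniformly on $V_{\rho/2}\times V_{\rho/2}\times \T_\sigma$), so $H \circ \Phi = \NN + H^{(2)}$ with $H^{(2)} \in \ZZ_{2,\ell,N,\rho/2}$ and the desired norm bound inherited from Step~2.

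The main obstacle I anticipate is not the Newton step itself, which is standard once Lemma~\ref{lem:lemma_iteratiu} is in hand, but rather the careful bookkeeping of the several independent losses that enter at each iteration: (i) the shrinking of the sectorial domain $V_{\rho_m} \to V_{\rho_{m+1}}$, (ii) the factors $\rho^{N-2}/(\rho_m-\rho_{m+1})^3$ from the $\pi_1$ estimate, and (iii) the need to keep the generating function $F_m$ defined on a slightly larger domain than the target so that its time-one flow lands where we need it. The delicate point is to balance these losses against the doubly exponential smallness of $\varepsilon_m$ in order to obtain the fractional exponent $5/6$ (rather than $1$) in the final estimate, and to verify that the smallness hypothesis~\eqref{cond:lema_iteratiu} is robust under the full infinite iteration; this is where the assumption that both $N$ and $N-\ell$ are sufficiently large plays a crucial role, since it guarantees enough room in the order of vanishing to absorb the accumulated constants.
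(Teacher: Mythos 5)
Your proposal is essentially correct and follows the same core strategy as the paper — a quadratically convergent Newton scheme built on repeated application of the Iterative Lemma~\ref{lem:lemma_iteratiu} — but the two arguments differ in how they shrink the domain, and your account of where the exponent $5/6$ comes from does not match your own scheme. The paper takes \emph{adaptive} domain losses $\delta_k := \rho_k - \rho_{k+1} = \varepsilon_k^{1/6}$, where $\varepsilon_k = \llbracket \pi_1 H_k \rrbracket$. With this choice, the Iterative Lemma gives $\varepsilon_{k+1} \le \delta_k^{-3}\varepsilon_k^2 = \varepsilon_k^{3/2}$, and the smallness condition \eqref{cond:lema_iteratiu} becomes $K\rho^{N-2}\varepsilon_k^{2/3} < 1$, which is uniformly maintained; the exponent $5/6$ then appears immediately and transparently at the very first step, because the additive contribution to the $\tilde\pi_1$ part is $\delta_0^{-1}\varepsilon_0 = \varepsilon_0^{5/6}$. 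You instead use a \emph{geometric} loss $\rho_m - \rho_{m+1} = \rho/2^{m+2}$, which also works: the recursion $\varepsilon_{m+1} \lesssim 2^{3m}\varepsilon_m^2$ still decays doubly exponentially once $\varepsilon_0$ is small, and the series $\sum_m C_m\varepsilon_m$ with $C_m \lesssim 2^m$ converges. However, with geometric losses the first term $C_0\varepsilon_0 \lesssim \varepsilon_0$ dominates, so your scheme actually yields $\sup_m E_m \lesssim E_0 + \varepsilon_0$, i.e.\ exponent $1$ rather than $5/6$ — a \emph{stronger} bound, which of course still implies the stated one since $\varepsilon_0 \le \varepsilon_0^{5/6}$. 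So your heuristic explanation (``an exponent strictly less than $1$ appears because a fractional power of $\varepsilon_0$ is used to absorb the universal constants'') is not what happens in your scheme; no sacrifice of exponent is needed to absorb the constants $A_m, B_m, C_m$, which are harmless in view of the super-exponential decay of $\varepsilon_m$. In sum: your approach is a valid and arguably cleaner alternative, and the paper's $5/6$ is simply an artifact of its particular adaptive choice $\delta_k = \varepsilon_k^{1/6}$, not a sharp threshold that every scheme must hit.
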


\begin{proof}
Observe first that for $N$ large enough by decreasing it (if necessary) we can assume that  $
\llbracket \pi_1 H_1\rrbracket_{1,\ell,N,\rho}$ is sufficiently small. We now define the sequences $(\varepsilon_k)_k$, $(\delta_k)_k$, $(\rho_k)_k$, and $(\kappa_k)_k$, $k\ge 0$, in the following way. For $k=0$, we take
\[
\rho_0 = \rho, \quad\varepsilon_0 = \llbracket \pi_1 H_1 \rrbracket_{1,\ell,N,\rho_0}, \quad \delta_0 = \varepsilon_0^{1/6}, \quad \tilde\varepsilon_0 = \max\{\delta_0^{-1}\varepsilon_0,\llbracket \tilde \pi_1 H_1 \rrbracket_{2,\ell,N,\rho_0}\}
\]
and, for $k\ge 1$ 
\[
\varepsilon_k =  \varepsilon_{k-1}^{3/2}, \qquad \delta_k = \varepsilon_k^{1/6}, \qquad \rho_k = \rho_{k-1}- \delta_{k-1}, \qquad \tilde\varepsilon_k=(1+2\varepsilon_k^{2/3})\tilde \varepsilon_{k-1}
\]
Suppose now that, at step $k\in\mathbb N$ we have a Hamiltonian 
\[
\tilde H_n=\mathcal N+ H_n,
\]
with  $H_n\in \mathcal Z_{1,\ell, N,\rho_n}$ 
\[
\llbracket \tilde \pi_1 H_n \rrbracket_{1,\ell,N,\rho_n}\leq \tilde\varepsilon_n, \qquad
\llbracket \pi_1 H_n\rrbracket_{1,\ell,N,\rho_n}\leq  \varepsilon_n.
\]
Then, Lemma~\ref{lem:lemma_iteratiu} shows the existence of $F_n\in\mathcal Z_{1,\ell,N,\rho_n}$ such that 
\[
H_n\circ\Phi_{F_n}=\mathcal N+H_{n+1}
\]
with $H_{n+1}\in \mathcal Z_{1,\ell, N,\rho_{n+1}}$ 
\[
\llbracket \tilde \pi_1 H_n \rrbracket_{1,\ell,N,\rho_n}\leq  \left(1+ \delta_n^{-2}\varepsilon_n\right) \tilde\varepsilon_n+\delta_n^{-1}\varepsilon_n\leq\left(1+ 2\delta_n^{-2}\varepsilon_n\right) \tilde\varepsilon_n=\tilde\varepsilon_{n+1}
\]
and
\[
\llbracket \pi_1 H_n\rrbracket_{1,\ell,N,\rho_n}\leq  \delta_n^{-3}\varepsilon_n^2=\varepsilon_{n+1}.
\]
The conclusion follows from a standard argument.

\end{proof}

\subsection{Proof of Theorem~\ref{thm:forma_normal}}
We now complete the proof of Theorem~\ref{thm:forma_normal}.
Let $\rho>0$, $2\leq n\leq 3$ and consider a Hamiltonian of the form
\[
\tilde H^{(n)}=\mathcal N+ H^{(n)}
\]
with $\llbracket \pi_n H^{(n)}\rrbracket_{n,\ell,N,\rho}$ sufficiently small.
Then, the very same argument deployed in Proposition~\ref{prop:primer_pas_de_forma_normal_total} shows that there exists a change of variables $\Phi^{(n)}:V_{\rho/2} \times V_{\rho/2} \times \T_\sigma  \to V_\rho \times V_\rho \times \T_\sigma $
\[
\tilde H^{(n+1)}=\tilde H^{(n)}\circ\Phi_F=\mathcal N+ H^{(n+1)}
\]
with   $H^{(n+1)} \in  \ZZ_{n+1,\ell,N,\rho/2}$ and 
\[
\llbracket H^{(n+1)}\rrbracket_{n+1,\ell,N,\rho/2}\lesssim \llbracket \pi_n H^{(n)} \rrbracket^{5/6}_{n+1,\ell,N,\rho}+\llbracket \tilde\pi_n H^{(n)} \rrbracket_{n+1,\ell,N,\rho}.
\]
Thus, it is possible to find a symplectic  change of variables $\Phi:V_{\rho/8} \times V_{\rho/8} \times \T_\sigma  \to V_\rho \times V_\rho \times \T_\sigma $ which recasts the Hamiltonian obtained in Proposition~\ref{prop:redressament_de_les_varietats} as
\[
H=\mathcal N+\mathcal H^{(4)}
\]
with 
$\mathcal H^{(4)} \in  \ZZ_{4,4,N,\rho/8}$ and 
\[
\llbracket \mathcal H^{(4)}\rrbracket_{4,4,N,\rho/8}\lesssim  \llbracket H_1 \rrbracket_{1,\ell,N,\rho}.
\]
The proof of Theorem~\ref{thm:forma_normal} is complete.

\section{Construction of the Shilnikov maps}\label{sec:Shilnikov}
In this section we prove the existence of stable and unstable Shilnikov maps in \eqref{eq:Shilnikovmapsdefn}. Namely, we prove Proposition~\ref{prop:Shilnikov}.

\begin{comment}
    \begin{prop}\label{prop:existenceShilnikov}
Fix any $a>2\kappa$ where $\kappa>0$ is the constant in  Proposition~\ref{prop:normalform}. Let $\delta<a/2$, define
\begin{equation}\label{eq:rectangularShilnikovdomain}
\Sigma_\delta=\{(q,p)\in\mathbb{R}^2\colon |q-a|,|p-a|\leq \delta\}
\end{equation}
and denote by $\overline \Sigma_\delta\subset \mathbb{C}^2$ the $\delta$-neighbourhood of $\Sigma_\delta$ in $\mathbb{C}^2$. There exists $T_0$ such that, for any $T\geq T_0$ the stable and unstable Shilnikov maps $\Psi^s_{T}$ and $\Psi^{u}_T$ introduced in \eqref{eq:defnShilnikov} are well defined on $\overline \Sigma_\delta$. Moreover, these maps are of the form 
\[
\Psi^{s}_T(q,p)=(q,y(q,p)),\qquad\qquad \Psi^{u}_T(q,p)=(x(q,p),p),
\]
with $\partial_p y(q,p),\partial_q x(q,p)\neq0$ and satisfying the asymptotic expression
\[
 y(q,p)=\left(\frac{\pi}{16 T}\right)^{2/3} q^{-1}\left( 1+\mathcal{O}(T^{-1})\right), \qquad\qquad x(q,p)= \left(\frac{\pi}{16 T}\right)^{2/3} p^{-1}\left( 1+\mathcal{O}(T^{-1})\right) 
\]
uniformly on $\overline\Sigma_\delta$.
\end{prop}
\begin{rem}
    Notice that the fact that $\partial_p y(q,p),\partial_q x(q,p)\neq0$ imply that $\Psi^s_T$ and $\Psi^u_T$ are local diffeormorphisms.
\end{rem}
\end{comment}

\subsection{Time reparametrization of the flow close to the degenerate saddle}

In order to study the dynamics close to the degenerate saddle it is convenient to first rescale time so that the flow of the Hamiltonian \eqref{eq:normalform} is equivalent to that of the system of integro-differential equations 
\begin{equation}\label{eq:rescaleddynamics}
\dot q (s)=\partial_p \mathcal K(q(s),p(s),t(q,p,s)),\qquad\qquad\dot p(s)=-\partial_q \mathcal K(q(s),p(s),t(q,p,s)).
\end{equation}
where $t(q,p,s)$ stands for the functional on the space of continuous curves $(q,p):[0,s]\to\mathbb{R}^2$ given by
\[
t(q,p,s)=\int_0^s (q(\sigma)+p(\sigma))^{-3}\mathrm{d}\sigma.
\]
Now notice that, for any $s_*>0$ and any $\xi>0, \eta>0$, solutions to the fixed point equation 
\begin{equation}\label{eq:fixedpointShilnikov}
z=\mathcal{F}_{\xi,\eta}(z,s)
\end{equation}
where $\mathcal{F}_
{\xi,\eta}=(F_{\xi,\eta},G_{\xi,\eta})$ is given by 
\begin{equation}
\begin{aligned}\label{eq:integraloperator}
F_{\xi,\eta}(z,s)=&\xi \exp(-s)+\int_0^{s} \exp(-(s-\sigma)) (\partial_p \mathcal K (q(\sigma),p(\sigma),t(q,p,\sigma))+q(\sigma))\mathrm{d}\sigma,\\
         G_{\xi,\eta}(z,s)=& \eta \exp(s-s_*)-\int_{s_*}^s \exp(s-\sigma) (\partial_q \mathcal K (q(\sigma),p(\sigma),t(q,p,\sigma))-p(\sigma))\mathrm{d}\sigma.
\end{aligned}
\end{equation}
correspond to solutions of \eqref{eq:rescaleddynamics} which moreover satisfy $q(0)=\xi$ and $p(s_*)=\eta$.
\subsection{Existence of fixed points of the integral operator $\mathcal F_{\xi,\eta}$}

In order to study the existence of fixed points of the operator $\mathcal{F}_{\xi,\eta}$ in \eqref{eq:integraloperator}, we introduce a suitable functional setting. 
We fix $\delta>0$ and let $\Sigma_\delta$ be the complex domain introduced in \eqref{eq:complexextensionsigma}. Given any 
\[
0<s*<\infty,\qquad\qquad (\xi,\eta)\in \Sigma_\delta\subset \mathbb{C}^2,
\]
we introduce the space of functions (we write $z=(q,p)$)
\begin{align*}
\mathcal{X}_{\xi,\eta}=\{z(s;\xi,\eta)\in \mathcal C^{\omega}((0,s_*),\mathbb{R}^2)\colon q(0;\xi,\eta)=\xi,\ p(s_*;\xi,\eta)=\eta \text{ and }\ \lVert z\rVert<\infty\},
\end{align*}
where we have defined
\[
\lVert z\rVert= \max_{s\in[0,s_*]} \left(|q(s)\exp(s)|+ | p(s) \exp(s_*-s)|\right).
\]

\begin{comment}
    we introduce, for $r\geq 1$, the spaces of functions (we write $z=(q,p)$)
\begin{align*}
\mathcal{X}=\{z(s;\xi,\eta)\in \mathcal C^{\omega}((0,s_*),\mathbb{R}^2)\colon q(0;\xi,\eta)=\xi,\ p(s_*;\xi,\eta)=\eta \text{ and }\ \lVert z\rVert_2<\infty\}
\end{align*}
where we have defined
\[
\lVert z\rVert_r= \max_{1\leq |\alpha|\leq r}\max_{s\in[0,s_*]} \left(|\partial^\alpha q(s)\exp(s)|+ |\partial^\alpha p(s) \exp(s_*-s)|\right).
\]
in multiindex notation
\[
\partial^\alpha =\frac{\partial^{|\alpha|}}{\partial_{\xi}^{\alpha_\xi}\partial_{\eta}^{\alpha_\eta}}
\]
\begin{rem}
    Namely, we ask that $s\mapsto z(s;\xi,\eta)$ is a real-analytic function and provide $\mathcal{C}^r$ estimates for the dependence $(\xi,\eta)\mapsto z(s;\xi,\eta)$ (we will only need to consider $r\leq 2$). Although, as we will see,  the dependence on $(\xi,\eta)$ of the fixed point in \eqref{eq:fixedpointShilnikov} is actually analytic, considering a norm which includes already $\mathcal C^2$ estimates for $z$ will be convenient to  obtain the $\mathcal C^2$ approximations/estimates of  the Shilnikov stable and unstable maps in Proposition~\ref{prop:Shilnikov}.
\end{rem}
\end{comment}
%\vspace{0.2cm}

We also denote by $z_0=(\xi\exp(-s),\eta\exp(s-s_*))$ (namely, the solution of the linear system) and introduce, for $\rho>0$, the closed ball
\[
\mathbb{B}_{\rho,\xi,\eta}=\{z\in\mathcal{X}_{\xi,\eta}\colon \lVert z-z_0\rVert\leq \rho\}.
\]
In Lemma~\ref{lem:inequalitiesrescaledflow} we exploit the particular structure of the vector field \eqref{eq:rescaleddynamics} to establish suitable estimates for the behavior of the integral operator \eqref{eq:integraloperator} when acting on paths $z\in\mathbb{B}_{\rho}$ for $\rho>0$ sufficiently small. We first need a couple of  technical lemmas.

\begin{lem}\label{lem:techlemmaintegralt}
    Let $s_*>0$ be sufficiently large. Then, for any $s\in[0,s_*]$
    \[
    \int_0^s (\exp(-\sigma)+\exp(\sigma-s_*))^{-3}\mathrm{d}\sigma\lesssim \exp(3s_*/2).
    \]
\end{lem}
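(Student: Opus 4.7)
The plan is to exploit the symmetry of the integrand about the midpoint $\sigma=s_*/2$, where $\exp(-\sigma)$ and $\exp(\sigma-s_*)$ balance. First I would perform the translation $u=\sigma-s_*/2$, under which
\[
\exp(-\sigma)+\exp(\sigma-s_*)=\exp(-s_*/2)\bigl(\exp(-u)+\exp(u)\bigr)=2\exp(-s_*/2)\cosh u.
\]
This factors the singular part of the size out of the integral in a completely transparent way.

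After cubing, inverting and pulling the $\sigma$-independent factor outside, the integral becomes
\[
\int_0^s \bigl(\exp(-\sigma)+\exp(\sigma-s_*)\bigr)^{-3}\mathrm{d}\sigma = \frac{\exp(3s_*/2)}{8}\int_{-s_*/2}^{\,s-s_*/2}\frac{\mathrm{d}u}{\cosh^3 u}.
\]
The remaining step is the observation that $\cosh^{-3}$ is integrable on $\mathbb{R}$ (since $\cosh u\sim \tfrac12 e^{|u|}$ as $|u|\to\infty$, the function decays exponentially), so
\[
\int_{-s_*/2}^{\,s-s_*/2}\frac{\mathrm{d}u}{\cosh^3 u}\le \int_{-\infty}^{\infty}\frac{\mathrm{d}u}{\cosh^3 u}=:C<\infty,
\]
uniformly in $s\in[0,s_*]$ and in $s_*>0$. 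The desired bound follows.

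There is no genuine obstacle here; the only point that requires care is the uniformity of $C$ with respect to both $s$ and $s_*$, which is automatic since after the translation the domain of integration is a subinterval of $\mathbb{R}$ and the integrand is independent of $s_*$. The lemma is effectively the statement that the cube of the minimum value $2\exp(-s_*/2)$ of $\exp(-\sigma)+\exp(\sigma-s_*)$ controls the whole integral, the usual Laplace-type reasoning made rigorous via the explicit $\cosh$ substitution.
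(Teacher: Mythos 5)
Your proof is correct and is essentially the same argument as the paper's: the key substitution $u=\sigma-s_*/2$ pulls out the factor $\exp(3s_*/2)$, and the remaining integral is bounded by $\int_{\mathbb{R}}(\exp(-u)+\exp(u))^{-3}\,du<\infty$. The only cosmetic difference is that the paper first enlarges the domain of integration from $[0,s]$ to $[0,s_*]$ before substituting, whereas you keep the limits $[-s_*/2,\,s-s_*/2]$ and pass to $\mathbb{R}$ directly; these are equivalent.
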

\begin{proof}
    We introduce the change of variables $\sigma=u-s_*/2$ so the integral above can be estimated by
    \begin{align*}
    \int_0^s (\exp(-\sigma)+\exp(\sigma-s_*))^{-3}\mathrm{d}\sigma\leq  &\int_0^{s_*} (\exp(-\sigma)+\exp(\sigma-s_*))^{-3}\mathrm{d}\sigma \\
    =&\exp(3s_*/2)\int_{-s_*/2}^{s_*/2} (\exp(-u)+\exp(u))^{-3}\mathrm{d}u\\
    \lesssim &\exp(3s_*/2)\int_{\mathbb{R}} (\exp(-u)+\exp(u))^{-3}\mathrm{d}u\\\
    \lesssim& \exp(3s_*/2).
    \end{align*}
\end{proof}
The following lemma is key to make use of Proposition~\ref{prop:normalform} when considering complex $(\xi,\eta)$.
\begin{lem}\label{lem:techlemmaimaginary}
    Let $R_\kappa$ denote the $\kappa$-neighborhood of the real line in $\mathbb C$. Then,  there exists $C>0$,  depending
only on $\kappa$, and such that, for 
\begin{equation}\label{eq:deltastar}
0\leq \delta\leq \delta_*=C \exp(-3s_*/2)
\end{equation}
and for any $\rho\leq \delta/2$, any $z\in\mathbb B_{\rho,\xi,\eta}$ with $(\xi,\eta)\in \Sigma_\delta$ satisfies that, for any $0\leq s\leq s_*$,
\[
(q(s),p(s),t(q(s),p(s),s))\in V_{\rho}\times V_{\rho} \times R_\kappa.
\]
\end{lem}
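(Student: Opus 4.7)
The claim has three pieces: $q(s)\in V_\rho$, $p(s)\in V_\rho$, and $t(q,p,s)\in R_\kappa$, and all three follow from the same linear decomposition $z(s)=z_0(s)+(z(s)-z_0(s))$, where $z_0(s)=(\xi e^{-s},\eta e^{s-s_*})$ is the solution of the linearized problem and the correction is controlled by the ball condition.

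First I would treat $q(s)$ (the argument for $p(s)$ is identical under the time-reversal $s\mapsto s_*-s$). From the definition of the norm on $\mathcal X_{\xi,\eta}$ and the assumption $z\in\mathbb B_{\rho,\xi,\eta}$, we obtain
\[
|q(s)-\xi e^{-s}|\le \rho e^{-s}\qquad \text{for every } s\in[0,s_*].
\]
Combining this with the fact that $(\xi,\eta)\in\Sigma_\delta$ forces $\mathrm{Re}\,\xi\in[a/2-\delta,3a/2+\delta]$ and $|\mathrm{Im}\,\xi|\le\delta$, I get
\[
\mathrm{Re}\,q(s)\ge (a/2-\delta-\rho)e^{-s},\qquad |\mathrm{Im}\,q(s)|\le (\delta+\rho)e^{-s},\qquad |q(s)|\le (3a/2+\delta+\rho)e^{-s}.
\]
Since $a$ is fixed small and $\rho\le\delta/2\le\delta_*/2$, the size condition $|q(s)|\le\rho$ (in the normal-form sense) is trivially satisfied, and the sectorial condition $|\mathrm{Im}\,q(s)|<\rho\bar\kappa|\mathrm{Re}\,q(s)|$ reduces to the $s$-independent inequality $\delta+\rho\le \rho\bar\kappa(a/2-\delta-\rho)$, which holds provided $\delta_*$ is chosen sufficiently small in terms of $a$ and $\bar\kappa$. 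Thus $q(s),p(s)\in V_\rho$ for all $s\in[0,s_*]$.

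The substantive step is the bound on $t(q,p,s)$. From the bounds above (and the analogous ones for $p$) I get
\[
\mathrm{Re}(q(\sigma)+p(\sigma))\ge (a/2-\delta-\rho)(e^{-\sigma}+e^{\sigma-s_*}),\qquad |\mathrm{Im}(q(\sigma)+p(\sigma))|\le (\delta+\rho)(e^{-\sigma}+e^{\sigma-s_*}),
\]
so that $|q+p|\sim e^{-\sigma}+e^{\sigma-s_*}$ uniformly, and $|\mathrm{Im}(q+p)|\lesssim \delta\,|q+p|$. Writing $w=q+p=u+iv$ and expanding $\mathrm{Im}(w^{-3})=v(v^2-3u^2)/|w|^6$, this gives the pointwise bound
\[
|\mathrm{Im}((q+p)^{-3})|\lesssim \delta\,(e^{-\sigma}+e^{\sigma-s_*})^{-3}.
\]
Integrating in $\sigma\in[0,s]\subset[0,s_*]$ and applying Lemma~\ref{lem:techlemmaintegralt} produces
\[
|\mathrm{Im}\,t(q,p,s)|\lesssim \delta\,e^{3s_*/2}.
\]
Choosing the constant $C$ in $\delta_*=Ce^{-3s_*/2}$ small enough in terms of $\kappa$ (and the implicit constants absorbed above) then gives $|\mathrm{Im}\,t(q,p,s)|<\kappa$, completing the proof.

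The only mildly delicate point is keeping track of the competing constants: the sectorial condition demands $\delta$ small in terms of $a,\bar\kappa$ (which is $s_*$-independent), while the imaginary-time condition demands $\delta \lesssim \kappa e^{-3s_*/2}$. Since the second requirement is stronger for large $s_*$, both are absorbed into $\delta_*=Ce^{-3s_*/2}$ with $C$ depending only on $\kappa$ (and the fixed parameters $a,\bar\kappa,\rho$ of the normal form).
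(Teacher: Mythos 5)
Your proof is correct and takes essentially the same route as the paper's: decompose $z(s)=z_0(s)+(z-z_0)(s)$, use the ball condition to control real and imaginary parts of $q(s)$ and $p(s)$ (hence the sectorial containment), and then estimate $\mathrm{Im}\,t(q,p,s)$ by the same pointwise bound $|\mathrm{Im}((q+p)^{-3})|\lesssim\delta(e^{-\sigma}+e^{\sigma-s_*})^{-3}$ followed by the integral estimate of Lemma~\ref{lem:techlemmaintegralt}. The only cosmetic difference is that the paper bounds $|\mathrm{Im}((q+p)^3)|$ and $|q+p|^{-6}$ separately, while you expand $\mathrm{Im}(w^{-3})=v(v^2-3u^2)/|w|^6$ directly; these are the same computation.
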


\begin{proof}
    Given a complex number $\tau\in\mathbb C$, we write
    \[
    \alpha(\tau)=\frac{|\mathrm{Im}(\tau)|}{|\mathrm{Re}(\tau)|}
    \]
    and notice that, for $(\xi,\eta)\in\Sigma_\delta$ with $\delta\ll a$ (recall $a>0$ is a small but fixed constant)
    \[
    \alpha(\xi),\alpha(\eta)\leq\frac{\delta}{a/2-\delta}\lesssim \delta.
    \]
    We now write
    \begin{align*}
    \mathrm{Re}(q)(s)=&\mathrm{Re}(q_0(s))+\mathrm{Re}(q(s)-q_0(s))=(\mathrm{Re}(\xi)+\mathrm{O}(\rho))\exp(-s)\\
    \mathrm{Im}(q)(s)=&\mathrm{Im}(q_0(s))+\mathrm{Im}(q(s)-q_0(s))=(\mathrm{Im}(\xi)+\mathrm{O}(\rho))\exp(-s)
    \end{align*}
    and it follows that, for $\delta$ and $\rho$ sufficiently small (depending only on $\kappa$)
    \[
    \alpha(q(s))\lesssim\frac{\delta+\rho}{a/2-\delta-\rho}\lesssim \kappa.
    \]
    Analogously, one shows that $\alpha(p(s))\lesssim \kappa$.
    
    It only remains to show that $t(q(s),p(s),s)\in R_\kappa$ for $\delta\leq \delta_*$. 
    We notice that 
    \[
    \mathrm{Im}(t)(s)=-\int_0^s\frac{\mathrm{Im}((q(\sigma)+p(\sigma))^3)}{|(q(\sigma)+p(\sigma))|^6}\mathrm{d}\sigma.
    \]
    Now, a trivial computation shows that, for $\rho<\delta/2\ll a$  we have 
    \begin{align*}
    |\mathrm{Im}((q(\sigma)+p(\sigma))^3)|\lesssim &(a^2\delta+\delta^3) \left( \exp(-\sigma)+\exp(\sigma-s_*)\right)^3\\
    \lesssim &\delta\left( \exp(-\sigma)+\exp(\sigma-s_*)\right)^3
    \end{align*}
    and
    \[
    |q(\sigma)+p(\sigma)|\gtrsim \exp(-\sigma)+\exp(\sigma-s_*).
    \]
    Thus, making use of Lemma~\ref{lem:techlemmaintegralt},
    \[
    |\mathrm{Im}(t)(s)|\lesssim \delta \int_0^s (\exp(-\sigma)+\exp(\sigma-s_*))^{-3}\mathrm{d}\sigma \leq \delta \exp(3s_*/2) \leq C,
    \]
    and the lemma is proved.
\end{proof}

We now provide suitable estimates on the action of the operator $\mathcal F_{\xi,\eta}$.
\begin{lem}\label{lem:inequalitiesrescaledflow}
Let $0\leq \rho\leq \delta/2\leq \delta_*$, with $\delta_*$ as in \eqref{eq:deltastar}. Then, for any $(\xi,\eta)\in \Sigma_\delta$ and  any $s\in[0,s_*]$, 
\begin{equation}\label{eq:firstineqinteg}
\lVert \mathcal F_{\xi,\eta}(z_0,s)-z_0\rVert\lesssim a^{10}\exp(-3s_*).
\end{equation}
(recall that $a$ is a small but fixed constant used in the definition of $\Sigma$ in \eqref{eq:dfnsigmadelta}). Moreover, for  any pair $z,z_*\in\mathbb{B}_{\rho,\xi,\eta}$, we have 
\begin{equation}\label{eq:lipschitz}
\lVert \mathcal F_{\xi,\eta}(z_*,s)-\mathcal F_{\xi,\eta}(z,s)\rVert\lesssim a^5 \exp(-3s_*/2).
\end{equation}
\end{lem}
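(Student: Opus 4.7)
The strategy is to reduce both estimates to careful bookkeeping of the nonlinear component $h(q,p,t) := \partial_p\mathcal K(q,p,t) + q = \partial_p(q^4 p^4 k(q,p,t))$ of the right-hand side of~\eqref{eq:rescaleddynamics} (and its twin $\tilde h := -\partial_q \mathcal K - p$) along trajectories in $\mathbb{B}_{\rho,\xi,\eta}$. By Lemma~\ref{lem:techlemmaimaginary}, any such trajectory satisfies $(q(\sigma),p(\sigma),t(q,p,\sigma)) \in V_\rho \times V_\rho \times R_\kappa$, which is precisely the domain on which Proposition~\ref{prop:normalform} is valid, so one has the pointwise bounds
\[
|h(q,p,t)| \lesssim |q|^4|p|^3(|q|+|p|)^N, \qquad |\nabla_{q,p,t}\,h(q,p,t)| \lesssim (|q|+|p|)^{N+6},
\]
together with analogous bounds for $\tilde h$. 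Throughout I would fix $N=3$.

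For the first inequality, a direct computation from~\eqref{eq:integraloperator} gives
\[
F_{\xi,\eta}(z_0,s) - q_0(s) = \int_0^s e^{-(s-\sigma)} h(q_0(\sigma),p_0(\sigma),t(z_0,\sigma))\,d\sigma,
\]
and analogously for $G$. Along the linear flow, $|q_0(\sigma)|\lesssim a\,e^{-\sigma}$ and $|p_0(\sigma)|\lesssim a\,e^{\sigma-s_*}$, so that $|q_0|^4|p_0|^3 \lesssim a^7 e^{-\sigma-3s_*}$ and $(|q_0|+|p_0|)^N \lesssim a^N(e^{-\sigma}+e^{\sigma-s_*})^N$. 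Multiplying by $e^s$ to form the $\lVert\cdot\rVert$-norm and noting that $\int_0^{s_*} e^{-\sigma}(e^{-\sigma}+e^{\sigma-s_*})^N\,d\sigma = \mathcal O(1)$ for $N\ge 3$, one harvests the prefactor $a^{N+7}e^{-3s_*}$, which for $N=3$ is exactly $a^{10}e^{-3s_*}$. The symmetric argument applied to $\tilde h$ in the $G$-integral completes this half of the lemma.

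For the second inequality (which I interpret, in view of its later use in the contraction argument, as the Lipschitz estimate with an implicit $\lVert z_*-z\rVert$ factor on the right) I would split
\[
h(z_*,t_*) - h(z,t) = \bigl[h(z_*,t_*) - h(z,t_*)\bigr] + \bigl[h(z,t_*) - h(z,t)\bigr]
\]
and apply the mean value theorem to each piece. The spatial piece uses $|q_*(\sigma)-q(\sigma)|\le \lVert z_*-z\rVert\,e^{-\sigma}$, $|p_*(\sigma)-p(\sigma)|\le \lVert z_*-z\rVert\,e^{\sigma-s_*}$ and the derivative bound above; after multiplication by $e^s$ and integration this contribution is of order $a^{N+6}e^{-3s_*}\lVert z_*-z\rVert$, comfortably smaller than required. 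The temporal piece requires estimating
\[
|t(z_*,\sigma)-t(z,\sigma)| \le \int_0^\sigma \bigl|(q_*+p_*)^{-3}-(q+p)^{-3}\bigr|\,d\tau \lesssim \lVert z_*-z\rVert\,a^{-4}\int_0^\sigma \bigl(e^{-\tau}+e^{\tau-s_*}\bigr)^{-3}\,d\tau,
\]
and by Lemma~\ref{lem:techlemmaintegralt} the last integral is $\lesssim e^{3s_*/2}$. Combined with $|\partial_t h|\lesssim a^{N+7}e^{-\sigma-3s_*}(e^{-\sigma}+e^{\sigma-s_*})^N$ this produces $a^{N+3}e^{-3s_*/2}\lVert z_*-z\rVert$, which for $N=3$ yields the advertised $a^5 e^{-3s_*/2}\lVert z_*-z\rVert$.

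\emph{Main technical obstacle.} The genuinely delicate point is the cancellation between the $e^{-3s_*}$ decay of $h$ along the saddle trajectory and the $e^{3s_*/2}$ loss inherited from the sensitivity of the time reparametrization $t=t(q,p,s)$; these combine to give precisely the factor $e^{-3s_*/2}$ appearing in the Lipschitz constant. The bookkeeping with powers of $a$ hinges on using $|qp|\lesssim a^2 e^{-s_*}$ when handling $|q|^4|p|^3$ and on the lower bound $|q+p|\gtrsim a(e^{-\sigma}+e^{\sigma-s_*})$ (valid because $\xi,\eta\in\Sigma_\delta$ have $|\xi|,|\eta|\sim a$ and $\rho\ll a$, so the linear term dominates) to prevent the time-difference estimate from blowing up in the middle of the interval $[0,s_*]$.
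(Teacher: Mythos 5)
Your proof is correct and follows essentially the same route as the paper's: factor out the nonlinear part of the vector field using the normal form of Proposition~\ref{prop:normalform}, invoke Lemma~\ref{lem:techlemmaimaginary} to stay on the sectorial domain, bound the $q^4p^3$-type products along the linear flow for \eqref{eq:firstineqinteg}, and for \eqref{eq:lipschitz} isolate the time-functional sensitivity $|t(z_*,\sigma)-t(z,\sigma)|\lesssim a^{-4}e^{3s_*/2}\lVert z_*-z\rVert$ via Lemma~\ref{lem:techlemmaintegralt}. The only wobble is arithmetical: $a^{N+7}\cdot a^{-4}=a^{N+3}$ gives $a^{6}$ for $N=3$, not $a^{5}$, but since $a\le 1$ this only strengthens the stated bound, and the paper's own bookkeeping lands on the same $a^{6}$.
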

\begin{proof}
Let
\begin{align*}
f(q,p,s)=&\partial_p \mathcal K(q(s),p(s),t(q,p,s))+q,\\
g(q,p,s)=&\partial_q \mathcal K (q(s),p(s),t(q,p,s))-p.
\end{align*}
From the definition of $f,g$ and the form of the Hamiltonian \eqref{eq:normalform} we observe that 
\begin{align*}
f(q,p,s)=&q^4(s)p^{3}(s)\tilde f(q(s),p(s),t(q,p,s))\\
g(q,p,s)=&q^{3}(s)p^4(s)\tilde g(q(s),p(s),t(q,p,s))
\end{align*}
with (for $1\leq n\leq 2$)
\begin{align}\label{eq:auxestimateslipschitzconst}
|\tilde f (q,p,t)|,|\tilde g(q,p,t)|=&\mathcal{O}_3(q,p),\\
|q^n\partial^n_q \tilde f(q,p,t)|,|p^n\partial^n_p\tilde f(q,p,t)|,|q^n\partial^n_q \tilde g(q,p,t)|,|p^n\partial^n_p \tilde g(q,p,t)|=&\mathcal{O}_3(q,p),\\
|\partial_t \tilde f(q,p,t)|,|\partial_t \tilde g(q,p,t)|=&\mathcal{O}_3(q,p),\label{eq:auxestimateslipschitzconst2}
\end{align}
uniformly for $(q,p,t)\in V_\rho \times V_\rho \times R_\kappa$. We now show how to obtain \eqref{eq:firstineqinteg}. In view of Lemma~\ref{lem:techlemmaimaginary}, we have that  $(q(s),p(s),t(q(s),p(s),s))\in V_{\rho}\times V_{\rho}\times R_\kappa$, provided $\delta \leq \delta_*\sim \exp(-3s_*/2)$. Then, the estimate \eqref{eq:auxestimateslipschitzconst} implies that
\begin{align*}
    |q^4(s)p^{3}(s)\tilde f(q(s),p(s),t(q,p,s))|\lesssim& a^{10}\exp(-(s+3s_*))\\ 
     |q^{3}(s)p^{4}(s)\tilde g(q(s),p(s),t(q,p,s))|\lesssim& a^{10}\exp(s-4s_*).
\end{align*}
Thus, we have proved \eqref{eq:firstineqinteg}.

We now show how to obtain \eqref{eq:lipschitz}. The only delicate point is to study the behavior of the functional $t:\mathbb{B}_{\rho}\to\mathbb{C}$. To that end, we use the fundamental theorem of calculus to write
    \begin{align*}
    t(q_*,p_*,s)-t(q,p,s)=3\int_0^s &\bigg(\big(q(\sigma)-q_*(\sigma)+p(\sigma)-p_*(\sigma)\big)\\
    &\times \int_0^1 \frac{\mathrm{d}\lambda}{\left(\lambda (q(\sigma)+p(\sigma))+(1-\lambda) (q_*(\sigma)+p_*(\sigma))\right)^4} \bigg)\mathrm{d}\sigma.
    \end{align*}
    Notice that, for any $\lambda\in[0,1]$ and any $0\leq \sigma\leq s$ we have that, for $z\in\mathbb B_\rho$,
    \begin{align*}
    |\lambda (q(\sigma)+p(\sigma))+&(1-\lambda) (q_*(\sigma)+p_*(\sigma))|\\
    \gtrsim &(\xi-\mathcal{O}(\rho))\exp(-\sigma)+(\eta-\mathcal{O}(\rho))\exp(\sigma-s_*)\\
    \geq &\frac a2\left( \exp(-\sigma)+\exp(\sigma-s_*)\right).
    \end{align*}
    Hence, it follows that 
    \begin{align*}
    |t(q_*,p_*,s)-t(q,p,s)|\lesssim a^{-4} &\int_0^s\frac{|q(\sigma)-q_*(\sigma)|}{(\exp(-\sigma)+\exp(\sigma-s_*))^4}\mathrm{d}\sigma\\
    &+\int_0^s \frac{|p(\sigma)-p_*(\sigma)|}{\exp(-\sigma)+\exp(\sigma-s_*))^4}\mathrm{d}\sigma\\
    \lesssim a^{-4} & \lVert z-z_*\rVert \bigg( \int_0^s\frac{\exp(-\sigma)}{(\exp(-\sigma)+\exp(\sigma-s_*))^4}\mathrm{d}\sigma \\
    &+\int_0^s\frac{\exp(\sigma-s_*)}{(\exp(-\sigma)+\exp(\sigma-s_*))^4} \mathrm{d}\sigma\bigg)
    \\
    \lesssim a^{-4} & \exp(3s_*/2)\lVert z-z_*\rVert,
    \end{align*}
    where, in the last inequality, we have used the change of variables $\sigma=s_*/2+u$ so, for any $0\leq s\leq s_*$,
    \begin{align*}
\int_0^s\frac{\exp(-\sigma)}{(\exp(-\sigma)+\exp(\sigma-s_*))^4}\mathrm{d}\sigma =& \exp(s_*)\int_{-s_*/2}^{s-s_*/2}\frac{\exp(-u)}{(\exp(-u)+\exp(u))^4}\mathrm{d}u\\
\lesssim &\exp(3s_*/2) \int_{-\infty}^\infty  \frac{\exp(-u)}{(\exp(-u)+\exp(u))^4}\mathrm{d}u\\
\lesssim &\exp(3s_*/2)
    \end{align*}
and a similar argument for the second integral. This implies that
\[
|q^4(s)p^{3}(s)||\tilde f(q,p,t(q(s),p(s),s))-\tilde f(q,p,t(q_*(s),p_*(s),s))| 
\lesssim  a^{6}\exp(-(s+3s_*/2)) \lVert z-z_*\rVert
\]
and
\[
|q^{3}(s)p^4(s)||\tilde g(q,p,t(q(s),p(s),s))-\tilde g(q,p,t(q_*(s),p_*(s),s))| 
\lesssim  a^{6} \exp(s-5s_*/2) \lVert z-z_*\rVert_.
\]
    A rather lengthy, but straightforward, number of similar (easier indeed) computations, together with the inequalities \eqref{eq:auxestimateslipschitzconst}-\eqref{eq:auxestimateslipschitzconst2} show that \eqref{eq:lipschitz} holds.
\end{proof}

The following is a trivial application of the fixed point theorem and Lemma~\ref{lem:inequalitiesrescaledflow}.
\begin{prop}\label{lem:existencefixedpointoperator}
Let $0\leq  \delta/2\leq \delta_*$ with $\delta_*$ as in \eqref{eq:deltastar} and let $(\xi,\eta)\in \Sigma_\delta$.
Then, there exists $C>0$ such that, for any $s_*>0$ sufficiently large and 
\[
\rho= C a^{10} \exp(-3s_*)
\]
 the operator $\mathcal{F}:\mathbb{B}_{\rho/2,\xi,\eta}\to\mathbb{B}_{\rho,\xi,\eta}$ is well defined and  has a unique fixed point $z_{*,\xi,\eta}\in\mathbb{B}_{\rho,\xi,\eta}$. 
 Moreover, for any $s\in[0,s_*]$, the dependence 
 \[
 (\xi,\eta)\mapsto z_{*,\xi,\eta}(s)
 \]
 is real-analytic in the interior of $\Sigma_\delta$.
\end{prop}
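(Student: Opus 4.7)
The plan is to apply the Banach fixed point theorem, with the estimates in Lemma~\ref{lem:inequalitiesrescaledflow} providing exactly the two ingredients needed. First I would fix the constant $C$ in the definition of $\rho = Ca^{10}\exp(-3s_*)$ to be four times the implicit constant in \eqref{eq:firstineqinteg}, so that
\begin{equation*}
\lVert \mathcal{F}_{\xi,\eta}(z_0,\cdot) - z_0 \rVert \leq \rho/4.
\end{equation*}
Since $\rho \sim \exp(-3s_*) \ll \exp(-3s_*/2) \sim \delta_*$, the hypothesis $\rho \leq \delta/2$ required in Lemma~\ref{lem:inequalitiesrescaledflow} is automatic for $s_*$ large, and Lemma~\ref{lem:techlemmaimaginary} guarantees that for any $z \in \mathbb{B}_{\rho,\xi,\eta}$ and $(\xi,\eta)\in\Sigma_\delta$ the curves $(q(s),p(s),t(q,p,s))$ stay inside $V_{\rho}\times V_{\rho}\times R_\kappa$, the domain of analyticity of the normal form Hamiltonian of Proposition~\ref{prop:normalform}. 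In particular the integrals defining $\mathcal{F}_{\xi,\eta}$ converge and the operator is well defined on $\mathbb{B}_{\rho,\xi,\eta}$.

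Next I would invoke the Lipschitz estimate \eqref{eq:lipschitz}: for $s_*$ sufficiently large, depending only on $a$, the Lipschitz constant $L\lesssim a^5\exp(-3s_*/2)$ is strictly smaller than $1/2$. Combining the two bounds, for any $z\in \mathbb{B}_{\rho,\xi,\eta}$,
\begin{equation*}
\lVert \mathcal{F}_{\xi,\eta}(z,\cdot) - z_0 \rVert \leq \lVert \mathcal{F}_{\xi,\eta}(z,\cdot) - \mathcal{F}_{\xi,\eta}(z_0,\cdot)\rVert + \lVert \mathcal{F}_{\xi,\eta}(z_0,\cdot) - z_0\rVert \leq L\rho + \rho/4 \leq 3\rho/4,
\end{equation*}
so $\mathcal{F}_{\xi,\eta}$ is a contraction of $\mathbb{B}_{\rho,\xi,\eta}$ into itself; applied to the smaller ball the same inequality shows that it actually maps $\mathbb{B}_{\rho/2,\xi,\eta}$ into itself (and a fortiori into $\mathbb{B}_{\rho,\xi,\eta}$). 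The Banach fixed point theorem then produces a unique $z_{*,\xi,\eta}\in \mathbb{B}_{\rho,\xi,\eta}$ satisfying $\mathcal{F}_{\xi,\eta}(z_{*,\xi,\eta},\cdot)=z_{*,\xi,\eta}$.

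For the analytic dependence on the parameters, I would exploit the fact that $(\xi,\eta)$ enter the operator only through the linear boundary terms $\xi\exp(-s)$ and $\eta\exp(s-s_*)$, while the integrands depend analytically on the path $z$ (and therefore, by composition, on $(\xi,\eta)$). Hence each Picard iterate $z^{(n)}_{\xi,\eta}=\mathcal{F}_{\xi,\eta}^{n}(z_0)$ is real-analytic in $(\xi,\eta)$ on the interior of $\Sigma_\delta$, and the contraction estimate gives $\lVert z^{(n)}_{\xi,\eta} - z_{*,\xi,\eta}\rVert \leq L^n\rho \to 0$ uniformly in $(\xi,\eta)\in \Sigma_\delta$. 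A standard Weierstrass argument then yields analyticity of the limit $z_{*,\xi,\eta}$ on the interior of $\Sigma_\delta$. The only subtle point in the whole argument — already absorbed into Lemma~\ref{lem:inequalitiesrescaledflow} via Lemma~\ref{lem:techlemmaimaginary} — is controlling that the complexified time $t(q,p,s)$ stays inside the analyticity strip $R_\kappa$ when $(\xi,\eta)$ becomes complex, and this is precisely what the scaling $\delta \leq 2\delta_* \sim \exp(-3s_*/2)$ is designed to secure.
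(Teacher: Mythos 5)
Your proof is correct and is exactly the ``trivial application of the fixed point theorem and Lemma~\ref{lem:inequalitiesrescaledflow}'' that the paper invokes (without providing details). The only minor point worth flagging is that the hypothesis $\rho\le\delta/2$ of Lemma~\ref{lem:inequalitiesrescaledflow} could in principle fail if $\delta$ is chosen much smaller than $\rho$; this is resolved by the monotonicity $\Sigma_\delta\subset\Sigma_{2\delta_*}$, so one applies the lemma with the maximal $\delta=2\delta_*$, which is what your inequality $\rho\sim\exp(-3s_*)\ll\delta_*\sim\exp(-3s_*/2)$ implicitly does. (You also correctly supply the factor $\lVert z-z_*\rVert$ on the right-hand side of \eqref{eq:lipschitz}, which is missing in the paper's statement of the Lipschitz bound but is clearly what the proof of Lemma~\ref{lem:inequalitiesrescaledflow} establishes.)
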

\vspace{0.3cm}

\subsection{Coming back to the original time parametrization} 
Let $0\leq \delta/2\leq \delta_*$ with $\delta_*$ as in \eqref{eq:deltastar} and let $z_{*,\xi,\eta}\in\mathbb{B}_\rho$ be the solution to the fixed point equation \eqref{eq:fixedpointShilnikov} obtained in Proposition~\ref{lem:existencefixedpointoperator}. Then, for any  $T$ sufficiently large, we define $s_*(\xi,\eta,T)$ implicitly by the equation
\begin{equation}\label{eq:undotimereparametrization}
T=\int_0^{s_*}\frac{\mathrm{d}s}{(q_*(s)+p_*(s))^3}.
\end{equation}
In the following lemma we provide an asymptotic expression for $s_*(\xi,\eta,T)$ with $T$ sufficiently large.
\begin{lem}
    There exists $T_0$ large enough such that, for any $(\xi,\eta)\in {\Sigma}_\delta$ and any  $T\geq T_0$, there exists a unique $s_*(\xi,\eta,T)$ satisfying \eqref{eq:undotimereparametrization}. Moreover, the following asymptotic expression 
    \begin{align}\label{eq:asymptoticexpressionsstar}
    \exp(-s_*(\xi,\eta,T))&=\left(\frac{\pi}{16T} \right)^{\frac 23} \frac{1}{\xi\eta}\left(1-\frac {1}{3T (\xi\eta)^{\frac 32}} \left( \left(\frac{\eta}{\xi}\right)^{\frac 32}+\left(\frac{\xi}{\eta}\right)^{\frac 32}\right)+\mathcal{O}(T^{-5/3})\right)^{\frac 23},
    \end{align}
    and the estimates (for $\alpha=(\alpha_\xi,\alpha_\eta)$, $1\leq |\alpha|\leq 2$)
    \begin{equation}\label{eq:derivativessstar}
    |\partial^\alpha s_*(\xi,\eta,T)|\lesssim 1,
    \end{equation}
    hold uniformly on $\Sigma_\delta$.
\end{lem}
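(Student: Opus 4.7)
The plan is to invert the implicit equation $T = \Phi(s_*,\xi,\eta) := \int_0^{s_*}(q_*+p_*)^{-3}\,d\sigma$, where $z_*=(q_*,p_*)$ is the fixed point given by Proposition~\ref{lem:existencefixedpointoperator}. First I would evaluate the leading term by replacing $z_*$ with the linear approximation $z_0(\sigma) = (\xi e^{-\sigma},\,\eta e^{\sigma - s_*})$, and substituting successively $u = \sigma - s_*/2$ and $v = \sqrt{\eta/\xi}\,e^u$. The integrand becomes $(\xi\eta)^{-3/2}v^2/(v^2+1)^3\,dv$, whose integral over $(0,\infty)$ is the elementary $\pi/16$ (via $v=\tan\theta$). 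Including the two exponentially small tails $|u|>s_*/2$, whose leading contributions are $\xi^{-3}e^{-3s_*/2}/3$ and $\eta^{-3}e^{-3s_*/2}/3$, yields the refinement
\[
\int_0^{s_*}(q_0+p_0)^{-3}\,d\sigma = \frac{\pi\,e^{3s_*/2}}{16(\xi\eta)^{3/2}} - \tfrac{1}{3}(\xi^{-3}+\eta^{-3}) + \mathcal{O}(e^{-s_*}).
\]

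Next I would control the nonlinear correction $\Phi - \int_0^{s_*}(q_0+p_0)^{-3}d\sigma$. From Proposition~\ref{lem:existencefixedpointoperator} and the definition of the weighted norm, one has $|q_*-q_0|(\sigma)\le \rho e^{-\sigma}$ and $|p_*-p_0|(\sigma)\le \rho e^{\sigma - s_*}$ with $\rho \lesssim e^{-3s_*}$. Combining these with the factorisations $\partial_p\mathcal K + q = \mathcal{O}(q^4p^3)$ and $\partial_q\mathcal K - p = \mathcal{O}(q^3p^4)$ inherited from~\eqref{eq:normalform}, and mimicking the weighted computation of Lemma~\ref{lem:inequalitiesrescaledflow} (that is, passing to the variable $u=\sigma-s_*/2$ and bounding the resulting convergent integrals), one shows that the discrepancy $|(q_*+p_*)^{-3}-(q_0+p_0)^{-3}|$ integrates to a quantity absorbed by the $\mathcal{O}(e^{-s_*})$ remainder once divided by the leading $e^{3s_*/2}$ term. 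This is the main technical obstacle I anticipate, since a naive norm bound only yields an $\mathcal{O}(T^{-1})$ contribution, and the stated $\mathcal{O}(T^{-5/3})$ error requires exploiting the sharp power $q^4p^3$ (rather than $q^3p^3$) of the vector field near the axes.

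Finally, since $\partial_{s_*}\Phi \sim e^{3s_*/2}$ is strictly positive and bounded away from zero for $s_*$ large enough, the intermediate value theorem yields a unique $s_*$ for each $T\ge T_0$, and the implicit function theorem (together with the real-analyticity of $z_{*,\xi,\eta}$ in $\Sigma_\delta$ from Proposition~\ref{lem:existencefixedpointoperator}) gives real-analyticity of $s_*$. Solving $T = \pi e^{3s_*/2}/(16(\xi\eta)^{3/2}) - (\xi^{-3}+\eta^{-3})/3 + \mathcal{O}(e^{-s_*})$ for $e^{3s_*/2}$ and raising to the $-2/3$ power, one finds
\[
e^{-s_*} = \left(\tfrac{\pi}{16T}\right)^{2/3}\!\!\frac{1}{\xi\eta}\left(1 + \tfrac{1}{3T}(\xi^{-3}+\eta^{-3}) + \mathcal{O}(T^{-5/3})\right)^{-2/3},
\]
and the identity $(1+x)^{-2/3} = (1-x)^{2/3} + \mathcal{O}(x^2)$ (whose $x^2 \sim T^{-2}$ error is subsumed by $\mathcal{O}(T^{-5/3})$), together with the algebraic rewriting $(\xi^{-3}+\eta^{-3}) = (\xi\eta)^{-3/2}\bigl((\eta/\xi)^{3/2}+(\xi/\eta)^{3/2}\bigr)$, reproduces~\eqref{eq:asymptoticexpressionsstar}. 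For the derivative estimates~\eqref{eq:derivativessstar}, implicit differentiation gives $\partial_\xi s_* = -\partial_\xi\Phi/\partial_{s_*}\Phi$; both numerator and denominator scale like $e^{3s_*/2}\sim T$, yielding $|\partial_\xi s_*| \sim 1/\xi = \mathcal{O}(1)$ uniformly on $\Sigma_\delta$, and the analogous argument for $\partial_\eta s_*$ and the second partials is routine provided one first propagates Cauchy-type weighted estimates for $\partial^\alpha z_*$ through the fixed point equation~\eqref{eq:fixedpointShilnikov}, which is done exactly as in Lemma~\ref{lem:inequalitiesrescaledflow} by differentiating the contraction.
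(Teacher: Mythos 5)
Your proof is correct and follows essentially the same route as the paper: the same change of variable (your $v=\sqrt{\eta/\xi}\,e^{u}$ is the paper's $u=\tfrac12\log(\eta/\xi)+s-s_*/2$ followed by exponentiation, and your $\int_0^\infty v^2(v^2+1)^{-3}\,dv=\pi/16$ is the paper's $G(+\infty)$), the same tail expansion giving the $c(\xi,\eta)$ term, and the same inversion by the implicit function theorem.

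One point where you worry unnecessarily: the nonlinear correction is \emph{not} the delicate term. Proposition~\ref{lem:existencefixedpointoperator} gives $\|z_*-z_0\|\lesssim a^{10}e^{-3s_*}$, so $|\tilde q+\tilde p|/|q_0+p_0|\lesssim a^{10}e^{-3s_*}$ pointwise and hence $T=I(\xi,\eta,s_*)\bigl(1+\mathcal O(a^{10}e^{-3s_*})\bigr)$. After factoring out $e^{3s_*/2}(\xi\eta)^{-3/2}$, this contributes an error $\mathcal O(e^{-3s_*})\sim T^{-2}$ inside the bracket, which is strictly smaller than the $\mathcal O(e^{-5s_*/2})\sim T^{-5/3}$ error coming from the \emph{next} term of the tail expansion of $G(\sigma)=\pi/16-\tfrac13 e^{-3\sigma}+\mathcal O(e^{-5\sigma})$. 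So the $T^{-5/3}$ in~\eqref{eq:asymptoticexpressionsstar} is driven by the tail asymptotics of the explicit integral, not by controlling the flow perturbation more finely than the naive weighted-norm bound already does. (You are right, of course, that the normal form $q^4p^4$ from Proposition~\ref{prop:normalform} is what produces the $e^{-3s_*}$ decay of $\rho$; had the remainder been only $q^3p^3$, the perturbative correction could interfere with the $T^{-5/3}$ term.) Finally, for the derivative bounds~\eqref{eq:derivativessstar} the paper differentiates the asymptotic formula~\eqref{eq:asymptoticexpressionsstar} itself and applies Cauchy estimates on $\Sigma_{\delta/2}$, using $\delta\gtrsim T^{-1}$ to absorb the loss; your implicit-differentiation route is valid too, but remember that $\Phi$ depends on $s_*$ both through the upper limit of integration and through the boundary data $p(s_*)=\eta$, so the dominant contribution to $\partial_{s_*}\Phi\sim e^{3s_*/2}$ comes from the integral, not the boundary term.
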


\begin{proof}
    We first notice that, for $\tilde q=q_*-q_0$ and $\tilde p=p_*-p_0$ we have, uniformly for $0\leq s\leq s_*$ and $(\xi,\eta)\in\Sigma_\delta$,
    \[
    \frac{|\tilde q(s)+\tilde p(s)|}{|q_0(s)+p_0(s)|}\lesssim \frac{\exp(-s)+\exp(s-s_*)}{|q_0(s)+p_0(s)|} \lVert z_*-z_0\rVert \lesssim a^{10}\exp(-3s_*).
    \]
    Hence,
    \[
    T=\int_0^{s_*}\frac{\mathrm{d}s}{(q_*(s)+p_*(s))^3}=\int_0^{s_*}\left(1+\mathcal{O} (a^{10}\exp(-3s_*))\right)\frac{\mathrm{d}s}{(q_0(s)+p_0(s))^3}.
    \]
    Also, notice that, introducing the change of variables given by 
    \[
u=\frac12\log \left(\frac{\eta}{\xi} \right)+s-s_*/2,
    \]
    we have 
    \begin{align*}
I(\xi,\eta,s_*)=&\int_0^{s_*}\frac{\mathrm{d}s}{(q_0(s)+p_0(s))^3}\\
=&\frac{\exp(3s_*/2)}{(\xi\eta)^{3/2}} \int_{-s_*/2+\log(\sqrt{\eta/\xi})}^{s_*/2+\log(\sqrt{\eta/\xi})} \frac{\mathrm{d}u}{(\exp(u)+\exp(-u))^3}.
    \end{align*}
    One might check that 
    \[
    G(\sigma)=\int_{-\infty}^\sigma\frac{\mathrm{d}u}{(\exp(u)+\exp(-u))^3}=\frac18 \left(\frac{e^\sigma(e^{2\sigma}-1)}{(e^{2\sigma}+1)^2}+\tan^{-1} (e^\sigma)\right),
    \]
    so 
    \begin{equation}\label{eq:asymptotictimeintegral}
    G(\sigma)=\frac {\pi}{16}-\frac{1}{3}e^{-3\sigma}+\mathcal{O}(e^{-5\sigma})\quad\text{as}\quad\sigma\to \infty
    \end{equation}
    and
\begin{equation}\label{eq:asymptotictimeintegral2}
    G(\sigma)=\frac13 e^{3\sigma}+\mathcal{O}(e^{5\sigma})\quad\text{as}\quad\sigma\to -\infty.
    \end{equation}
    Thus, for large $s_*$ and uniformly for $(\xi,\eta)\in \Sigma_\delta$,
    \begin{align*}
T(\xi,\eta,s_*)=&I(\xi,\eta,s_*)\left(1+\mathcal{O} (a^{10}\exp(-3s_*))\right)\\
=&\frac{\exp(3s_*/2)}{(\xi\eta)^{3/2}}
\left(\frac{\pi}{16}-\frac 13 c(\xi,\eta)\exp(-3s_*/2)(1+\mathcal{O}(\exp(-s_*))) +\mathcal{O}(a^{10}\exp(-3s_*))\right).
    \end{align*}
    where we have written $c(\xi,\eta)=(\eta/\xi)^{3/2}+(\xi/\eta)^{3/2}$. In order to obtain \eqref{eq:asymptoticexpressionsstar} we rewrite the last equality as
    \[
    \exp(-s_*)= \left(\frac{\pi}{16T} \right)^{\frac 23} \frac{1}{\xi\eta}\left(1-\frac {16}{3\pi}c(\xi,\eta) \exp(-3s_*/2)(1+\mathcal{O}(\exp(-s_*)))+\mathcal{O}(\exp(-3s_*))\right)^{\frac 23}
    \]
    and make use of the implicit function theorem.

    We now show how to obtain \eqref{eq:derivativessstar}. It follows from the asymptotic expression \eqref{eq:asymptoticexpressionsstar} and Cauchy estimates that for $\alpha=(\alpha_\xi,\alpha_\eta)$ with $|\alpha|=1$ and any $(\xi,\eta)\in\Sigma_{\delta/2}$,
    \[
    -\partial^\alpha s_* \exp(-s_*)=\left(\frac{\pi}{16T} \right)^{\frac 23} \left(\partial^\alpha \left( \frac{1}{\xi\eta}\right) \left(1+\mathcal{O}(T^{-1})\right)+ \frac{1}{\xi\eta} \left(1+\mathcal{O}\left(T^{-1}+ \delta^{-1}T^{-2}\right)\right)\right).
    \]
    By the above expression, $\exp(s_*)\sim T^{2/3}$. Then, $\delta\gtrsim \exp(-3s_*/2)\sim T^{-1}$,  and we obtain that, uniformly in $(\eta,\xi)\in  \Sigma_{\delta/2}$,
    \begin{align*}
    \partial^\alpha s_*= &\left(\frac{\pi}{16T} \right)^{\frac 23}  \exp(s_*)\left(\partial^\alpha \left( \frac{1}{\xi\eta}\right) \left(1+\mathcal{O}(T^{-1})\right)+ \frac{1}{\xi\eta} \left(1+\mathcal{O}\left(T^{-1}\right)\right)\right)\\
    =&\mathcal O (1).
    \end{align*}
    The estimate for $|\alpha|=2$ follows in a similar fashion making use of the above expression and noticing that 
    \[
    \delta^{-2}T^{-2}\lesssim 1.
    \]
\end{proof}

We are now ready to complete the proof of Proposition~\ref{prop:Shilnikov}.

\begin{proof}[Proof of Proposition~\ref{prop:Shilnikov}]
Let $(q_*(s;\xi,\eta),p_*(s;\xi,\eta))$ be the solution to the fixed point equation \eqref{eq:fixedpointShilnikov} found in Proposition~\ref{lem:existencefixedpointoperator}. The Shilnikov stable and unstable maps are given, respectively, by
\[
\Psi_T^s(\xi,\eta)=(\xi,p_*(0;\xi,\eta)),\qquad\qquad \Psi_T^u(\xi,\eta)=(q_*(s_*(\xi,\eta,T);\xi,\eta),\eta).
\]
Denote by 
\[
\Delta_q(s)=e^{s}\left(q_*(s)-q_0(s)\right),\qquad\qquad\Delta_p(s)=e^{s_*-s}\left(p_*(s)-p_0(s)\right).
\]
Then, since for $0\leq s\leq s_*$ we have $\Delta_q(s),\Delta_p(s)=\mathcal{O}(a^{10}\exp(-3s_*))$, 
\begin{align*}
q_*(s)=&\exp(-s)\left(\xi+\mathcal O(a^{10}\exp(-3s_*))\right),\\
p_*(s)=&\exp(s-s_*)\left(\eta+\mathcal O(a^{10}\exp(-3s_*))\right).
\end{align*}
It thus follows from \eqref{eq:fixedpointShilnikov} and \eqref{eq:integraloperator} that (we use again the notation $c(\xi,\eta)=(\eta/\xi)^{3/2}+(\xi/\eta)^{3/2}$)
\[
q_*(s_*(\xi,\eta,T);\xi,\eta)=\left(\frac{\pi}{16T} \right)^{2/3} \frac{1}{\eta}\left(1-\frac {2}{9T (\xi\eta)^{3/2}}c(\xi,\eta)+\mathcal{O}(a^{-10}T^{-2})\right)
\]
and
\[
p_*(0;\xi,\eta)=\left(\frac{\pi}{16T} \right)^{2/3} \frac{1}{\xi}\left(1-\frac {2}{9T (\xi \eta)^{3/2}}c(\xi,\eta)+\mathcal{O}(a^{-10}T^{-2})\right),
\]
so the asymptotic expansion \eqref{eq:asymptoticShilnikov} follows. Since \eqref{eq:asymptoticShilnikov} holds uniformly on $\Sigma_\delta$ with $\delta= T^{-1}$, the asymptotic expressions for the first partial derivatives \eqref{eq:asymptoticShilnikovderiv},\eqref{eq:asymptoticShilnikovderiv2} and the estimates for the second partial derivatives follow from Cauchy estimates.
\end{proof}

\section{Secondary  homoclinic tangencies to the degenerate saddle}\label{sec:tangency}

In this section we prove Proposition~\ref{lem:homoclinictangencies}. To that end, we fix $a>0$ sufficiently small so that $[0,2a]^2\subset V_\rho \times V_\rho$, where $V_\rho$ is the sectorial domain in Proposition~\ref{prop:normalform}. Throughout the whole section we work in the coordinate system $(q,p)\in V_\rho \times V_{\rho} \subset\mathbb C^2$ obtained in Proposition~\ref{prop:normalform}. In particular, we recall that in this coordinate system the local stable and unstable manifolds of the fixed point $O=(0,0)$ are given by $\{p=0\}$ and $\{q=0\}$ respectively. We denote by 
\[
\Psi_{\mathcal K}:V_{\rho} \times V_{\rho} \to \mathbb{C}^2
\]
the time-one map associated to the flow of Hamiltonian $\mathcal K$ in \eqref{eq:normalform}.

In the following lemma we obtain a parametrization of a suitable compact piece of the unstable manifold as a graph over the stable manifold. To do so, the introduction of some notation is in order. We denote by 
\[
U_a=\{a/2<q<2a,\ q-p<0\}.
\]
For $\mu>0$ small enough we denote by $\gamma^u$ the unique connected component of $W^{u}(O)\cap U_a$ corresponding to the first backwards intersection of $W^{u}_{\mathrm{loc}}(O)$ with $U_a$. Clearly, by regular dependence of compact pieces of $W^{u}(O)$ on $\mu$ (see 
 \cite{McGeheestablemfold}), the curve $\gamma^u$ is well defined for $\mu>0$ sufficiently small.

\begin{lem}\label{lem:transversesplitting}
 Fix any $a>0$, small enough. Then, for any sufficiently large value of the Jacobi constant~$J$  and for $\mu>0$, small enough (depending on $J)$, there exist a constant $\sigma_{J}>0$  and a real-analytic function $M_\mu:[a/2,2a]\to \mathbb R$ of the form
 \begin{equation}\label{eq:Melnikovinq} 
     M_{\mu}(q;J)=\frac{1}{2q}\left(\mu \left(\sigma_J\sin ((4\mathcal J/q)^3) +\mathcal{E}_ J(q)\right)+\mathcal O(\mu a)+\mathcal{O}(\mu^2)\right)
 \end{equation}
 with $\lim_{J\to\infty} \mathcal E_J(q)/\sigma_{J}\to 0$ uniformly for $q\in [a/2,2a]$ and   such that $\gamma^u$ can be parametrized as
   \begin{equation}\label{eq:transverseparam}
\gamma^u=\{(q,M_\mu(q; J))\colon a/2<q<2a\}.
   \end{equation}
\end{lem}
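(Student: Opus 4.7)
The plan is to parametrize $\gamma^u$ as a perturbation of its $\mu=0$ counterpart and to identify the leading-order displacement as a Melnikov-type integral. For $\mu = 0$, the reduced system is the integrable Kepler problem, and the global stable and unstable manifolds of $O$ coincide along the homoclinic loop of parabolic motions. In the normal form coordinates of Proposition~\ref{prop:normalform}, where $W^u_{\mathrm{loc}}(O) = \{q = 0\}$ and $W^s_{\mathrm{loc}}(O) = \{p = 0\}$, the first backwards intersection of the globalization of $W^u_{\mathrm{loc}}(O)$ with $U_a$ lies in $W^s_{\mathrm{loc}}(O)$; hence for $\mu=0$ the set $\gamma^u \cap U_a$ is the graph $\{p = 0,\, q\in[a/2,2a]\}$. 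For $\mu > 0$ small enough I would propagate a compact piece of $W^u_{\mathrm{loc}}(O)$---whose existence and regularity are ensured by \cite{McGeheestablemfold}---forward under the full flow of $\mathcal K$ until it reaches $U_a$. Smooth dependence of the flow on $(\mu,J)$ in regions bounded away from $O$, together with transversality of the unperturbed curve to the foliation $\{q=\mathrm{const}\}$ in $U_a$, then yields the graph representation \eqref{eq:transverseparam} with $M_\mu = \mathcal O(\mu)$, real-analytic in $q$.

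The leading $\mu$-order of $M_\mu$ is the Hamiltonian Melnikov function. Writing $\mathcal K = \mathcal K_0 + \mu \mathcal K_1 + \mathcal O(\mu^2)$ with $\mathcal K_0$ the reduced $2$-body Hamiltonian (Lemma~\ref{lem:symplecticred}) and $\mathcal K_1$ the first-order correction, the energy gap between $W^u(O)$ and $W^s(O)$ at horizontal position $q$ is given to leading order by the Poisson-bracket integral of $\mathcal K_1$ along the unperturbed parabolic orbit through $(q,0)$. Translating this energy gap into a $p$-displacement amounts to dividing by $\partial_p \mathcal K_0|_{p=0} = -q + \mathcal O_3(q)$, which is the source of the $\frac{1}{2q}$ prefactor in \eqref{eq:Melnikovinq} (the factor $2$ comes from the rotation and rescaling in Lemma~\ref{lem:McGehee}). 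To evaluate the integral I would exploit the fact that along the parabolic orbit $q(s) \sim s^{-1/3}$, and hence $s \sim q^{-3}$, while the angular variable $\phi$ entering $\mathcal K_1$ through trigonometric factors generates a phase of the form $(4\mathcal{J}/q)^3$ up to an additive constant fixed by the choice of reference section. The resulting oscillatory integral is essentially the one computed in \cite{MartinezPinyol}; extracting its asymptotics gives the main term $\sigma_J \sin((4\mathcal{J}/q)^3)$ with $\sigma_J > 0$, while the remainder $\mathcal E_J(q)$ collects higher harmonics that become negligible relative to $\sigma_J$ as $J \to \infty$ thanks to extra factors of $1/J$. The $\mathcal O(\mu a)$ term absorbs finite-size corrections from evaluating at $q\in[a/2,2a]$ rather than at the tangent-to-$\{p=0\}$ limit $q\to 0^+$ together with contributions of the non-integrable normal-form remainder $q^4 p^4 k(q,p,\phi)$ in Proposition~\ref{prop:normalform}, while $\mathcal O(\mu^2)$ is the higher-order Melnikov tail.

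The main obstacle is the degenerate character of the saddle: orbits approach $W^{s,u}_{\mathrm{loc}}(O)$ only polynomially, so the classical Melnikov estimates based on exponential contraction are unavailable and convergence of the integral is delicate. I would address this by splitting the integration into an ``outer'' range $|s|\lesssim T$, handled by direct bounds along the parabolic orbit, and an ``inner'' range $|s| \gtrsim T$, controlled using the normal form \eqref{eq:normalform} of Proposition~\ref{prop:normalform} together with the sharp description of the local dynamics provided by the Shilnikov maps of Proposition~\ref{prop:Shilnikov} (equivalently, by the explicit parametrizations of $W^{u,s}_{\mathrm{loc}}(O)$ from \cite{McGeheestablemfold}). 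The same outer/inner splitting, combined with Gronwall-type estimates adapted to the polynomial approach to $O$, is what closes the $\mathcal O(\mu^2)$ remainder in the expansion of $M_\mu$.
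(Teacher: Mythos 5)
Your plan has a genuine structural gap. You propose to write $\mathcal K = \mathcal K_0 + \mu \mathcal K_1 + \mathcal O(\mu^2)$ and to compute the Melnikov integral in the normal-form coordinates of Proposition~\ref{prop:normalform}, splitting inner/outer in $s$. But the normal form $\mathcal K = -qp + q^4p^4\,k$ is defined only on a small sectorial neighborhood $V_{\rho/2}\times V_{\rho/2}\times\mathbb T_\rho$ of the degenerate saddle, whereas the Melnikov functional must be evaluated along the \emph{whole} parabolic homoclinic orbit, most of which lies far outside that domain (where $r$ is of order one). There is no coherent $\mu$-expansion of $\mathcal K$ to split into ``$\mathcal K_0$'' and ``$\mu\mathcal K_1$'' either: the conjugacy $\Omega$ in Proposition~\ref{prop:normalform} is constructed separately for each $\mu$ and the $\mu$-dependence is distributed non-explicitly between $\Omega$ and $\mathcal K$. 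So the Poisson-bracket integral you want to set up cannot be written down in these coordinates, and the outer leg of your proposed outer/inner decomposition has no Hamiltonian to integrate against.

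The paper avoids this by not computing a Melnikov integral at all in Appendix~\ref{sec:splittingr3bp}. It imports the splitting formula in time-energy coordinates $(u,E)$ from \cite{MartinezPinyol} (Lemma~\ref{lem:Melnikov}, equation~\eqref{eq:splittinginEcoords}), which is valid along a compact piece of the parabolic orbit away from $O$, and then performs a pure change-of-coordinates argument: it introduces an explicit intermediate system $(\tilde q,\tilde p)=(x-\chi(-y,0),\,x-\chi(y,0))$ built from McGehee's parametrization $\chi$ of the local invariant manifolds, verifies $|\phi_2-\Psi\circ\Omega|=\mathcal O_3(a)$ so that $(\tilde q,\tilde p)$ is $\mathcal O_3(a)$-close to the normal-form $(q,p)$, inverts the $q$-component to get $g_q(\tilde q)=4^3\tilde q^{-3}+\mathcal O(a,\mu)$, and substitutes into the $(u,E)$-splitting to obtain \eqref{eq:Melnikovinq}. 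The $\frac{1}{2q}$ prefactor and the phase $(4\mathcal J/q)^3$ come directly from this substitution together with the asymptotics of $(r_h,y_h)$ in Lemma~\ref{lem:techlemmaunperturbedhomocl}, not from a division by $\partial_p\mathcal K_0$ as you suggest (note also that the Hamiltonian vector field here carries a $(q+p)^3$ factor from the singular form $\omega$, so the naive ``divide by the transverse derivative of $\mathcal K_0$'' heuristic does not produce the stated prefactor). If you want to repair your approach, the cleanest route is the paper's: take the known splitting in a coordinate system where it is already established, and transport it near $O$ via the McGehee $\chi$.
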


Lemma~\ref{lem:transversesplitting} follows from the results in \cite{MartinezPinyol}. We provide the details in Appendix~\ref{sec:splittingr3bp}. We now make use of the parametrization \eqref{eq:transverseparam} to prove the existence of secondary homoclinic tangencies. Following Duarte, to exploit the (approximate) symmetry of the problem we compare \eqref{eq:transverseparam} with images of the diagonal
\[
\Delta=\{(q,p)\in\mathbb{R}^2\colon q=p,\ (q,p)\in [0,a/4]^2\}
\]
under the local map.  In the following we assume that $\mu>0$ is sufficiently small.

\begin{lem}\label{lem:diagonal}
    For any $T\in 2\mathbb{N}$ sufficiently large denote by $
    \gamma_{T}^-$ (resp. $\gamma_T^+$)  the unique connected component of  $\Phi_{loc}^{-T/2}(\Delta)\cap U_a$ (resp. $\Phi_{loc}^{+T/2}(\Delta)\cap U_a$) corresponding to the first backwards (resp. forward) intersection of $\Delta$ and $U_a$.  Let $x_T(q,p):\Sigma_\delta\to\mathbb C$ and $y_T(q,p):\Sigma_\delta\to\mathbb C$ denote the functions in \eqref{eq:formShilnikovmaps} (i.e. the first component of the Shilnikov unstable map, and the second component of the Shilnikov stable map respectively). Then, for $\mu>0$ small enough, $\gamma_T^\pm$ can be parametrized as
    \[
    \gamma_T^-=\{(q,y_T(q,q))\colon q\in[a/2,2a]\}, \qquad   \gamma_T^+=\{(x_T(p,p),p)\colon p\in[a/2,2a]\}.
    \]
\end{lem}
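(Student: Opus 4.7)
The plan relies on the $R$-reversibility $R \circ \Psi_{\mathcal K} = \Psi_{\mathcal K}^{-1} \circ R$, where $R(q,p) = (p,q)$. This is inherited from the assumption $R \circ P_\mu = P_\mu^{-1} \circ R$ of Theorem~\ref{thm:main}, provided that the normal form of Proposition~\ref{prop:normalform} can be implemented $R$-equivariantly. I would verify this by noting that the operators $\LL_k$ and $\wt\LL_k$ in~\eqref{def:operadors_diferencials_L_i_wt_L} are swapped by the $R$-action and that the cohomological equations of Section~\ref{sec:normalform} admit $R$-symmetric solutions at every step of the iterative scheme, so the total transformation is $R$-equivariant. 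Granting this, applying $R$ to the characterizing identity $\Psi_{\mathcal K}^T(\xi, y_T(\xi, \eta)) = (x_T(\xi, \eta), \eta)$ of Proposition~\ref{prop:Shilnikov} and using the uniqueness of the Shilnikov maps yields
\[
x_T(\xi, \eta) = y_T(\eta, \xi), \qquad y_T(\xi, \eta) = x_T(\eta, \xi);
\]
in particular $x_T(p, p) = y_T(p, p)$ for all $p \in [a/2, 2a]$.

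To prove the inclusion $\supseteq$ in the parametrization of $\gamma_T^+$, fix $p \in [a/2, 2a]$ and consider the Shilnikov orbit $\{z_k := \Psi_{\mathcal K}^k(z_0)\}_{k=0}^T$ with $z_0 := (p, y_T(p, p))$, so $z_T = (x_T(p, p), p)$. The diagonal identity gives $R z_0 = z_T$, making the entire orbit segment $R$-symmetric, so its midpoint $z_{T/2}$ is fixed by $R$, i.e., lies on the diagonal $\{q = p\}$. From the leading-order behavior $q(\sigma) \sim p\, e^{-\sigma}$, $p(\sigma) \sim p\, e^{\sigma - s_*}$ derived in Section~\ref{sec:Shilnikov}, together with $e^{-s_*/2} \sim (\pi/16T)^{1/3}/p$, both coordinates of $z_{T/2}$ are of order $T^{-1/3}$, so $z_{T/2} \in [0, a/4]^2$ for $T$ large and hence $z_{T/2} \in \Delta$. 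Consequently $z_T = \Psi_{\mathcal K}^{T/2}(z_{T/2}) \in \Psi_{\mathcal K}^{T/2}(\Delta) \cap U_a$.

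For the reverse inclusion, given $w \in \Psi_{\mathcal K}^{T/2}(\Delta) \cap U_a$, the preimage $\tilde z := \Psi_{\mathcal K}^{-T/2}(w) \in \Delta$ is fixed by $R$, so reversibility gives $Rw = R\Psi_{\mathcal K}^{T/2}(\tilde z) = \Psi_{\mathcal K}^{-T/2}(\tilde z)$ and hence $\Psi_{\mathcal K}^T(Rw) = w$. Writing $w = (q_w, p_w)$ with $p_w \sim a$ and $q_w$ small, the point $Rw = (p_w, q_w)$ sits on $\sigma^s_{p_w}$ and $w$ on $\sigma^u_{p_w}$; the connecting orbit stays inside the Shilnikov box $[0, 3a/2]^2$ because its midpoint $\tilde z$ is in $[0, a/4]^2$ and the coordinates evolve monotonically in the local region. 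The uniqueness clause of Proposition~\ref{prop:Shilnikov} then identifies this orbit with the Shilnikov orbit $\Psi^u_T(p_w, p_w)$, forcing $q_w = x_T(p_w, p_w)$ and giving the claimed parametrization of $\gamma_T^+$. The parametrization of $\gamma_T^-$ follows by the symmetric argument with $\Psi_{\mathcal K}^{-T/2}$ in place of $\Psi_{\mathcal K}^{T/2}$, or equivalently by applying $R$ and using $y_T(q, q) = x_T(q, q)$. The principal technical obstacle is the $R$-equivariance of the normal form construction; once this is in place, the rest of the proof is a direct consequence of reversibility together with the uniqueness and asymptotic content of Proposition~\ref{prop:Shilnikov}.
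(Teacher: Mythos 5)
Your proof takes a genuinely different route from the paper's, and both ultimately hinge on the same delicate point. The paper's argument is computational: it first shows, via the rescaled-time asymptotics, that $\gamma_T^\pm$ are graphs over the $q$- and $p$-axes, and then identifies the graph functions with $y_T(q,q)$ and $x_T(p,p)$ by inserting the factorization $\Phi_{loc}^T=\Psi^u_T\circ(\Psi^s_T)^{-1}$ into the claimed identity $\Phi_{loc}^T(q,h(q))=(\tilde h(q),q)$; the diagonal identity $x_T(p,p)=y_T(p,p)$ that you derive is never isolated. You instead take the reversibility $R\circ\Psi_{\mathcal K}=\Psi_{\mathcal K}^{-1}\circ R$ as the organizing principle, obtain the diagonal identity from it, and build both inclusions around $R$-symmetric orbit segments. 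Structurally, the paper's step ``$\Phi_{loc}^T(q,h(q))=(\tilde h(q),q)$'' (i.e.\ the $p$-coordinate of the forward $T$-iterate of $(q,h(q))$ is exactly $q$) and your step $Rz_0=z_T$ encode precisely the same requirement: that $\Psi_{\mathcal K}$ in the normal-form coordinates of Proposition~\ref{prop:normalform} is exactly $R$-reversible.

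This is a genuine gap, and you are right to flag it as the principal obstacle, but your sketch does not close it. The paper itself concedes the point: just after Lemma~\ref{lem:exist_secondary_tangency} it states that ``a priori, the map $\Phi_{loc}$ is only approximately symmetric since in Proposition~\ref{prop:normalform} we have not studied the behavior of the remainder under the involution $R$.'' Observing that $\LL_k$ and $\wt\LL_k$ are interchanged by $R$ is not a verification: the generating Hamiltonians of the scheme are built from solutions of the cohomological equations of Lemma~\ref{lem:Inversa_de_loperador_lineal}, produced by a non-explicit construction on a sectorial domain, and the preliminary transformations of Proposition~\ref{prop:redressament_de_les_varietats} straighten $W^s$ and $W^u$ by two successive changes built from the non-explicit parametrizations $\gamma^{u,s}$ in a fixed order; one must check at each stage that the input data is $R$-symmetric and that the chosen solution is the $R$-equivariant one. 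Until that is carried out --- or until the lemma and your diagonal identity are restated with quantitative error terms of the size of the $R$-asymmetry, matching the paper's ``approximately symmetric'' phrasing, together with a check that these errors are negligible in the tangency analysis of Lemma~\ref{lem:exist_secondary_tangency} --- your $\supseteq$ argument stops at $Rz_0=z_T$ and your $\subseteq$ argument stops at $\Psi_{\mathcal K}^T(Rw)=w$. The remainder of your argument (midpoint on $\Delta$ of size $\mathcal O(T^{-1/3})$, identification of $w$ with the unique Shilnikov intersection) is sound given the reversibility, and is a pleasant conceptual alternative to the paper's algebraic manipulation.
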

\begin{proof}
    We first prove that $\gamma_T^-$ can be parametrized as a graph of the form
    \[
    \gamma_T^-=\{(q,h(q)),q\in[a/2,2a]\},
    \]
    for some function $h(q)$. To see this, we consider the image of $\Delta$ under the flow 
    \[
    \dot q=(q+p)^3 \partial_p \mathcal K,\qquad\qquad \dot p=-(q+p)^3 \partial_q \mathcal K,
    \]
    where $\mathcal{K}$ is as in \eqref{eq:normalform}. We rescale time $\mathrm{d}t/\mathrm{d}s=-(q+p)^{-3}$ (recall that we want to consider negative iterations). In the rescaled time, it follows from the structure of $\mathcal K$ that, as long as it stays on $V_\rho \times V_\rho$, the time $s$ map $\phi^s$ of a point $(\tau,\tau)\in\Delta$ is given by 
    \begin{equation}\label{eq:auxiliaryrescaledflow}
\phi^s(\tau,\tau)=\tau\big((1+\mathcal{O}(a)) \exp(s),(1+\mathcal O(a)) \exp(-s)\big)
    \end{equation}
    and, along this orbit
    \begin{align*}
    t(\tau,s)=-&\int_0^s (q(\sigma,\tau)+p(\sigma,\tau))^{-3}\mathrm{d}\sigma\\
    =&-\tau^{-3}(1+\mathcal{O}(a))\int_0^s (\exp(\sigma)+\exp(-\sigma))^{-3}\mathrm{d}\sigma\\
    = &-\tau^{-3} \exp(3s/2)(1+\mathcal{O}(a))\int_{-s/2}^{s/2}(\exp(u)+\exp(u))^{-3}\mathrm{d}u\\
    =&-\frac{\pi\tau^{-3}}{16}\exp(3s/2)\left(1+\mathcal{O}(a)+\mathcal{O}(\exp(-3s/2))\right),
    \end{align*}
    where, in the last equality we have made use of the asymptotics \eqref{eq:asymptotictimeintegral}, \eqref{eq:asymptotictimeintegral2}. Thus, for any $t<0$,
    \[
    \exp(-s(\tau,t))= \frac{1}{\tau^2}\left(\frac{\pi}{16 |t|}\right)^{2/3}\left(1+\mathcal{O}(a)+\mathcal{O}(|t|^{-1})\right).
    \]
   Together with expression \eqref{eq:auxiliaryrescaledflow}, this implies that, for a fixed value of $q\in[a/2,2a]$ and sufficiently large $T>0$, there exists a unique $\tau(q,T)$ such that
   \[
q=\pi_q \left(\phi^{s(\tau(q,T),-T)}(\tau(q,T),\tau(q,T))\right).
   \]
   Then, 
   \[
   \phi^{s(\tau(q,T),-T)}(\tau(q,T),\tau(q,T))=(q,\tau(q,\tau)\exp(-s(\tau(q,\tau),-T)))
   \]
    and our claim follows with 
    \[
    h(q)=\pi_p \left(\phi^{s(\tau(q,T),-T)}(\tau(q,T),\tau(q,T)) \right).
    \]
    A completely analogous argument shows that there exists $\tilde h$ such that 
    \[
     \gamma_T^+=\{(\tilde h(p),p),p\in[a/2,2a]\}.
    \]
    Now we show that indeed  $h(q)=y_T(q,q)$ and $\tilde h(p)=x_T(p,p)$, with $x_T$ and $y_T$ as in~\eqref{eq:formShilnikovmaps}. By definition, for any $q\in[a/2,2a]$,
    \[
    \Phi_{loc}^T(q,h(q))=(\tilde h(q),q).
    \]
    As $\Phi^T_{loc}=\Psi^u_T\circ(\Psi^s_T)^{-1}$, this is the same as
    \[
    (\Psi_T^u)^{-1}(\tilde h(q),q)= (\Psi_T^s)^{-1}(q,h(q)).
    \]
    Making use of the notation in \eqref{eq:formShilnikovmaps}, the equality above reads
    \[
    (x_T^{-1}(\tilde h(q),q),q)=(q,y_T^{-1}(q,h(q))),
    \]
    which implies that 
    \begin{equation}\label{eq:defnht}
        q=y^{-1}_T(q,h(q))\qquad\qquad q=x_T^{-1}(\tilde h(q),q).
    \end{equation}
    Using that $y_T^{-1}$ is defined implicitly by 
    \[
    y_T(q,y_T^{-1}(q,p))=p,
    \]
    we deduce that 
    \[
    y_T(q,y_T^{-1}(q,h(q)))=h(q).
    \]
    Analogously 
    \[
    x_T(x_T^{-1}(\tilde h(q),q),q)=\tilde h(q).
    \]
    Then, the lemma is proved using \eqref{eq:defnht}.
\end{proof}

The explicit parametrization of $\gamma_T$ given in Lemma~\ref{lem:diagonal} is very convenient to prove the existence of secondary homoclinic tangencies, as we show in the following result.
\begin{lem}\label{lem:exist_secondary_tangency}
Let $n\in\mathbb{N}$ be sufficiently large and define, for $\mu>0$ small enough, the function 
\[
\delta(q,\mu;n)=M_{\mu}(q)-y_n(q,q)
\]
where $M_\mu(q)$ and $y_n(q,q)$ are the functions in Lemmas~\ref{lem:transversesplitting}~and~\ref{lem:diagonal}. Then, for any $n\in\mathbb N$ sufficiently large there exist a constant $\tau_n$ such that $\tau_n\to 1$ as $n\to\infty$, a point  $q_n\in[3a/4,3a/2]$ and $\mu_n$ of the form
\[
\mu_n=2 \left(\frac{\pi}{16 n}\right)^{2/3} (\tau_n+\mathcal{O}(a)),
\]
such that, locally around $q_n$,
\begin{equation}\label{eq:expansiondeltan}
\delta(q,\mu;n)=\alpha_n(\mu-\mu_n)+ \beta_n(q-q_n)^{2}+\mathcal{O}_2(\mu-\mu_n)+\mathcal O(n^{1/3})\mathcal{O}_3(q-q_n),
\end{equation}
for certain explicit constants $\alpha_n,\beta_n$ of the form
\begin{equation}\label{eq:constantsalphabetan}
\alpha_n=\frac{1}{2 q_n}(\tau_n+\mathcal{O}(a^2)),\qquad\qquad \beta_n=\frac{\mu_n}{q_n^9} 1152(\tau_n+\mathcal{O}(a^3)).
\end{equation}

\begin{rem}\label{lem:shrinkingdomains}
  For a fixed $n\in\mathbb{N}$ the expression $\eqref{eq:expansiondeltan}$ only guarantees that the cubic terms are small provided $|q-q_n|\ll n^{-1}$. This is however not an issue for our purposes since in the renormalization scheme developed in Section~\ref{sec:renormalization} we first fix $n\in\mathbb N$ and then select a suitable small domain.
\end{rem}
 
\end{lem}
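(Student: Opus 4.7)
The plan is to substitute the asymptotic formulas from Lemma~\ref{lem:transversesplitting} (for $M_\mu$) and Proposition~\ref{prop:Shilnikov} (for $y_n(q,q)$) into $\delta$, then locate a simultaneous zero of $\delta$ and $\partial_q \delta$ via the implicit function theorem, and finally read off the Taylor coefficients at that zero. Writing $\omega(q) := (4\mathcal{J}/q)^3$, the substitution yields
$$
\delta(q,\mu;n) = \frac{\mu\sigma_J \sin(\omega(q))}{2q} + \frac{\mu \mathcal{E}_J(q)}{2q} + \frac{\mathcal{O}(\mu a) + \mathcal{O}(\mu^2)}{2q} - \frac{1}{q}\left(\frac{\pi}{16n}\right)^{2/3}(1+\mathcal{O}(n^{-1})).
$$
For $J$ fixed and large, $\omega'(q) = -3(4\mathcal{J})^3/q^4$ is large on $[a/2,2a]$, so $\partial_q \delta$ is dominated by $\mu\sigma_J \omega'(q)\cos(\omega(q))/(2q)$, while $\partial_\mu \delta \sim \sigma_J\sin(\omega(q))/(2q)$ and $\partial_q^2 \delta$ is of order $\mu\sigma_J(\omega'(q))^2\sin(\omega(q))/(2q)$.

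I would then seek a critical point by picking the largest integer $k^\ast$ for which $q^\ast := 4\mathcal{J}(\pi/2 + 2k^\ast\pi)^{-1/3}$ lies in $[3a/4,3a/2]$ (such $k^\ast$ exists once $J$ is chosen large enough depending on $a$), so that $\sin(\omega(q^\ast)) = 1$ and $\cos(\omega(q^\ast)) = 0$. Since the natural scale of $\mu$ is $n^{-2/3}$, rescale $\bar\mu := \mu n^{2/3}$. After dividing the two equations $\delta = 0$ and $\partial_q \delta = 0$ by their natural sizes (namely $(qn^{2/3})^{-1}$ and $\omega'(q)(qn^{2/3})^{-1}$), the system becomes, to leading order,
\begin{align*}
G_1(q,\bar\mu) &:= \tfrac12 \bar\mu\sigma_J\sin(\omega(q)) - (\pi/16)^{2/3} = \mathcal{O}(n^{-1})+\mathcal{O}(\bar\mu a), \\
G_2(q,\bar\mu) &:= \tfrac12 \bar\mu\sigma_J\cos(\omega(q)) = \mathcal{O}(n^{-1})+\mathcal{O}(\bar\mu a),
\end{align*}
with obvious seed solution $(q^\ast, \bar\mu^\ast := 2(\pi/16)^{2/3}/\sigma_J)$. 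The Jacobian $\partial(G_1,G_2)/\partial(q,\bar\mu)$ at the seed has determinant $\bar\mu^\ast\sigma_J^2\omega'(q^\ast)/4 \ne 0$, so the implicit function theorem yields $(q_n,\mu_n)$ with $q_n \to q^\ast$ and $\mu_n = \bar\mu^\ast n^{-2/3}(1+\mathcal{O}(a))$, matching the claimed form of $\mu_n$ (with $\tau_n$ encoding the factor $\sigma_J^{-1}\sin(\omega(q_n))^{-1}$ together with the $\mathcal{O}(a)$ corrections).

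With $(q_n,\mu_n)$ in hand, expansion~\eqref{eq:expansiondeltan} follows by Taylor-expanding $\delta$ at $(q_n,\mu_n)$. Since $\partial_q \delta(q_n,\mu_n) = 0$ by construction, only the $(\mu-\mu_n)$ and $(q-q_n)^2$ terms appear at first and second order, with $\alpha_n = \partial_\mu\delta(q_n,\mu_n)$ and $\beta_n = \tfrac12\partial_q^2\delta(q_n,\mu_n)$. Using $\sin(\omega(q_n)) = 1+\mathcal{O}(a^2)$ (from the IFT correction to $q^\ast$) and $(\omega'(q_n))^2 \asymp q_n^{-8}$ after substituting $\omega(q_n) = \pi/2+2k^\ast\pi$, one recovers exactly the form~\eqref{eq:constantsalphabetan}. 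The quadratic error $\mathcal{O}_2(\mu-\mu_n)$ comes from $\partial_\mu^2 \delta = \mathcal{O}(1)$ (inherited from the $\mathcal{O}(\mu^2)$ remainder in $M_\mu$), and $\partial_q^3 \delta$ is bounded by terms of the form $\mu_n(\omega')^3 + \mu_n \omega'\omega'' + \ldots = \mathcal{O}(\mu_n/q_n^{10}) = \mathcal{O}(n^{-2/3})$, a fortiori within the very loose bound $\mathcal{O}(n^{1/3})$ allowed by the lemma.

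The main obstacle is the degeneracy at $n = \infty$: both $\delta$ and $\partial_q\delta$ vanish identically along $\mu = 0$, so the implicit function theorem cannot be applied at the naive limit. The rescaling $\bar\mu = \mu n^{2/3}$, dictated by the balance between the splitting amplitude $\mu\sigma_J/q$ and the Shilnikov amplitude $n^{-2/3}/q$, unfolds this degeneracy and gives a nondegenerate Jacobian at the seed. The key quantitative input is the largeness of $|\omega'(q)|$, which is ensured by fixing $J$ sufficiently large once and for all using the free parameter in Lemma~\ref{lem:transversesplitting}; once this is available, the remaining verifications reduce to routine derivative estimates uniform in $n$.
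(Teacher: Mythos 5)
Your argument is sound and follows the same overall strategy as the paper: substitute the asymptotic formulas for $M_\mu$ and for the Shilnikov map, reformulate the search for a degenerate critical point as the zero set of $F_n(q,\mu)=(\delta,\partial_q\delta)$, solve by the implicit function theorem from an explicit seed, and then Taylor-expand $\delta$ at $(q_n,\mu_n)$. The one genuinely distinct choice is the seed. You take $q^\ast$ with $\cos(\omega(q^\ast))=0$ exactly, whereas the paper seeds the IFT at a solution of $\tan(\omega)=\pm 192\,q^{-3}$, so that $\cos(\omega)\sim a^3$ is small but nonzero. Your choice is cleaner and rests on a real cancellation: once $\delta=0$ is imposed, the term $-\mu\sigma_J\sin(\omega)/(2q^2)$ in $M'_\mu$ exactly matches $\partial_q\bigl(y_n(q,q)\bigr)\approx -(\pi/16n)^{2/3}q^{-2}$, so the leading-order condition for $\partial_q\delta=0$ really is $\cos(\omega)=0$ rather than $\tan(\omega)=\pm 192q^{-3}$. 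The paper's write-up hides this by absorbing $-\sin/(2q^2)$ into an $\mathcal{O}(a^{-2})$ error, which is why its approximate system looks different; but both seeds give a small residual and a nondegenerate (and in your case diagonal) linearization, so both work. Keeping track of the cancellation, as you do, is a small genuine improvement: it makes the Jacobian block-diagonal and makes the computation of $\alpha_n,\beta_n$ at the critical point more transparent.

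There is one step where your reasoning is incorrect even though the stated conclusion survives. You assert that $\partial_q^3\delta=\mathcal{O}(n^{-2/3})$, crediting the bound entirely to $M_\mu$ via powers of $\omega'$, and then call the $\mathcal{O}(n^{1/3})$ prefactor in \eqref{eq:expansiondeltan} ``a very loose bound.'' It is not loose, and it does not come from $M_\mu$. The dominant contribution to $\partial_q^3\delta$ comes from the third $q$-derivative of $y_n(q,q)$. Proposition~\ref{prop:Shilnikov} controls the Shilnikov map and its derivatives only up to second order, and does so on the complex domain $\Sigma_\delta$ with width $\delta\gtrsim T^{-1}$; the only way to reach a third derivative is a Cauchy estimate, which loses a factor $\delta^{-1}\sim T$ and turns the second-derivative bound $\mathcal{O}(T^{-2/3})$ into $\mathcal{O}(T^{1/3})$ for $|\alpha|=3$. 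That is precisely the origin of the $\mathcal{O}(n^{1/3})\mathcal{O}_3(q-q_n)$ term (and of Remark~\ref{lem:shrinkingdomains}: one only controls the cubic remainder for $|q-q_n|\ll n^{-1}$, consistent with a Cauchy estimate on a disk of radius $\sim n^{-1}$). Your claimed $\mathcal{O}(n^{-2/3})$ for the cubic coefficient is not justified by the information available from Proposition~\ref{prop:Shilnikov}; fortunately, $\mathcal{O}(n^{1/3})$ is both what is true and what the lemma requires.
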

\begin{proof}
    We look for zeros of the map $F_n:[a/2,2a]\times(0,1/2]\to\mathbb{R}^2$ where $F_n(q,\mu)=(\delta(q,\mu;n),\partial_q \delta(q,\mu;n))$.
    Making use of the asymptotic expressions \eqref{eq:formShilnikovmaps} and \eqref{eq:Melnikovinq}, we can rewrite the system of equations $F_n(q,\mu)=0$ as
    \begin{align*}
   \left( \frac{\pi}{16 n}\right)^{2/3}q^{-1}(1+\mathcal{O}(n^{-1}))=& y_n(q,q)=M_\mu(q)=\mu\left(2^{-1} q^{-1} \sigma_J\sin (4^3q^{-3})+\mathcal{O}(a)\right)\\
    -\left( \frac{\pi}{16 n}\right)^{2/3}q^{-2}(1+\mathcal{O}(n^{-1}))=&\partial_q y_n(q,q)=M_\mu'(q)\\
    =&\mu\left(-96 q^{-5} \sigma_J\cos (4^3q^{-3}) +\mathcal{O}(a^{-2})\right).
    \end{align*}
Let  $(q_0,\mu_0)$ be such that 
    \begin{equation}\label{eq:approxsecondtangency}
    \tan (4q_0^{-3})=-192 q_0^{-3},\qquad\qquad \mu_0=2 \left( \frac{\pi}{16 n}\right)^{2/3} \frac{1}{\sin (4q_0^{-3})}.
    \end{equation}
    The first equation can be written as $\tan x=-48 x$ for $x=4 q_0^{-3}$. Then,  since $\tan x=-48x$ has at least one solution  at each interval of the form $[k\pi,(k+1/2)\pi]$ with $k\in 2\mathbb N+1$ (in particular it has solutions with arbitrarily large $x$), for any $a$ sufficiently small, we can always find solutions of the first equation in \eqref{eq:approxsecondtangency} with $q_0\in [3a/4,3a/2]$. Moreover, it is not difficult to see that, for solutions of the first equation with $q_0=(4/x)^{1/3}$  and $x\in[k\pi,(k+1/2)\pi]$, $k\in2\mathbb N+1$, we must have $\sin (4q_0^{-3})\to 1$ as $k\to\infty$ so $\mu_0$ is well defined and $\mu_0\sim n^{-2/3}$. One can also check that $\cos(4q_0^{-3})=-192^{-1} q_0^{3}\sin (4q_0^{-3})\sim a^{3}.$

    It is then a tedious but straightforward computation to check that, for $(q_0,\mu_0)$ as above,
    \[
\delta_n(q_0,\mu_0)=\mathcal{O}( a^{-1}n^{-5/3})+\mathcal{O}(\mu_0 a)=\mathcal O(\mu_0 a)=\mathcal {O}(a n^{-2/3})
    \]
    and
    \[
    \partial_q \delta_n(q_0,\mu_0)=\mathcal{O}( a^{-2}n^{-5/3})+\mathcal{O}(\mu_0 a^{-2})={O}(\mu_0 a^{-2})=\mathcal O(a^{-2}n^{-2/3}).
    \]
    We now claim that, for any $(q,\mu)$ such that 
    \[
    |q-q_0|\leq a^{5},\qquad\qquad |\mu-\mu_0|\leq \mu_0 a,
    \]
    we have that $DF(q,\mu)$ is an invertible matrix with 
    \begin{equation}\label{eq:invertibleFtang}
    (DF(q,\mu))^{-1}=\left(\begin{array}{cc}
      \mathcal{O}(\mu_0^{-1} a^8)   & - (1152\mu f(q))^{-1}q^{9}  \\
        -2q (f(q))^{-1} & \mathcal{O}(a^8)
    \end{array}\right),\quad f(q)=\sigma_J\sin(4^3q^{-3}).
    \end{equation}
    Indeed, \eqref{eq:invertibleFtang} follows from the estimates
    \begin{equation}\label{eq:auxiliaryestimatessecondtg}
    \begin{split}
\partial_q\delta_n(q,\mu)=&\mathcal{O}(\mu_0a^{-2}),\\
        \partial_\mu\delta_n(q,\mu)=&(2q)^{-1}\sigma_J\sin(4^3q^{-3})+\mathcal{O}(a),\\
        \partial^2_{q^2} \delta_n(q,\mu)=&1152\mu\ q^{-9}\sigma_J\sin(4^3 q^{-3})+\mathcal{O}(\mu_0 a^{-6}),\\
        \partial^2_{q\mu} \delta_n(q,\mu)=&\mathcal{O}(a^{-2}),
        \end{split}
    \end{equation}
    which are easily obtained using the asymptotic formulas and estimates in Proposition~\ref{prop:Shilnikov} and Lemma~\ref{lem:transversesplitting}. Thus, $(DF(q,\mu))^{-1} F_n(q_0,\mu_0)=(\mathcal{O}(a^7),\mathcal{O}(\mu_0a^2))$ and, modulo verifying \eqref{eq:invertibleFtang}, it follows from the implicit function theorem that there exists 
    \[
    q_n=q_0+\mathcal{O}(a^7),\qquad\qquad \mu_n=\mu_0(1+\mathcal{O}(a^2))
    \]
    such that $F(q_n,\mu_n)=0$. Then, we can Taylor expand
    \[
\delta_n(q,\mu)=\partial_\mu \delta_n(q_n,\mu_n) (\mu-\mu_n)+\frac{1}{2}\partial^2_{q^2} \delta_n(q_n,\mu_n) (q-q_n)^2+E_n(q,\mu),
    \]
    with 
    \[
    E_n(q,\mu)=\int_{0}^1 \partial^2_{\mu^2}(q_n,\mu_n+s(\mu-\mu_n))\mathrm{d}s+\int_{0}^1 \partial^3_{q^3}(q_n+s(q-q_n),\mu)\mathrm{d}s.
    \]
    Finally, the estimates in Proposition~\ref{prop:Shilnikov} and Lemma~\ref{lem:transversesplitting} imply that 
    \[
    E_n=\mathcal{O}_2(\mu-\mu_n)+\mathcal{O}(n^{1/3})\mathcal{O}_3( |q-q_n|)
    \]
    and the expressions for $\alpha_n,\beta_n$ in \eqref{eq:constantsalphabetan} follow from \eqref{eq:auxiliaryestimatessecondtg}.
    \end{proof}
In Lemma~\ref{lem:exist_secondary_tangency} we have proved that, for sufficiently large $n\in\mathbb{N}$, there exists a value $\mu_n>0$ for which $W^u(O)$ and the diagonal $\Delta$ have a quadratic homoclinic tangency which unfolds generically with $\mu$. If the map $\Phi_{loc}$ is symmetric with respect to the involution $R(x,y)=(y,x)$ in the sense that $\Phi_{loc}\circ R=R\circ\Phi_{loc}^{-1}$, this would imply that for $\mu=\mu_n$ there exists a quadratic homoclinic tangency between $W^{u}(O)$ and $W^s(O)$ which unfolds generically. However, a priori, the map $\Phi_{loc}$ is only approximately symmetric since in Proposition~\ref{prop:normalform} we have not studied the behavior of the remainder under the involution $R$. Still, it follows from Lemma~\ref{lem:diagonal}, the asymptotic formulas for the Shilnikov maps given in Proposition~\ref{prop:Shilnikov} and  the expression \eqref{eq:expansiondeltan} that for values of $\mu$ sufficiently close to $\mu_n$, there exists a quadratic homoclinic tangency between $W^{u}(O)$ and $W^s(O)$ which unfolds generically. We abuse notation and denote by $\{\mu_n\}_{n\in\mathbb N}$ the corresponding sequence.

This implies the existence of two points, $(a_n,0)\in W^{s}_{loc}(O)$ and $(0, \tilde a_n)\in W^{u}_{loc}(O)$, such that 
\begin{itemize}
    \item There exists a (unique) number $L(n)\in\mathbb{N}$ for which $ \Psi^{L(n)}_{\mathcal K_{\mu_n}}(0,\tilde a_n)=(a_n,0)$,
    where we recall  $\Psi_{\mathcal K_{\mu}}$ stands for the Poincar\'{e} map on the section $\{\phi=0\}$ associated to the flow of the Hamiltonian $\mathcal K$ in Proposition~\ref{prop:normalform}.
    \item For any neighborhood $R_n$ of $(a_n,0)$, there exists a small connected arc $\gamma_{R_n}\subset W_{loc}^u(O)$ which passes through $(0,\tilde a_n)$, is such that $\Psi^{L(n)}_{\mathcal K_{\mu_n}}(\gamma_{R_n})\subset R_n$ and, moreover, has a quadratic tangency with $W_{loc}^s(O)$ at $(a_n,0)$.
    \item The quadratic tangency between $\Psi^{L(n)}_{K_{\mu_n}}(\gamma_{R_n})$ and $W^s_{loc}(O)$ unfolds generically in $\mu$.
\end{itemize}

In the next lemma, we use the expansion \eqref{eq:asymptoticexpressionsstar} to describe the local  structure of the map
\begin{equation}\label{eq:defnglobalmap}
\Psi_{glob,n,\varepsilon}:=\Psi^{L(n)}_{\mathcal K_{\mu_n+\varepsilon}}
\end{equation}
from a neighborhood of $(0,\tilde a_n)$ to a neighbourhood of $(a_n,0)$. More concretely we define the complex balls
\begin{equation}\label{eq:ballsglobalmap}
\begin{split}
R^u_{n,\rho}=&\{z=(q,p)\in \mathbb{C}^2\colon |z-(0,\tilde a_n)|\leq \rho\},\\
R^s_{n,\rho}=&\{z=(q,p)\in \mathbb{C}^2\colon |z-(a_n,0)|\leq \rho\}.
\end{split}
\end{equation}
\begin{lem}\label{lem:globalmap}
        Fix $n\in\mathbb{N}$ sufficiently large, let $(a_n,0)\in W^{s}_{loc}$ and $(0,\tilde a_n)\in W^{u}_{loc}$ be the homoclinic points constructed above and let $\Psi_{glob,n,\varepsilon}$ be the map in \eqref{eq:defnglobalmap}. Then, for $\rho>0$ small enough,  the map $\Psi_{glob,n,\varepsilon}:R^{u}_{n,\rho}\to R^s_{n,2\rho}$ is well defined, real-analytic and the dependence on $\varepsilon$ is real-analytic. Moreover, uniformly for $(x,\tilde a_n+y)\in R^{u}_{n,\rho}$, and $\varepsilon$ in a sufficiently small complex disk centered at zero
        \[
    \Psi_{glob,n,\varepsilon}:(x,\tilde a_n+y)\mapsto (a_n+b_nx+c_ny+X_n(x,y;\varepsilon), \varepsilon+ d_n x-y^2+Y_n(x,y;\varepsilon)),  
        \]
        for some constants $b_n,c_n$ and $d_n>0$ and 
        \[
        X_n(x,y;\varepsilon)=\mathcal O_2(\varepsilon)+\mathcal{O}_2(x,y),\quad\quad Y_n(x,y;\varepsilon)=\mathcal O_2(\varepsilon)+\mathcal{O}(x(x+y))+\mathcal{O}_3(x,y).
        \]
    \end{lem}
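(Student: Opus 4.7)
The plan is to proceed in three steps: regularity, identification of the structural Taylor coefficients via the invariant-manifold geometry, and normalization.

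\textbf{Step 1 (Regularity and the domain).} The homoclinic points $(0,\tilde a_n)$ and $(a_n,0)$ lie in $V_\rho\times V_\rho$ given by Proposition~\ref{prop:normalform} and, crucially, are uniformly separated from the degenerate saddle $O$. Hence the orbit of length $L(n)$ that joins them lies in a compact set on which the flow of $\mathcal K_{\mu_n+\varepsilon}$ depends real-analytically on both the phase variables and the parameter. Since $L(n)$ is finite and fixed, $\Psi_{glob,n,\varepsilon}=\Psi^{L(n)}_{\mathcal K_{\mu_n+\varepsilon}}$ is a composition of $L(n)$ real-analytic maps, and so is real-analytic on $R^u_{n,\rho}\times\{|\varepsilon|<\varepsilon_0\}$ with image in $R^s_{n,2\rho}$ for $\rho,\varepsilon_0$ small enough.

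\textbf{Step 2 (Invariant-manifold structure).} Write $\Psi_{glob,n,\varepsilon}(x,\tilde a_n+y)=(a_n+\tilde q(x,y;\varepsilon),\tilde p(x,y;\varepsilon))$, so that $\tilde q(0,0;0)=\tilde p(0,0;0)=0$. In the normal-form coordinates one has $W^u_{loc}(O)=\{q=0\}$, so the segment $\{x=0\}$ is part of the unstable manifold, and its image under $\Psi_{glob,n,0}$ is the continuation of $W^u(O)$ through $(a_n,0)$. By the construction of $\mu_n$ in Lemma~\ref{lem:exist_secondary_tangency}, this continuation has a quadratic tangency with $W^s_{loc}(O)=\{p=0\}$ at $(a_n,0)$, which translates to $\tilde p(0,y;0)$ vanishing to order exactly two in $y$, with nonzero quadratic coefficient. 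The generic unfolding assertion (nonvanishing of $\alpha_n$ in \eqref{eq:constantsalphabetan}) gives $\partial_\varepsilon\tilde p(0,0;0)\neq 0$. An affine rescaling of $y$ (absorbed into the coefficients $b_n,c_n$) together with one of $\varepsilon$ then normalizes these coefficients to $-1$ and $+1$ respectively, producing $\tilde p(0,y;0)=-y^2+\mathcal O(y^3)$ and $\tilde p(0,0;\varepsilon)=\varepsilon+\mathcal O(\varepsilon^2)$.

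\textbf{Step 3 (Positivity of $d_n$ and the error structure).} Since $\partial_y\tilde p(0,0;0)=0$ by the tangency, the fact that $\Psi_{glob,n,0}$ is a local diffeomorphism at $(0,\tilde a_n)$ -- which follows from preservation of the singular form $\omega$ and the resulting non-vanishing Jacobian $(a_n/\tilde a_n)^3$ -- forces $d_n:=\partial_x\tilde p(0,0;0)\neq 0$. Its positivity is obtained by orienting $x$ consistently with the flow direction along the transverse section; this is a bookkeeping choice and is the main technical subtlety of the lemma. With $d_n>0$ fixed, the remaining coefficients $b_n=\partial_x\tilde q(0,0;0)$ and $c_n=\partial_y\tilde q(0,0;0)$ are read off from the Taylor expansion, and the asserted error estimates follow by collecting terms: in $X_n$ one absorbs the pure $\varepsilon^{\geq 2}$ terms into $\mathcal O_2(\varepsilon)$ and the remaining quadratic-and-higher $(x,y)$ terms into $\mathcal O_2(x,y)$; in $Y_n$ the quadratic $(x,y)$ contributions other than $-y^2$ are exactly the $x^2$ and $xy$ terms, grouped as $\mathcal O(x(x+y))$, while the cubic-and-higher terms go into $\mathcal O_3(x,y)$. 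The hardest point is the positivity of $d_n$; the rest is a direct application of Taylor's theorem to a real-analytic map between neighborhoods of fixed points.
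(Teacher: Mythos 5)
The paper does not actually supply a proof of Lemma~\ref{lem:globalmap} — it is stated and then used directly in Section~\ref{sec:returnmap} — so your proposal cannot be compared against an official argument. Judged on its own terms, your plan (regularity from a finite composition away from $O$, Taylor structure from the quadratic tangency and the generic unfolding, and $d_n\neq 0$ from non-degeneracy of the symplectic Jacobian) is the right outline. But there are two concrete gaps.

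\textbf{Sign of the $y^2$ coefficient.} You write that an affine rescaling of $y$ ``normalizes'' the quadratic coefficient to $-1$. A real rescaling $y\mapsto\lambda y$ replaces $\beta y^2$ by $\beta\lambda^2 y^2$, so the sign of the coefficient is a rescaling invariant; you can normalize its magnitude to $1$ but not produce the required minus sign unless you first establish that $\partial_y^2\tilde p(0,0;0)<0$. This sign is a genuine geometric statement about how the continuation of $W^u(O)$ approaches $W^s_{\mathrm{loc}}(O)=\{p=0\}$ at the tangency point, and must be extracted from the Melnikov-type parametrization in Lemma~\ref{lem:transversesplitting} together with the construction of $\mu_n$ in Lemma~\ref{lem:exist_secondary_tangency}. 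Your proof asserts the sign as if the rescaling delivered it; it cannot.

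\textbf{Positivity of $d_n$.} You argue $d_n\neq 0$ from invertibility of the Jacobian $\begin{pmatrix}b_n&c_n\\d_n&0\end{pmatrix}$ and area preservation: this part is correct, and indeed gives $c_n d_n<0$. But you then dispose of the sign of $d_n$ as ``a bookkeeping choice'' tied to ``orienting $x$''. There is no such freedom: $x=q$ is the $q$-coordinate of the normal form of Proposition~\ref{prop:normalform}, which is constrained to the sector $V_\rho$ (essentially $\operatorname{Re} q\ge 0$), so its orientation is fixed. The inequality $d_n>0$ is a geometric statement that the global map takes the half-disk $\{x>0\}\cap R^u_{n,\rho}$ into the half-space $\{p>0\}$ to first order; it has content and needs an argument, not a choice of convention. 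Note also that the two signs are coupled through $c_nd_n<0$, so fixing one pins down the other; you cannot arrange them independently.

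Both points matter for the renormalization in Proposition~\ref{prop:mainresultrenormalization}, where the $-P^2$ in the limiting H\'enon map and the orientation used to identify the Newhouse parameter interval come from exactly these signs. (One could instead leave the sign of the quadratic coefficient unspecified and reparametrize $\kappa\mapsto -\kappa$ in the limit, but then the statement of the lemma as given would have to be weakened.) A secondary, smaller issue: the claim that $X_n=\mathcal O_2(\varepsilon)+\mathcal O_2(x,y)$, i.e.\ that $\tilde q$ has \emph{no linear $\varepsilon$-term}, is itself a normalization (it amounts to recentering $a_n$ along the $\varepsilon$-dependent image of $(0,\tilde a_n)$) and you do not say how it is achieved, although this one is easily absorbed.
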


We provide estimates in complex domains in order to make use of Cauchy estimates to obtain bounds for certain derivatives in the next section. The complex domains in which Lemma~\ref{lem:globalmap} hold may shrink to zero with $N\in\mathbb N$. As already pointed out in Remark~\ref{lem:shrinkingdomains}, this is not an issue for the renormalization scheme in the next section.

\section{Renormalization in a neighborhood of the homoclinic tangency}\label{sec:returnmap}
In this section we prove Proposition~\ref{prop:mainresultrenormalization}, which ensures that a suitably renormalized iteration of the return map to a neighborhood of a homoclinic tangency converges, in the real-analytic topology, to the H\'enon map. The proof is divided in several steps. 

Let $\Psi^{u,s}_T:\Sigma_\delta\to\mathbb{C}^2$ be the unstable and stable Shilnikov maps constructed in Proposition~\ref{prop:Shilnikov}. Let $\mu\in(0,1/2]$ be such that the map $\Psi_{\mathcal K_\mu}$ has a quadratic homoclinic tangency (to the degenerate saddle $O$) which unfolds generically with $\mu$. Let  $(0,\tilde a)\in W_{loc}^u$, $(a,0)\in W_{loc}^s$  be points of homoclinic tangency located, respectively in the local unstable and stable manifolds. Then, there exists $L\in\mathbb N$ such that $\Psi^{L}_{\mathcal K_\mu}(0,\tilde a)=(a,0)$. We denote by 
$\Psi_{\mathrm{glob},\varepsilon}:=\Psi_{\mathcal K_{\mu+\varepsilon}}^L$  the ``global map'' from a neighbourhood of $(0,\tilde a)$ to $(a,0)$.
Then, for $T$ large enough,  the map $\Psi_\varepsilon:\mathcal B_\delta\to\mathbb{C}^2$ given by 
\begin{equation}\label{eq:Psirenorm}
\Psi_\varepsilon(\xi,\eta)=(\Psi_T^s)^{-1}\circ \Psi_{glob,\varepsilon}\circ\Psi_T^u(a+\xi,\tilde a +\eta)
\end{equation}
is real-analytic on 
\[
\mathcal B_{\delta}=\{z\in\mathbb C^2\colon \mathrm{dist}(z,[-\sigma,\sigma]^2)\leq \delta\},
\]
for some constant $\sigma$ depending only on a (in the definition of $\Sigma$ in \eqref{eq:dfnsigmadelta}) and $\mu$. We recall that these are small but fixed constants.
In Lemma~\ref{lem:renormfirststep} we prove that, for any $T\in\mathbb N$ large enough, there exists a certain value $\varepsilon_T$ for which there exists a fixed point for the map $\Psi_{\varepsilon_T}$ and, moreover, the image under the map $\Psi_{\varepsilon_T}$ of a vertical segment passing through this fixed point  is a parabolic arc which has a tangency with a horizontal segment. Then, we will see in Lemma~\ref{lem:rescalingtoHenon} that, locally around this point, a rescaling of the map $\Psi_{\varepsilon_T}$ is close to the (critical) H\'{e}non map. The following notation will be used. We write  
\[
\tilde x_T(\xi,\eta)=x_T( a+\xi,\tilde a+\eta)\qquad\qquad\text{and}\qquad\qquad\tilde y_T(\xi,\eta)=y_T( a+\xi, \tilde a+\eta).
\]

\begin{lem}\label{lem:renormfirststep}
    For any $T\in\mathbb N$ large enough, there exist real  $\xi_T,\eta_T,\varepsilon_T=\mathcal{O}(T^{-2/3})$ such that $\Psi_{\varepsilon_T}(\xi_T,\eta_T)=(\xi_T,\eta_T)$ and the $\eta$-coordinate of $\partial_\eta\Psi_{\varepsilon_T}(\xi_T,\eta_T)$ equals zero.
\end{lem}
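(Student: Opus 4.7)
The plan is to phrase the three required conditions as a system of three equations in the three unknowns $(\xi,\eta,\varepsilon)$, find an explicit leading--order approximate zero using the asymptotics of the Shilnikov maps, and then apply the implicit function theorem to upgrade it to an exact real solution.

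First I make the definition of $\Psi_\varepsilon$ explicit. Using $\Psi^u_T(\xi,\eta)=(x_T(\xi,\eta),\eta)$, the normal form of $\Psi_{\mathrm{glob},\varepsilon}$ near the tangency (of the type provided by Lemma~\ref{lem:globalmap}, $(x,\tilde a+y)\mapsto(a+bx+cy+X,\,\varepsilon+dx-y^2+Y)$ with $d>0$), and $(\Psi^s_T)^{-1}(q,p)=(q,y_T^{-1}(q,p))$, the fixed--point condition $\Psi_\varepsilon(\xi,\eta)=(\xi,\eta)$ reads
\begin{align*}
(\mathrm{F}_1)\quad & \xi - b\,x_T(a+\xi,\tilde a+\eta) - c\,\eta - X = 0, \\
(\mathrm{F}_2)\quad & y_T(a+\xi,\tilde a+\eta) - d\,x_T(a+\xi,\tilde a+\eta) + \eta^2 - \varepsilon - Y = 0,
\end{align*}
where $X,Y$ are evaluated at $(x_T,\eta;\varepsilon)$. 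Differentiating $y_T(\xi',\eta')=p^{**}$ implicitly in $\eta$ and imposing $\partial_\eta\eta'=0$ (which is the condition ``the $\eta$--coordinate of $\partial_\eta\Psi_\varepsilon$ vanishes'') converts the tangency condition into
\[
(\mathrm{T})\quad -2\eta + d\,\partial_\eta x_T + \partial_\eta Y - \partial_\xi y_T\,\bigl(c+b\,\partial_\eta x_T + \partial_\eta X\bigr) = 0,
\]
all partials evaluated at the corresponding point.

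Next I extract an approximate real solution. By Proposition~\ref{prop:Shilnikov},
\[
x_T|_0\sim\left(\tfrac{\pi}{16T}\right)^{2/3}\!\tilde a^{-1},\quad y_T|_0\sim\left(\tfrac{\pi}{16T}\right)^{2/3}\!a^{-1},\quad \partial_\eta x_T|_0\sim-\left(\tfrac{\pi}{16T}\right)^{2/3}\!\tilde a^{-2},\quad \partial_\xi y_T|_0\sim-\left(\tfrac{\pi}{16T}\right)^{2/3}\!a^{-2},
\]
while the ``cross'' partials $\partial_\xi x_T,\partial_\eta y_T$ are of order $T^{-5/3}$ and $X,Y$, together with their first derivatives in $(x,y,\varepsilon)$, are $\mathcal{O}(T^{-2/3})$ on the scales of interest. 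Plugging this into $(\mathrm{F}_1),(\mathrm{F}_2),(\mathrm{T})$ one finds, at leading order,
\[
\eta_{*} = \tfrac{1}{2}\bigl(d\,\partial_\eta x_T|_0 - c\,\partial_\xi y_T|_0\bigr),\qquad \xi_{*} = b\,x_T|_0 + c\,\eta_{*},\qquad \varepsilon_{*} = y_T|_0 - d\,x_T|_0 + \eta_{*}^{2},
\]
all of size $\mathcal{O}(T^{-2/3})$ and real, with residuals of size $\mathcal{O}(T^{-4/3})$.

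Finally, to invoke the implicit function theorem, I compute the Jacobian of $(\mathrm{F}_1,\mathrm{F}_2,\mathrm{T})$ with respect to $(\xi,\eta,\varepsilon)$ at $(\xi_{*},\eta_{*},\varepsilon_{*})$. A scale--by--scale inspection using the first and second derivative estimates for $x_T,y_T$ in Proposition~\ref{prop:Shilnikov} and the estimates for $X,Y$ in Lemma~\ref{lem:globalmap} yields the block structure
\[
J\;\sim\;\begin{pmatrix}
1 & -c & \mathcal{O}(T^{-2/3})\\
\mathcal{O}(T^{-2/3}) & \mathcal{O}(T^{-2/3}) & -1\\
\mathcal{O}(T^{-2/3}) & -2 & \mathcal{O}(T^{-2/3})
\end{pmatrix},
\]
whose determinant equals $-2+\mathcal{O}(T^{-2/3})$, nonzero uniformly in $T$ for $T$ large. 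Since all coefficients are real and the Shilnikov maps are real--analytic on the real domain, the implicit function theorem produces a unique real solution $(\xi_T,\eta_T,\varepsilon_T)$ within $\mathcal{O}(T^{-4/3})$ of $(\xi_{*},\eta_{*},\varepsilon_{*})$, hence itself of size $\mathcal{O}(T^{-2/3})$, as claimed.

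The principal obstacle is the bookkeeping of the multiple scales present in the Shilnikov data: the partials of $x_T$ and $y_T$ mix sizes $T^{-2/3}$ and $T^{-5/3}$, the second partials are $\mathcal{O}(T^{-2/3})$, and the nonlinear remainders $X,Y$ generate further cross--terms by the chain rule. A careful but mechanical verification is needed to confirm that, after these cancellations, the dominant $2\times 2$ block of $J$ (columns corresponding to $\eta$ and $\varepsilon$) is indeed nondegenerate and that the terms discarded in the approximate solution are genuinely of higher order.
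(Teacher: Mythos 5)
Your proof is correct and follows essentially the same approach as the paper: cast the fixed-point and tangency conditions as a $3\times 3$ system $F(\xi,\eta,\varepsilon)=0$, produce an explicit leading-order real solution of size $\mathcal{O}(T^{-2/3})$ from the Shilnikov asymptotics, and then check that the Jacobian is nondegenerate (determinant $\sim \pm 2$) so the implicit function theorem applies. A small remark: your leading-order formula $\eta_*=\tfrac12\bigl(d\,\partial_\eta x_T|_0 - c\,\partial_\xi y_T|_0\bigr)$ comes from expanding $-\partial_\xi y_T\,\partial_\eta\mathcal Q + \partial_\eta\mathcal P=0$ to first order, and it is the correct one; the paper's displayed $\eta_0=-\tfrac12\bigl(c\,\partial_\xi\tilde y_T + d\,\partial_\eta\tilde x_T\bigr)$ appears to carry a sign typo in the $d\,\partial_\eta\tilde x_T$ term (this does not affect the size estimate $\eta_0=\mathcal{O}(T^{-2/3})$ nor the Jacobian computation, hence the conclusion is unchanged). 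The only place where the paper is a bit more explicit is in justifying the residual bound $F(\xi_0,\eta_0,\tilde\varepsilon_0)=\mathcal{O}(T^{-4/3})$: it invokes the mean value theorem together with the splitting of Shilnikov derivatives into sizes $T^{-2/3}$ (for $\partial_\eta x_T,\ \partial_\xi y_T$) and $T^{-5/3}$ (for $\partial_\xi x_T,\ \partial_\eta y_T$) and the bound $|\xi_0|,|\eta_0|=\mathcal{O}(T^{-2/3})$; you state the same estimate but should make this accounting explicit to close the argument.
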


\begin{proof}
   
    We look for $\xi_*,\eta_*,\varepsilon_*$ as solutions to the system of 3 equations
    \begin{equation}
        \begin{split}\label{eq:systemeqs}
\Psi^s_T( a+\xi_*, \tilde a+\eta_*)=&\Psi_{glob,\varepsilon_*}\circ\Psi^u_T(  a+\xi_*,\tilde a+\eta_*),\\
0=&\Pi_p \left( \left(D\Psi_T^s(a+\xi_*,\tilde a+\eta_*)\right)^{-1} \partial_p (\Psi_{glob,\varepsilon_*}\circ\Psi^u_T )\right)
    \end{split}
    \end{equation}
     On the one hand,  the Shilnikov stable and unstable maps read 
    \[
    \Psi_T^s( a+\xi,\tilde a+\eta)=(a+\xi, \tilde y_T(\xi,\eta)),\qquad\qquad \Psi_T^{u}(a+\xi,\tilde a+\eta)=(\tilde x_T(\xi,\eta),\tilde a+\eta).
    \]
    On the other hand, it follows from Lemma~\ref{lem:globalmap} that the global map satisfies 
    \[
    \Psi_{glob,\varepsilon}:(x,\tilde a+y)\mapsto (a+bx+cy+X(x,y;\varepsilon),\varepsilon+dx-y^2+Y(x,y;\varepsilon)),
    \]
with
\[
X(x,y;\varepsilon)=\mathcal O_2(x,y,\varepsilon),\qquad\qquad Y(x,y;\varepsilon)=\mathcal{O}_2(\varepsilon)+\mathcal{O}_1(x)\mathcal{O}_1(x+y)+\mathcal{O}_3(x,y),
\]
  uniformly in a sufficiently small complex ball around $(0,\tilde a)$ and, also uniformly, in a sufficiently small complex ball around $\varepsilon=0$ (both balls being independent of $T)$.  In particular, if we write 
\begin{align*}
\mathcal{Q}(\xi,\eta;\varepsilon)=&\pi_\xi \left( \Psi_{glob,\varepsilon}\circ\Psi_T^u\right)(a+\xi,\tilde a+\eta)- a\\\mathcal{P}(\xi,\eta;\varepsilon)=&\pi_\eta \left( \Psi_{glob,\varepsilon}\circ\Psi_T^u\right)(a+\xi,\tilde a+\eta),
\end{align*}
then
    \begin{align*}
  \mathcal{Q}(\xi,\eta;\varepsilon)=&b\tilde x_T(\xi,\eta)+c \eta+X(\tilde x_T(\xi,\eta),\eta;\varepsilon)\\
\mathcal P(\xi,\eta;\varepsilon)=&\varepsilon+d\tilde x_T(\xi,\eta)-\eta^2+Y(\tilde x_T(\xi,\eta),\eta;\varepsilon).
    \end{align*}
    We now set $\varepsilon=\varepsilon_0+\tilde \varepsilon$ with $\varepsilon_0+d\tilde x_T(0,0)=\tilde y_T(0,0)$ so, for this choice of $\varepsilon$,
    \[
    \mathcal{P}(\xi,\eta;\varepsilon_0+\tilde\varepsilon)= y_T(0,0)+\tilde\varepsilon+d(\tilde x_T(\xi,\eta)-\tilde x_T(0,0))-\eta^2+Y(\tilde x_T(\xi,\eta),\eta;\varepsilon).
    \]
    Thus, \eqref{eq:systemeqs} is equivalent to the zeros of the map $F:\Sigma\times \mathbb R\to\mathbb{R}^3$ with 
    \begin{align*}
 F_1(\xi,\eta,\tilde\varepsilon)=& \xi-\mathcal Q(\xi,\eta;\varepsilon_0+\tilde\varepsilon)\\
     F_2(\xi,\eta,\tilde\varepsilon)=&\tilde y_T(\xi,\eta)-\mathcal{P}(\xi,\eta;\varepsilon_0+\tilde\varepsilon)\\
    F_3(\xi,\eta,\tilde\varepsilon)=&-\partial_\xi\tilde y_T (\xi,\eta) \partial_\eta \mathcal Q(\xi,\eta;\varepsilon_0+\tilde\varepsilon)+\partial_\eta\mathcal{P}(\xi,\eta;\varepsilon_0+\tilde\varepsilon).
    \end{align*}
    We now notice that, for 
    \[
    \xi_0=b\tilde x_T(0,0)-c\eta_0,\qquad \eta_0=-\frac{1}{2}\left( \partial_\xi \tilde y_T (0,0) c+d \partial_\eta \tilde x_T(0,0)\right)
    \]
    and \[
    \tilde \varepsilon_0=\tilde y_T(\xi_0,\eta_0)-\tilde y_T(0,0)-d\left(\tilde x_T(\xi_0,\eta_0)-\tilde x_T(0,0) \right)+\eta_0^2-Y(\tilde x_T(\xi_0,\eta_0),\eta_0;0),
    \]
we have  
\begin{equation}\label{eq:estimateaproxsolF}
F(\xi_0,\eta_0;\tilde\varepsilon_0)=\mathcal{O}(T^{-4/3}).
\end{equation}
Indeed, uniformly for $(\xi,\eta)\in \Sigma_\delta$, we have 
\[
|\partial_\eta \tilde x_T|,|\partial_\xi\tilde y_T|=\mathcal{O}(T^{-2/3}),\qquad\qquad |\partial_\xi \tilde x_T|,|\partial_\eta\tilde y_T|=\mathcal{O}(T^{-5/3}),
\]
so \eqref{eq:estimateaproxsolF} follows by direct application of the mean value theorem and the bound
\[
|\xi_0|,|\eta_0|=\mathcal{O}(T^{-2/3}).
\]
 On the other hand,  it follows from the definition of $F$ and the estimates in   Proposition~\ref{prop:Shilnikov}  that, uniformly for $(\xi,\eta)\in\Sigma$ and $\tilde\varepsilon$ sufficiently small, $DF(\xi,\eta,\tilde\varepsilon)=A+\mathcal{O}(T^{-2/3})$ with 
\[
A=\begin{pmatrix}
    1 &-c&0\\
    0&0&1\\
    0&-2&0\\
\end{pmatrix}
\]
   and the proof of the lemma is completed making use of the implicit function theorem.
\end{proof}

We now define a suitable rescaling of the map $\Psi_\varepsilon$ in \eqref{eq:Psirenorm} around the point $(\xi_*,\eta_*)$ for values of $\varepsilon$ close to $\varepsilon_*$. More concretely, we prove the following result.

\begin{lem}\label{lem:rescalingtoHenon}
Denote by $B_{10}\subset \mathbb{C}^2$ the complex ball  around the origin of radius $10$. Let $h:(Q,P)\mapsto (\xi,\eta)$ be the change of variables given by
\[
Q=\alpha^{-1} (\xi-\xi_*),\qquad P=\alpha^{-1}(\eta-\eta_*),\qquad\text{for}\quad \alpha=\partial_\eta \tilde y_T(\xi_*,\eta_*),
\]
and let $\varepsilon=\varepsilon_*+ \alpha^2 \kappa$. Then, uniformly for $(Q,P)\in B_{10}\subset \mathbb{C}^2$ and $\kappa\in[-2,2]$ we have
\[
h^{-1}\circ \Psi_{\varepsilon(\kappa)}\circ h (Q,P)=\left( cP,\kappa +d \gamma Q-P^2\right)+\mathcal{O}(T^{-2/3}), \qquad\text{for}\quad \gamma=\frac{\partial_\xi \tilde x_T(\xi_*,\eta_*)}{\partial_\eta \tilde y_T(\xi_*,\eta_*)}.
\]
\end{lem}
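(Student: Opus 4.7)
The plan is to Taylor expand the map $\Psi_\varepsilon$ at the fixed point $(\xi_*,\eta_*,\varepsilon_*)$ provided by Lemma~\ref{lem:renormfirststep}, substitute $(\xi,\eta,\varepsilon) = (\xi_* + \alpha Q,\eta_* + \alpha P,\varepsilon_* + \alpha^2\kappa)$, and show that the combination of the fixed-point equation, the tangency condition, and the asymptotic estimates of Proposition~\ref{prop:Shilnikov} produces the stated H\'enon form on $B_{10}$ up to an $\mathcal O(T^{-2/3})$ error. Using the factorization from the proof of Lemma~\ref{lem:renormfirststep}, write $\Psi_\varepsilon(\xi,\eta) = (\mathcal Q(\xi,\eta;\varepsilon),\, \tilde y_T^{-1}(\mathcal Q(\xi,\eta;\varepsilon),\mathcal P(\xi,\eta;\varepsilon)))$; then the fixed-point relations read $\mathcal Q(\xi_*,\eta_*;\varepsilon_*) = \xi_*$ and $\mathcal P(\xi_*,\eta_*;\varepsilon_*) = \tilde y_T(\xi_*,\eta_*)$, while the tangency condition on $\partial_\eta\pi_\eta\Psi_{\varepsilon_*}(\xi_*,\eta_*)$ rewrites, after differentiating $\tilde y_T(\mathcal Q,z)=\mathcal P$ with respect to $\eta$, as
\[
\partial_\eta \mathcal P\big|_* \;=\; \partial_\xi \tilde y_T\big|_*\;\partial_\eta \mathcal Q\big|_*.
\]
These two identities will be invoked at the critical junctures to cancel terms that would otherwise diverge when divided by $\alpha\sim T^{-5/3}$.

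The first coordinate is straightforward: the key orders of magnitude from Proposition~\ref{prop:Shilnikov}, namely $\partial_\xi \tilde x_T,\, \partial_\eta \tilde y_T = \mathcal O(T^{-5/3})$, $\partial_\eta \tilde x_T,\, \partial_\xi \tilde y_T = \mathcal O(T^{-2/3})$, and $\partial^2 \tilde x_T,\partial^2\tilde y_T = \mathcal O(T^{-2/3})$, together with the form of $X$ from Lemma~\ref{lem:globalmap} (which yields $\partial_x X|_*, \partial_y X|_*, \partial_\varepsilon X|_* = \mathcal O(T^{-2/3})$), imply $\partial_\xi \mathcal Q = \mathcal O(T^{-5/3})$, $\partial_\eta \mathcal Q = c + \mathcal O(T^{-2/3})$, $\partial_\varepsilon \mathcal Q = \mathcal O(T^{-2/3})$. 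A first-order Taylor expansion of $\mathcal Q - \xi_*$ in $(\alpha Q,\alpha P,\alpha^2\kappa)$ followed by division by $\alpha$ yields $\alpha^{-1}(\mathcal Q - \xi_*) = cP + \mathcal O(T^{-2/3})$, uniformly on $B_{10}$.

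For the second coordinate, write $z = \tilde y_T^{-1}(\mathcal Q,\mathcal P)$ and Taylor-expand the implicit identity $\tilde y_T(\mathcal Q,z) = \mathcal P$ around $(\xi_*,\eta_*)$; since $\partial^2 \tilde y_T|_* = \mathcal O(T^{-2/3})$ and $\mathcal Q-\xi_*,\, z-\eta_* = \mathcal O(\alpha)$, one iteration of the implicit function argument gives
\[
\alpha(z - \eta_*) \;=\; (\mathcal P - \mathcal P_*) - \partial_\xi\tilde y_T\big|_*\,(\mathcal Q - \xi_*) + \alpha^2\cdot\mathcal O(T^{-2/3}).
\]
Expanding $\mathcal P - \mathcal P_*$ and $\mathcal Q - \xi_*$ to second order in $(\alpha Q,\alpha P,\alpha^2\kappa)$ and dividing by $\alpha^2$, one finds the following: the linear-in-$P$ coefficient equals $\alpha^{-1}(\partial_\eta\mathcal P - \partial_\xi\tilde y_T\partial_\eta\mathcal Q)|_* = 0$ by the tangency condition; the $Q$-coefficient equals $\alpha^{-1}(\partial_\xi\mathcal P - \partial_\xi\tilde y_T\partial_\xi\mathcal Q)|_* = d\,\partial_\xi\tilde x_T/\alpha + \mathcal O(T^{-2/3}) = d\gamma + \mathcal O(T^{-2/3})$; the $\kappa$-coefficient equals $(\partial_\varepsilon\mathcal P - \partial_\xi\tilde y_T\partial_\varepsilon\mathcal Q)|_* = 1 + \mathcal O(T^{-2/3})$, the leading $1$ coming from the explicit $+\varepsilon$ in $\mathcal P$; and the $P^2$ coefficient equals $\tfrac12\partial^2_\eta\mathcal P|_* = -1 + \mathcal O(T^{-2/3})$, the leading $-1$ coming from the $-\eta^2$ term in Lemma~\ref{lem:globalmap}. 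All remaining quadratic contributions (coming from $\partial^2_\xi\mathcal P$, $\partial^2_{\xi\eta}\mathcal P$ or second derivatives of $\tilde y_T$) are $\mathcal O(T^{-2/3})$ by the structural form of $Y$ in Lemma~\ref{lem:globalmap} together with the Shilnikov estimates.

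The main technical subtlety is reconciling the fixed target domain $B_{10}$ with the shrinking complex neighbourhood $\Sigma_\delta$, $\delta\gtrsim T^{-1}$, on which Proposition~\ref{prop:Shilnikov} supplies the derivative bounds for $\tilde x_T,\tilde y_T$. This is resolved by the elementary observation that for $(Q,P)\in B_{10}$ the original displacement satisfies $|\xi-\xi_*|,|\eta-\eta_*|\le 10\alpha\sim T^{-5/3}\ll T^{-1}\sim\delta$, so the asymptotic bounds of Proposition~\ref{prop:Shilnikov} remain valid throughout the substituted domain. Cauchy estimates applied on a slightly smaller polydisc yield bounds for third-order derivatives of $\tilde x_T,\tilde y_T$ of size $\mathcal O(T^{1/3})$, producing third-order Taylor remainders of size at most $T^{1/3}\alpha^3\lesssim T^{-14/3}$, which are safely absorbed into $\mathcal O(T^{-2/3})$. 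The only remaining work is the careful bookkeeping of the cross terms sketched above.
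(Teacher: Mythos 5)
Your argument takes a genuinely different route from the paper. You expand $\Psi_\varepsilon = (\mathcal Q, \tilde y_T^{-1}(\mathcal Q,\mathcal P))$ directly and Taylor-expand the inverse $\tilde y_T^{-1}$ around the fixed point; the paper instead parametrizes the image as $\Psi_T^s\circ h$ of a H\'enon candidate plus a correction $(U,V)$ and recasts the problem as a fixed-point equation for $W=(U,V)$, solved by contraction. The paper's formulation buys you that it never handles $\tilde y_T^{-1}$, whose derivatives in $p$ are large ($\partial_p \tilde y_T^{-1} \sim \alpha^{-1}\sim T^{5/3}$ and worse for higher derivatives); the size of $W$ is produced automatically by the contraction estimate. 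Your formulation is more transparent: it isolates explicitly that the linear-in-$P$ coefficient vanishes by the tangency identity $\partial_\eta\mathcal P|_*=\partial_\xi\tilde y_T|_*\partial_\eta\mathcal Q|_*$, that $d\gamma$ arises as $\partial_\xi\tilde x_T/\partial_\eta\tilde y_T$ against the $\alpha$-normalization, that $\kappa$ comes from the explicit $\varepsilon$-term in $\mathcal P$, and that $-P^2$ descends from the $-\eta^2$ in the global map.

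There is one step you state but do not prove, and it is not cosmetic: the a priori bound $z-\eta_*=\mathcal O(\alpha)$. Your implicit identity reads $\alpha(z-\eta_*)=(\mathcal P-\mathcal P_*)-\partial_\xi\tilde y_T|_*(\mathcal Q-\xi_*)-R_2$ with $R_2=\mathcal O\bigl(\|D^2\tilde y_T\|\,(|\mathcal Q-\xi_*|+|z-\eta_*|)^2\bigr)=\mathcal O\bigl(T^{-2/3}(\alpha+|z-\eta_*|)^2\bigr)$. The naive bound, from $\partial_p\tilde y_T^{-1}=\mathcal O(\alpha^{-1})$ and $|\mathcal P-\mathcal P_*|=\mathcal O(\alpha T^{-2/3})$, gives only $|z-\eta_*|=\mathcal O(T^{-2/3})$, which is far larger than $\alpha\sim T^{-5/3}$; this produces $R_2=\mathcal O(T^{-2})$, and after division by $\alpha^2$ that is $\mathcal O(T^{4/3})$, destroying the estimate. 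The improvement to $\mathcal O(\alpha)$ relies on exactly the cancellation you subsequently invoke (the tangency forcing $(\mathcal P-\mathcal P_*)-\partial_\xi\tilde y_T|_*(\mathcal Q-\xi_*)=\mathcal O(\alpha^2)$ uniformly on $B_{10}$), and must therefore be supplied by a genuine bootstrap: assume $|z-\eta_*|\le C\alpha$, verify that the right-hand side is then $\mathcal O(\alpha^2)\,(1+\mathcal O(T^{-2/3}(1+C)^2))$ so the assumption reproduces itself with a $T$-independent constant, and close by continuity. The paper's fixed-point scheme is precisely a tidy packaging of this bootstrap. Once that is supplied, the rest of your bookkeeping --- the coefficient identifications, the inclusion $10\alpha\ll\delta\sim T^{-1}$ keeping everything inside $\Sigma_\delta$, and the Cauchy bound $\partial^3\tilde y_T=\mathcal O(T^{1/3})$ controlling third-order remainders --- is correct.
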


\begin{proof}
    We want to show that there exist $U,V=\mathcal{O}(T^{-2/3})$  uniformly for $(Q,P)\in B_{10}$, such that, on the same domain,
    \begin{equation}\label{eq:renormcondproof}
   \Psi_{\varepsilon,glob}\circ \Psi^u_T(\xi_*+\alpha Q,\eta_*+\alpha P)= \Psi _T^s\circ h (cP+U(
Q,P), \kappa+d\gamma Q-P^2+V(Q,P)).
    \end{equation}
In order to do so, we will show that 
\begin{align*}
\widetilde U(Q,P)=&\pi_q\left( \Psi_{\varepsilon,glob}\circ \Psi^u_T(\xi_*+\alpha Q,\eta_*+\alpha P)- \Psi _T^s\circ h (cP, \kappa+d\gamma Q-P^2)\right)\\
\widetilde V(Q,P)=&\pi_p\left( \Psi_{\varepsilon,glob}\circ \Psi^u_T(\xi_*+\alpha Q,\eta_*+\alpha P)- \Psi _T^s\circ h (cP, \kappa+d\gamma Q-P^2)\right)
\end{align*}
are such that the equation
\begin{equation}\label{eq:inversefinalsteprenorm}
\begin{split}
\Psi^s_T\circ h (cP+U(
Q,P), \kappa+d\gamma Q-P^2+V(Q,P))=&\Psi _T^s\circ h (cP, \kappa+d\gamma Q-P^2)\\&+(\widetilde U(Q,P), \widetilde V(Q,P))
\end{split}
\end{equation}
admits a unique solution $U(Q,P),V(Q,P)$ with the desired estimates for any $(Q,P)\in B_{10}$. On the one hand,
    \begin{align*}
    h(cP, \ \kappa+d\gamma Q-P^2)=(\xi_*,\eta_*)
+\alpha (cP, \ \kappa+d\gamma Q-P^2),
    \end{align*}
    so, writing $z=\alpha (cP, \ \kappa+d\gamma Q-P^2)$,
    \[
    \Psi _T^s\circ h (cP, \kappa+d\gamma Q-P^2)=(\xi_*, \tilde y_T(\xi_*,\eta_*))+ D\Psi^s_T (\xi_*, \eta_*)\  z + R(Q,P),
    \]
    with 
    \[
    R=\big( 0,\  \int _0^1 (1-t) \langle D^2 y_T(\xi_*+tz, \eta_*+tz) z,z\rangle\ \mathrm{d}t\big).
    \]
    We introduce the notation (for $\star=\xi,\eta$)
    \[
    x_*=\tilde x_T(\xi_*,\eta_*),\qquad y_*= \tilde y_T(\xi_*,\eta_*),\qquad \partial_\star x_*=\partial_\star \tilde x_T(\xi_*,\eta_*),\qquad \partial_\star y_*=\partial_\star \tilde y_T(\xi_*,\eta_*).
    \]
    From the definition of $\Psi^s_T$ we obtain that 
  \[
    D\Psi^s_T (\xi_*, \eta_*)\  z= \alpha\left( c P,\ c \partial_\xi y_* P+ \partial_\eta y_* (\kappa+d \gamma Q-P^2)\right),
   \]
    and, uniformly for $(\xi,\eta)\in\Sigma_\delta$ (see Proposition~\ref{prop:Shilnikov})
    \[
 \partial_{\xi^2}^2 \tilde y_T(\xi,\eta), \partial_{\xi\eta}^2 \tilde y_T(\xi,\eta),\partial_{\eta^2}^2 \tilde y_T(\xi,\eta)=\mathcal{O}(T^{-2/3}),
    \]
    so $R=(0,\mathcal{O}(\alpha^2 T^{-2/3}))$. Therefore, it is clear that, for $(Q,P)\in B_{10}\subset\mathbb{C}^2$,
\begin{multline}
\label{eq:leftrenorm}
    \Psi _T^s\circ h (cP, \kappa+d\gamma Q-P^2) \\ =(\xi_*, \tilde y_T(\xi_*,\eta_*))
+\alpha\left( c P,\ c \partial_\xi y_* P+ \partial_\eta y_* (\alpha^{-2} (\varepsilon-\varepsilon_*)+d \gamma Q-P^2)+\mathcal{O}(\alpha T^{-2/3})\right).
\end{multline}
    On the other hand, for $\tilde \Psi_\varepsilon(Q,P)=\Psi_{\varepsilon,glob}\circ\Psi^u_T(\xi_*+\alpha Q,\eta_*+\alpha P)$, and writing $\tilde z(Q,P)=(\alpha Q, \alpha P)$, Taylor's integral formula yields
    \[
    \tilde \Psi_\varepsilon (Q,P)=\tilde \Psi_\varepsilon (\xi_*,\eta_*)+D\tilde\Psi_\varepsilon \tilde z(Q,P) + \big(0,\frac{1}{2} \partial_{\eta^2}^2 (\pi_\eta (\tilde \Psi_{\varepsilon})) \alpha^2P^2\big) + S(Q,P)
    \]
    for (using the definition of $\tilde\Psi_\varepsilon$ and the estimates in Proposition~\ref{prop:Shilnikov})
    \[
    S= \alpha^2\left(\mathcal{O}( T^{-2/3}),\mathcal{O}(T^{-2/3})\right).
    \]
    Then, it follows from the definition of $(\xi_*,\eta_*)$ and the expression 
    \[
    \tilde \Psi_\varepsilon(\xi,\eta)=(b \tilde x_T(\xi,\eta)+c\eta+X(\tilde x_T(\xi,\eta),\eta), \varepsilon+ d\tilde x_T(\xi,\eta)-\eta^2+Y(\tilde y_T(\xi,\eta),\eta))
    \]
    that, uniformly for $(Q,P)\in B_{10}$,
    \begin{multline}
    \label{eq:rightrenorm}
    \tilde \Psi_\varepsilon (\xi_*+\alpha Q,\eta_*+\alpha P)=(\xi_*, \tilde y_T(\xi_*,\eta_*)) \\
     + \left( \partial_\eta \mathcal Q(\xi_*,\eta_*)\alpha P,\ \alpha^2\hat\varepsilon+\partial_\eta\mathcal{P}(\xi_*,\eta_*)\alpha P- \alpha ^2 P^2\right)  +\mathcal{O}(\alpha^2 T^{-2/3}).
    \end{multline}
    Comparing \eqref{eq:leftrenorm} and \eqref{eq:rightrenorm} and making use of the definition of $(\xi_*,\eta_*)$, one easily obtains that 
\begin{align*}
\widetilde U(Q,P)=&\alpha b\partial_\eta x_*+\mathcal{O}(\alpha^2 T^{-2/3}),\\
\widetilde V(Q,P)=&\alpha b\partial_\xi y_*\partial_\eta x_*+\mathcal{O}(\alpha^2 T^{-2/3}).
\end{align*}
It remains to study the equation \eqref{eq:inversefinalsteprenorm}. To that end, it will be convenient to introduce some notation:
\begin{itemize}
    \item We denote by $\mathtt H:\mathbb C^2\to \mathbb{C}^2$ the H\'{e}non map
\[
\mathtt{H}:(Q,P)\mapsto (cP,\kappa+d\gamma Q-P^2),
\]
\item We denote by $W(Q,P)=(U(Q,P),V(Q,P))$ and by $\widetilde W(Q,P)=(\widetilde U(Q,P),\widetilde V(Q,P))$,
\item We denote by 
\[
A=D(\Psi^s_T\circ h)(0,0)=\left(\begin{array}{cc}
   1  &0  \\
   \partial_\xi y_*  &\alpha 
\end{array}\right)
\]
\item and we denote by 
\begin{align*}
F&(Q,P,W(Q,P))=\left(D(\Psi^s_T\circ h)(H(Q,P)) -D(\Psi^s_T\circ h)(0,0)\right) W(Q,P)\\
+&\int_0^1(1-t)\langle D^2(\Psi^s_T\circ h)((t\mathtt H+(1-t)W)(Q,P)) W(Q,P), W(Q,P) \rangle \mathrm{d}t.
\end{align*}
\end{itemize}
Then, we expand the left hand side  of \eqref{eq:inversefinalsteprenorm} as
\begin{align*}
\Psi^{s}_T\circ h\circ( \mathtt{H}+W)=&\Psi^{s}_T\circ h\circ \mathtt{H}(Q,P)+ A W(Q,P)+F(Q,P,W(Q,P)).
\end{align*}
The estimates in Proposition~\ref{prop:Shilnikov} imply that, uniformly for $(Q,P)\in B_{10}$,
\[
F(Q,P,W(Q,P))=\mathcal{O}(T^{-2/3}\alpha|W(Q,P)|)+\mathcal{O}(T^{-2/3}|W(Q,P)|^2).
\]
The right hand side of \eqref{eq:inversefinalsteprenorm} is given by 
\[
\Psi^{s}_T\circ h\circ \mathtt{H}(Q,P)+\widetilde W(Q,P).
\]
Hence, for $(Q,P)\in B_{10}$, we rewrite \eqref{eq:inversefinalsteprenorm} as the fixed point equation for $W(Q,P)$
\[
W(Q,P)= A^{-1} \left( \widetilde W(Q,P)+F(Q,P,W(Q,P))\right).
\]
If we define 
\[
W_0(Q,P)=A^{-1} \widetilde W(Q,P),
\]
it follows that
\[
W_0(Q,P)=(\alpha b\partial_\eta x_*,0)+\big(\mathcal{O}(\alpha^2 T^{-2/3}),\mathcal O(\alpha T^{-2/3})\big)=\mathcal O(\alpha T^{-2/3}),
\]
and
\[
E_0(Q,P):= W(Q,P)-W_0(Q,P)=\left(D(\Psi^s_T\circ h)(0,0)\right)^{-1} \mathcal O( \alpha^2 T^{-4/3})=\mathcal O(\alpha T^{-4/3}).
\]
The proof is easily completed making use of the estimate (recall the definition of $F$ above and the estimates in Proposition~\ref{prop:Shilnikov})
\begin{align*}
|A^{-1}&(F(Q,P,W(Q,P))-F(Q,P,W_*(Q,P)))|\\
\lesssim &\alpha^{-1}|F(Q,P,W(Q,P))-F(Q,P,W_*(Q,P))|\\
\lesssim & \alpha^{-1}\bigg( \left(\alpha T^{-2/3}+|W(Q,P)+W_*(Q,P)|T^{1/3} \right)|W(Q,P)-W_*(Q,P)| \bigg)
\end{align*}
and a suitable version of the fixed point theorem.
\end{proof}

Finally, in order to complete the proof of Proposition~\ref{prop:mainresultrenormalization}, we introduce the linear map $\hat{h}:(Q,P)\mapsto (-d\gamma Q, P)$
and consider $h_T= h\circ\hat{h}$. Then, recalling the definition of $\Psi_{\varepsilon}$ in \eqref{eq:Psirenorm}, we obtain that
\begin{equation}\label{eq:asymptoticHenonproof}
h_T^{-1}\circ\Psi^s_T\circ \Psi^{L}_{\mathcal{K}_{\varphi(\kappa)}}\circ\Psi^{u}_T\circ h_T(Q,P)=(-cd\gamma Q,\ \kappa-Q-P^2)+\mathcal{O}(T^{-2/3}),\qquad\qquad 
\end{equation}
with 
\[
\varphi(\kappa)=\mu_n+\varepsilon_*+\alpha^2 \kappa,
\]
and the result follows after noticing that, since $(\xi_*,\eta_*)$ is a fixed point of the map $\Psi_{\varepsilon_*}$ and the map $\Psi_{\varepsilon_*}$ preserves a symplectic form, we must have
\[
1=\mathrm{det} D\Psi_{\varepsilon_*}= -cd\gamma+\mathcal{O}(T^{-2/3}).
\]

\subsection{Proof of Lemma~\ref{lem:lineoftangencies}}
The existence of the line of tangencies is implied by the existence of a solution to the equation
\[
\partial_\eta (\Psi_{glob}\circ\Psi_T^u)\wedge \partial_\eta \Psi_T^s(\xi,\eta)=0.
\]
In the proof of Lemma~\ref{lem:renormfirststep} it is shown that, for values of $\mu$ close to $\mu_*$, this equation admits a solution which is of the form $(\xi,\eta_*(\xi,\mu))$ and moreover, the tangency is quadratic. We then define
\[
\ell_a=\{(q,p)\colon \Psi_T^u(\xi,\eta_*(\xi,\mu)),\ \xi\in(a/2,3a/2)\}\subset V.
\]
The second and third item in Lemma~\ref{lem:lineoftangencies} follow directly from the definition of $\ell_a$. Moreover, since
\[
\partial_\xi \eta_*(\xi,\mu)=\mathcal{O}(T^{-2/3}),
\]
it is clear that $\ell_a$ forms an angle with $\mathcal F_v$ which is bounded by below by $\pi/2-\mathcal O(T^{-2/3})$, so the first item follows for $T$ large enough.

\subsection{Proof of Lemma~\ref{lem:locgeomunstabmanifoldbasicset}}
Let $h_T$ be the affine rescaling introduced after Lemma~\ref{lem:rescalingtoHenon}, in which the map $\mathcal H:=h_T^{-1}\circ\Psi_{\varphi(\kappa),T}\circ h_T$ satisfies the asymptotic expression \eqref{eq:asymptoticHenonproof}. It then follows that $\mathcal H$ has, for $\kappa> -1$, a hyperbolic fixed point 
\[
Z_{\kappa,T}=(-1-\sqrt{1-\kappa},-1-\sqrt{1-\kappa}).
\]
One may check  that, in $(\xi,\eta)$ coordinates, the stable and unstable tangent spaces at $Z_{\kappa,T}$ are of the form
\[
\widetilde E^s=\mathrm{span}\{(1,-\tau)\},\quad\quad \widetilde E^u=\mathrm{span}\{(\tau,1)\},\qquad\tau=\lambda-\sqrt{\lambda^2-1}+\mathcal O(T^{-2/3}),
\]
for $\lambda=-(1+\sqrt{1-\kappa})$,
and that the open branch of the local unstable manifold is the one located below $Z_{\kappa,T}$ (see Figure~\ref{fig:Localmanifolds}). It then follows that at the point 
\[
z_{\kappa,T}=\Psi_T^s\circ h_T(Z_{\kappa,T}),
\]
which is a hyperbolic fixed point for the map $\Psi_{glob}\circ\Psi^T_{loc}$, the splitting of the tangent space is of the form
\[
E^s= D\Psi_T^s(h_T(Z_{\kappa,T})) D h_T \widetilde E^s,\qquad E^u= D\Psi_T^s(h_T(Z_{\kappa,T})) D h_T\widetilde E^u.
\]
Let denote $\alpha(\xi,\eta)=\partial_\xi x_T(\xi,\eta)$, $\beta(\xi,\eta)=\partial_\eta x_T(\xi,\eta)$. Then the conclusion follows since
\[
D\Psi^s_T(h_T(Z_{\kappa,T})) D h_T =\begin{pmatrix}
    \alpha&0\\
    \alpha\beta&\alpha^2
\end{pmatrix},
\]
so $E^u$ is of the form
\[
E^u=\mathrm{span}\{(1,\beta+\tau^{-1}\alpha)\} 
\]
and the tangent vector to $\mathcal F_h$ is given by $(1,\beta)$.

\appendix

\section{Splitting of invariant manifolds in the restricted 3-body problem}\label{sec:splittingr3bp}

In this appendix we provide the details leading to the proof of Lemma~\ref{lem:transversesplitting}. The first step is to construct a good coordinate system to measure the splitting between $W^{u,s}(O)$. To do so, let $(r_h(u),y_h(u)):\mathbb{R}\to \mathbb{R}^2$ be the time parameterization of the homoclinic manifold of the Kepler problem in polar coordinates $(r,y)$ (see \cite{MartinezPinyol}). The following result, proved in \cite{MartinezPinyol}, will be useful later. 
\begin{lem}\label{lem:techlemmaunperturbedhomocl}
   The following asymptotic expressions hold for $u\to\pm\infty$:
   \[
   r_h(u)= \frac{1}{2}(6u)^{2/3}(1+\mathcal{O}(u^{-2/3})),\qquad\qquad y_h(u)= \left(\frac{4}{3u}\right)^{-1/3}(1+\mathcal{O}(u^{-2/3})).
   \]
\end{lem}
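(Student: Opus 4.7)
The plan is to identify the unperturbed homoclinic $(r_h(u),y_h(u))$ as the planar parabolic orbit of the Kepler problem, i.e.\ the zero energy level of
\[
H_{\mathrm{Kep}}(r,y;J) \;=\; \frac{y^{2}}{2} + \frac{J^{2}}{2 r^{2}} - \frac{1}{r},
\]
and then carry out a direct asymptotic analysis. On $\{H_{\mathrm{Kep}}=0\}$ one has $y^{2} = 2/r - J^{2}/r^{2}$, and bi-asymptoticity to $\{r=\infty,\, y=0\}$ selects precisely this level. Combined with $\dot r = y$, the problem reduces on the outgoing branch ($u>0$, $y>0$) to the single ODE $\dot r = \sqrt{2/r - J^{2}/r^{2}}$.

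Separating variables yields
\[
u \;=\; \int_{r_{0}}^{r} \sqrt{\frac{s}{2 - J^{2}/s}}\, ds .
\]
For $s\gg 1$ one has $(2-J^{2}/s)^{-1/2} = 2^{-1/2}\bigl(1+\mathcal{O}(s^{-1})\bigr)$, so the integrand equals $\sqrt{s/2}\bigl(1+\mathcal{O}(s^{-1})\bigr)$. Integrating term by term gives $u = \tfrac{\sqrt 2}{3}\, r^{3/2}\bigl(1+\mathcal{O}(r^{-1})\bigr)$, and inverting this monotone relation for $r$ large produces
\[
r_{h}(u) \;=\; \Bigl(\tfrac{3u}{\sqrt 2}\Bigr)^{2/3}\!\bigl(1+\mathcal{O}(u^{-2/3})\bigr) \;=\; \tfrac{1}{2}(6u)^{2/3}\bigl(1+\mathcal{O}(u^{-2/3})\bigr),
\]
which is the first claimed expansion. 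The corresponding expansion for $y_{h}$ then follows either by differentiating the expansion of $r_{h}$ with respect to $u$, or, more transparently, by substituting the expansion of $r_{h}$ into $y_{h} = \sqrt{2/r_{h}-J^{2}/r_{h}^{2}} = \sqrt{2/r_{h}}\bigl(1+\mathcal{O}(r_{h}^{-1})\bigr)$; both routes produce the stated leading order with an $\mathcal{O}(u^{-2/3})$ relative error.

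Finally, the case $u\to -\infty$ is handled by the time-reversal symmetry $(r,y,u)\mapsto (r,-y,-u)$ of $H_{\mathrm{Kep}}$, which exchanges the two branches of the parabolic orbit and immediately transfers the expansion above. The analysis presents no conceptual obstacle; the only technical care lies in propagating the $\mathcal{O}(s^{-1})$ error inside the integrand through the inversion of $u=u(r)$ in a uniform way, so as to recover the claimed $\mathcal{O}(u^{-2/3})$ remainder and, thereby, the same relative error for $y_{h}$.
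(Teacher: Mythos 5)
Your derivation is correct, and the paper itself does not supply one: the lemma is quoted as a result ``proved in \cite{MartinezPinyol}'', so there is no in-text argument to compare against. Your route --- identify $(r_h,y_h)$ with the zero-energy Kepler parabola, separate variables in $\dot r=\sqrt{2/r-J^2/r^2}$, expand the integrand for large $s$, and invert $u=u(r)$ --- is the natural elementary derivation, and the error bookkeeping ($\int^r\sqrt{s/2}\,\mathcal{O}(s^{-1})\,ds=\mathcal{O}(r^{1/2})$, then $r^{-1}\sim u^{-2/3}$ after inversion) does close up to the claimed relative error; the choice of the base point $r_0$ only contributes an additive constant to $u$, absorbed into the same error term. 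An alternative, which the paper itself uses in Appendix~\ref{sec:splitting4bp} for the rescaled four-body case, is the explicit parametrization of the Kepler parabola via the auxiliary variable $\tau$: with $v=(\tau+\tau^3/3)/2$ one has $r_h=(1+\tau^2)/2$, whence $\tau\sim(6v)^{1/3}$ and the asymptotics of $r_h$ and $y_h=\dot r_h=2\tau/(1+\tau^2)$ drop out immediately. Your version avoids the closed form and works directly for arbitrary angular momentum $J$ without rescaling; either is acceptable.

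One caveat about your last sentence. Your computation gives
\[
y_h(u)=\left(\frac{4}{3u}\right)^{1/3}\bigl(1+\mathcal{O}(u^{-2/3})\bigr),
\]
which correctly decays as $u\to+\infty$. The lemma as printed has $\left(\frac{4}{3u}\right)^{-1/3}$, which would grow; the exponent in the paper's statement carries a sign slip. You should flag this explicitly rather than assert, as you do, that ``both routes produce the stated leading order''.
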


We now introduce the change of variables\footnote{In the literature, these coordinates are usually referred to as time-energy coordinates.} (notice that, in view of Lemma~\ref{lem:techlemmaunperturbedhomocl}, this change is well defined for $|u|$ sufficiently large)
\[
(u,E)\mapsto (r,y)=\phi_1(u,E)=(r_h(u), y_h(u)+\frac{1}{y_h(u)}E).
\]
The following lemma was proved in \cite{MartinezPinyol}.
\begin{lem}\label{lem:Melnikov}
 Fix any $0<u_0<u_1<\infty$. Then, for any sufficiently large value $J$ of the Jacobi constant  and for $\mu>0$ small enough (depending on $J)$, there exist real-analytic functions $E^{u,s}_\mu:(u_0,u_1)\to\mathbb{R}$ such that
\begin{equation}
    \begin{split}
        \Gamma^{u,s}:(u_0,u_1)&\mapsto (r_h(u),y_h(u)+ y_h^{-1}(u) E^{u,s}_\mu(u;J))\\
    \end{split}
\end{equation}
 are parameterizations in polar coordinates of compact pieces of the stable and unstable manifolds of $\mathcal O$. Moreover, 
 \begin{equation}\label{eq:splittinginEcoords}
 E^u_\mu(u;\mathcal J)-E^s_\mu(u;\mathcal J)=\mu \left(\sigma_J\sin (\mathcal J^3u) +\mathcal{E}(u;\mathcal J)\right)+\mathcal{O}(\mu^2),
 \end{equation}
 where $\sigma_J>0$ is an explicit constant independent of $\mu$ and $\mathcal{E}(u;\mathcal J)$ satisfies that $\lim_{J\to\infty} \mathcal E(u;\mathcal J)/\sigma_J\to 0$ uniformly in $u\in(u_0,u_1).$
\end{lem}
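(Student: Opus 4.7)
The proof will follow the classical Melnikov strategy adapted to the parabolic fixed point at infinity, combining a variation-of-constants argument with an asymptotic analysis of the resulting oscillatory integral for large $J$.

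The first step is to realize $W^{u,s}(\mathcal O)$ as graphs over the compact interval $(u_0,u_1)$ in the time--energy coordinates $(u,E)=\phi_1^{-1}(r,y)$. In these coordinates the unperturbed homoclinic is $\{E=0\}$, traversed with $\dot u = 1+\mathcal O(E,r_h^{-1}(u))$, and the perturbation enters only through $\dot E = \mathcal O(\mu)$ plus higher-order terms. By McGehee's stable manifold theorem for the degenerate saddle $\mathcal O$, the manifolds $W^{u,s}(\mathcal O)$ exist for all $\mu\in(0,1/2]$; propagating their local pieces forward (resp. backward) in time, and using regular dependence on $\mu$ on compact time intervals, one checks that for $\mu>0$ small enough (depending on $u_0,u_1$ and on $J$) the manifolds do not develop folds over $(u_0,u_1)$ and therefore admit graph parameterizations $E=E^{u,s}_\mu(u;J)$.

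The second step is the perturbative expansion. Writing $E^{u,s}_\mu(u;J)=\mu E^{u,s}_1(u;J)+\mathcal O(\mu^2)$ and substituting into the invariance equation, the first-order corrections satisfy a linear ODE in $u$ whose source is the gradient of the perturbation $U_\mu/\mu$ (evaluated at $\mu=0$) along the unperturbed homoclinic $(r_h,y_h)$, with boundary conditions $E^u_1(u;J)\to 0$ as $u\to -\infty$ and $E^s_1(u;J)\to 0$ as $u\to +\infty$. Integration (variation of constants) yields the standard Melnikov representation
\[
E^u_1(u;J)-E^s_1(u;J)=-\int_{-\infty}^{+\infty}\frac{1}{y_h(u')}\,\partial_\phi \!\left[\tfrac{1}{\mu}U_\mu\right]\!\bigg|_{\mu=0}\!\bigl(r_h(u'),\phi_h(u',u;J)\bigr)\,du',
\]
where $\phi_h(u',u;J)$ denotes the evolution of the phase $\phi=\alpha-t$ along the unperturbed orbit parameterized so that the intersection with $\{\phi=0\}$ corresponds to the base point $u$. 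The key kinematic fact, obtained from $\dot\phi=G/r^2-1$, the large value of $J$ and the asymptotics of Lemma~\ref{lem:techlemmaunperturbedhomocl}, is that $\phi_h(u',u;J)=J^3(u'-u)+\mathcal O(r_h^{-1}(u'))$, i.e.\ up to algebraically decaying corrections the phase is linear in $u'$ with slope $J^3$.

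The third step is the asymptotic extraction of the oscillatory behavior. Expanding $\partial_\phi[\tfrac{1}{\mu}U_\mu]|_{\mu=0}$ in Fourier series in $\phi$ (equivalently, in Legendre polynomials of $\cos\phi$), substituting the kinematic approximation above, and factoring out the $u$-dependence through the phase shift $\phi_h(u',u;J)=J^3 u'-J^3 u+\dots$, one obtains
\[
E^u_1(u;J)-E^s_1(u;J)=\sigma_J\sin(J^3 u)+\mathcal E(u;J),
\]
where $\sigma_J$ is (up to an absolute constant) the first Fourier coefficient of the gradient of $U_\mu/\mu$, integrated against $y_h^{-1}(u')e^{iJ^3 u'}$ along the unperturbed homoclinic, and $\mathcal E$ gathers the higher Fourier harmonics and the contributions coming from replacing $\phi_h(u',u;J)$ by its linear approximation. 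Strict positivity $\sigma_J>0$ follows from a direct computation with the dominant Legendre coefficient of the perturbation.

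The main obstacle will be the last step: obtaining the clean $\sin(J^3 u)$ factor with a bound $\mathcal E/\sigma_J\to 0$ requires controlling the phase error $\phi_h(u',u;J)-J^3(u'-u)$ uniformly in $u'\in\mathbb R$. Because $r_h^{-1}(u')$ decays only algebraically at infinity, one cannot bound the phase error pointwise by a small quantity; instead one has to split the integration into a bounded region, where one applies integration by parts against the fast oscillation $e^{iJ^3 u'}$ to transfer the error to a factor of $J^{-3}$, and tail regions $|u'|\gg 1$, where one uses the asymptotics of Lemma~\ref{lem:techlemmaunperturbedhomocl} and absolute convergence of the remaining integral to estimate the tail directly. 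Combining both estimates produces $\mathcal E(u;J)=o(\sigma_J)$ uniformly for $u\in(u_0,u_1)$, as required.
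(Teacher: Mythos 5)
First, note that the paper does not prove this lemma at all: it is quoted verbatim from \cite{MartinezPinyol}, so there is no internal proof to compare against. Your overall strategy (graph parameterizations $E=E^{u,s}_\mu(u;J)$, first--order expansion in $\mu$, variation of constants, Fourier analysis of the resulting Melnikov integral) is indeed the standard route taken in that reference and in the related literature, and steps one and two are essentially sound, modulo one caveat: the propagation from the local invariant manifolds of the parabolic point at infinity down to the section $u\in(u_0,u_1)$ is \emph{not} over a compact time interval (the orbit comes from $r=\infty$), so ``regular dependence on $\mu$ on compact time intervals'' does not by itself give the $\mathcal O(\mu)$ and $\mathcal O(\mu^2)$ control you need; one has to work with weighted norms along the semi-infinite separatrix.

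The genuine gap is in your final error analysis. The constant $\sigma_J$ is \emph{exponentially small} in $J$: the Melnikov coefficient is an oscillatory integral $\int f(u')e^{iJ^3u'}\,du'$ with $f$ analytic in a strip of width $1/3$ determined by the complex singularities of the parabolic homoclinic ($\tau=\pm i$, i.e.\ $u=\pm i/3$), so $\sigma_J\sim J^{a}e^{-J^3/3}$ --- exactly as in the analogous computation the paper carries out for the $4$-body problem in Appendix~\ref{sec:splitting4bp}, where $L^{[1]}\sim G_0^{-3/2}e^{-G_0^3/3}$. Your plan to control $\mathcal E$ by real-variable integration by parts against $e^{iJ^3u'}$ (gaining factors of $J^{-3}$) plus absolute-value tail bounds can only produce errors that are \emph{polynomially} small in $J$, which is vastly larger than $\sigma_J$ and therefore cannot establish $\mathcal E(u;J)/\sigma_J\to0$. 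To close this you must keep the exact phase $k(\alpha_h(u')-J^3u')$ (rather than linearizing it, which introduces a spurious error), and estimate every Fourier coefficient by deforming the contour of integration up to $\Im u'=\pm(1/3-\varepsilon)$, together with a local analysis near the singularity to extract the leading asymptotics of the first harmonic and verify $\sigma_J\neq 0$. Without this complex-analytic step the statement $\lim_{J\to\infty}\mathcal E/\sigma_J=0$ is out of reach.
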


We now want to obtain parameterizations of the stable and unstable manifolds in the coordinate system $(q,p)$ obtained in Proposition~\ref{prop:normalform}.  As these coordinates are not explicit, we first consider an explicit auxiliary coordinate system $(\tilde q,\tilde p)$  which is close to the coordinate system $(q,p)$. In order to construct the coordinates $(\tilde q,\tilde p)$, we let $\chi(y,t)$ be the function such that, in McGehee coordinates $(x,y)$ 
\[
W^u_{\mathrm{loc}}=\{(\chi(y,t),y),\ 0\leq y\ll 1\},\qquad W^s_{\mathrm{loc}}=\{(\chi(-y,t),y),\ 0\leq y\ll 1\}.
\]
The existence and asymptotic properties of the function $\chi$ were established by McGehee in \cite{McGeheestablemfold} (see also \cite{Moserbook}). Then, we define the change of variables $ \phi_2:(x,y)\mapsto (\tilde q,\tilde p)$ given by
\[
(\tilde q,\tilde p)=(x-\chi(-y,0),x-\chi(y,0)).
\]
One may check that, since $\chi(y,t)=y+\mathcal{O}_3(y)$, for any $a>0$ small enough and $(x,y)\in[0,2a]^2$, we have that 
\[
|\phi_2-\Psi|= \mathcal O_3(a),
\]
where $\Psi$ is the change of variables in Lemma~\ref{lem:McGehee}. Since, for the change of variables  $\Omega$ in Proposition~\ref{prop:normalform}, we have $|\Omega-\mathrm{id}|= \mathcal{O}_3(a)$ uniformly on $(q,p)\in[0,2a]^2$ it also follows that 
\[
|\phi_2-\Psi\circ\Omega|=\mathcal O_3(a),
\]
so the coordinates $(\tilde q,\tilde p)$ are $\mathcal O_3(a)$-close to the coordinates $(q,p)$.

We now construct a parameterization of the invariant manifolds in coordinates $ (\tilde q,\tilde p)$. In order to do so, we consider the change of variables  $\tilde\phi=\phi_2\circ\phi_1:(u,E)\mapsto ( \tilde q,\tilde p)$, which is given by 
\[
(\tilde q,\tilde p)=\left(\sqrt{2/r_h(u)}-\chi(-y_h(u)-y_h(u)^{-1}E,0),  \sqrt{2/r_h(u)}-\chi(y_h(u)+y_h(u)^{-1}E,0)\right).
\]
Notice that,  since by construction $W^s(O)=\{\tilde p=0\}$, we must have that for any $u\in(u_0,u_1)$
\begin{equation}\label{eq:auxiliarylemmasplitting}
\sqrt{2/r_h(u)}-\chi(y_h(u)+y_h(u)^{-1}E^s(u),0)=0.
\end{equation}
Define now the functions $\tilde f_q(u),\tilde f_p(u):(u_0,u_1)\to\mathbb{R}$ given by
\[
 f_q(u)=\sqrt{2/r_h(u)}-\chi(-y_h(u)-y_h(u)^{-1}E^u(u),0)
\]
and 
\[
f_p(u)=\sqrt{2/r_h(u)}-\chi(y_h(u)+y_h(u)^{-1}E^u(u),0).
\]
Then, if we are able to find a function $g_q(\tilde q)$ such that $f_q(g_q(\tilde q))=\tilde q$, we obtain the graph parameterization of the unstable manifold
\[
\widetilde \Gamma:(f_q(u_0)f_q(u_1))\mapsto (\tilde q,f_p(g_q(\tilde q)))
\]
in $(\tilde q,\tilde p)$ coordinates. We now check that we are able to invert the function $f_q$ and provide an asymptotic formula for the parameterization $\widetilde \Gamma$ above. To do so, we notice that, from the definition of the coordinates $(\tilde q,\tilde p)$ and the asymptotic properties of the unperturbed homoclinic orbit given in Lemma~\ref{lem:techlemmaunperturbedhomocl},
\begin{align*}
f_ q(u)=&\sqrt{2/r_h(u)}+y_h(u)+\mathcal{O} (y_h^{3}(u)+|E^{s}(u)|)\\
=& 2y_h(u)(1+\mathcal{O}_2 (y_h(u)))+\mathcal{O}(\mu)\sim 4 u^{-1/3}(1+\mathcal{O}_2 ( u^{-1/3}))+\mathcal{O}(\mu).
\end{align*}
Therefore, given a sufficiently small value $a>0$, we can choose sufficiently large $u_0,u_1$ in Lemma~\ref{lem:Melnikov}  so that there exists a unique function $g_q:(a/2,2a)\to\mathbb R$ satisfying $f_q(g_q(\tilde q))=\tilde q$. A trivial computation shows that, uniformly for $\tilde q\in (a/2,2a)$
\[
g_q(\tilde q)=4^3\tilde q^{-3}+\mathcal{O}(a,\mu).
\]
Finally, we provide an asymptotic formula for the parameterization $\widetilde \Gamma$ above. We notice that, in view of \eqref{eq:auxiliarylemmasplitting} and the asymptotic expression in Lemma~\ref{lem:Melnikov} for $E^{s}-E^u$, we can express, for $u\in (u_0,u_1)$
\begin{align*}
f_p(u)=&\chi(y_h(u)+y_h(u)^{-1}E^s(u))-\chi(y_h(u)+y_h(u)^{-1}E^u(u))\\
=&\frac{1}{y_h(u)} (E^{s}(u)-E^u(u))+\mathcal{O}(\mu y_h(u))\\
=&\frac{1}{y_h(u)}\left(\mu \left(\sigma_J\sin (J^3u) +\mathcal{E}(u;J)\right)+\mathcal{O}(\mu^2)\right)+\mathcal{O}(\mu u^{-1/3}).
\end{align*}
Therefore, using the asymptotic formula above for $g_q(\tilde q)$ we obtain that, uniformly  for $\tilde q\in (a/2,2a)$,
\[
f_p(g_q(\tilde q))=\frac{1}{2q}\left(\mu \left(\sigma_J\sin ((4J q^{-1})^3) +\mathcal{E}(f_q(\tilde q);J)\right)+\mathcal O(\mu a)+\mathcal{O}(\mu^2)\right).
\]
Then, \eqref{eq:Melnikovinq} follows since, as we have seen above, the coordinates $(q,p)$ in Proposition~\ref{prop:normalform} are $\mathcal O(a)$ close to the coordinates $(\tilde q,\tilde p)$.

\section{Splitting of invariant manifolds in the Sitnikov problem}\label{sec:Sitnikov}
In variables $(z,y)\in\mathbb R^2$ representing the $z$-coordinate and $z$-momentum of the massless particle, the dynamics of the Sitnikov problem is given by the Hamiltonian
\begin{equation}
\label{Sitnikov}
H_\epsilon(z,y,t)=\frac{y^2}2-\frac{1}{\sqrt{z^2+\rho_\epsilon^2(t)}},\qquad\epsilon\in[0,1),
\end{equation}
where $\rho_\epsilon(t):\mathbb T\to[1-\epsilon,1]$ stands for the distance of the primaries to the line of motion of the massless body. McGehee's compactification of the phase space $z=2/x^2$ can be used to study the dynamics close to ``infinity'' and conjugates the dynamics of $H_\epsilon$ to the flow of the Hamiltonian 
\[
\mathtt H_\epsilon(x,y,t)=\frac{y^2}{2}-\frac{x^2}{2\sqrt{1+\frac{1}{4}x^4\rho_\epsilon^2(t)}}
\]
in the symplectic manifold $(\mathbb R^2,\frac{4}{x^3}\mathrm{d}x\wedge\mathrm{d}y)$. One may check that $O=(0,0)$ is a parabolic fixed point for the time-one map associated to flow of $\mathtt H_\epsilon$ and that the time-one map is of the form \eqref{eq:mainpoincaremapformula}. Then, the  techniques developed by McGehee in \cite{McGeheestablemfold} show that $O$ has stable and unstable manifolds $W^{s,u}_\epsilon(O)$. 

The proof of Theorem~\ref{thm:Sitnikov} follows after establishing that Proposition~\ref{prop:normalform} also holds for the Hamiltonian $K_\epsilon(q,p,t)=\mathtt H_\epsilon(2^{-1/2}(q+p),2^{-1/2}(q-p),t)$ and that the splitting of separatrices in the Sitnikov problem is of the same form as in the restricted 3-body problem. That this is the case can be easily seen after noticing that 
\[
K_\epsilon(q,p,t)=qp+\mathcal O_6((q+p)
)\]
so the results in Section~\ref{sec:normalform} apply directly. The study of the splitting of invariant manifolds of $O$ is carried out in the next section.

\subsection{Splitting of invariant manifolds}
For $\epsilon=0$, the system is integrable and $W^{s}_0(0)=W^{u}_0(O)$.  Let $(z_h(u),y_h(u)):\mathbb R\to\mathbb R^2$ stand for the time parametrization of this homoclinic manifold. It is not difficult to see that the asymptotic behavior of $z_h$ and $y_h$ coincides with the one given in Lemma~\ref{lem:techlemmaunperturbedhomocl} (replacing $r_h$ by $z_h$). The following result has been obtained in \cite{SitnikovPerezChavela}.

\begin{lem}\label{lem:MelnikovSitnikov}
 Fix any $0<u_0<u_1<\infty$. Then, for any $\epsilon>0$ small enough  there exist real-analytic functions $E^{u,s}_\epsilon:(u_0,u_1)\to\mathbb{R}$ such that
\begin{equation}
    \begin{split}
        \Gamma^{u,s}:(u_0,u_1)&\mapsto (r_h(u),y_h(u)+ y_h^{-1}(u) E^{u,s}_\epsilon(u))\\
    \end{split}
\end{equation}
 are parameterizations in $(z,y)$ coordinates of compact pieces of the stable and unstable manifolds of $O$. Moreover, 
 \begin{equation}\label{eq:splittinginEcoordsSitnikov}
 E^u_\epsilon(u)-E^s_\epsilon(u)=\epsilon \sigma_\epsilon\sin (u) +\mathcal{O}(\epsilon^2),
 \end{equation}
 where $\sigma_\epsilon>0$ is an explicit constant independent of $\epsilon$.
\end{lem}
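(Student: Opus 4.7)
The plan is to mirror the argument of Appendix~\ref{sec:splittingr3bp}, in which the analogous splitting formula for the restricted 3-body problem is obtained, and to quote the explicit coefficient computation from \cite{SitnikovPerezChavela}. The three conceptual steps are: a graph representation of $W^{u,s}_\epsilon(O)$ in time-energy coordinates, a Poincar\'e-Melnikov identity for the splitting, and a contour-integral evaluation of the leading term.

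First, I would introduce the time-energy change of variables
\[
(u,E)\longmapsto (z,y)=\bigl(z_h(u),\, y_h(u)+y_h^{-1}(u)E\bigr),
\]
which is real-analytic and a diffeomorphism onto its image for $u$ in a fixed compact subinterval $(u_0,u_1)\subset(0,\infty)$. At $\epsilon=0$ both invariant manifolds coincide with $\{E=0\}$ and are transverse to the level sets $\{E=\mathrm{const}\}$. Since $O$ is parabolic, the local invariant manifolds $W^{u,s}_\epsilon(O)$ persist and depend smoothly on $\epsilon$ by the methods of \cite{McGeheestablemfold}; propagating them to the domain of $\phi$ by the flow, the implicit function theorem yields real-analytic graph parametrizations $E=E^{u,s}_\epsilon(u)$ for $\epsilon$ small enough, producing the parametrizations $\Gamma^{u,s}$ claimed in the statement.

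Next, I would derive the Melnikov formula. Writing $\mathtt H_\epsilon=\mathtt H_0+\epsilon \mathtt H_1(x,y,t)+\mathcal{O}(\epsilon^2)$ and using the Kepler expansion $\rho_\epsilon(t)=1-\epsilon\cos t+\mathcal{O}(\epsilon^2)$, a direct computation in the original $(x,y)$ variables gives
\[
\mathtt H_1(x,y,t)=\frac{x^6\cos t}{8\,(1+x^4/4)^{3/2}}+\mathcal{O}(\epsilon),
\]
so that $\partial_t\mathtt H_1$ decays like $z_h(\sigma)^{-3}\sim\sigma^{-2}$ along the unperturbed separatrix. Imposing conservation of $\mathtt H_\epsilon$ along trajectories in $W^{u,s}_\epsilon(O)$ and telescoping one obtains
\[
E^u_\epsilon(u)-E^s_\epsilon(u)=\epsilon\int_{-\infty}^{+\infty}\partial_t\mathtt H_1\bigl(z_h(\sigma),y_h(\sigma),\sigma+u\bigr)\,\mathrm{d}\sigma+\mathcal{O}(\epsilon^2),
\]
where absolute convergence is guaranteed by the asymptotics of $z_h,y_h$ analogous to Lemma~\ref{lem:techlemmaunperturbedhomocl}. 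The $\mathcal{O}(\epsilon^2)$ bound is obtained by an iterative argument on the variational equation around the unperturbed homoclinic, as in the proof of Lemma~\ref{lem:Melnikov}.

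Finally, I would evaluate the leading integral by contour deformation. The function $\sigma\mapsto z_h(\sigma)$ extends meromorphically to a complex strip $|\mathrm{Im}\,\sigma|<\sigma_*$, with dominant singularities on its boundary coming from complex-time collisions of the parabolic orbit. Decomposing $\cos(\sigma+u)=\cos u\cos\sigma-\sin u\sin\sigma$ and using the parity of $z_h$, the $\cos u$ piece vanishes while the $\sin u$ piece produces the leading term. The main obstacle is the explicit computation (and positivity) of the coefficient $\sigma_\epsilon>0$; this is the delicate residue computation carried out in \cite{SitnikovPerezChavela}, from which~\eqref{eq:splittinginEcoordsSitnikov} follows.
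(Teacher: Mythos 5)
The paper itself does not prove this lemma: it simply states ``The following result has been obtained in \cite{SitnikovPerezChavela}'' and moves on, so there is no internal proof to match your sketch against. What you have written is a plausible reconstruction of the Melnikov argument that \cite{SitnikovPerezChavela} carries out, following the template of Appendix~\ref{sec:splittingr3bp}, and structurally it is sound: graph parametrization of $W^{u,s}_\epsilon(O)$ in time--energy coordinates, a Poincar\'e--Melnikov formula for the leading-order splitting with an $\mathcal{O}(\epsilon^2)$ remainder via the variational equation, and a parity argument for the $\sin u$ dependence, with the explicit evaluation of the constant delegated to \cite{SitnikovPerezChavela} exactly as the paper does.

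Two details deserve correction. First, since $H_1\propto \cos t$, the Melnikov integrand involves $\partial_t H_1\propto\sin t$, hence $\sin(\sigma+u)$, not $\cos(\sigma+u)$; under the parity you invoke, it is the $\cos\sigma$ piece of $\sin(\sigma+u)=\sin u\cos\sigma+\cos u\sin\sigma$ that survives, giving $\sin u$. As written, your decomposition of $\cos(\sigma+u)$ combined with the claim that ``the $\sin u$ piece produces the leading term'' is internally inconsistent (for that decomposition the $\cos u$ piece would survive); you reach the right conclusion for the wrong reason. Second, the emphasis on contour deformation and complex-time collisions is out of place here. Unlike the singular regime of Appendix~\ref{sec:splitting4bp} (fast forcing, exponentially small splitting), the Sitnikov splitting is a \emph{regular} $\mathcal{O}(\epsilon)$ perturbation: the period of the primaries is fixed at $2\pi$ and the Melnikov integral is an absolutely convergent real integral whose value is an order-one constant. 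Residues are a convenient way to compute it explicitly, but no analysis of singularity locations or exponentially small asymptotics is required, and framing the coefficient as ``the main obstacle'' overstates the difficulty relative to what the paper actually needs, which is only existence of the parametrizations and the sign of $\sigma_\epsilon$, both taken directly from \cite{SitnikovPerezChavela}.
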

We notice that \eqref{eq:splittinginEcoordsSitnikov} is of the same nature as the corresponding formula \eqref{eq:splittinginEcoords} for the restricted circular 3-body problem . One then may check that the analogue of Proposition~\ref{lem:homoclinictangencies} also holds for the Sitnikov problem.

\section{Splitting of invariant manifolds in the restricted 4-body problem}\label{sec:splitting4bp}
We now consider the restricted 4-body problem in which the primaries (the three massive bodies) move in a Lagrange triangular periodic orbit. We will denote this model as the restricted planar circular 4-body problem (RPC4BP). We let the masses of the primaries be $m_0=1-2\mu$ and $m_1=m_2=\mu$ for $\mu\in(0,1/3]$. In Cartesian coordinates
\begin{equation}\label{eq:H4bp}
H_{\mu,4BP}(q,p,t)=\frac{|p|^2}{2}-V_\mu(q,t)\qquad\qquad (q,p,t)\in\mathbb R^2\times\mathbb R^2\times\mathbb T
\end{equation}
with 
\[
V_\mu(q,t)=\frac{1-2\mu}{|q-q_{0,\mu}(t)}+\frac{\mu}{|q-q_{1,\mu}(t)|}+\frac{\mu}{|q-q_{2,\mu}(t)|}
\]
With this choice of the masses, for any value of $\mu\in(0,1/2]$ the Hamiltonian $H_{\mu,4BP}$ is invariant by rotations. After McGehee's compactification of the phase space it can be shown that (as for the restricted circular 3-body problem) there exists a degenerate periodic orbit at infinity which possesses stable and unstable invariant manifolds. We denote these by $W^{u,s}_\infty$.

In this section we prove the existence of $\mu_*\in(0,1/2]$ (localized close to  $\mu \sim 1/3$) for which these invariant manifolds possess a quadratic tangency which unfolds generically with $\mu$. To understand the reason why we consider this particular subset of the parameter space, let $H(r,\alpha,y,G,t; \mu)=\frac{y^2}{2}+\frac{G^2}{2r^2}-\frac{1}{r}-U(r,\alpha-t; \mu)$ denote the Hamiltonian of the RPC4BP in polar coordinates. Thanks to our choice of parameters, the interaction potential  $U=U(r,\phi; \mu)$ is even in $\phi$. This convenient choice will simplify considerably the investigation of existence of quadratic homoclinic tangencies.

In order to analyze the aforementioned invariant manifolds for values of $\mu\sim 1/3$, we place ourselves in the singular perturbative regime corresponding to large values of the Jacobi constant $\mathcal J= H(r,\alpha,y,G,t;\mu)-G$. A completely analogous situation was already tackled in \cite{GMS2016} for studying the existence of oscillatory motions in the restricted circular 3-body problem with arbitrary mass ratio. The idea is that if we rescale the variables by
\[
r=G_0^2 \tilde r, \quad y=G_0^{-1} \tilde y, \quad \alpha=\tilde \alpha, \quad G=G_0 \tilde G, \quad t=G_0^3 v,
\]
then the Hamiltonian becomes
\[
\tilde H (\tilde r, \tilde \alpha, \tilde y, \tilde G, v; \mu, G_0) = \frac{\tilde y^2}{2} + \frac{\tilde G^2}{2 \tilde r^2} - \frac{1}{\tilde r} - V(\tilde r, \tilde \alpha - G_0^3 v; \mu, G_0) ,
\]
where $V(\tilde r, \phi; \mu, G_0) = G_0^2 U(G_0^2 \tilde r, \phi; \mu)$ satisfies that 
\[
V(\tilde r,\phi;\mu,G_0)=\mathcal O(G_0^{-4}\tilde r^{3}).
\]
Then, for large values of $G_0$, we can see $\tilde H$ as a fast-periodic perturbation of the 2-body problem and the manifolds $W^{u,s}_\infty$ can be studied as a perturbation of the manifold of parabolic motions for the 2-body problem $W_0$ (see \cite{GMS2016}). The major challenge in this setting is that the perturbation framework is singular: the frequency of the perturbation is unbounded as $G_0\to\infty$. This results in the so-called exponential small splitting of separatrices \cite{Neishtadt}. It is well known that understanding the splitting in this singular perturbation framework is rather challenging. However, from the point of view of the splitting problem there is no major difference between the Hamiltonian~\eqref{eq:H4bp} and the Hamiltonian of the restricted circular 3-body problem considered in~\cite{GMS2016}. Thus, we just briefly sketch the ideas needed to study the manifolds $W^{u,s}_\infty$ and point out the minor differences which we must handle in our case.

The first step is to notice that since the dependence on $(\tilde \alpha, v)$ is only through $\phi=\tilde \alpha - G_0^3 v$, we can define a new variable $\phi=\tilde \alpha - G_0^3 v$ and obtain a new Hamiltonian 
\[
\mathcal{H}(\tilde r, \phi, \tilde y, \tilde G; \mu, G_0) = \frac{\tilde y^2}{2} + \frac{\tilde G^2}{2 \tilde r^2} - \frac{1}{\tilde r} - V(\tilde r, \phi; \mu, G_0) - G_0^3 \tilde G .
\]
We denote the invariant manifolds in rescaled coordinates by $\tilde W_\infty^{u,s}$ and by $\tilde W_0$ the unperturbed homoclinic manifold (i.e. the manifold of parabolic motions for the 2-body problem). In the new variables $\tilde W_0$ can be parameterized by 
\[
(\tilde r, \tilde \alpha, \tilde y, \tilde G) = (\tilde r_h(v), \tilde \alpha_h(v), \tilde y_h(v), 1) .
\]
The functions $\tilde r_h, \tilde \alpha_h, \tilde y_h$ have explicit formulas, as stated in the next proposition from~\cite{MartinezPinyol}.

\begin{prop}
There exist unique functions $\tilde r_h, 
 \tilde y_h, \tilde\alpha_h:\mathbb{R} \to \mathbb{R}$ satisfying
\[
\begin{aligned}
\dot{\tilde r}_h &= \tilde y_h \\
\dot{\tilde y}_h &= \frac{1}{\tilde r_h^3} - \frac{1}{\tilde r_h^2} \\
\dot{\tilde \alpha}_h &= \frac{1}{\tilde r_h^2} .\\
\end{aligned}
\]

With the change $v = (\tau+\tau^3/3)/2$, they can be written as
\[
r_h(v(\tau)) = \frac{1+\tau^2}{2}, \qquad e^{i\alpha_h(v(\tau))} = \frac{\tau-i}{\tau+i} .
\]
\end{prop}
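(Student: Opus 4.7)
The proposition has two independent assertions: existence/uniqueness of a distinguished solution of the ODE system, and verification of the explicit parametrization. My plan is as follows.

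First, I would identify the ODE system as the equations of motion of the Kepler problem in polar coordinates restricted to the energy level zero with angular momentum one. Indeed, the first two equations are Hamilton's equations for $\mathcal K_{\text{red}}(\tilde r, \tilde y) = \tilde y^2/2 + 1/(2\tilde r^2)-1/\tilde r$, which is the reduced radial Hamiltonian at $\tilde G = 1$, and $\mathcal K_{\text{red}}=0$ precisely isolates the parabolic orbits. Pinning down the functions amounts to selecting the standard ``pericenter at $v=0$'' initial condition, namely $\tilde r_h(0) = 1/2$, $\tilde y_h(0) = 0$, and (consistent with $e^{i\alpha_h(0)} = -1$) $\tilde \alpha_h(0) = \pi$. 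Since the vector field is real-analytic on $\{\tilde r > 0\}$ and the energy relation $\tilde y^2 = 2/\tilde r - 1/\tilde r^2$ together with conservation of $\tilde G=1$ forces $\tilde r_h(v) \ge 1/2$ for all $v\in\mathbb{R}$, classical ODE theory gives existence and uniqueness on the whole real line.

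Second, I would verify the explicit formulas by a direct substitution. The reparametrization $v(\tau) = (\tau+\tau^3/3)/2$ is a diffeomorphism $\mathbb R\to\mathbb R$, since $dv/d\tau=(1+\tau^2)/2>0$, and this derivative equals the candidate $\tilde r_h(v(\tau))=(1+\tau^2)/2$, which is Kepler's ``equal areas'' relation $dv = \tilde r_h \, d\tau$. From here every check is a short computation: differentiating $\tilde r_h$ yields $d\tilde r_h/dv = 2\tau/(1+\tau^2)$, which we take as the candidate $\tilde y_h$; differentiating again gives $d\tilde y_h/dv = 4(1-\tau^2)/(1+\tau^2)^3$, and a direct comparison shows this equals $1/\tilde r_h^3 - 1/\tilde r_h^2 = 8/(1+\tau^2)^3 - 4/(1+\tau^2)^2$. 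Finally, logarithmic differentiation of $e^{i\alpha_h} = (\tau-i)/(\tau+i)$ yields $d\alpha_h/d\tau = 2/(1+\tau^2)$ and hence $d\alpha_h/dv = 4/(1+\tau^2)^2 = 1/\tilde r_h^2$, as required. The initial condition $v(0)=0$ gives $\tilde r_h(0)=1/2$, $\tilde y_h(0)=0$, $e^{i\alpha_h(0)}=-1$, matching the pinning above, so by uniqueness the explicit formulas agree with the distinguished solution on all of $\mathbb{R}$.

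There is no genuine obstacle in this proof: the only mildly nontrivial point is justifying that the solution extends to all of $\mathbb{R}$, but this is immediate from energy conservation ($\tilde r_h$ is bounded below by $1/2$ and grows at most like $v^{2/3}$ at infinity, as one sees either from the formula or from the asymptotics in Lemma~\ref{lem:techlemmaunperturbedhomocl}). The rest is verification. In particular, no singular perturbation or splitting argument is needed for this statement; those tools will only enter when comparing $\tilde W^{u,s}_\infty$ with $\tilde W_0$ in subsequent sections.
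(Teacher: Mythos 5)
Your computations are correct, and the approach---recognize the system as the zero-energy, $\tilde G=1$ reduction of Kepler and then verify the parametrization by direct differentiation---is exactly the expected one. Note, though, that the paper itself gives no proof of this proposition: it is stated and immediately attributed to \cite{MartinezPinyol} (``the functions $\tilde r_h,\tilde\alpha_h,\tilde y_h$ have explicit formulas, as stated in the next proposition from \cite{MartinezPinyol}''), so your self-contained verification supplies an argument the paper merely outsources. All the identities check out: $dv/d\tau=(1+\tau^2)/2=\tilde r_h$, hence $\tilde y_h = d\tilde r_h/dv = 2\tau/(1+\tau^2)$, and then $d\tilde y_h/dv = 4(1-\tau^2)/(1+\tau^2)^3 = 8/(1+\tau^2)^3-4/(1+\tau^2)^2 = 1/\tilde r_h^3 - 1/\tilde r_h^2$; logarithmic differentiation of $(\tau-i)/(\tau+i)$ gives $d\alpha_h/d\tau = 2/(1+\tau^2)$ and so $d\alpha_h/dv = 1/\tilde r_h^2$. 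The global-in-time extension from $\tilde r_h\ge 1/2$ on the zero-energy level is also fine.

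One remark directed at the statement rather than at your argument: taken literally, ``unique'' cannot hold, since the system is autonomous (so any time translate is again a solution) and the third equation only determines $\tilde\alpha_h$ up to an additive constant. Your normalization --- pericenter at $v=0$, i.e.\ $\tilde r_h(0)=1/2$, $\tilde y_h(0)=0$, and $\tilde\alpha_h(0)=\pi$ --- is precisely the one encoded by the displayed formulas (indeed $e^{i\alpha_h(0)}=(0-i)/(0+i)=-1$), so it resolves the ambiguity correctly; if you want the uniqueness clause to be airtight you should state explicitly that this normalization is intended, or phrase it as: the parabolic orbit is the unique orbit on the zero-energy level of the $\tilde G=1$ reduction, and the displayed change of variables and formulas fix its time parametrization.
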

In order to study the invariant manifolds in the perturbed case, we will restrict ourselves to the energy level $\{\mathcal{H} = -G_0^3\}$. Since the invariant manifolds $\tilde W_\infty^{u,s}$ are Lagrangian, they can be locally parameterized as graphs of generating functions $S(\tilde r, \phi; \mu, G_0)$, which are solutions of the Hamilton-Jacobi equation
\begin{equation}\label{eq:HJ_RPC4BP}
    \mathcal{H}(\tilde r, \phi, \partial_{\tilde r} S, \partial_\phi S; \mu, G_0) = -G_0^3 . 
\end{equation}
Call $S_0(\tilde r, \phi)$ the solution of the Hamilton-Jacobi equation
\[
\mathcal{H}(\tilde r, \phi, \partial_{\tilde r} S_0, \partial_\phi S_0; G_0) = -G_0^3, \qquad \mathcal{H}(\tilde r, \phi, \tilde y, \tilde G; \mu, G_0) = \frac{\tilde y^2}{2} + \frac{\tilde G^2}{2 \tilde r^2} - \frac{1}{\tilde r} - G_0^3 \tilde G ,
\]
and let $S_1=S-S_0$. Consider the change of variables $(\tilde r, \phi) = (\tilde r_h(v), \xi+\tilde \alpha_h(v))$ and define $T_1(v, \xi; \mu, G_0) = S_1(\tilde r_h(v), \xi + \tilde \alpha_h(v); \mu, G_0)$. Denote by $T_1^u$ the unique such function that satisfies the boundary conditions $\lim_{v\to-\infty} \tilde y_h(v)^{-1} \partial_v T_1^u(v,\xi)=0$, $\lim_{v\to-\infty} \partial_\xi T_1^u(v,\xi)$, and by $T_1^s$ the one that satisfies $\lim_{v\to+\infty} \tilde y_h(v)^{-1} \partial_v T_1^s(v,\xi)=0$, $\lim_{v\to+\infty} \partial_\xi T_1^s(v,\xi)=0$. The existence of these functions and their extension to suitable complex domains follow from the very same arguments deployed in \cite{GMS2016}.

The upshot of introducing the Hamilton-Jacobi formalism is that the 2-dimensional invariant manifolds $\tilde W_\infty^{u,s} \cap \{\mathcal{H} = -G_0^3\}$ can be now parametrized as
\begin{equation}\label{eq:param_inv_manifolds_RPC4BP}
\begin{aligned}
    \tilde r &= \tilde r_h(v) \\
    \tilde y &= \tilde y_h(v)-\tilde y_h(v)^{-1}\left(\partial_v T_1^{u,s}(v,\xi; \mu, G_0) - \tilde r_h(v)^{-2} \partial_\xi T_1^{u,s}(v,\xi; \mu, G_0)\right) \\
    \phi &= \xi + \tilde \alpha_h(v) \\
    \tilde G &= 1 + \partial_\xi T_1^{u,s} (v,\xi; \mu, G_0) .
\end{aligned}
\end{equation}
Fixing now a section $\{\phi=\phi_0\}$, the invariant manifolds become invariant curves $\gamma^{u,s}$ of the Poincar\'{e} map $\mathcal{P}_{G_0, \phi_0} \colon \{\phi=\phi_0\} \to \{\phi = \phi_0 + 2\pi\}$. The invariant curves $\gamma^{u,s}$ have parameterizations of the form 
\begin{equation}\label{eq:param_inv_curves_RPC4BP} 
\begin{aligned}
\tilde r &= \tilde r_h(v) \\
\tilde y &= Y_{\phi_0}^{u,s}(v) \\ 
&=\tilde y_h(v)-\tilde y_h(v)^{-1}\left(\partial_v T_1^{u,s}(v,\xi; \mu, G_0) - \tilde r_h(v)^{-2} \partial_\xi T_1^{u,s}(v,\xi; \mu, G_0)\right)\Big|_{\xi=\phi_0-\alpha_h(v)}.
\end{aligned}
\end{equation}
In this way, analyzing the existence of tangencies between the manifolds $\tilde W^{u,s}_\infty$ is reduced to proving the existence of degenerate critical points of the function $v\mapsto T_1^{u}-T_1^s$. The same argument in \cite{GMS2016} shows that there exists a common domain $D\subset\mathbb C\times(\mathbb C/ 2\pi \mathbb Z)$, which contains a subset of the form $(v_-, v_+) \times (\mathbb{R}/2\pi\mathbb{Z})$ with $v_-, v_+ \in \mathbb{R}$, and where both $T^{u}_1$ and $T^s_1$ are well-defined.

We now recall that the potential $V(\tilde r, \phi; \mu, G_0)$ is even in $\phi$ (remember that this is a consequence of the fact that two of the primaries were taken with equal masses). Then as in \cite{GMS2016} we know that 
\begin{equation}\label{eq:symmetry_sols_HJ_RPC4BP}
    T_1^s(v, \xi) = -T_1^u(-v, -\xi). 
\end{equation}
Define $\widetilde \Delta (v, \xi) = T_1^u(v, \xi) - T_1^s(v, \xi)$ and the linear partial differential operator
\begin{equation}\label{def:DifferentialOperator_original}
 \widetilde{\mathcal{L}}=\left(1+A(v,\xi)\right)\partial_v - G_0^3\left(1+B(v,\xi)\right)\partial_\xi ,
\end{equation}
where
\begin{equation}\label{def:DiffOperator_AB}
\begin{aligned}
A(v,\xi)=& \frac{1}{2 \tilde y_h^2}\left(\left(\partial_v T_1^s+\partial_v T_1^u\right)
-\frac{1}{ \tilde r_h^2}(\partial_\xi T_1^s+\partial_\xi T_1^u)\right) ,\\
B(v,\xi)=&\frac{G_0^{-3}}{2 \tilde r_h^2 \tilde y_h^2}\left(\left(\partial_v T_1^s+\partial_v T_1^u\right)
-\frac{1}{ \tilde r_h^2}(\partial_\xi T_1^s+\partial_\xi T_1^u)\right)\\
&-\frac{G_0^{-3}}{2 \tilde r_h^2}\left(\partial_\xi T_1^s+\partial_\xi
T_1^u\right).
\end{aligned}
\end{equation}
It is a straightforward computation to check that $\widetilde \Delta \in \ker\widetilde{\mathcal{L}}$. To unravel the consequences of this fact, we first introduce a coordinate system in which the differential operator \eqref{def:DifferentialOperator_original} is of constant coefficients. The following proposition is the analogue of Theorem~6.3 of \cite{GMS2016}.

\begin{prop}\label{prop:straigthen_operator_RPC4BP}
    There exists a close to identity change of variables $(w, \xi) \mapsto (w + \mathcal{C}(w,\xi), \xi)$ which is well defined on the complex domain $D\subset \mathbb C\times(\mathbb C/2\pi\mathbb Z)$ and is such that $\Delta(w, \xi) := \widetilde \Delta(w + \mathcal{C}(w,\xi), \xi)$ satisfies $\Delta \in \ker \mathcal{L}$, where $\mathcal{L} = \partial_w - G_0^3 \partial_\xi$. Moreover, one can choose $\mathcal{C}$ such that $\mathcal{C}(-w, -\xi) = -\mathcal{C}(w,\xi)$.
\end{prop}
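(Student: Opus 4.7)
The plan is to translate the straightening condition into a quasilinear transport PDE for $\mathcal{C}$, and then solve it via a fixed-point scheme on a suitable space of analytic functions on $D$.

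First I would derive the PDE. Writing $\Delta(w,\xi) = \widetilde{\Delta}(w+\mathcal{C}(w,\xi),\xi)$ and applying the chain rule gives $\partial_v\widetilde{\Delta} = (1+\partial_w\mathcal{C})^{-1}\partial_w\Delta$ and $\partial_\xi\widetilde{\Delta} = \partial_\xi\Delta - (1+\partial_w\mathcal{C})^{-1}\partial_\xi\mathcal{C}\,\partial_w\Delta$. Substituting into $\widetilde{\mathcal{L}}\widetilde\Delta=0$ and asking that the resulting identity reduce to $\mathcal{L}\Delta=\partial_w\Delta - G_0^3\partial_\xi\Delta=0$ for every $\widetilde\Delta\in\ker\widetilde{\mathcal{L}}$ leads, after clearing denominators, to
\begin{equation*}
\partial_w\mathcal{C}(w,\xi) - G_0^3\,\partial_\xi\mathcal{C}(w,\xi) = \mathcal{F}\big(w+\mathcal{C}(w,\xi),\,\xi\big),
\qquad \mathcal{F}(v,\xi) := \frac{A(v,\xi)-B(v,\xi)}{1+B(v,\xi)},
\end{equation*}
where $A,B$ are the functions in~\eqref{def:DiffOperator_AB}.

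Next I would construct a right inverse of the constant-coefficient transport operator $\mathcal{L}_0 := \partial_w - G_0^3\partial_\xi$. Expanding $2\pi$-periodic functions in $\xi$ as $\varphi(w,\xi) = \sum_{k\in\Z}\varphi_k(w)e^{ik\xi}$, the equation $\mathcal{L}_0 \mathcal{C} = \mathcal{F}$ decouples into the family of scalar ODEs $\mathcal{C}_k'(w) - ikG_0^3\,\mathcal{C}_k(w) = \mathcal{F}_k(w)$, which I would solve with the integration contour chosen, Fourier mode by Fourier mode, so that $\mathcal{L}_0^{-1}\mathcal{F}$ decays at the boundary of the $w$-projection of $D$ and respects the involution $(w,\xi)\mapsto(-w,-\xi)$ (for instance, by integrating from $0$ to $w$ in the zero mode and from $\mp\infty$ in the non-zero modes, depending on the sign of $k$). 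The quantitative bounds on $\mathcal{L}_0^{-1}$ in an appropriate Banach space of analytic functions on $D$ would be obtained in complete analogy with Section~6 of \cite{GMS2016}. The PDE is then the fixed-point equation
\begin{equation*}
\mathcal{C} = \mathcal{L}_0^{-1}\bigl[\mathcal{F}(w+\mathcal{C}(\cdot,\cdot),\cdot)\bigr],
\end{equation*}
and since $T_1^{u,s}$ are small in the singular regime $G_0\to\infty$, the coefficients $A,B$ (and hence $\mathcal{F}$) are small in the relevant norm, so a standard contraction-mapping argument in a small ball produces a unique solution $\mathcal{C}$, close to the identity.

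For the symmetry, the relation $T_1^s(v,\xi)=-T_1^u(-v,-\xi)$ in~\eqref{eq:symmetry_sols_HJ_RPC4BP}, together with the evenness in $(v,\xi)\mapsto(-v,-\xi)$ of $\tilde r_h^2$ and $\tilde y_h^2$, implies $A(-v,-\xi)=A(v,\xi)$, $B(-v,-\xi)=B(v,\xi)$, and therefore $\mathcal{F}(-v,-\xi)=\mathcal{F}(v,\xi)$. Setting $\widehat{\mathcal{C}}(w,\xi) := -\mathcal{C}(-w,-\xi)$ and computing directly, one checks that $\widehat{\mathcal{C}}$ satisfies the same PDE as $\mathcal{C}$; by uniqueness in the fixed-point scheme (guaranteed by choosing $\mathcal{L}_0^{-1}$ to commute with the involution) we conclude $\widehat{\mathcal{C}} = \mathcal{C}$, i.e. $\mathcal{C}(-w,-\xi)=-\mathcal{C}(w,\xi)$.

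The main obstacle is the construction and quantitative control of $\mathcal{L}_0^{-1}$: the characteristics $\xi+G_0^3 w = \text{const}$ of $\mathcal{L}_0$ wind densely on the torus and cover the complex domain $D$ only after a long time, so one has to choose the inversion mode by mode and in a way compatible both with analyticity on $D$ and with the involution used in the symmetry step, precisely as is done in the analogous analysis for the RPC3BP in~\cite{GMS2016}.
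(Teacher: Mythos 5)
Your proposal is correct and follows the route the paper delegates to: the paper gives no proof of this proposition, stating only that it is the analogue of Theorem~6.3 of \cite{GMS2016}, and your sketch reconstructs exactly that argument — the chain-rule reduction to the quasilinear transport equation $\partial_w\mathcal{C} - G_0^3\partial_\xi\mathcal{C} = \tfrac{A-B}{1+B}\big(w+\mathcal{C}(w,\xi),\xi\big)$, the Fourier-mode-by-mode inversion of $\mathcal{L}_0 = \partial_w - G_0^3\partial_\xi$ with contours chosen for analyticity on $D$, and the contraction argument using the smallness of $A,B$ for large $G_0$. Your symmetry step is also sound, since $T_1^s(v,\xi)=-T_1^u(-v,-\xi)$ together with $\tilde r_h$ even and $\tilde y_h$ odd in $v$ gives $A(-v,-\xi)=A(v,\xi)$, $B(-v,-\xi)=B(v,\xi)$, and hence the evenness of $\mathcal{F}$ that makes $-\mathcal{C}(-w,-\xi)$ solve the same PDE, so the parity of $\mathcal{C}$ follows from uniqueness once $\mathcal{L}_0^{-1}$ is chosen to commute with the involution.
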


As a consequence of Proposition~\ref{prop:straigthen_operator_RPC4BP} and the symmetry in \eqref{eq:symmetry_sols_HJ_RPC4BP}, we have the following.

\begin{prop}\label{prop:Fourier_series_Delta_RPC4BP}
The function $\Delta$  defined in Proposition~\ref{prop:straigthen_operator_RPC4BP} satisfies the symmetry $\Delta(-w,-\xi)=\Delta(w,\xi)$ and has a Fourier series of the form
  \begin{equation}\label{eq:Fourier_series_Delta_RPC4BP}
    \Delta(w, \xi) = \Delta^{[0]} + \sum_{\ell \in \mathbb{N}} \Delta^{[\ell]} \cos(\ell (G_0^3 w +\xi)),
  \end{equation}
for certain real coefficients $\Delta^{[\ell]}(\mu, G_0)$. 
Since $\Delta$ only depends on $w, \xi$ through $z=G_0^3 w + \xi$, to simplify the notation we will write $\Delta(z)=\Delta(G_0^3 w + \xi) := \Delta(w, \xi)$.
\end{prop}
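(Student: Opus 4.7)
The plan is to derive both assertions from three ingredients already on the table: the symmetry $T_1^s(v,\xi)=-T_1^u(-v,-\xi)$ of the generating functions (see~\eqref{eq:symmetry_sols_HJ_RPC4BP}), the oddness $\mathcal C(-w,-\xi)=-\mathcal C(w,\xi)$ of the straightening transformation from Proposition~\ref{prop:straigthen_operator_RPC4BP}, and the fact that $\Delta\in\ker\mathcal L$ with $\mathcal L=\partial_w-G_0^3\partial_\xi$. The symmetry statement is a direct bookkeeping computation; the Fourier series then follows by reducing $\Delta$ to a periodic function of a single characteristic variable and applying reality plus the symmetry just obtained.

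First I would establish the symmetry $\Delta(-w,-\xi)=\Delta(w,\xi)$. By definition $\widetilde\Delta(v,\xi)=T_1^u(v,\xi)-T_1^s(v,\xi)$, so~\eqref{eq:symmetry_sols_HJ_RPC4BP} yields $\widetilde\Delta(v,\xi)=T_1^u(v,\xi)+T_1^u(-v,-\xi)$, hence $\widetilde\Delta(-v,-\xi)=\widetilde\Delta(v,\xi)$. Using the oddness of $\mathcal C$,
\[
\Delta(-w,-\xi)=\widetilde\Delta\bigl(-w+\mathcal C(-w,-\xi),-\xi\bigr)=\widetilde\Delta\bigl(-(w+\mathcal C(w,\xi)),-\xi\bigr),
\]
and setting $v=w+\mathcal C(w,\xi)$ this equals $\widetilde\Delta(-v,-\xi)=\widetilde\Delta(v,\xi)=\Delta(w,\xi)$.

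Next I would exploit the kernel relation. Since $\mathcal L\Delta=0$, the characteristics $G_0^3 w+\xi=\text{const}$ of $\mathcal L$ foliate the domain and $\Delta$ is constant along them. Consequently there exists a function of a single variable, which I call $\Delta(z)$, with $\Delta(w,\xi)=\Delta(G_0^3 w+\xi)$. Because $\xi\in \mathbb C/2\pi\mathbb Z$ and $\Delta$ is analytic on the common domain $D$ of Proposition~\ref{prop:Fourier_series_Delta_RPC4BP}, the function $z\mapsto \Delta(z)$ is $2\pi$-periodic, so it admits a Fourier expansion $\Delta(z)=\sum_{\ell\in\mathbb Z} c_\ell e^{i\ell z}$. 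Reality of $T_1^{u,s}$ (and hence of $\widetilde\Delta$ and $\Delta$) on the real section forces $c_{-\ell}=\overline{c_\ell}$, while the symmetry $\Delta(-w,-\xi)=\Delta(w,\xi)$ translates to $\Delta(-z)=\Delta(z)$, i.e.\ $c_{-\ell}=c_\ell$. Combining both identities gives $c_\ell\in\mathbb R$ and $c_{-\ell}=c_\ell$, so the series collapses to~\eqref{eq:Fourier_series_Delta_RPC4BP} with $\Delta^{[0]}=c_0$ and $\Delta^{[\ell]}=2c_\ell$ for $\ell\ge 1$.

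No step is really an obstacle: the one that requires a little care is the reduction to a function of a single variable, where one must check that the straightening change of coordinates in Proposition~\ref{prop:straigthen_operator_RPC4BP} is defined on a domain large enough for the characteristics of $\mathcal L$ to be full circles in $\xi$, so that the resulting periodicity in $z$ is genuine and the Fourier expansion converges on the claimed domain. This is however guaranteed by the construction of $D\supset(v_-,v_+)\times(\mathbb R/2\pi\mathbb Z)$ stated right before Proposition~\ref{prop:Fourier_series_Delta_RPC4BP}, so the argument goes through without further work.
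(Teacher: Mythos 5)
Your proof is correct, and since the paper states Proposition~\ref{prop:Fourier_series_Delta_RPC4BP} without a written proof (this appendix is a sketch that defers the details to \cite{GMS2016}), it supplies exactly the argument the authors have in mind: the symmetry comes from~\eqref{eq:symmetry_sols_HJ_RPC4BP} combined with the oddness of $\mathcal{C}$, and the cosine expansion comes from the single-variable reduction via $\ker\mathcal{L}$, $2\pi$-periodicity in $\xi$, reality, and the evenness just established. The only implicit point worth naming is that invoking $\Delta(-z)=\Delta(z)$ for all $z\in D$ requires $D$ to be symmetric under $(w,\xi)\mapsto(-w,-\xi)$, which holds in the construction inherited from \cite{GMS2016} (there the complex strip in $w$ is centered at the origin); once you observe that reality and evenness hold on the real circle, the identities $c_{-\ell}=\overline{c_\ell}$ and $c_{-\ell}=c_\ell$ extend to the complex domain by analyticity, exactly as you argue.
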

\vspace{0.2cm}

The key now to study the coefficients $\Delta^{[l]}$ is that the domain $D\subset \mathbb C\times(\mathbb C/2\pi\mathbb Z)$ where $\Delta(w,\xi)$ is well defined gets sufficiently close to the lines $\mathrm{Im} w=\pm 1/3$. Then, a standard argument shows that the coefficients in \eqref{eq:Fourier_series_Delta_RPC4BP} decay exponentially fast. Moreover, using the exact same ideas as in \cite{GMS2016} one can show that $\Delta$ is uniformly approximated by the so-called Melnikov potential 
\[
L(G_0^3 w + \xi; \mu, G_0) = \int_{-\infty}^{+\infty} V(\tilde r_h(s), \xi+ G_0^3 w -G_0^3 s + \tilde \alpha_h(s); \mu, G_0) ds 
\]
for $(w,\xi)\in D$. This allows us to obtain exponentially decaying estimates for the differences $\Delta^{[l]}-L^{[l]}$. The following result is analogous to Lemma~6.8 in \cite{GMS2016}.
\begin{prop}\label{prop:approx_inv_manifolds_RPC4BP} 
    There exists $G_0^\ast>0$ such that for any $G_0 > G_0^{\ast}$ and $\mu \in (0, 1/2]$ the following bounds hold 
    \[
    \begin{aligned}
       \left| \Delta^{[\ell]}(\mu, G_0) - L^{[\ell]}(\mu, G_0) \right| &\leq \mu^2 |1-3\mu| (K G_0)^{- \frac{7}{2} + \frac{3}{2} |\ell|} e^{-\frac{|\ell| G_0^3}{3}} \quad \text{ for } \ell \notin 3 \mathbb{N} ,\\
       \left| \Delta^{[\ell]}(\mu, G_0) - L^{[\ell]}(\mu, G_0) \right| &\leq \mu^2 (K G_0)^{- \frac{7}{2} + \frac{3}{2} |\ell|} e^{-\frac{|\ell| G_0^3}{3}} \quad \text{ for } \ell \in 3 \mathbb{N},
    \end{aligned}
    \]
    for some constant $K > 0$ independent of $\mu$ and $G_0$.
\end{prop}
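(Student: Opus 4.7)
The strategy follows closely the singular perturbative scheme developed in \cite{GMS2016} for the restricted 3-body problem, with a key extra ingredient coming from the discrete symmetry of the Lagrange equilateral configuration when $\mu=1/3$. The starting point is to extend both $T_1^u, T_1^s$ (and, after the straightening in Proposition~\ref{prop:straigthen_operator_RPC4BP}, the function $\Delta$) analytically to a complex strip whose width approaches $|\mathrm{Im}\,w|=1/3$. The limiting width $1/3$ comes from the complex collision singularities of the unperturbed homoclinic: through $v=(\tau+\tau^3/3)/2$, the singularities of $\tilde r_h,\tilde\alpha_h$ at $\tau=\pm i$ land at $v=\pm i/3$. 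Existence of $T_1^{u,s}$ on such a strip is obtained by setting up a fixed-point scheme for the Hamilton-Jacobi equation~\eqref{eq:HJ_RPC4BP} exactly as in Sections~4--5 of \cite{GMS2016}; because the perturbation $V=\mathcal O(\mu G_0^{-4}\tilde r^3)$ has the same structure in our setting, no essential change is needed.

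Next, I would compare $\Delta$ with the formal first-order Melnikov approximation $L$. By construction $L$ is obtained by integrating $V$ along the unperturbed separatrix, so $\Delta-L$ solves a linear equation of the same type as the one defining $\Delta$ but with a source of order $\mathcal O(\mu^2)$; this is the standard Melnikov-type bootstrap. Running the fixed-point scheme on this equation on a slightly narrower complex strip, say $|\mathrm{Im}\,w|<1/3-CG_0^{-3/2}$ (the natural scale in the singular problem, chosen to stay away from the collision singularities at the rate dictated by the behavior of $V\circ(\tilde r_h,\tilde\alpha_h)$ near $v=\pm i/3$), yields a uniform bound of the form $\|\Delta-L\|_{L^\infty}\lesssim \mu^2 (KG_0)^{-7/2}$, where the power $-7/2$ comes from a direct computation of the leading behavior of the integrand near $v=\pm i/3$. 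A standard Paley--Wiener/shift-of-contour argument on the strip of analyticity then converts this into the Fourier bound
\[
\bigl|\Delta^{[\ell]}-L^{[\ell]}\bigr|\lesssim \mu^2 (KG_0)^{-7/2+3|\ell|/2}e^{-|\ell|G_0^3/3},
\]
where the extra $(KG_0)^{3|\ell|/2}$ accounts for the growth of $\Delta-L$ as one approaches the singular boundary of the strip.

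The only genuinely new ingredient, compared with \cite{GMS2016}, is the extra factor $|1-3\mu|$ for $\ell\notin 3\mathbb{N}$. This is a consequence of a discrete symmetry that is special to the equal-mass case: when $\mu=1/3$ all three primaries have the same mass and the Lagrange equilateral configuration is invariant under the rotation $\phi\mapsto\phi+2\pi/3$. Hence $V(\tilde r,\phi;1/3,G_0)=V(\tilde r,\phi+2\pi/3;1/3,G_0)$, which propagates through the construction to give $T_1^{u,s}(v,\xi;1/3,G_0)=T_1^{u,s}(v,\xi+2\pi/3;1/3,G_0)$ and therefore $\Delta(G_0^3w+\xi;1/3,G_0)=\Delta(G_0^3w+\xi+2\pi/3;1/3,G_0)$. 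Inspecting the Fourier series~\eqref{eq:Fourier_series_Delta_RPC4BP}, this identity forces $\Delta^{[\ell]}(1/3,G_0)=0$ for every $\ell\notin 3\mathbb{N}$, and the same holds for $L^{[\ell]}$. Writing a first-order Taylor expansion in $\mu$ around $\mu=1/3$ and applying the already-established bound to the derivative then produces the extra $|1-3\mu|$ factor.

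The main obstacle is not conceptual but computational: as in \cite{GMS2016}, obtaining the correct exponent $-7/2+3|\ell|/2$ in the polynomial prefactor requires a careful analysis of the local behavior of $\Delta-L$ near the collision singularities $v=\pm i/3$, which in turn depends on performing the fixed-point iteration on a sequence of complex domains shrinking towards these singularities at the correct rate $G_0^{-3/2}$. Modulo reproducing this analysis with the potential $V$ of the RPC4BP in place of the one in \cite{GMS2016}, together with the $\mathbb{Z}_3$-symmetry argument above to get the factor $|1-3\mu|$, the proof goes through.
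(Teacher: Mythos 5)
The paper itself offers no proof of this proposition: it merely asserts the result is ``analogous to Lemma~6.8 in \cite{GMS2016}'' and leaves the details, including the crucial $|1-3\mu|$ factor, to the reader. Your reconstruction follows exactly the intended route --- Hamilton--Jacobi fixed point on complex domains approaching $\mathrm{Im}\,w=\pm1/3$, a Melnikov bootstrap comparing $\Delta$ with $L$, then Paley--Wiener on the strip of analyticity --- and, importantly, it supplies the one genuinely new ingredient that the paper never names: the $\mathbb{Z}_3$-symmetry at $\mu=1/3$. Since $m_0=m_1=m_2=1/3$ there, the potential $U(r,\phi;1/3)$ is $2\pi/3$-periodic in $\phi$, so by uniqueness of the Hamilton--Jacobi solutions with prescribed asymptotics one obtains that $\Delta$ and $L$ are $2\pi/3$-periodic in $z$; hence $\Delta^{[\ell]}(1/3)=L^{[\ell]}(1/3)=0$ for $\ell\notin3\mathbb{N}$, and extracting the factor $|1-3\mu|$ from analyticity in $\mu$ works as you describe. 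This is the correct explanation, it is consistent with the explicit $(1-3\mu)$ factors in $L^{[1]}$, $L^{[2]}$ of Proposition~\ref{prop:Fourier_expansion_Melnikov_RPC4BP}, and it is a genuinely useful addition to what the paper states.

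One quantitative slip worth flagging: the strip contraction rate $G_0^{-3/2}$ does not produce the prefactor $(KG_0)^{3|\ell|/2}$. If the uniform bound on $\Delta-L$ held only on $|\mathrm{Im}\,w|<1/3-CG_0^{-3/2}$, then passing to $z=G_0^3w+\xi$ and applying Paley--Wiener would give an error of order $e^{C|\ell|G_0^{3/2}}$, far larger than $(KG_0)^{3|\ell|/2}=e^{\frac{3}{2}|\ell|\log(KG_0)}$. To match the stated bound one must work at distance only of order $\log(KG_0)/G_0^3$ from $\mathrm{Im}\,w=\pm1/3$ and track the blow-up of $\Delta-L$ near the collision singularities $v=\pm i/3$; this is the heavy part of the \cite{GMS2016} analysis that you correctly identify as the main burden, but the specific scale you give is not the right one. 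Modulo that correction, the outline is sound.
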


The following proposition, which can be obtained from the results in \cite{DKdlRS2019}, gives estimates for $L$.

\begin{prop}\label{prop:Fourier_expansion_Melnikov_RPC4BP}
    The function $L(w, \xi; \mu, G_0)$ satisfies
    \[
    L(w, \xi; \mu, G_0) = L^{[0]}(\mu, G_0) + \sum_{\ell \in \mathbb{N}} L^{[\ell]}(\mu, G_0) \cos(\ell (G_0^3 w + \xi)) ,
    \]
    where
    \[
    \begin{aligned}
	L^{[1]}(\mu, G_0) &= \frac{1}{2} \sqrt{\frac{3 \pi}{2}} \mu (1 - 3 \mu) (1 - 2 \mu) G_0^{-3/2} e^{\dps\tfrac{-G_0^{3}}{3}} \left(1+\bigO\left(G_0^{-1}\right)\right) ,\\
	L^{[2]}(\mu, G_0) &= 4 \sqrt{\pi} \mu (1 - 3 \mu) G_0^{1/2} e^{\dps\tfrac{-2 G_0^{3}}{3}} \left(1+\bigO\left(G_0^{-1}\right)\right) ,\\
	L^{[3]}(\mu, G_0) &= - 27 \sqrt{2\pi} \mu^2 (1-2 \mu) G_0^{3/2} e^{-G_0^3} \left(1+\bigO\left(G_0^{-1}\right)\right) ,\\
    L^{[\ell]}(\mu, G_0) &= \bigO\left(G_0^{\ell-3/2} e^{-\frac{\ell G_0^3}{3}}\right), \quad \text{ for } \ell \geq 4 .
    \end{aligned}
    \]
\end{prop}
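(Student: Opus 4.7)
The strategy is standard for Melnikov-type integrals in singular perturbations, and closely parallels the calculation of the splitting potential in the restricted circular 3-body problem carried out in \cite{DKdlRS2019}. I will therefore only outline the main steps, emphasizing the modifications forced by the Lagrange triangular configuration.

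\emph{Step 1: Multipolar expansion of $V$.} Since the Lagrange triangles have the primaries at fixed angular positions $\phi_0,\phi_1,\phi_2$ in the synodic frame, I would first write $V$ in polar coordinates as the sum of three Kepler-like potentials centered at these angles and at distances $\tilde r_j=\tilde r_j(\mu,G_0)$ from the origin. Subtracting the monopole $-1/\tilde r$ (already part of $\tilde \NN$) and expanding each term in powers of $\tilde r_j/\tilde r$, the resulting series starts at the quadrupole (the dipole vanishes because the origin is the center of mass). Each multipole coefficient depends trigonometrically on $\phi-\phi_j$.

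\emph{Step 2: Fourier expansion in $\xi$ and mass-dependent prefactors.} Regrouping the multipolar expansion into a Fourier series in $\xi$, one reads off the explicit coefficients of $\cos(\ell\xi)$ and $\sin(\ell\xi)$. Two symmetries should be exploited here:
\begin{itemize}
    \item $m_1=m_2$ forces the series to contain only cosines, which is consistent with~\eqref{eq:Fourier_series_Delta_RPC4BP}.
    \item At $\mu=1/3$ the three masses are equal and the Lagrange configuration has a $3$-fold rotational symmetry, which forces all Fourier modes $L^{[\ell]}$ with $\ell\notin 3\N$ to vanish. This is the origin of the factors $(1-3\mu)$ in the leading terms of $L^{[1]}$ and $L^{[2]}$ (and in the error of $L^{[3]}$).
    \item When $\mu=1/2$ one has $1-2\mu=0$, and an additional discrete symmetry appears; this produces the factor $(1-2\mu)$ in $L^{[1]}$ and $L^{[3]}$.
\end{itemize}

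\emph{Step 3: Asymptotics of the integral.} Each Fourier coefficient becomes an integral of the form
\[
L^{[\ell]}(\mu,G_0) = \int_{-\infty}^{+\infty} V^{(\ell)}(\tilde r_h(s);\mu,G_0)\, e^{i\ell(\tilde\alpha_h(s)-G_0^3 s)}\,\mathrm{d}s,
\]
where $V^{(\ell)}$ is the $\ell$-th cosine Fourier coefficient of $V$ in $\xi$. The homoclinic parametrization $v(\tau)=(\tau+\tau^3/3)/2$ has complex singularities at $\tau=\pm i$, i.e.\ $v=\pm i/3$, which are the closest singularities to the real axis. Shifting the integration contour and extracting the contribution near these singularities by a steepest descent/Laplace argument yields the exponential factor $e^{-\ell G_0^3/3}$, while the algebraic prefactor in $G_0$ is determined by the local expansion of $V^{(\ell)}(\tilde r_h(s))$ near $s=\pm i/3$, where $\tilde r_h\to 0$. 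Plugging in the multipolar expansion of Step~1 and performing the Laplace integral gives the explicit constants $\tfrac{1}{2}\sqrt{3\pi/2}$, $4\sqrt{\pi}$ and $-27\sqrt{2\pi}$ and the stated powers of $G_0$.

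\emph{Main obstacle.} The delicate bookkeeping lies in $L^{[3]}$: at $\mu=1/3$ the coefficient is not forced to vanish by the 3-fold symmetry, and its leading term receives contributions from both the quadrupole and the octupole in the multipole expansion; these combine to the factor $\mu^2(1-2\mu)$. Tracking the constants through this cancellation (and verifying that no lower order in $G_0$ survives) is the technical heart of the proof. Once this is handled, the bound for $\ell\ge 4$ follows from the general steepest descent asymptotics without extracting explicit constants.
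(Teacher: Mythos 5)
The paper does not actually prove this proposition: it states that the result ``can be obtained from the results in \cite{DKdlRS2019}'' and leaves it at that, so there is no internal proof to compare against. Your outline is a faithful reconstruction of the standard Melnikov-potential computation that one finds in that reference and in \cite{GMS2016}: multipole expansion about the barycenter (dipole vanishing since the origin is the center of mass), reduction of each $\ell$-th Fourier coefficient in $\xi$ to an integral along the parabolic homoclinic, and exponential asymptotics extracted from the complex singularity of $\tilde r_h$ at $\tau=\pm i$ (equivalently $v=\pm i/3$), which indeed gives the $e^{-\ell G_0^3/3}$ factor after deforming towards the relevant branch point. Your interpretation of the mass factors is also the right mechanism: $1-3\mu$ records the breaking of the $\mathbb Z/3$ symmetry of the equal-mass Lagrange triangle (so modes $\ell\notin 3\mathbb N$ vanish at $\mu=1/3$), and $1-2\mu$ records the $\phi\mapsto\phi+\pi$ symmetry that appears when $m_0=0$ (so odd modes vanish at $\mu=1/2$). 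Your honest flagging of the $L^{[3]}$ cancellation as the technically delicate point is accurate; that is precisely where a careful expansion through the quadrupole and octupole terms is needed. Two small precision remarks, neither fatal: the polynomial $v(\tau)$ itself is entire — it is $\tilde r_h$, $\tilde y_h$, $\tilde\alpha_h$ (as functions of $v$) that develop singularities at $v=\pm i/3$; and the contour cannot literally be ``shifted past'' those branch points, one rather localizes near them and applies a Laplace/watershed estimate, which you do mention. Overall the proposal is sound and matches the intended (externally cited) argument.
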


We are now ready to prove the main result of this appendix.

\begin{prop}
    Let $G_0^\ast > 0$ be big enough. There exists a curve $\Upsilon$ in the parameter region $(\mu, G_0) \in (0, 1/2] \times (G_0^\ast, +\infty)$ of the form $\mu = 1/3 + \bigO(G_0^3 e^{-2 G_0^3/3})$ such that, for $(\mu, G_0) \in \Upsilon$, the invariant curves $\gamma^{u,s}$ have a quadratic homoclinic tangency. Moreover, the quadratic tangency unfolds generically with the parameter $\mu$.
\end{prop}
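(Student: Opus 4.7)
The plan is to translate the existence of a quadratic tangency between $\gamma^u$ and $\gamma^s$ into the existence of a double zero of the splitting function $\Delta'(z)$: one seeks $(\mu,z_0)$ with $\Delta'(z_0)=\Delta''(z_0)=0$ and $\Delta'''(z_0)\neq 0$. Combining Propositions~\ref{prop:approx_inv_manifolds_RPC4BP} and~\ref{prop:Fourier_expansion_Melnikov_RPC4BP}, the crucial observation is that $L^{[1]}$ and $L^{[2]}$ both carry the factor $(1-3\mu)$, whereas $L^{[3]}\sim -\sqrt{2\pi}\,G_0^{3/2}e^{-G_0^3}$ does not vanish at $\mu=1/3$. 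Writing $\mu=1/3+\epsilon$, the ratios become $L^{[1]}/L^{[3]}\sim \epsilon\,G_0^{-3}e^{2G_0^3/3}$ and $L^{[2]}/L^{[3]}\sim \epsilon\,G_0^{-1}e^{G_0^3/3}$, which dictates the balance $\epsilon\sim G_0^3 e^{-2G_0^3/3}$. In this regime $L^{[2]}/L^{[3]}\sim G_0^2 e^{-G_0^3/3}$ is exponentially small, as are $L^{[\ell]}/L^{[3]}$ for $\ell\ge 4$, so to leading order
\[
\Delta'(z)\approx -L^{[1]}\sin z-3L^{[3]}\sin 3z,\qquad \Delta''(z)\approx -L^{[1]}\cos z-9L^{[3]}\cos 3z.
\]

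The next step is to look for the tangency near $z_0=\pi/2$, where both $\cos z_0$ and $\cos 3z_0$ vanish, so $\Delta''(\pi/2)$ automatically disappears at leading order (its sub-leading contribution $-4L^{[2]}$ being exponentially smaller than $L^{[3]}$). The remaining equation $\Delta'(\pi/2)=0$ reduces to $L^{[1]}=3L^{[3]}$, which fixes $\epsilon=c_\ast G_0^3 e^{-2G_0^3/3}\bigl(1+\mathcal{O}(G_0^{-1})\bigr)$ for an explicit nonzero constant $c_\ast$. Writing $z_0=\pi/2+\zeta$ and $\epsilon=c_\ast G_0^3 e^{-2G_0^3/3}+\tilde\epsilon$, the implicit function theorem applied to the $2\times 2$ system $F(\zeta,\tilde\epsilon)=(\Delta'(z_0),\Delta''(z_0))$---after rescaling its two components by $L^{[3]}$ and $L^{[2]}$ respectively, so as to make the leading Jacobian non-degenerate---produces the curve $\Upsilon$ with $\zeta=\mathcal{O}(G_0^2 e^{-G_0^3/3})$. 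Quadraticity of the tangency is inherited from the explicit computation
\[
\Delta'''(z_0)\approx L^{[1]}-27L^{[3]}=3L^{[3]}-27L^{[3]}=-24L^{[3]}\neq 0.
\]

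The main technical obstacle is to control the exponentially small error terms from Proposition~\ref{prop:approx_inv_manifolds_RPC4BP} in the $\mu$-direction so that they do not destroy the non-degeneracy of the Jacobian used above. This is handled via Cauchy estimates: the Poincar\'e map depends holomorphically on $\mu$ in a neighborhood of $1/3$ (the primaries' positions being analytic in the masses), so the bounds $|\Delta^{[\ell]}-L^{[\ell]}|$ of Proposition~\ref{prop:approx_inv_manifolds_RPC4BP} transfer to the same exponentially small bounds on $\partial_\mu\Delta^{[\ell]}-\partial_\mu L^{[\ell]}$. Finally, generic unfolding amounts to checking $\partial_\mu\Delta'(z_0)\bigr|_\Upsilon\neq 0$: a direct differentiation gives $\partial_\mu[\mu(1-3\mu)(1-2\mu)]|_{\mu=1/3}=-1/3\neq 0$ whereas $\partial_\mu[\mu^2(1-2\mu)]|_{\mu=1/3}=2\mu(1-3\mu)|_{\mu=1/3}=0$, so $\partial_\mu L^{[3]}$ vanishes at leading order while $\partial_\mu L^{[1]}$ does not, giving $\partial_\mu\Delta'(\pi/2)\approx -\partial_\mu L^{[1]}|_{\mu=1/3}\neq 0$ and confirming genericity.
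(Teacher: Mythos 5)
Your proposal follows the same route as the paper's proof: write $\mu = 1/3 + \mathcal{O}(G_0^3 e^{-2G_0^3/3})$ to balance $L^{[1]}$ against $L^{[3]}$, observe that $L^{[2]}$ and $L^{[\ell]}$ for $\ell\ge4$ are exponentially smaller in this regime, center the tangency at $z_0=\pi/2$ where the odd-frequency cosines vanish so that $\Delta''(\pi/2)$ reduces to exponentially small terms, solve $L^{[1]}=3L^{[3]}$ for the leading constant, close with the implicit function theorem, and verify $\Delta'''\approx L^{[1]}-27L^{[3]}\neq 0$ and $\partial_\mu\Delta'(\pi/2)\approx -\partial_\mu L^{[1]}\neq 0$. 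You are slightly more explicit about the two-variable IFT and the genericity computation than the paper, which is fine; a small slip is the sign of the subleading $\ell=2$ term in $\Delta''(\pi/2)$, which is $+4\Delta^{[2]}$ rather than $-4L^{[2]}$ since $\cos\pi=-1$, but this does not affect the argument.
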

\begin{proof}
In view of the parameterization of the invariant curves $\gamma^{u,s}$ given in the variables $(v,\xi)$ in \eqref{eq:param_inv_curves_RPC4BP}, together with the fact that the change of variables $(w,\xi) \mapsto (v,\xi)$ is close to identity as stated in Proposition~\ref{prop:straigthen_operator_RPC4BP}, in order to prove the existence of a quadratric tangency between $\gamma^{u,s}$ we have to find a solution $(z,\mu,G_0)$ of the system
\begin{equation}\label{eq:existence_quadratic_tang_RPC4BP}
\begin{aligned}
    \Delta'(z; \mu, G_0) &= 0 \\
    \Delta''(z; \mu, G_0) &= 0 
\end{aligned}
\end{equation}
for which 
\[
\Delta'''(z; \mu, G_0) \neq 0 
\]
First, let us take the parameter $\mu$ of the form $\mu=\tfrac{1}{3} + \hat{\mu} G_0^3 e^{-2 G_0^3/3}$, with $\hat{\mu}=\bigO(1)$. Then, from Propositions~\ref{prop:Fourier_series_Delta_RPC4BP},~\ref{prop:Fourier_expansion_Melnikov_RPC4BP} and~\ref{prop:approx_inv_manifolds_RPC4BP}, we know that
\[
\begin{aligned}
    \Delta^{[1]} &= -\frac{3}{2} \sqrt{\frac{3 \pi}{2}} \hat{\mu} \mu (1 - 2 \mu) G_0^{3/2} e^{-G_0^3} + \bigO\left(G_0 e^{-G_0^3}\right) , \\
    \Delta^{[3]} &= - 27 \sqrt{2\pi} \mu^2 (1-2 \mu) G_0^{3/2} e^{-G_0^3} + \bigO\left(G_0 e^{-G_0^3}\right) , \\
    \Delta^{[\ell]} &= \bigO\left(G_0^{7/2}e^{-\frac{4 G_0^3}{3}}\right) \quad \text{ for } \ell \neq 1,3.
\end{aligned}
\]
Thus \eqref{eq:existence_quadratic_tang_RPC4BP} becomes
\[
\begin{aligned}
    -\Delta^{[1]} \sin(z) - 3 \Delta^{[3]} \sin(3 z) + \mathcal{R}'(z; \mu, G_0) &= 0 \\
    -\Delta^{[1]} \cos(z) - 9 \Delta^{[3]} \cos(3 z) + \mathcal{R}''(z; \mu, G_0) &= 0 \\
\end{aligned}
\]
and the non-vanishing of the third derivative takes the form
\[
   \Delta^{[1]} \sin(z) + 27 \Delta^{[3]} \sin(3 z) + \mathcal{R}'''(z; \mu, G_0) \neq 0
   \]
where $\mathcal{R}$ is a Fourier cosine series whose coefficients are of order $\bigO\left(G_0^{7/2} e^{-\frac{4 G_0^3}{3}}\right)$. Taking $z=\pi/2$, the previous system boils down to satisfying
\[
    -\Delta^{[1]} + 3 \Delta^{[3]} + \bigO\left(G_0^{7/2}e^{-\frac{4 G_0^3}{3}}\right) = 0.
\]
By the implicit function theorem, this  equation has a solution of the form $\hat{\mu}=\hat{\mu}(G_0) = 12 \sqrt{3} + \bigO\left(G_0^3 e^{-2 G_0^3/3}\right)$, which clearly satisfies that
\[
\Delta^{[1]} - 27 \Delta^{[3]} + \bigO\left(G_0^{7/2}e^{-\frac{4 G_0^3}{3}}\right) \neq 0.  
\]
This proves the existence of a quadratic homoclinic tangency between the invariant curves $\gamma^{u,s}$ along a curve $\Upsilon$ parameterized by $\mu=\mu(G_0)=1/3 + 12\sqrt{3} G_0^3 e^{-2G_0^3/3} + \bigO\left(G_0^6 e^{-4G_0^3/3}\right)$. %Note that here $\mu > 1/3$.

It is trivial to check that, since the derivative of the function $\mu\mapsto \Delta'(z;\mu,G_0)$  does not vanish at $\mu=\mu(G_0)$ this tangency unfolds generically.
\end{proof}

\section{Proof of Proposition \ref{prop:redressament_de_les_varietats}}\label{sec:appendixtechnical}

The proof of Proposition \ref{prop:redressament_de_les_varietats} is split in two steps.
\subsection{Proof of Proposition~\ref{prop:redressament_de_les_varietats}. First step}

In this section we perform a finite number of steps of normal form. This is achieved by the standard Lie method, using the Poisson bracket $\{\cdot,\cdot\}$ in~\eqref{def:parentesi_de_Poisson}. Like in the hyperbolic case, the homological equation allows to kill all terms of a given degree in $(q,p)$, except some resonant ones, independent of $t$. We check that there are no resonant terms in the original Hamiltonian and that no resonant terms are produced if an appropriate choice is made at each step. This argument requires the Hamiltonian to be even in $t$. If this assumption is not met, it is possible to obtain an analogous result with a normal form consisting only on time independent resonant terms, analogous to Birkhoff normal form. 

\begin{prop}
	\label{prop:forma_normal_formal}
	Let $H$ be the Hamiltonian in~\eqref{def:HamiltoniaH}. Let $N \in \N$ be fixed. There exists $\rho>0$ (depending on $N$) and a close to identity real-analytic change of coordinates $\Phi$, defined on $\mathbb B_\rho^2\times\mathbb T_\rho\subset\mathbb C^2\times(\mathbb C/2\pi\mathbb Z)$, satisfying $\pi_t \Phi = \Id$, preserving the 2-form $\omega$ and  such that $H\circ\Phi=\NN  + \wt H_1$, with
	\[
	 \wt H_1(q,p,t) =   (q+p)^3 g_N(q,p,t),
	\]
	where $\NN$ is defined in~\eqref{def:Hamiltonia_N_i_Hu},  and $g_N(q,p,t) = \OO_N(q,p)$.
\end{prop}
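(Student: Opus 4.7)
The plan is to construct $\Phi$ as a finite composition of Hamiltonian Lie transforms $\Phi_{F_m}^1$ that kill, one degree at a time, the homogeneous components of the perturbation $\tilde H_1$ in $(q,p)$. The core structural issue is that the singular symplectic form produces
\[
\{\NN,F\} = (q+p)^3\bigl(q\partial_q F - p\partial_p F\bigr) - \partial_t F,
\]
a shift of degree $+3$ in the spatial piece against a degree-preserving $\partial_t$ piece. This precludes the naive hyperbolic Lie scheme and forces a careful choice of generator.

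The key observation I would exploit is that for any polynomial $P$ and any $F$, the bracket $\{(q+p)^3 P,F\}$ is automatically divisible by $(q+p)^5$. Therefore every higher-order contribution $\mathrm{ad}_F^k H_1$ ($k\ge 1$) in the Lie series retains the $(q+p)^3$ factor, and the only mechanism by which $\{\NN,F\}$ can spoil the factorization $H_1=(q+p)^3\tilde H_1$ is via the $-\partial_t F$ term. I would therefore restrict to the composite ansatz
\[
F_m = (q+p)^3\,\tilde F_m(q,p,t) + F_m^{\mathrm{st}}(q,p),
\]
with $F_m^{\mathrm{st}}$ time-independent, both $\tilde F_m$ and $F_m^{\mathrm{st}}$ homogeneous of degree $m$ in $(q,p)$, and both independent of $I$. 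Then $\partial_t F_m=(q+p)^3\partial_t\tilde F_m$ is divisible by $(q+p)^3$, each $\Phi_{F_m}^1$ preserves $\omega$, fixes $t$ (so $\pi_t\Phi=\mathrm{Id}$), and the $(q+p)^3$ factor is kept throughout the iteration.

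Expanding $\tilde H_1=\sum_{m\ge 1}\tilde H_1^{(m)}$ and writing $\tilde H_1^{(m)} = \tilde H_1^{(m),\mathrm{av}} + \tilde H_1^{(m),\mathrm{osc}}$, the degree-$m$ piece of $\{\NN,F_m\}/(q+p)^3$ is $(q\partial_q - p\partial_p)F_m^{\mathrm{st}} - \partial_t\tilde F_m$, and the homological equation decouples into
\[
\partial_t\tilde F_m^{\mathrm{osc}} = \tilde H_1^{(m),\mathrm{osc}},\qquad (q\partial_q - p\partial_p)F_m^{\mathrm{st}} = -\tilde H_1^{(m),\mathrm{av}}.
\]
The first is solved by Fourier series, the second by inverting $(q\partial_q-p\partial_p)$ monomial by monomial via $q^ap^b\mapsto(a-b)q^ap^b$; the kernel consists of $(qp)^{m/2}$-multiples and is nontrivial only for even $m$. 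At step $m=1$ the obstruction is absent because $H_1$ being even in $(q,p)$ and $(q+p)^3$ odd force $\tilde H_1$ to be odd in $(q,p)$, so only odd degrees are initially present.

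The hard part will be the resonance analysis at the even degrees produced along the iteration. At step $m$, the new degree-$(m+3)$ contribution from $\{\NN,F_m\}/(q+p)^3$ takes the form $q^ap^b(q+p)^2\bigl[(3+a-b)q + (a-b-3)p\bigr]$ for a monomial $\tilde F_m = q^ap^b$; a direct computation shows that the $q^kp^k$-coefficient with $k=(m+3)/2$ can only arise when $b-a\in\{3,1,-1,-3\}$, and in each of these four cases the relevant coefficient $\alpha=3+a-b$, $\beta+2\alpha$, $2\beta+\alpha$, or $\beta=a-b-3$ is identically zero. Hence the leading obstruction to $(qp)^k$-resonance vanishes structurally. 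The higher-order Lie contributions $\mathrm{ad}_F^k H_1$ may in principle produce $(qp)^j$-components at yet higher degrees; these are neutralized using the free time-averages $\langle\tilde F_m\rangle_t$, which drop out of the degree-$m$ homological equation and can be tuned inductively to cancel any such obstruction. Iterating from $m=1$ up to $m=N-1$ and composing the corresponding Lie flows yields $\Phi=\Phi_{F_{N-1}}^1\circ\cdots\circ\Phi_{F_1}^1$ on some $\mathbb B_\rho^2\times\mathbb T_\rho$ with $\rho=\rho(N)>0$ shrinking as $N$ grows, and the identity $H\circ\Phi=\NN+(q+p)^3 g_N$ with $g_N=\mathcal O_N$ holds by construction. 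The main obstacle is precisely this resonance analysis: ensuring that the singular Poisson bracket never produces a $(qp)^k$-resonance in the time-averaged perturbation at any step.
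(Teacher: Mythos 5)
Your ansatz $F_m = (q+p)^3\tilde F_m + F_m^{\mathrm{st}}$ with time-independent $F_m^{\mathrm{st}}$, and the resulting decoupled homological equations, are essentially the ones used in the paper, and your observation that the range of $q\partial_q - p\partial_p$ misses every $(qp)^k$-monomial — so the first Lie bracket $\{\NN,F_m\}$ cannot feed a resonance into degree $m+3$ — is correct and cleanly stated.

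The gap is in the treatment of the higher Lie orders. The structural vanishing you verified applies only to the first bracket, which is of the form $(q\partial_q-p\partial_p)[(q+p)^3\tilde F_m]$; the terms $\mathrm{ad}_{F}^j H$ for $j\ge 2$, and the brackets with $H_1$, are \emph{not} of the form $(q\partial_q-p\partial_p)(\cdot)$ and do generically feed $(qp)^k$-monomials into the time-averaged perturbation. For instance, writing $\{\NN,F_m\} = (q+p)^3 G$, the second bracket divided by $(q+p)^3$ contains $3(q+p)^2 G(\partial_p F_m - \partial_q F_m)$, and a monomial-by-monomial check with $G \ni q^ap^b$, $F_m \ni q^cp^d$ produces $(qp)^k$-coefficients proportional to $3d$, $6d-3c$, $3d-6c$, $-3c$ according to which of the residues $(b+d)-(a+c)\in\{3,1,-1,-3\}$ occurs; none of these vanish identically. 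Your proposed remedy — tuning the free time-average $\langle\tilde F_m\rangle_t$ — is undercut by your own first-order computation: $(q\partial_q-p\partial_p)[(q+p)^3\langle\tilde F_m\rangle_t]$ has identically zero $(qp)^k$-component, so that free parameter does not act on the resonance at leading order, and you give no argument for a higher-order cancellation scheme.

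The paper closes this gap by exploiting a symmetry your proposal never invokes: $H$ is even in $t$ (Remark~\ref{paritat_en_t}), and Lemma~\ref{lem:parity_poisson} tracks the $(q,p)$-degree parity and the $t$-parity simultaneously to show that the perturbation stays in $(E^+_{\ge\cdot}\oplus D^-_{\ge\cdot})\cap\mathcal W$ throughout the Lie scheme. From there the resonance-freeness is immediate and works at every order: a $(qp)^k$-monomial of $\tilde H_k$ lives at odd degree $2k+3$ of $H_k$, and the odd-degree part of $H_k$ lies in $D^-$, hence has odd $t$-parity and zero time-average — there is nothing resonant to kill. The paper explicitly remarks that if $H$ were not even in $t$, resonant terms would persist and the conclusion would weaken to a Birkhoff-type normal form; since your scheme never uses that symmetry, it cannot establish the resonance-free statement of the Proposition as written.
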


We  introduce four different  algebras of formal series  on $(p,q)$ with periodic coefficients. 
To begin with, for $r \in \mathbb{N}$, consider the spaces of homogeneous polynomials in $(q,p)$
\[
\begin{aligned}
    P_{r}^{+} &= \{\sum_{\ell+m=r} f_{\ell, m}(t) q^{\ell} p^m \mid f_{\ell, m} \colon \mathbb{R}/2\pi\Z \to \mathbb{R}, \, f_{\ell, m}(-t)=f_{\ell, m}(t), \forall \ell, m \} , \\
    P_{r}^{-} &= \{\sum_{\ell+m = r} f_{\ell, m}(t) q^{\ell} p^m \mid f_{\ell, m} \colon \mathbb{R}/2\pi\Z \to \mathbb{R}, \, f_{\ell, m}(-t)=-f_{\ell, m}(t), \forall \ell, m \},
\end{aligned}
\]
and the corresponding spaces $P_{r}$ for homogeneous polynomials of degree $r$ in $(q,p)$ independent of $t$. Note that $P_{r} \subset P_{r}^{+}$. We also
define, for $k\in \mathbb{N}$, the vector spaces of formal series $E_{\ge 2k}^{\pm} = \bigoplus_{r \ge k} P_{2r}^{\pm}$ and $D_{\ge 2k-1}^{\pm} = \bigoplus_{r \ge k} P_{2r-1}^{\pm}$.
Clearly if $k<k'$ then $E_{\ge 2k'}^{\pm} \subset E_{\ge 2k}^{\pm}$ and $D_{\ge 2k'-1}^{\pm} \subset D_{\ge 2k-1}^{\pm}$. It is immediate to check that they are  algebras with the standard product. Moreover, we can consider the action of formal differentiation on these algebras. 

The following lemma is immediate.

\begin{lem}\label{lem:parity_poisson} 
    The following inclusions hold:
    \begin{align}
    \{E_{\ge 2k}^{+}, E_{\ge 2k}^{+}\} \subset D_{\ge 4k+1}^{+}\label{eq:(E+,E+)} ,\\
    \{ E_{\ge 2k}^{+}, E_{\ge 2k}^{-}\}, \{E_{\ge 2k}^{-}, E_{\ge 2k}^{+}\} \subset D_{\ge 4k+1}^{-}\label{eq:(E+,E-)} ,\\
    \{E_{\ge 2k}^{+},D_{\ge 2k-1}^{+}\}, \{D_{\ge 2k-1}^{+}, E_{\ge 2k}^{+}\}  \subset E_{\ge 4k}^{+}\label{eq:(E+,D+)} ,\\
    \{E_{\ge 2k}^{+},D_{\ge 2k-1}^{-}\}, \{D_{\ge 2k-1}^{-},E_{\ge 2k}^{+}\} \subset E_{\ge 4k}^{-}\label{eq:(E+,D-)} ,\\
    \{E_{\ge 2k}^{-}, E_{\ge 2k}^{-}\} \subset D_{\ge 4k+1}^{+}\label{eq:(E-,E-)} ,\\
    \{E_{\ge 2k}^{-},D_{\ge 2k-1}^{+}\}, \{D_{\ge 2k-1}^{+},E_{\ge 2k}^{-}\} \subset E_{\ge 4k}^{-}\label{eq:(E-,D+)} ,\\
    \{E_{\ge 2k}^{-},D_{\ge 2k-1}^{-}\}, \{D_{\ge 2k-1}^{-},E_{\ge 2k}^{-}\} \subset E_{\ge 4k}^{+}\label{eq:(E-,D-)} ,\\
    \{D_{\ge 2k-1}^{+},D_{\ge 2k-1}^{+}\} \subset D_{\ge 4k-1}^{+}\label{eq:(D+,D+)} ,\\
    \{D_{\ge 2k-1}^{+},D_{\ge 2k-1}^{-}\},  \{D_{\ge 2k-1}^{-},D_{\ge 2k-1}^{+}\} \subset D_{\ge 4k-1}^{-}\label{eq:(D+,D-)} ,\\
    \{D_{\ge 2k-1}^{-},D_{\ge 2k-1}^{-}\} \subset D_{\ge 4k-1}^{+}\label{eq:(D-,D-)} .
    \end{align} 
\end{lem}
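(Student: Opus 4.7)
The plan is to verify all ten inclusions by a single uniform bookkeeping argument that tracks two independent features of the Poisson bracket, namely its effect on the $(q,p)$-degree and its effect on the $t$-parity of the coefficients. First I would observe that every element of $E_{\ge 2k}^{\pm}$ and $D_{\ge 2k-1}^{\pm}$ is independent of $I$, so the $(\partial_t\partial_I-\partial_I\partial_t)$ contribution in~\eqref{def:parentesi_de_Poisson} vanishes identically and the Poisson bracket reduces to
\[
\{F,G\}(q,p,t) = (q+p)^3\bigl(\partial_q F\,\partial_p G - \partial_p F\,\partial_q G\bigr).
\]
In particular the bracket preserves independence from $I$ and $t$-periodicity.

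Next I would carry out the degree count in $(q,p)$. If $F$ is homogeneous of degree $a$ and $G$ of degree $b$ in $(q,p)$, then $\partial_q F\,\partial_p G$ and $\partial_p F\,\partial_q G$ are homogeneous of degree $a+b-2$, and multiplying by $(q+p)^3$ yields a polynomial homogeneous of degree $a+b+1$. By bilinearity, if the minimum degrees of $F$ and $G$ are $m_F$ and $m_G$, then $\{F,G\}$ has minimum degree $m_F+m_G+1$. Applied respectively to the three relevant pairings $(E_{\ge 2k},E_{\ge 2k})$, $(E_{\ge 2k},D_{\ge 2k-1})$ and $(D_{\ge 2k-1},D_{\ge 2k-1})$, this produces the lower bounds $4k+1$, $4k$ and $4k-1$ that appear in the subscripts on the right-hand sides of~\eqref{eq:(E+,E+)}--\eqref{eq:(D-,D-)}.

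I would then run the $t$-parity count in parallel. Since differentiation in $q$ or $p$ does not act on $t$, and multiplication by $(q+p)^3$ is $t$-independent, the $t$-parity of $\{F,G\}$ is simply the product of the $t$-parities of $F$ and $G$: even$\,\times\,$even and odd$\,\times\,$odd give even, while even$\,\times\,$odd gives odd. This pins down every $\pm$ superscript appearing in the statement. Finally, to decide whether the resulting series lies in an $E$ or a $D$ space, one combines the degree shift $a+b+1$ with the parities of $a$ and $b$: adding one to a sum of two even numbers, or to a sum of two odd numbers, produces an odd degree, so brackets of two $E$-type or two $D$-type elements land in a $D$-type space; adding one to an even-plus-odd sum produces an even degree, so mixed $E$--$D$ brackets land in an $E$-type space. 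This matches precisely the ten cases in the lemma.

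There is no real obstacle here: once the explicit formula for $\{\cdot,\cdot\}$ is written down and the observation that the factor $(q+p)^3$ contributes the extra ``$+1$'' to the degree is made, the result follows monomial-by-monomial and extends to the algebras of formal series by bilinearity.
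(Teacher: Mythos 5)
Your argument is correct, and it fills in precisely the elementary bookkeeping that the paper leaves implicit when it declares the lemma ``immediate.'' The three ingredients you isolate — the $I$-independence killing the $\partial_t\partial_I$ block, the $(q+p)^3$ factor raising the $(q,p)$-degree to $a+b+1$, and the multiplicativity of $t$-parity under taking the coefficient products — are exactly what is needed, and chasing their combinations through the ten cases recovers every inclusion as stated.
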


Finally, we define
\[
\mathcal{W} = \{(q+p)^3 f(q,p,t), \text{ for some analytic function } f \colon \mathbb B_\rho^2\times\mathbb T_\rho\subset\mathbb C^2\times(\mathbb C/2\pi\mathbb Z) \to \mathbb C\} .
\]
Since functions in $\mathcal{W}$ do not depend on $I$, it is clear that from the definition of the Poisson bracket in~\eqref{def:parentesi_de_Poisson} that, if $g, \tilde{g} \in \mathcal{W}$, then $\{g, \tilde{g}\} \in \mathcal{W}$.

We will consider changes of coordinates $\Phi_F^1$ given by the time $1$ flow of a Hamiltonian $F$, with respect to the 2-form $\omega$. Then, 
\begin{equation}\label{eq:Lie_series}
H \circ \Phi_F^1 = \sum_{j\ge 0} \frac{1}{j!} \mathrm{ad}_F^j H,
\end{equation}
where $\mathrm{ad}_F^0 H = H$ and $\mathrm{ad}_F^j H = \{\mathrm{ad}_F^{j-1} H, F\}$, for $j\ge 1$. 
Moreover, using~\eqref{def:parentesi_de_Poisson},
\[
\{\NN, F\} = (q+p)^3 \left(q \frac{\partial F}{\partial q} - p \frac{\partial F}{\partial p} \right) - \frac{\partial F}{\partial t}.
\]

\begin{lem}\label{lem:iterative_first_step}
    Let $k \in \mathbb{N}$. Given $H=\NN+H_{k}$ such that $H_{k} \in (E_{\ge 2k+2}^{+} \oplus D_{\ge 2k+3}^{-}) \cap \mathcal{W}$, there exists a close to identity real-analytic change of coordinates $\Phi$ defined on $\mathbb B_\rho^2\times\mathbb T_\rho\subset\mathbb C^2\times(\mathbb C/2\pi\mathbb Z)$, satisfying $\pi_t \Phi_k = \Id$, preserving the 2-form $\omega$ and such that $\wt H = H \circ \Phi$ is of the form $\wt H = \NN + H_{k+1}$, with $H_{k+1} \in (E_{\ge 2k+4}^{+} \oplus D_{\ge 2k+5}^{-}) \cap \mathcal{W}$.
\end{lem}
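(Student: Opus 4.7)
The plan is to construct $\Phi$ as the time-one flow $\Phi_F^1$ of an auxiliary Hamiltonian $F$ chosen to solve a homological equation that cancels the lowest-degree part of $H_k$. Since $\pi_t F = 0$ (we take $F$ to depend only on $(q,p,t)$ with no $I$-dependence), the resulting $\Phi_F^1$ automatically satisfies $\pi_t \Phi = \Id$, and is symplectic with respect to $\omega$. Decompose $H_k = A + B + R$, where $A \in P_{2k+2}^{+}$ is the lowest-degree even part, $B \in P_{2k+3}^{-}$ the lowest-degree odd part, and $R \in E_{\ge 2k+4}^{+} \oplus D_{\ge 2k+5}^{-}$. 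The constraint $H_k \in \mathcal W$ forces $(q+p)^3 \mid A$ and $(q+p)^3 \mid B$.

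The first main step is to solve the homological equation $\{\NN, F\} = -A - B$ modulo terms in $E_{\ge 2k+4}^{+}\oplus D_{\ge 2k+5}^{-}$. Using the explicit form $\{\NN, F\} = (q+p)^3(q\partial_q F - p\partial_p F) - \partial_t F$, we look for $F = F_A + F_B$ with $F_A \in P_{2k+2}^{-}$ and $F_B \in P_{2k+3}^{+}$, plus lower-degree correction monomials to handle time-averaged (resonant) components. Monomial by monomial, writing $F = \sum_{a,b} f_{a,b}(t) q^a p^b$, the equation reduces to the scalar ODE $-f_{a,b}'(t) = (\text{coefficient of } q^a p^b)$ up to a shift coming from the degree-raising term $(q+p)^3(q\partial_q F - p\partial_p F)$. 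Because the coefficients of $A$ are even in $t$, a solution $f_{a,b}$ odd in $t$ exists iff the forcing has zero mean; any nonzero mean is absorbed by a degree-$(2k-1)$ correction in $F$, which is possible precisely because $(q+p)^3 \mid A$. The same argument, with parities swapped, solves the equation for $B$. Crucially, the parity choice $F_A \in P_{2k+2}^-$, $F_B \in P_{2k+3}^+$ mirrors the way $\{\NN,\cdot\}$ interchanges $E^{\pm}$ and $D^{\pm}$.

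The second step is to verify that the Lie series
\[
H \circ \Phi_F^1 = \NN + H_k + \{\NN, F\} + \sum_{j\ge 2} \tfrac{1}{j!}\, \mathrm{ad}_F^{\,j}\NN + \sum_{j\ge 1} \tfrac{1}{j!}\, \mathrm{ad}_F^{\,j} H_k
\]
produces $H_{k+1} := (H \circ \Phi_F^1) - \NN \in (E_{\ge 2k+4}^{+} \oplus D_{\ge 2k+5}^{-}) \cap \mathcal W$. By construction of $F$, the terms $H_k + \{\NN,F\}$ reduce to $R$ plus higher-order residue. Every further bracket $\mathrm{ad}_F^j \NN$ and $\mathrm{ad}_F^j H_k$ is controlled by Lemma~\ref{lem:parity_poisson}: since $\NN \in E^+_{\ge 2}$ and $F \in D^-_{\ge 2k+1} \oplus E^+_{\ge 2k+2}$, the inclusions~\eqref{eq:(E+,E-)}, \eqref{eq:(E+,D+)}, \eqref{eq:(E-,D-)}, \eqref{eq:(D+,D+)}--\eqref{eq:(D-,D-)} show that each bracket lands in $E^+_{\ge 2k+4} \oplus D^-_{\ge 2k+5}$, with strictly increasing degree at each iteration. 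The stability of $\mathcal W$ under the Poisson bracket (noted after the definition of $\mathcal W$) ensures the result remains in $\mathcal W$. Analyticity and convergence of the Lie series hold on a slightly smaller polydisk once $\rho$ is taken small enough (depending on $k$), which is acceptable since the outer Proposition~\ref{prop:redressament_de_les_varietats} only performs finitely many such steps.

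The principal obstacle is the homological step: one must verify that the resonance structure of the operator $F \mapsto \{\NN,F\}$ is compatible with the decomposition imposed by $\mathcal W$ and the parities in $t$. The point is that the degree-preserving part $-\partial_t F$ has cokernel precisely the $t$-independent (resonant) terms, and these can be killed by the degree-raising contribution $(q+p)^3 (q\partial_q F - p\partial_p F)$, which maps degree $2k-1$ to degree $2k+2$ through the factor $(q+p)^3$. This matching is exactly what the hypothesis $H_k \in \mathcal W$ guarantees; without it, obstructions would appear. The parity tracking via Lemma~\ref{lem:parity_poisson} is then the bookkeeping tool that makes the iterative scheme close up cleanly, and is ultimately rooted in the time-reversal symmetry of $H$ noted in Remark~\ref{paritat_en_t}.
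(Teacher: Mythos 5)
Your proof is correct and follows essentially the same approach as the paper: set up the homological equation $\{\NN,F\}=-\pi_{P_{2k+2}^{+}}H_k-\pi_{P_{2k+3}^{-}}H_k$, solve the time-dependent part with a generator of the opposite $t$-parity, and absorb the $t$-averaged (resonant) part of $\pi_{P_{2k+2}^{+}}H_k$ via a degree-$(2k-1)$ time-independent correction, which is possible precisely because $(q+p)^3$ divides $H_k$ (i.e.\ $H_k\in\mathcal W$) and because $2k-1$ is odd so no $q^mp^m$ resonance occurs; the parity bookkeeping for the tail of the Lie series is exactly Lemma~\ref{lem:parity_poisson}. The only organizational difference is that the paper splits the step into two successive changes of variables (first killing $P_{2k+2}^{+}$, then $P_{2k+3}^{-}$), whereas you merge them into a single generator $F=F_{\mathrm{corr}}+F_A+F_B$; a quick degree/parity check confirms this merged version still lands in $E_{\ge 2k+4}^{+}\oplus D_{\ge 2k+5}^{-}$ for all $k\ge 1$, so both are valid.
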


\begin{proof}
    The proof is divided into two steps. In the first one, we look for some $F^{[1]}$ so that the corresponding transformation $\Phi_{F^{[1]}}^1$ given by \eqref{eq:Lie_series} kills the terms in $P_{2k+2}^{+}$ of $H_{k}$, obtaining a new Hamiltonian $\NN + H_{k}^{[1]}$ with $H_{k}^{[1]}\in (E_{\ge 2k+4}^{+} \oplus D_{\ge 2k+3}^{-}) \cap \mathcal{W}$. Then, in the second one, we use some $F^{[2]}$ to kill the terms in $P_{2k+3}^{-}$ of $H_{k}^{[1]}$, obtaining a new Hamiltonian $\NN+H_{k}^{[2]}$ where $H_{k}^{[2]}\in (E_{\ge 2k+4}^{+} \oplus D_{\ge 2k+5}^{-}) \cap \mathcal{W}$. Then we take $H_{k+1} := H_{k}^{[2]}$ and $\Phi := \Phi_{F^{[2]}}^1 \circ \Phi_{F^{[1]}}^1$. We now proceed to explain in detail how to perform each of these steps.

    For the first step, we look for $F^{[1]}$ such that $\{\NN, F^{[1]}\}=-\pi_{P_{2k+2}^{+}} H_k$. Writing $\pi_{P_{2k+2}^{+}} H_k = (q+p)^3 h_1(q,p) + h_2(q,p,t)$, for some $h_1 \in P_{2k-1}$, $h_2 \in P_{2k+2}^{+}$, $h_2$ with zero mean respect to $t$, it is clear that we have to look for $F^{[1]}$ of the form $F^{[1]}=F_1^{[1]} + F_2^{[1]}$, for some $F_1^{[1]}\in P_{2k-1}$ and $F_2^{[1]} \in P_{2k+2}^{-}$ satisfying
    \[
    \begin{aligned}
     q \frac{\partial F_1^{[1]}}{\partial q} (q,p)-p \frac{\partial F_1^{[1]}}{\partial p} & = - h_1(q,p), \\
     \frac{\partial F_2^{[1]}}{\partial t} (q,p,t) & = h_2(q,p,t).
	\end{aligned}
     \]
     
     The first equation has a unique solution because the only resonant monomials of the operator $F \to q \frac{\partial F}{\partial q} (q,p)-p \frac{\partial F}{\partial p}$, where $F$ is a homogeneous polynomial of degree $m$ in $(q,p)$, are $q^{[m/2]} p^{[m/2]}$ and the degree of $h_1$ is $2k-1$, which is odd. The second one can be solved because $h_2(t,q,p)$ has zero mean; moreover, we can choose a solution $F_2^{[1]}$ which is odd in $t$, i.e., such that $F_2^{[1]} \in P_{2k+2}^{-}$.

     For this $F^{[1]}$, and taking into account \eqref{eq:Lie_series} and Lemma~\ref{lem:parity_poisson}, it is immediate to check that $H_k \circ \Phi_{F^{[1]}}$ is of the form $\NN + H_k^{[1]}$, where $H_k^{[1]} \in (E_{\ge 2k+4}^{+} \oplus D_{\ge 2k+3}^{-}) \cap \mathcal{W}$. 

     For the second step, we look for $F^{[2]}$ such that $\{\NN, F^{[2]}\}=-\pi_{P_{2k+3}^{-}} H_k^{[1]}$. It is then clear that we have to look for $F^{[2]}\in P_{2k+3}^{+}$ satisfying
    \[
     \frac{\partial F^{[2]}}{\partial t} (q,p,t) = \pi_{P_{2k+1}^{-}} H_k^{[1]} .
     \]
     This equation has a solution because the right-hand side, being odd in $t$, has zero mean. For this $F^{[2]}$, and taking into account \eqref{eq:Lie_series} and Lemma~\ref{lem:parity_poisson}, it is immediate to check that $H_k^{[1]} \circ \Phi_{F^{2]}}$ is of the form $\NN + H_k^{[2]}$, where $H_k^{[2]} \in (E_{\ge 2k+4}^{+} \oplus D_{\ge 2k+5}^{-}) \cap \mathcal{W}$.
\end{proof}

\begin{proof}[Proof of Proposition~\ref{prop:forma_normal_formal}]
Since $H_1$ in~\eqref{def:Hamiltonia_Hu} satisfies that $H_1 \in E_{\ge 4}^{+} \cap \mathcal{W}$, the proof follows applying Lemma~\ref{lem:iterative_first_step} iteratively.
\end{proof}  	
   	
\subsection{Proof of Proposition~\ref{prop:redressament_de_les_varietats}: Second step}
\label{sec:forma_normal:segon_pas}
   
 In this section we use that the periodic orbit at $(q,p) = (0,0)$ possesses stable and unstable invariant manifolds \cite{McGeheestablemfold}. Straightening them allows us to kill an infinite number of terms of the expansion of the Hamiltonian. The procedure uses the functions that parametrize the invariant manifolds. It is well known that, since the periodic orbit is degenerate, in general these functions are no analytic at the periodic orbit but (a priori) only in some sectorial domain. 
 This fact will have significant impact on the proof.  
 For this reason, we start the section by introducing some definitions of spaces of functions. Let $V_{\rho}$ be the sectorial domain introduced in \eqref{eq:sectorialdom}.
Define the spaces of functions
\[
%\begin{aligned}
\X_{r,\rho}  = \{f: V_{\rho}\times \T_\sigma \to \C\mid \text{analytic, \;$\|f\|_{r,\rho} < \infty$}\}, 
%\\
% \Y_{r,s,N,\rho} & = \{F: V_{\tilde \kappa \rho,\rho} \times V_{\tilde \kappa \rho,\rho} \times \T_\sigma \to \C\mid \text{analytic, }\\ & \text{ $z_1^{-r}z_2^{-s}F(z_1,z_2,t)$
%  continuous at $\{0\} \times V_{\tilde \kappa \rho,\rho} \times \T_\sigma \cup V_{\tilde \kappa \rho,\rho} \times \{0\} \times \T_\sigma $, \; $\|F\|_{r,s,N,\rho} < \infty$}\},
%\end{aligned}
\]
where $\T_\sigma = \{t \in \C/2\pi\Z\mid |\Im t | < \sigma\}$ and
\[
%\begin{aligned}
  \|f\|_{r,\rho}  = \sup_{(z,t)\in V_{\rho}\times \T_\sigma} |z^{-r} f(z,t)|
  % \\
  % \|F\|_{r,s,N,\rho} & = \sup_{(z_1,z_2,t)\in V_{\tilde \kappa \rho,\rho}\times V_{\tilde \kappa \rho,\rho}\times\T_\sigma} |z_1^{-r} z_2^{-s} (|z_1|+|z_2|)^{-N} F(z_1,z_2,t)|
%\end{aligned}
\]
It is clear that $\X_{r,\rho}$ 
%and $\Y_{r,s,N,\rho}$ are 
is a Banach space and the norm satisfies
\[
%\begin{aligned}
\|f g \|_{r+s,\rho} \le \|f\|_{r,\rho} \|g \|_{s,\rho}.
% \\ 
% \|F G\|_{r_F+r_G,s_F+s_G,N_F+N_G,\rho} & \le \|F \|_{r_F,s_F,N_F,\rho}
% \|G\|_{r_G,s_G,N_G,\rho}.
%\end{aligned}
\]

% Some properties of these spaces are the following.

% \begin{lem}
% \label{lem:composicio_de_funcions}
% Let $n, r,s, >0$, $N,M\ge 2$. Let $f \in \X_{n,\rho}$, $F\in \Y_{r,s,N,\rho}$, $G_1 \in \Y_{1,0,M,\rho}$, $G_2 \in \Y_{0,1,M,\rho}$.
% Then
% \begin{enumerate}
%     \item 
% \end{enumerate}
% \end{lem}

 We are now ready to complete the proof of Proposition \ref{prop:redressament_de_les_varietats}.
 
 \begin{proof}[Proof of Proposition \ref{prop:redressament_de_les_varietats}]
 Let $N = \wt N+2M$. Let $H = \NN + \wt H_1$ be the Hamiltonian obtained in Proposition~\ref{prop:forma_normal_formal}, where $\wt H_1(q,p,t) = (q+p)^3(f_4(qp)+ \OO_{N-2}(q,p))$ is analytic in a neighborhood of $\{q=p=0, \; t \in \T\}$. 
  Assume that $(\wt H_1)_{\mid q = 0}, (\wt H_1)_{\mid p = 0} \not \equiv 0$. Otherwise, the claim is proved. 
 Performing an additional step of normal form as in the proof of Proposition~\ref{prop:forma_normal_formal},
 we can assume that
 \begin{equation}
 \label{def:ordres_minims_no_nuls}
 \wt  H_1 (q,0,t) =  q^{N+1} + \OO_{N+2}(q), \qquad  \wt  H_1 (0,p,t) =  p^{N+1} + \OO_{N+2}(p).
 \end{equation}
 
 By Theorem 2.10 of~\cite{BaldomaFM20},
 the periodic orbit at $q=p=0$ possesses local invariant stable and unstable manifolds, $W^{u,s}$, of the form
 \begin{equation}
 	\label{def:Ws_i_Wu}
 W^s = \{p = \gamma^s(q,t) \}, \qquad  W^u = \{q = \gamma^u(p,t)\}, 
 \end{equation}
 where the functions $\gamma^{u,s}(x,t)$ are $C^\infty$ in a neighborhood of the $\{0\}\times (\R/2\pi\Z)$, satisfy $\gamma^{u,s}(x,t) = \OO_2(x)$, and are analytic for $(x,t) \in V_{\rho} \times \mathbb{T}_{\sigma}$, for some small enough $\rho, \sigma >0$, where $V_{\rho}$ is the sectorial domain defined in \eqref{eq:sectorialdom} and $\mathbb{T}_{\sigma}=\{t \in \C/2\pi\Z \mid |\Im t | < \sigma\}$.
 
 First we claim that, under the current hypotheses, 
% \[
% \gamma^{u,s}(x,t) = \bar \gamma_{N_{p,q}}^{u,s} x^{N_{p,q}} +\hat \gamma_{N_{p,q}+3}^{u,s} (t) x^{N_{p,q}+3}+ \OO(x^{N_{p,q}+1}),
% \]
  \[
 \gamma^{u,s}(x,t) =  x^{N} + \OO(x^{N+1}).
 \] 	
The proof of the claim is a straightforward computation. Indeed, in the case of the stable manifold, it follows from 
  writing $\gamma^s(x,t) =  \alpha x^k  + \wt \OO(x^{k+1}) + \OO(x^{k+4})$, where $ \wt \OO(x^{j}) $ means $\OO(x^j)$ independent of $t$, with $k\ge 2$, imposing the invariance condition 
 \[
 \dot p_{\mid p= \gamma^s(q,t)}- \partial_q \gamma^s (q,t) \dot q_{\mid p= \gamma^s(q,t)} - \partial_t \gamma^s (q,t) = 0,
 \]
and taking into account that
 \[
 \begin{aligned}
 \dot p_{\mid p = \gamma^s(q,t)} & = \alpha q^{k+3}  -(N+1) q^{N+3} + \OO(q^{k+4})  + \OO(q^{N+4}), \\
 \partial_q \gamma^s (q,t) \dot q_{\mid p= \gamma^s(q,t)}  & = k \alpha q^{k+3} + \OO(q^{k+4})  ,\\
  \partial_t \gamma^s (q,t)  & = \OO(q^{k+4}).
\end{aligned}
 \]
Hence, $k=N$ and $\alpha = 1$.

Next, we perform two consecutive changes of variables. The first one straightens the stable manifold and, the second one, the unstable one. Unlike the changes in Proposition~\ref{prop:forma_normal_formal}, these changes are not analytic in a neighborhood of the origin: they are only defined in sectorial domains.

To do so, let $F(q,t)$ be the function defined by
\begin{equation}
	\label{def:F_redressant_varietat_estable}
\frac{\partial F}{\partial q}(q,t) =\frac{1}{2q^2}\left(\frac{1}{\left(1+q^{-1} \gamma^s(q,t)\right)^2} - 1\right), \qquad F(0,t) = 0.
\end{equation}

It is clear that $\partial_q F(q,t) = -q^{N-3} + \wt \OO_{N-2}(q)+ \OO_{N+1}(q)$ and is analytic in $V_{\rho}\times \T_{\sigma}$, for some $\rho,\sigma >0$. Let $\Phi_F^1$ be the time 1 map of the Hamiltonian $F$. Since $F$ only depends on $(q,t)$, it is immediate to check that
\[
\Phi_F^1(q,p,t,I)  = \left(q, p +P(q,p,t),t,I- \frac{\partial F}{\partial t}(q,t)\right).
\]
where 
\begin{equation}
\label{def:Pqpt}
P(q,p,t)= \frac{q+p}{\left( 1+2(q+p)^2\frac{\partial F}{\partial q}(q,t)\right)^{1/2}}-(q+p) = (q+p)^3 q^{N-3}+(q+p)^3\OO_{N-2}(q,p)
\end{equation}
is defined and analytic in $V_{\rho}\times B_\rho \times \T_{\sigma} \times \C$,
$C^\infty$ at $\{0\}\times B_\rho \times \T_{\sigma} \times \C$ and, for some $\rho' > \rho$,
\[
\Phi_F^1 (V_{\rho}\times B_\rho \times \T_{\sigma} \times \C ) \subset V_{\rho'}\times B_{\rho'} \times \T_{\sigma} \times \C.
\]
Hence,  $H \circ \Phi_F^1$ is analytic in $V_{\rho}\times B_\rho \times \T_{\sigma} \times \C$,
$C^\infty$ at $\{0\}\times B_\rho \times \T_{\sigma} \times \C$.

In view of~\eqref{def:Ws_i_Wu}, the stable manifold in the new variables is given by
\[
\pi_p \Phi_F^1(q,p,t,I) - \gamma^s (\pi_q \Phi_F^1(q,p,t,I), \pi_t \Phi_F^1(q,p,t,I)) = 0,
\]
that is,
\[
 \frac{q+p}{\left( 1+2(q+p)^2\frac{\partial F}{\partial q}(q,t)\right)^{1/2}}-q - \gamma^s(q,t) = 0. 
\]

Taking into account~\eqref{def:F_redressant_varietat_estable}, an immediate computation shows that the above equation is equivalent to $p = 0$. Hence,
\[
0 = \dot p_{\mid p =0} = -(q+p)^3 \frac{\partial }{\partial q} (H \circ \Phi_F^1 )_{\mid p =0}.
\]
This and the fact that $\Phi_F^s = \Id + \OO_{N-1}(q,p)$ imply that
\[
H \circ \Phi_F^1 = \NN  + H_2,
\]
where
\[
H_2 = (\NN  + H_1) \circ \Phi_F^1 - \NN,
\]
satisfies $H_2(q,p,t) = p \OO_{N-1}(q,p)$, that is, we have straightened the stable manifold. 

In order to repeat the procedure to straighten the unstable manifold, we first need to see that, in these new variables, the unstable manifold can be written as in~\eqref{def:Ws_i_Wu}, in the form $q = \tilde \gamma^u(p,t)$, with $\tilde \gamma^u$ defined in $V_{\rho''}\times B_\rho \times \T_{\sigma}$, for some $\rho'' >0$.

In this new set of variables, the unstable manifold is given by the solutions of 
\[
q = \gamma^u(p+P(q,p,t),t)
\]
where $\gamma^u$ was introduced in~\eqref{def:Ws_i_Wu} and $P$ in~\eqref{def:Pqpt}. It is immediate to check, by means of a fixed point argument, that the above equation defines  $q$ as a function $\tilde \gamma^u \in \X_{N,\rho'}$ of $p$, for any $0 < \rho' < \rho$. Hence, in the new variables, the unstable manifold is given by $W^u = \{q = \tilde \gamma^u(p,t)\}$. We can repeat the procedure to straighten $W^u$, now considering the time 1 map, $\Phi_G^1$, of the Hamiltonian $G(p,t)$ defined analogously by~\eqref{def:F_redressant_varietat_estable}, using $\tilde \gamma^u(p,t)$ instead of $\gamma^s(t)$. It satisfies
\[
\Phi_G^1(q,p,t,I)  = \left(q+Q(q,p,t), p ,t,I- \frac{\partial G}{\partial t}(p,t)\right).
\]
where 
\[
Q(q,p,t)= \frac{q+p}{\left( 1+2(q+p)^2\frac{\partial G}{\partial p}(p,t)\right)^{1/2}}-(q+p) = (q+p)^3 p^{N-3}+(q+p)^3\OO_{N-2}(q,p)
\]
is defined and analytic in $B_\rho \times V_{\rho}\times \T_{\sigma} \times \C$,
$C^\infty$ at $B_\rho \times \{0\}\times \T_{\sigma} \times \C$ and, for some $\rho' > \rho$,
\[
\Phi_G^1 (B_\rho \times V_{\rho}\times  \T_{\sigma} \times \C) \subset B_{\rho'} \times V_{\rho'}\times \T_{\sigma} \times \C.
\]
Furthermore, since $Q(q,p,t) = p^N + q \OO_{N-1}(q,p)+\OO_{N+1}(q,p)$, taking $\rho' = \rho - \OO_N(\rho)$
\begin{equation}
\label{PhiGq1_envia_sectors_a_sectors}
\Phi_G^1 (V_{\rho'} \times V_{\rho'/N}\times  \T_{\sigma} \times \C) \subset V_{\rho} \times V_{\rho}\times \T_{\sigma} \times \C,
\end{equation}
which implies that $H\circ \Phi_F^1 \circ \Phi_G^1$ is well defined, analytic in $V_{\rho'} \times V_{\rho'/N}\times  \T_{\sigma} \times \C$ and $C^\infty$ at $\{ 0 \} \times V_{\rho'/N}\times  \T_{\sigma} \times \C \cup V_{\rho'} \times \{0\}\times  \T_{\sigma} \times \C$.

With the same argument as in the case of the stable manifold, $\Phi_G^1$ straightens the unstable manifold. Since it is the identity 
in $\{p=0\}$, the stable manifold, the invariant manifolds of the periodic orbit at the origin of $H\circ \Phi_F^1 \circ \Phi_G^1$
are $\{p=0\}$ and $\{q=0\}$. We write $H\circ \Phi_F^1 \circ \Phi_G^1 = \NN  + H_2$, where
\[
H_2 = H_1 \circ \Phi_F^1 \circ \Phi_G^1 + \NN \circ \Phi_F^1 \circ \Phi_G^1-\NN .
\]
Since $\Phi_F^1, \Phi_G^1 = \Id + \OO_N$, and $H_1 = \OO_N$, it is clear that so is $H_2$. The following lemma implies the claim.

\begin{lem}
\label{lem:descomposicio_en_productes_qp}
Let $H: V_{\rho} \times V_{\rho}\times  \T_{\sigma}  \to \C$, analytic, $C^\infty$ at $\{ 0 \} \times V_{\rho}\times  \T_{\sigma}  \cup V_{\rho} \times \{0\}\times  \T_{\sigma} $, and $H(q,p,t) = \OO_N(q,p)$. Then, for any $k$ such that $2k < N$,
\[
  H(q,p,t) = \sum_{j=0}^{k-1} (qp)^j\left(h_j(q,t) + \tilde h_j (p,t)\right) + (qp)^k    \wt H(q,p,t),
  \]
for some $h_j, \tilde h_j \in  \X_{N-2j,\rho}$, $j=0,\dots,k-1$,
$\wt H:V_{\rho} \times V_{\rho}\times  \T_{\sigma}  \to \C$, analytic, $C^\infty$ at $\{ 0 \} \times V_{\rho}\times  \T_{\sigma}  \cup V_{\rho} \times \{0\}\times  \T_{\sigma}$ and $\wt H(q,p,t) = \OO_{N-2k}(q,p)$. 
\end{lem}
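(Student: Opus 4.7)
The plan is to proceed by induction on $k$, peeling off one factor of $qp$ at each step via a Hadamard-type decomposition. The base case $k=0$ is trivial (take $\widetilde H = H$). For the inductive step, assume we have written
\[
H(q,p,t) = \sum_{j=0}^{k-1} (qp)^j\left(h_j(q,t) + \tilde h_j (p,t)\right) + (qp)^k    \widetilde H_k(q,p,t),
\]
with $\widetilde H_k$ analytic on $V_\rho\times V_\rho\times \mathbb T_\sigma$, of class $C^\infty$ at $\{0\}\times V_\rho\times \mathbb T_\sigma \cup V_\rho \times \{0\}\times \mathbb T_\sigma$, and satisfying $\widetilde H_k = \mathcal O_{N-2k}(q,p)$. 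We then need to write $\widetilde H_k$ itself as $h_k(q,t)+\tilde h_k(p,t) + qp\, \widetilde H_{k+1}(q,p,t)$ with the appropriate regularity and order.

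The natural choice is $h_k(q,t) = \widetilde H_k(q,0,t)$ and $\tilde h_k(p,t) = \widetilde H_k(0,p,t)-\widetilde H_k(0,0,t)$. Since $\widetilde H_k = \mathcal O_{N-2k}(q,p)$ and $2k<N$, we have $\widetilde H_k(0,0,t)\equiv 0$, and both $h_k,\tilde h_k$ belong to $\mathcal X_{N-2k,\rho}$. Moreover, $\widetilde H_k(q,p,t) - h_k(q,t) - \tilde h_k(p,t)$ vanishes identically on $\{q=0\}\cup\{p=0\}$.

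To extract the factor $qp$, I would apply the fundamental theorem of calculus twice: writing $\widetilde H_k(q,p,t)-\widetilde H_k(q,0,t) = p\int_0^1 \partial_p \widetilde H_k(q,sp,t)\, ds$ and subtracting the analogous identity at $q=0$, one gets
\[
\widetilde H_k(q,p,t) - h_k(q,t) - \tilde h_k(p,t) = qp \int_0^1\!\!\int_0^1 \partial_q\partial_p \widetilde H_k(rq,sp,t)\, dr\, ds,
\]
so I simply set $\widetilde H_{k+1}(q,p,t) = \int_0^1\!\!\int_0^1 \partial_q\partial_p \widetilde H_k(rq,sp,t)\, dr\, ds$. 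Since $V_\rho$ is star-shaped with respect to the origin along real rays, the rescalings $(rq,sp)$ for $r,s\in[0,1]$ stay in $V_\rho\times V_\rho$, so the integral is well defined and inherits analyticity on $V_\rho\times V_\rho\times \mathbb T_\sigma$ together with the $C^\infty$ regularity at the coordinate hyperplanes. The order $\widetilde H_{k+1} = \mathcal O_{N-2(k+1)}(q,p)$ follows from the fact that differentiation once in $q$ and once in $p$ reduces the order by at most two.

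The only subtlety to verify is the star-shapedness of $V_\rho$ needed to justify the integral representation inside the sectorial domain; this is immediate from the definition \eqref{eq:sectorialdom}, since the set $V_\rho$ is invariant under multiplication by $r\in[0,1]$. Beyond this check, the argument is routine and the induction produces the claimed decomposition up to any $k$ with $2k<N$. Taking $k=M$ in the statement of Proposition~\ref{prop:redressament_de_les_varietats} (so that $N-2M$ is as large as desired) yields the required normal form, completing the proof.
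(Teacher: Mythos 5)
Your proof is correct and follows essentially the same route as the paper: the Hadamard-type decomposition $H = H(q,0,t)+H(0,p,t)+qp\int_0^1\!\int_0^1 \partial_q\partial_p H(rq,sp,t)\,dr\,ds$ and induction on $k$, with the order drop checked via the Taylor characterization of $\mathcal{O}_N(q,p)$. The only cosmetic difference is that you explicitly record the star-shapedness of $V_\rho$ and write $\tilde h_k(p,t)=\widetilde H_k(0,p,t)-\widetilde H_k(0,0,t)$, whereas the paper tacitly uses $\widetilde H_k(0,0,t)=0$; both observations are immediate.
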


\begin{proof}[Proof of Lemma~\ref{lem:descomposicio_en_productes_qp}]
The claim for $k = 1$ follows immediately from
the equality
\[
H(q,p,t) = H(q,0,t) + H(0,p,t) + qp H_1(q,p,t),
\]
where
\[
H_1(q,p,t) = \int_0^1\int_0^1 \partial_q \partial_p H(\tau q, \sigma p,t)\,d\tau d\sigma.
\]
To see that $H_1(q,p,t) = \OO_{N-2}(q,p)$ take into account the following fact, which is readily deduced from Taylor's formula:
\[
H(q,p,t) = \OO_N(q,p) \Leftrightarrow \partial_q^j \partial_p^i H(0,0,t) = 0 \, \text{ for } i+j \leq N-1 .
\]
The rest is proved by induction.
\end{proof}
This finishes the proof of Proposition \ref{prop:redressament_de_les_varietats}.
\end{proof}

\bibliography{biblioMelnikov}
\bibliographystyle{alpha}

\end{document}